\documentclass[10pt,reqno]{amsart}
\usepackage[colorlinks=true,
pdfstartview=FitV, linkcolor=cyan, citecolor=magenta,
urlcolor=blue]{hyperref}
\usepackage{amsmath,amsfonts,latexsym,amssymb,dsfont}
\usepackage[utf8]{inputenc}
\usepackage{graphicx,subfig,accents}
\usepackage{pdfsync}
\usepackage{color}
\newtheorem{theorem}{Theorem}[subsection]
\newtheorem{lemma}[theorem]{Lemma}
\newtheorem{definition}[theorem]{Definition}
\newtheorem{proposition}[theorem]{Proposition}
\newtheorem{corollary}[theorem]{Corollary}

\newcommand{\rmks}{{\sc Remarks: }}

\newcommand{\ii}{{\iota}}

\renewcommand{\gg}{{\ms g}}
\newcommand{\hh}{{\ms h}}

\newcommand{\dual}{{\circ}}

\newcommand{\ww}{{\ms W}}
\newcommand{\tw}{{\ms T}}

\newcommand{\tr}{\operatorname{tr}}
\newcommand{\BB}{\mathds{B}}
\newcommand{\bb}{\mathbf b}
\newcommand{\pp}{\ms p}
\newcommand{\PP}{\ms P}
\newcommand{\DD}{\ms D}
\renewcommand{\leq}{\leqslant}
\renewcommand{\geq}{\geqslant}
\newcommand{\cA}{\mathcal A}
\newcommand{\cB}{\mathcal B}

\newcommand{\cL}{\mathcal L}

\newcommand{\cQ}{\mathcal Q}

\newcommand{\cF}{\mathcal F}

\newcommand{\Eig}[1]{[#1]}
\newcommand{\sln}{\ms{PSL}_n(\mathbb R)}
\newcommand{\sld}{\ms{PSL}_2(\mathbb R)}
\newcommand{\End}{\operatorname{End}}
\newcommand{\Hn}{{\ms H}(n,S)}

\newcommand{\otw}{\overline{\ww}_p}
\newcommand{\Hom}{\operatorname{Hom}}
\newcommand{\gir}{\operatorname{gh}}
\newcommand{\pn}{\mathbb P(\mathbb R^n)}
\newcommand{\dd}{{\rm d}}
\newcommand{\ms}{\mathsf}
\newcommand{\mk}{\mathfrak}
\newcommand{\bu}{\accentset{\bullet}}
\newcommand{\logd}[2]{\frac{\{#1,#2\}_S}{#1 #2}}
\newcommand{\defeq}{{}\mathrel{\mathop:}={}} 
\newcommand{\eqdef}{{}=\mathrel{\mathop:}{}}

\newcommand{\auteur}{
\vskip 2truecm
\centerline{Université Côte d’Azur, CNRS, LJAD, France}}
\newcommand{\seq}[1]{
\{{#1}_m\}_{m\in\mathbb N}}

\newcommand{\grf}{\pi_1(S)}
\newcommand{\wh}{\widehat}

\newcommand{\bgrf}{\partial_\infty\pi_1(S)}
\renewcommand{\d}{\bu}
\renewcommand{\epsilon}{\varepsilon}
\newcommand{\jl}{\divideontimes}
\newcommand{\sig}{\operatorname{Sign}}

\newcommand{\id}{\operatorname{Id}}

\title{Goldman Algebra, Opers \\ and the Swapping Algebra}
\author{François LABOURIE}
\thanks{Université Côte d'Azur, CNRS, LJAD, France. The research leading to these results has received funding from the European Research Council under the {\em European Community}'s seventh Framework Programme (FP7/2007-2013)/ERC {\em grant agreement} n$^o$FP7-246918, as well as well as from the ANR program ETTT (ANR-09-BLAN-0116-01) }
\begin{document}

\maketitle
\newtheorem{theotheo}[foo]{Theorem}
\newtheorem{propro}[foo]{Proposition}

\begin{abstract} We define a Poisson Algebra called the {\em swapping algebra} using the intersection of curves in the disk. We interpret a subalgebra of the fraction algebra of the swapping algebra -- called the {\em algebra of multifractions} --  as an algebra of functions on the space of cross ratios and thus as an algebra of functions on the Hitchin component as well as on the space of $\ms{SL}_n(\mathbb R)$-opers with trivial holonomy. We relate this Poisson algebra to the Atiyah--Bott--Goldman symplectic structure and to the Drinfel'd--Sokolov reduction. We also prove an extension of Wolpert formula.
\end{abstract}

\section{Introduction} The purpose of this article is threefold. We first introduce the {\em swapping algebra} which is a Poisson algebra generated -- as a commutative algebra -- by pairs of points on the circle. Then we relate this construction to two well known Poisson structures:
\begin{itemize}
\item the Poisson structure of the character variety of representations of a surface group in $\sln$ discovered by Atiyah, Bott and Goldman
 \cite{Atiyah:1983,Goldman:1984,Goldman:1986}
\item the Poisson structure of the space of $\sln$-opers introduced by Dickey, Gel'fand and Magri and described in a geometrical way by Drinfel'd and Sokolov \cite{Magri:1978gh,Drinfelcprimed:1981ua,Dickey:1997un} \end{itemize}
One way to interpret heuristically these relations is to say that the swapping algebra embodies the notion of a ``Poisson structure" for the space of all cross ratios, space that contains both the space of opers and the ``universal (in genus) Hitchin component". As a byproduct of the methods of this paper, we also produce a generalisation of the Wolpert formula which computes the brackets of length functions for the Hitchin component. 

The results of this article were announced in \cite{Labourie:2010ev} The relation -- at a topological level -- between the character variety and opers was already noted by the author in \cite{Labourie:2006}, by Fock and Goncharov in \cite{Fock:2006a} and foreseen by Witten in \cite{Witten:to} (see also \cite{Govindarajan:1995cr,Govindarajan:1995im}).
I thank Martin Bridson, Sergei Fomin, Louis Funar and Bill Goldman  for their interest and help.

We now explain more precisely the content of this article.

\subsection{The swapping algebra}
Our first result is the construction of the {\em swapping algebra}. To avoid cumbersome expressions, we shall denote most of the time the ordered pair $(X,x)$ of points of the circle by the concatenated symbol $Xx$. We recall in Paragraph \ref{sec:link} the definition and properties of the linking number $[Xx,Yy]$ of the two pairs $(X,x)$ and $(Y,y)$. If $\PP$ is a subset of the circle, we denote by $\cL(\PP)$  the commutative associative algebra generated by pairs of points of $\PP$ with the relations $XX=0$, for all $X$ in $\PP$. Our starting result is the following
\begin{theotheo}{\sc [Swapping Bracket]}\label{theo:A}
For every complex number $\alpha$, there exists a unique Poisson bracket on $\cL(\PP)$ such that the bracket of two generators is
$$
\{X x,Yy\}_\alpha\defeq [Xx,Yy] (X y.Y x +\alpha. X x.Yy).
$$
\end{theotheo}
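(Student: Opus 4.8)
The plan is to reduce the statement to a short list of combinatorial identities for the linking number. Since $\cL(\PP)$ is generated as a commutative associative algebra by the pairs $Xx$, any Poisson bracket on it is completely determined by its values on these generators through the Leibniz rule; this gives uniqueness. For existence I would define the candidate bracket on the free commutative algebra on the symbols $Xx$ by the stated formula on generators and extend by bilinearity and the Leibniz rule; three points then remain to be checked. First, that the bracket descends to the quotient by the relations $XX=0$: this amounts to showing that $\{XX,Yy\}_\alpha$ lies in the ideal generated by these relations, which holds because $[XX,Yy]=0$ -- a property of the linking number recalled in \S\ref{sec:link} -- so that the only surviving term $\alpha\,[XX,Yy]\,XX\,Yy$ already carries the factor $XX$. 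Second, antisymmetry on generators is immediate because $[Xx,Yy]=-[Yy,Xx]$ while $Xy\,Yx+\alpha\,Xx\,Yy$ is invariant under $(X,x)\leftrightarrow(Y,y)$, and antisymmetry then propagates to $\cL(\PP)$ via the Leibniz rule. The third and only substantial point is the Jacobi identity.

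For Jacobi I would invoke the standard fact that the Jacobiator $(a,b,c)\mapsto\{\{a,b\},c\}+\{\{b,c\},a\}+\{\{c,a\},b\}$ of an antisymmetric biderivation on a commutative algebra is again a derivation in each of its three arguments; it therefore suffices to check that it vanishes on triples of generators $Xx,Yy,Zz$. Writing $\{\cdot,\cdot\}_\alpha=\{\cdot,\cdot\}_0+\alpha\,\{\cdot,\cdot\}'$ with $\{Xx,Yy\}'=[Xx,Yy]\,Xx\,Yy$, the Jacobiator becomes a polynomial in $\alpha$ of degree at most two. Its $\alpha^2$-coefficient is the Jacobiator of $\{\cdot,\cdot\}'$, a log-canonical bracket, hence it vanishes for free for any antisymmetric choice of coefficients. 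Expanding $\{\{Xx,Yy\}_0,Zz\}_0$ with the defining formula and forming the cyclic sum, one finds that the $\alpha^0$-coefficient is a combination of two degree-three monomials in the generators built from the six points $X,x,Y,y,Z,z$, so that its vanishing is equivalent to the single identity
\[
[Xx,Yy]\,[Xy,Zz]+[Yy,Zz]\,[Yz,Xx]+[Zz,Xx]\,[Zx,Yy]=0
\]
(the companion identity attached to the other monomial is deduced from this one by the relabelling $(Y,y)\leftrightarrow(Z,z)$). Similarly, the $\alpha^1$-coefficient reduces to a second quadratic identity in linking numbers, which expresses the compatibility of the two Poisson brackets $\{\cdot,\cdot\}_0$ and $\{\cdot,\cdot\}'$.

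Establishing these two linking-number identities is the delicate step, and the one I expect to be the main obstacle. The cleanest route is to express the linking number through the cyclic-order cocycle: for distinct points one has $[Xx,Yy]=\tfrac12\bigl(\epsilon(X,x,Y)-\epsilon(X,x,y)\bigr)$, where $\epsilon(a,b,c)\in\{-1,1\}$ records the cyclic order of $(a,b,c)$, upon which the identities turn into polynomial identities in the $\epsilon$'s that follow from the alternating property of $\epsilon$ and the cocycle relation $\epsilon(a,b,c)-\epsilon(a,b,d)+\epsilon(a,c,d)-\epsilon(b,c,d)=0$, valid for any four distinct points. Short of such a uniform argument there is always the safe fallback of a direct verification: up to the dihedral symmetry of the circle and the symmetries of the identities, the six points realize only finitely many essentially different cyclic configurations, and in each one every linking number occurring is an explicit element of $\{-1,0,1\}$. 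Coincidences among the six points are absorbed into the same enumeration, using the relation $XX=0$ in $\cL(\PP)$ and the fact that the linking number of a degenerate pair vanishes; the only real nuisance is the bookkeeping of the $\alpha$-linear cross-terms, which is lengthy but no harder in principle than the case $\alpha=0$.
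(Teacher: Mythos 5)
Your overall plan matches the paper's: determine the bracket by Leibniz (uniqueness), check descent through $XX=0$, reduce Jacobi to generators, and split the Jacobiator by degree in $\alpha$. Your observation that the $\alpha^2$-coefficient is the Jacobiator of the log-canonical bracket $\{Xx,Yy\}'=[Xx,Yy]\,Xx\,Yy$ and therefore vanishes by antisymmetry of the coefficient matrix is exactly what the paper's computation of $S_2=C\,Xx.Yy.Zz=0$ does.

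There is, however, a genuine imprecision in your treatment of the $\alpha^0$ term that would become a gap if pressed. You assert that the vanishing of the degree-zero part is \emph{equivalent to} the linking-number identity
$[Xx,Yy][Xy,Zz]+[Yy,Zz][Yz,Xx]+[Zz,Xx][Zx,Yy]=0$.
This identity is \emph{false} in general: the paper explicitly remarks (after Proposition \ref{prelimF}) that $F(X,x,Y,x,Z,x)\neq 0$ when $X,x,Y,Z$ are pairwise distinct. The correct statement is that
$S_0 = F(X,x,Y,y,Z,z)\,(Xz.Yx.Zy - Xy.Yz.Zx)$,
and $S_0=0$ holds because \emph{either} the hypothesis $\{X,x\}\cap\{Y,y\}\cap\{Z,z\}=\emptyset$ is met and then $F=0$, \emph{or} a point occurs in all three pairs and then the monomial factor itself vanishes in $\cL(\PP)$ (either the two monomials coincide, or a factor $aa=0$ appears). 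You do gesture at the second alternative (``coincidences \ldots are absorbed \ldots using $XX=0$''), but packaging the whole thing as a single unconditional identity is incorrect and would mislead a reader into trying to prove something false; you need to state and carry out the case split. The same caveat affects your proposed cyclic-order route, since the representation $[Xx,Yy]=\tfrac12(\epsilon(X,x,Y)-\epsilon(X,x,y))$ only makes sense when the relevant points are distinct, so it cannot by itself dispose of the coincidence configurations.

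Beyond that, the proof of $F=0$ under the non-coincidence hypothesis is where your route genuinely diverges from the paper's. The paper works axiomatically: it abstracts four properties of the linking number (two antisymmetries, a cocycle identity, and the ``linking number alternative'' $[Xx,Yy][Xy,Yx]=0$) and derives $F=0$ from those alone via a several-step symmetry/cocycle argument (Proposition \ref{prelimF}). You propose either a direct cyclic-order computation or a finite case enumeration. Both are defensible for the concrete linking number on the circle, but the paper's abstract route buys the statement for \emph{any} linking number in the sense of its Definition, not just the circle one; if you take the cyclic-order shortcut, you should note that this restricts the scope. Also, your description of the $\alpha^1$-coefficient as requiring ``a second quadratic identity'' is technically accurate (the coefficient is quadratic in linking numbers) but obscures the useful fact that it factors through the \emph{linear} cocycle relation $[Xy,Zz]+[Yx,Zz]=[Xx,Zz]+[Yy,Zz]$, which is considerably easier to verify than $F=0$ and is how the paper disposes of it.
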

The {\em swapping algebra} is the algebra $\cL_\alpha(\PP)$ endowed with the Poisson bracket $\{ \cdotp, \cdotp\}_\alpha$. This theorem is proved in Section \ref{sec:swap}. The goal of this paper is to relate this swapping algebra to other Poisson algebras. 

One should note that this bracket can be used to express very simply some results of Wolpert and in particular the variation of the length of curve transverse to a shear \cite{Wolpert:1983td,Wolpert:1982eo}.

\subsection{Cross ratios and the multifraction algebra} We shall concentrate on the interpretation of an offshoot of the swapping algebra. We denote by $\mathcal Q_\alpha(\PP)$ the algebra of fractions of $\cL_\alpha(\PP)$ equipped with the induced Poisson structure. The {\em multifraction algebra} $\mathcal B(\PP)$ is the vector subspace of $\mathcal Q_\alpha(\PP)$ generated by
the {\em elementary multifractions}: $$
[{\rm X},{\rm x};\sigma]\defeq \frac{\prod_{i=1}^{i=n}X_ix_{\sigma(i)}}{ \prod_{i=1}^{i=n}X_ix_{i}},
$$
where ${\rm X}=(X_1,\ldots,X_n)$ and ${\rm x}=(x_1,\ldots,x_n)$ are $n$ tuples of points of $\PP$ and $\sigma$ is a permutation of $\{1,\ldots,n\}$. Then we have the easy proposition
\begin{propro}
The multifraction algebra is a Poisson subalgebra of $\mathcal Q_\alpha(\PP)$. The induced Poisson structure does not depend on $\alpha$. Finally $\mathcal B(\PP)$ is generated as a commutative algebra by the {\em cross fractions}:
$$
[X,Y,x,y]\defeq \frac{Xx.Yy}{Yx.Xy}.
$$
\end{propro}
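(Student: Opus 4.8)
The plan is to prove the three assertions in sequence, each reducing to an algebraic identity at the level of generators. The central observation is that an elementary multifraction $[{\rm X},{\rm x};\sigma]$ is, up to sign conventions in the linking numbers, a product of cross fractions, so all three claims follow once the behaviour of the bracket on cross fractions is understood.

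\emph{Step 1: the cross fractions generate $\mathcal B(\PP)$ as a commutative algebra.} First I would observe that every cross fraction $[X,Y,x,y]=\frac{Xx.Yy}{Yx.Xy}$ is itself an elementary multifraction: take $n=2$, ${\rm X}=(X,Y)$, ${\rm x}=(x,y)$ and $\sigma$ the transposition, so $[X,Y,x,y]=[{\rm X},{\rm x};\sigma]$. Conversely, given an elementary multifraction $[{\rm X},{\rm x};\sigma]$, write the permutation $\sigma$ as a product of transpositions, or more efficiently decompose it into cycles and telescope each cycle: for a cycle $(i_1\,i_2\,\cdots\,i_k)$ one checks the telescoping identity
$$
\frac{X_{i_1}x_{i_2}\,X_{i_2}x_{i_3}\cdots X_{i_k}x_{i_1}}{X_{i_1}x_{i_1}\,X_{i_2}x_{i_2}\cdots X_{i_k}x_{i_k}}
=\prod_{j=1}^{k-1}[X_{i_j},X_{i_{j+1}},x_{i_{j+1}},x_{i_j}]\cdot(\text{a trivial factor}),
$$
where the product of the denominators $X_{i_j}x_{i_j}$ reorganises correctly because each such factor appears exactly once in a numerator and once in a denominator of consecutive cross fractions. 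Multiplying over the cycles of $\sigma$ expresses $[{\rm X},{\rm x};\sigma]$ as a product of cross fractions, so $\mathcal B(\PP)$, which is \emph{defined} as the span of elementary multifractions, is contained in the algebra generated by cross fractions; the reverse inclusion is the observation that cross fractions are elementary multifractions. The one point needing care is that $\mathcal B(\PP)$ is defined as a vector subspace of $\mathcal Q_\alpha(\PP)$, so I must check it is closed under multiplication — which is precisely what the cycle decomposition shows, since the product of two elementary multifractions is again elementary (concatenate the tuples and take the block-diagonal permutation), and then re-split it into cross fractions.

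\emph{Step 2: $\mathcal B(\PP)$ is a Poisson subalgebra.} Since $\mathcal B(\PP)$ is generated as a commutative algebra by cross fractions, and a Poisson bracket satisfies the Leibniz rule, it suffices to show that $\{[X,Y,x,y],[Z,W,z,w]\}_\alpha$ again lies in $\mathcal B(\PP)$ for any two cross fractions. Here I would use the quotient rule for the induced bracket on the fraction algebra: $\{f/g,h/k\}=\frac{1}{gk}\{f,h\}-\frac{f}{g^2k}\{g,h\}-\ldots$, expand everything via the Leibniz rule down to brackets of generators $\{Pp,Qq\}_\alpha=[Pp,Qq](Pq.Qp+\alpha\,Pp.Qq)$, and collect terms. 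The outcome will be a sum of terms each of which is (linking number)$\times$(a monomial in the generators)/(the product $Yx.Xy.Wz.Zw$ of the original denominators); I then need to recognise each such term, after the $\alpha$-dependent contributions cancel (see Step 3), as a $\mathbb Z$-linear combination of products of cross fractions. The cleanest way to see this is to note that the bracket of two cross fractions, being a derivation in each slot applied to a ratio of products of generators, automatically has the right "balanced" form — every point $P$ appearing in a numerator appears equally often in the corresponding denominator — and any balanced fraction of this type is, by the telescoping identity of Step 1, a product of cross fractions.

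\emph{Step 3: $\alpha$-independence.} This is the step I expect to be the main obstacle, and the one requiring the most bookkeeping. The $\alpha$-term in $\{Pp,Qq\}_\alpha$ is $\alpha\,[Pp,Qq]\,Pp.Qq$, i.e. it reproduces the two generators being bracketed. When we expand $\{[{\rm X},{\rm x};\sigma],[{\rm Y},{\rm y};\tau]\}_\alpha$ by Leibniz, the $\alpha$-contribution from bracketing a numerator factor $X_ix_{\sigma(i)}$ (or denominator factor $X_ix_i$) in the first multifraction against a factor in the second produces a term proportional to that same factor times the rest; summing over \emph{all} numerator-and-denominator factors of a given multifraction, the signs from the quotient rule (numerator factors enter with $+$, denominator factors with $-$) make the total $\alpha$-contribution proportional to
$$
\Bigl(\sum_{i}[X_ix_{\sigma(i)},\,\cdot\,]-\sum_i[X_ix_i,\,\cdot\,]\Bigr)\times(\text{stuff}),
$$
and the linking number is \emph{bilinear} and \emph{additive} in the concatenation of pairs, so this alternating sum telescopes to zero because $\sigma$ is a permutation: the multiset $\{x_{\sigma(i)}\}$ equals the multiset $\{x_i\}$, hence $\sum_i[X_ix_{\sigma(i)},Qq]=\sum_i[X_ix_i,Qq]$ once we also use that the $X$-coordinate contribution is the same multiset on both sides. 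I would make this precise by invoking the additivity of the linking number recalled in the referenced Paragraph \ref{sec:link} — essentially $[Xx+Yy,Zz,\ldots]$ splits as a sum — and the identity $\sum_i(X_ix_{\sigma(i)}-X_ix_i)=0$ as a formal sum of pairs modulo the relevant equivalence. The same cancellation applies in the second slot, so the bracket on $\mathcal B(\PP)$ agrees with the $\alpha=0$ bracket. The delicate part is tracking the signs and making sure the "stuff" multiplying the alternating sum is genuinely independent of $i$, which is where one uses that the $\alpha$-term literally returns the bracketed factor rather than a product of two different factors.
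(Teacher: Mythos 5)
Your proposal is correct and follows essentially the same route as the paper's own (very terse) proof, which is simply to observe that every elementary multi fraction is a product of cross fractions and that the swapping bracket of two cross fractions is again a multi fraction not depending on $\alpha$, and then to invoke the Leibniz rule. Your Step~1 (cycle decomposition and telescoping) is a fleshed-out version of the first observation, and your Steps~2--3 unpack the second. Two remarks on economy and precision. First, you treat Step~3 as the main obstacle, but it collapses once you note that the $\alpha$-contribution to $\{Pp,Qq\}_\alpha/(Pp\cdot Qq)$ is exactly $\alpha[Pp,Qq]$; writing the log-derivative of a cross fraction $C=[X;Y;x;y]$ against a generator $Qq$ as $[Xx,Qq]+[Yy,Qq]-[Yx,Qq]-[Xy,Qq]$ and invoking the identity \eqref{In5} of Proposition~\ref{prelimF} makes the $\alpha$-term vanish on the nose, and Leibniz then kills it for all multi fractions — there is no need for your multiset/telescoping argument at the level of general elementary multi fractions, though that version is also correct once one derives the cocycle behaviour of $(x,y)\mapsto[Xy,Zz]-[Xx,Zz]$ from \eqref{In5}. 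Second, your Step~2 leaves the identification of $\{C,D\}$ as a multi fraction slightly implicit; the concrete point making it "immediate" is that the non-$\alpha$ part of $\{Pp,Qq\}_\alpha/(Pp\cdot Qq)$ is $[Pp,Qq]\cdot\frac{Pq.Qp}{Pp.Qq}$, i.e.\ a linking number times a cross fraction, so the full log-derivative $\{C,D\}/(CD)$ is a $\mathbb Z$-linear combination of cross fractions and hence $\{C,D\}\in\mathcal B(\PP)$. Your "balanced fraction" heuristic points at this but does not quite close the gap.
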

In particular, it follows that the multifraction algebra is naturally mapped to the commutative algebra of functions on cross ratios (See Section \ref{sec:crossratio}). Thus the existence of a Poisson structure on the multifraction algebra can be interpreted as that of a Poisson structure on the space of cross ratios.
\subsection{The multifraction algebra as a ``universal" Goldman algebra}
We then relate the multifraction algebra to the Goldman algebra. Let $\Gamma$ be the fundamental group of a surface $S$, $\partial_\infty\Gamma$ the boundary at infinity of $\Gamma$, and $\PP$ be the subset of $\partial_\infty \Gamma$ consisting of fixed points of elements of $\Gamma$. The Hitchin component $\Hn$ of the character variety of representations of $\Gamma$ in $\sln$ was interpreted in \cite{Labourie:2005} as a space of cross ratios. Thus every multifraction in $\mathcal B({\ms P})$ gives a smooth function on the Hitchin component (see Proposition \ref{prop:smoothmulti} for details). Thus we have a restriction
$$
{\ms I}_S: \cB(\PP)\to C^\infty(\Hn).
$$
This mapping is not a Poisson morphism, nevertheless it becomes so when we take sequences of well chosen finite index subgroups. More precisely,
we define and prove, as an immediate consequence of one of the main result of Niblo in \cite{Niblo:1992uy}, the existence of {\em vanishing sequences} of finite index subgroups $\{\Gamma_n\}_{n\in\mathbb N}$ of $\Gamma$; these sequences are essentially such that every geodesic becomes eventually simple and for which the intersection of two geodesics becomes eventually minimal (See Paragraph \ref{par:vaseq} and Appendix \ref{sec:vanishexist} for precisions). 

Then denoting by $\{ \cdotp, \cdotp\}_W$ the swapping bracket and $\{ \cdotp, \cdotp\}_{S_n}$ the Goldman bracket for $S_n\defeq \tilde S/\Gamma_n$ coming from the Atiyah--Bott--Goldman symplectic form on the character variety, we prove in Section \ref{sec:gold}
\begin{theotheo}\label{theo:C}{\sc [Goldman bracket for vanishing sequences ]}

Let $\seq{\Gamma}$ be a vanishing sequence of subgroups of $\pi_1(S)$. Let $\PP\subset\bgrf$ be the set of end points of geodesics. Let $b_0$ and $b_1$ be two multifractions in $\mathcal B(\PP)$. Then we have, 
\begin{eqnarray}
\lim_{n\rightarrow\infty}\{b_0,b_1\}_{{S}_n}=\{b_0,b_1\}_{W}.\label{theo:swapuniv0}
\end{eqnarray}
\end{theotheo}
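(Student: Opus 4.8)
The plan is to reduce the statement to a computation with a single pair of geodesics, using bilinearity of both brackets and the fact that the multi-fraction algebra is generated by cross fractions. First I would recall that the Goldman bracket $\{,\}_{S_n}$, as a derivation in each variable, is computed on trace-type functions by summing over intersection points of the underlying geodesics with a sign and a ``broken loop'' contribution; on cross ratios this translates into a sum over coincidences of endpoints of the geodesics supporting the cross fractions. By the Leibniz rule it therefore suffices to prove the limit formula when $b_0$ and $b_1$ are each a single cross fraction $[X,Y,x,y]$, where $X,Y,x,y$ and the analogous four points for $b_1$ are fixed points of elements of $\Gamma$ (hence of $\Gamma_n$ for all $n$, since $\Gamma_n$ has finite index so the same hyperbolic elements, up to powers, fix the same points). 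The key point is that the value of a cross fraction, as a function on $\Hn$, is insensitive to which finite cover we use: the cross ratio of four boundary points is intrinsic, so $b_0$ and $b_1$ define the ``same'' functions on $\mathcal H(n,S_n)$ for every $n$ via the restriction maps ${\ms I}_{S_n}$. What changes with $n$ is only the combinatorics of how the four-point configurations sit relative to the action of $\Gamma_n$, i.e. which pairs of points are ``linked through'' a group element.

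The second step is to make precise the comparison between the swapping bracket $\{,\}_W$ and the limit of the Goldman brackets at the level of the defining formulas. The swapping bracket of two generators $Xx, Yy$ is $[Xx,Yy](Xy\cdot Yx + \alpha\, Xx\cdot Yy)$, where $[Xx,Yy]$ is the linking number of the two chords in the disk — this counts, with sign, whether the chords $Xx$ and $Yy$ cross. On the other hand, the Goldman bracket $\{f_\gamma, f_\eta\}_{S_n}$ for functions associated to geodesics $\gamma,\eta$ is a sum over $p \in \gamma \cap \eta$ of signed terms indexed by the homotopy classes obtained by smoothing at $p$. I would translate Goldman's formula into a statement about boundary fixed points: each geometric intersection point $p$ of the axes of $\gamma$ and $\eta$ in $S_n$ corresponds, upstairs in the disk, to a linking of the corresponding chords joining the attracting/repelling fixed points. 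The crucial mechanism provided by the vanishing sequence hypothesis (from Niblo's theorem, Paragraph \ref{par:vaseq}) is that as $n\to\infty$ every closed geodesic eventually becomes simple and any two closed geodesics eventually realize their geometric intersection number minimally; hence in $S_n$ the number of intersection points of the relevant geodesics stabilizes to exactly the number of essential linkings of the chords in the disk, with no ``spurious'' self- or over-counting. This is what forces the combinatorial sum defining $\{b_0,b_1\}_{S_n}$ to converge, term by term, to the sum defining $\{b_0,b_1\}_W$.

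The third step handles the passage from the Goldman bracket of trace functions to the bracket of cross fractions, using the interpretation of $\Hn$ as a space of cross ratios from \cite{Labourie:2007a} together with Proposition \ref{prop:smoothmulti}: a cross fraction restricts to a smooth function on the Hitchin component, and its Hamiltonian vector field under the Atiyah--Bott--Goldman form can be computed from the variation of cross ratios along deformations, which Wolpert-type formulas (the ``extension of Wolpert formula'' promised in the abstract) express in terms of the linking data. I would set up the identity for a single pair of cross fractions by expanding each as a ratio of products $Xx$, picking up derivative terms via Leibniz, and checking that the $\alpha$-dependent term $\alpha\, Xx\cdot Yy$ drops out upon taking logarithmic derivatives — consistent with the Proposition asserting that the induced bracket on $\mathcal B(\PP)$ is $\alpha$-independent — so that what survives matches, in the limit, the Goldman computation.

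The main obstacle I anticipate is the third step: controlling the correspondence between the analytic Goldman bracket (defined via the symplectic form on the character variety and computed through holonomy variations) and the purely combinatorial linking-number expression, uniformly enough that the limit can be taken inside the finite sum. Concretely, one must show that for large $n$ there are no ``extra'' intersection points of the geodesics in $S_n$ beyond those accounted for by chord linkings in the disk, and that each genuine intersection contributes exactly the cross-ratio-derivative term predicted by the swapping formula — this requires combining Niblo's minimality/simplicity statement with a careful sign analysis (orientation of intersections versus sign of the linking number) and with the explicit form of the Hitchin cross ratio. The deformation-theoretic bookkeeping (which fixed points move, and how the cross ratio varies) is where the real work lies; the rest is organizational, reducing to cross fractions by Leibniz and quoting the structural results already established.
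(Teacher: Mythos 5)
Your proposal identifies some of the correct structural ingredients, but there is a genuine gap at exactly the place you flag as the main obstacle, and the gap is not merely a matter of ``bookkeeping'' — it is the computational engine of the whole proof.

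The reduction via Leibniz to generators is sound, but the paper does not reduce to cross fractions; it reduces to \emph{elementary functions} $\tw(\gamma_1,\ldots,\gamma_q)$, and this choice is not cosmetic. The reason is that elementary functions are precisely the $p\to\infty$ limits of normalized Wilson loops, $\tw(\gamma_1,\ldots,\gamma_q)=\lim_p\ww(\gamma_1^p\cdots\gamma_q^p)/\prod_i\ww(\gamma_i^p)$ (Proposition~\ref{Asym2}). This is what gives access to the only formula available for computing the Goldman bracket, namely Goldman's trace formula~\eqref{ABGw} for Wilson loops. Your plan relies on ``Hamiltonian vector fields computed from variations of cross ratios'' and ``Wolpert-type formulas,'' but no such direct formula for the ABG bracket of two cross ratios is established anywhere; the generalised Wolpert formula is itself a \emph{consequence} of the product-formula machinery and is used only for the length-function statement (Theorem~\ref{theo:braleng}), not for Theorem~\ref{theo:C}.

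Concretely, what is missing is the two-parameter asymptotics that drives the proof. There are two limits in play: the auxiliary parameter $p$ in $\gamma_i^p$ needed to approximate elementary functions by Wilson loops, and the index $n$ of the cover $S_n$. The proof must show that $\{b_0,b_1\}_{S_n}$ equals $\{b_0,b_1\}_W$ \emph{plus an error controlled uniformly by} $\gir_n(\rho)+\gir_0(\rho)^N$ (Corollary~\ref{apcomp}), after which the vanishing-sequence hypothesis is used twice: once to make $\gir_n\to 0$ (Proposition~\ref{VanishGPH1}), and once to ensure the Good Position Hypothesis and the $N$-niceness of the covering (Propositions~\ref{VanishGPH2},~\ref{VanishGPH3}), which control the intersection loops $c_x(\tilde\gamma_p,\tilde\eta_p)$ and force them to be of the form $\gamma^{k_1}\eta^{-k_2}$ with $k_1,k_2$ in a controlled range. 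Your phrase ``the combinatorial sum converges term by term'' understates this: the individual Wilson-loop products do \emph{not} stabilize; one needs the bouquet/product formula (Propositions~\ref{prodform},~\ref{prodform2}) to organize the sum over intersection points of the highly non-simple curves $\gamma_0^p\cdots\gamma_q^p$, and then the girth estimates to kill the non-leading terms. Without naming these ingredients, the third step of the proposal does not have the means to produce the required equality. You correctly sense that ``this is where the real work lies,'' but the proposal does not supply the ideas that do that work.

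One smaller remark in your favor: the observation that the $\alpha$-dependent term $\alpha\,Xx\cdot Yy$ drops out under logarithmic differentiation is correct and matches the paper's proposition that the restriction of $\{,\}_\alpha$ to $\mathcal B(\PP)$ is $\alpha$-independent. And your identification of the \emph{role} of the vanishing sequence — simple geodesics and minimal intersections in $S_n$ for $n$ large — is the right intuition; it is packaged in the paper as the Good Position Hypothesis and the $N$-nice condition.
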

The statement of this theorem actually requires some preliminaries in defining properly the meaning of Assertion \eqref{theo:swapuniv0}. In a way, this result tells us that the swapping bracket is the Goldman bracket on the universal solenoid.

The proof relies on the description of special multifractions called {\em elementary functions} (see Paragraph \ref{sec:elemfunct}) as limits of the well studied functions on the character variety known as {\em Wilson loops}. 

Another result is a precise asymptotic formula, on a fixed surface this time, relating the Goldman and the swapping brackets. Let $\Gamma$ be as above. Let $\gamma\in\Gamma$. 
Let finally $y\in \PP$, and $\gamma^+$, $\gamma^-$ be respectively the attractive and repulsive fixed points of $\gamma$ in $\partial_\infty(\Gamma)$. Let us define the following formal series of cross fractions, reverting to the notation $(X,x)$ for pairs,

$$
\hat\ell_\gamma(y)=\frac{1}{2}
\log\left(\frac{
(\gamma^+,\gamma(y))\cdot(\gamma^-,\gamma^{-1}(y))}
{(\gamma^+,\gamma^{-1}(y))\cdot(\gamma^-,\gamma(y))}\right)
$$

In \cite{Labourie:2006} we show that the {\em period function} $\ell_\gamma\defeq {\ms I}_S(\hat\ell_\gamma(y)))$ -- seen as  a function on the character variety -- is independent on $y$ and is a function of the eigenvalues of the monodromy of $\gamma$. These period functions coincide with the length functions for the classical Teichmüller Theory -- that is $n=2$. 

We now have 
\begin{theotheo}\label{theo:D}{\sc[Bracket of length functions]}
Let $\gamma$ and $\eta$ be homotopy classes of curves which as simple curves have at most one intersection point, then we have 
$$
\lim_{n\to\infty}{\rm I}_S(\{\hat\ell_{\gamma^n}(y),\hat\ell_{\eta^n}(y)\}_W)=\frac{1}{4}\{\ell_{\gamma},\ell_{\eta}\}_S
$$
\end{theotheo}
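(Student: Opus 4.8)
The plan is to compute both sides of the asserted identity as functions of the four boundary points $\gamma^{+},\gamma^{-},\eta^{+},\eta^{-}$ — in fact of the single cross ratio these four points determine under the limit curve of the Hitchin representation — and to check that the two expressions agree. The left hand side will come directly from the swapping bracket formula of Theorem \ref{theo:A}; the right hand side from Goldman's description of the Atiyah--Bott--Goldman bracket together with the description of the period functions through Wilson loops.

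\emph{The swapping side.} Note first that $\hat\ell_{\gamma^n}(y)=\log\,[\gamma^{+},\gamma^{-},\gamma^{-n}(y),\gamma^{n}(y)]$ is the logarithm of a single cross fraction, because $\gamma^{n}$ has the same attracting and repelling fixed points as $\gamma$. Hence, by the proposition on the multi fraction algebra, $\{\hat\ell_{\gamma^n}(y),\hat\ell_{\eta^n}(y)\}_W$ does not depend on $\alpha$, and the Leibniz rule expands it into a sum of sixteen terms of the form $\pm[Xx,Yy]\bigl(\tfrac{1}{[X,Y,x,y]}+\alpha\bigr)$, one for each pair made of a factor $Xx$ of $[\gamma^{+},\gamma^{-},\gamma^{-n}(y),\gamma^{n}(y)]$ and a factor $Yy$ of $[\eta^{+},\eta^{-},\eta^{-n}(y),\eta^{n}(y)]$, the $\alpha$ parts summing to zero. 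Now let $n\to\infty$. Since $\gamma^{\pm n}(y)\to\gamma^{\pm}$ and $\eta^{\pm n}(y)\to\eta^{\pm}$, the two ``short'' chords $\gamma^{+}\gamma^{n}(y)$ and $\gamma^{-}\gamma^{-n}(y)$ (and likewise $\eta^{+}\eta^{n}(y)$, $\eta^{-}\eta^{-n}(y)$) have both endpoints collapsing onto a single point, so for large $n$ their linking number with any chord avoiding that point is \emph{identically} zero; this kills twelve of the sixteen terms, using only that the four points $\gamma^{\pm},\eta^{\pm}$ are distinct. The four surviving terms involve only the ``axis'' chords $\gamma^{+}\gamma^{-n}(y)$, $\gamma^{-}\gamma^{n}(y)$, $\eta^{+}\eta^{-n}(y)$, $\eta^{-}\eta^{n}(y)$; their linking numbers tend to $\pm[\gamma^{+}\gamma^{-},\eta^{+}\eta^{-}]$, which is a sign $\epsilon$ if the axes of $\gamma$ and $\eta$ cross and $0$ otherwise, while the four cross ratio factors converge to cross ratios of the four distinct points $\gamma^{\pm},\eta^{\pm}$. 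Adding the four contributions yields $\epsilon$ times an explicit M\"obius invariant rational function of the cross ratio $[\gamma^{+},\gamma^{-},\eta^{+},\eta^{-}]$ evaluated through the limit curve; in particular the limit is independent of $y$, and for $n=2$ it specialises to the $\cos\theta$ of Wolpert's formula.

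\emph{The Goldman side.} If $\gamma$ and $\eta$ are disjoint, their axes do not cross, $\epsilon=0$, and $\{\ell_\gamma,\ell_\eta\}_S=0$ by Goldman's formula, so both sides vanish. If $\gamma$ and $\eta$ meet in a single point $p$, then — and this is where the hypothesis of at most one intersection is used, since it forces the axis of $\gamma$ to meet only the axis of $\eta$ and no other translate — Goldman's formula for the Atiyah--Bott--Goldman bracket of the period functions produces a single contribution localised at $p$. It remains to rewrite this contribution through the cross ratio data carried by $p$, that is by $\gamma^{\pm},\eta^{\pm}$. For this one uses the description of the period functions as limits of Wilson loops that already underlies the proof of Theorem \ref{theo:C}: $\ell_\gamma$ coincides, up to a fixed scalar, with $\lim_{k}\tfrac1k\log\bigl(\tr\rho(\gamma^{k})\,\tr\rho(\gamma^{-k})\bigr)$; Goldman's formula for the bracket of these trace functions at $p$ is computed from the resolutions of the intersection, and after passing to the limit one recovers exactly $\epsilon$ times the same rational function of the cross ratio.

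\emph{The main obstacle.} The swapping side is routine combinatorics of chords and linking numbers. The real difficulty is the Goldman side: one must prove that the bracket of two period functions at a single transverse intersection depends on the Hitchin representation \emph{only} through the first cross ratio of the four fixed points — a priori the higher cross ratios of the Hitchin boundary curve could enter — and identify the precise rational function so that it matches the swapping computation; this is the substance of the Wolpert-type formula itself. I expect it to be reached either by combining Goldman's trace formula with the limit-of-traces expression of $\ell_\gamma$ while controlling how the periods of the resolved curves degenerate as $k\to\infty$, or by running the argument of Theorem \ref{theo:C} along an auxiliary vanishing sequence that turns $\gamma$ and $\eta$ into simple, minimally intersecting curves, thereby identifying the already-computed swapping bracket with the genuine Goldman bracket. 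Secondary points to be recorded are the interchange of $\lim_{n\to\infty}$ with ${\rm I}_S$ and with the formal logarithm, the non-degeneracy of the limiting cross ratios (automatic since distinct primitive hyperbolic classes have four distinct fixed points), and the choice of normalisations making the two rational functions literally coincide.
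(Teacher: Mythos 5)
Your sketch follows the paper's own route. On the swapping side you reproduce the argument in the proof of Theorem \ref{theo:braleng}: expand $\{\hat\ell_{\gamma^n}(y),\hat\ell_{\eta^n}(y)\}_W$ by Leibniz into sixteen terms indexed by the four sign choices, observe that for large $n$ twelve of the linking numbers vanish because the corresponding pair of points collapses onto a single fixed point, and identify the four survivors as $\pm[\gamma^+\gamma^-,\eta^+\eta^-]$ times elementary cross fractions in the four fixed points. On the Goldman side you have correctly isolated the generalised Wolpert formula as the key lemma, and the first of your two candidate strategies (write $\ell_\gamma$ as $\lim_p \frac{1}{p}\log(\tr\rho(\gamma^p)\tr\rho(\gamma^{-p}))$, compute the Goldman bracket of the trace functions at the unique transverse intersection via the product formula, and take the $p\to\infty$ limit with the Wilson-loop asymptotics of Proposition \ref{Asym2}) is exactly what the paper does in proving Theorem \ref{theo:genWolp}. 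The hypothesis of at most one intersection point is used at precisely that step, to make the Goldman Lie bracket of $\gamma^{\epsilon p}$ and $\eta^{\epsilon' p}$ a single monomial $\epsilon\epsilon' p^2\,\ii(\gamma,\eta)\,\gamma^{\epsilon p}\eta^{\epsilon' p}$.

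Two caveats. First, the generalised Wolpert formula is the entire technical content of the theorem, and you record it as an obstacle with two candidate routes rather than carrying one out; since the first route is the right one this is incompleteness rather than a wrong approach, but the computation cannot simply be gestured at. Second, the phrase that the Goldman bracket at a single intersection depends on the Hitchin representation ``only through the first cross ratio of the four fixed points'' is too strong once $n>2$: the answer, $\ii(\gamma,\eta)\sum_{\epsilon,\epsilon'\in\{\pm1\}}\epsilon\epsilon'\,\tw(\gamma^\epsilon,\eta^{\epsilon'})$, is a signed sum of four a priori distinct elementary functions of order two evaluated in the rank-$n$ cross ratio, and they collapse to a single rational function of one projective cross ratio (Wolpert's $\cos\theta$) only for $n=2$. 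Consequently the match with the swapping side must be made term by term in the sign sum, which is how the paper concludes.
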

As a tool of the proof of this result we prove the following extension of the Wolpert formula \cite{Wolpert:1983td,Wolpert:1982eo}
\begin{theotheo}\label{theo:DW}{\sc [Generalised Wolpert Formula]}
Let $\gamma$ and $\eta$ be two homotopy classes of curves which as simple curves have exactly one intersection point. Then the Goldman bracket of the two length functions $\ell_\gamma$ and $\ell_\eta$ is 
\begin{equation}
\{\ell_\gamma,\ell_\eta\}_S({\bf b})=\ii(\gamma,\eta)\sum_{\epsilon,\epsilon^\prime\in\{-1,1\}}\epsilon\epsilon^\prime. {\bf b}\left(\gamma^\epsilon,\eta^{\epsilon'},\gamma^{-\epsilon},\eta^{-\epsilon'}\right).
\end{equation}
\end{theotheo}
This Formula has recently been extended using different methods par Bridgeman in \cite{Bridgeman:2015wn}.

\subsection{The multifraction algebra and $\sln$-opers} We finally relate the multifraction algebra to opers. We recall in Section \ref{sec:oper} the definition of real opers and their interpretation as maps to the projective space $\mathbb P(\mathbb R^n)$ and its dual. In particular, opers with trivial holonomy can be embedded in the space of smooth cross ratios. The Drinfel'd-Sokolov reduction allows us to define the Poisson bracket of pairs of {\em acceptable observables}, a subclass of functions on the spaces of opers. We then show that this Poisson bracket coincide with the swapping bracket 

\begin{theotheo}{\sc[Swapping Bracket and opers]}\label{theo:E}

Let $(X_0,x_0,Y_0,y_0,X_1,x_1,Y_1,y_1)$ be pairwise distinct points on the circle $\mathbb T$. Then the cross fractions $[X_0,x_0,Y_0,y_0]$ and $[X_1,x_1,Y_1,y_1]$ defines a pair of acceptable observables whose Poisson bracket with respect to the Drinfel'd-Sokolov reduction coincides with their Poisson bracket in the multifraction algebra.
\end{theotheo}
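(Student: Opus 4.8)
The plan is to realise a cross fraction as an honest function on the space of $\ms{SL}_n(\mathbb R)$-opers with trivial holonomy, to reduce the theorem --- the Poisson bracket being a biderivation and $\log[X,Y,x,y]=\log Xx+\log Yy-\log Yx-\log Xy$ --- to a single identity on the elementary pairings, and finally to prove that identity by varying the parallel transport of the oper connection. Recall that, the holonomy being trivial, such an oper is a connection $\nabla=d+A$ on the circle $\mathbb T$ acting on a fixed $n$-dimensional space $\mathcal V$, with $A$ in Drinfel'd--Sokolov gauge; in that gauge the oper flag is the constant standard flag, so, writing $\ell_0$ and $\ell_0^\ast$ for its line and its top hyperplane, the developing data reduce to the constant $\ell_0,\ell_0^\ast$ in each fibre, and
\[
Xx\;:=\;\langle\,\ell_0^\ast,\ T_\nabla(x,X)\,\ell_0\,\rangle
\]
is well defined up to nonzero scalars $\lambda(X)\mu(x)$ depending separately on $X$ and on $x$ --- the choices of representatives in the two lines, and of the arc from $X$ to $x$, the latter immaterial because going once around $\mathbb T$ is the identity. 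These separable factors cancel in every cross fraction, which is therefore a genuine function on the space of opers. (Equivalently one may use the differential operator $L$ underlying the oper, or its Miura factorisation, and read $Xx$ as the value at $X$ of the solution of $L\psi=0$ with an $(n-1)$-fold zero at $x$; for $n=2$ one recovers an ordinary cross ratio of four points of $\mathbb T$, and in general its $(n-1)$-st power.) The theorem then amounts to
\[
\{\log Uu,\ \log Vv\}_{\mathrm{DS}}\;=\;[Uu,Vv]\;\frac{Uv\cdot Vu}{Uu\cdot Vv}
\]
for pairwise distinct $U,u,V,v$, where $[Uu,Vv]$ is the linking number of Paragraph~\ref{sec:link}: the right-hand side is $\{\log Uu,\log Vv\}_\alpha$ at $\alpha=0$, and by the Proposition on the multi fraction algebra the $\alpha$-dependent terms cancel in the bracket of two cross fractions, so matching $\alpha=0$ is enough.

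I would then compute the functional derivative. Perturbing $\nabla$ to $\nabla+a$ changes $T_\nabla(x,X)$ by $\int_X^x T_\nabla(x,t)\,a(t)\,T_\nabla(t,X)\,dt$ along the arc; since $\ell_0,\ell_0^\ast$ carry no further $A$-dependence in the Drinfel'd--Sokolov gauge, this is the whole variation. Writing $p(t):=T_\nabla(t,X)\ell_0$ and $q(t):=\ell_0^\ast\circ T_\nabla(x,t)$ one gets, as a distribution valued in $\mk{sl}(\mathcal V)$ (after projecting off the identity, which below changes nothing because the relevant pairings are trace-orthogonal to it),
\[
\frac{\delta\log Xx}{\delta a(t)}\;=\;\frac{1}{Xx}\;p(t)\otimes q(t)\quad\text{for }t\in(X,x),\qquad 0\text{ elsewhere}.
\]
The crucial point is that $p$ is $\nabla$-parallel and $q$ is parallel for the dual connection, so on the open arc the rank-one endomorphism $p\otimes q$ is covariantly constant: $\nabla(p\otimes q)=(\nabla p)\otimes q+p\otimes(\nabla q)=0$.

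Feeding this into the Drinfel'd--Sokolov bracket --- which, before reduction, is the current-algebra bracket $\{F,G\}=\int_{\mathbb T}\langle\tfrac{\delta F}{\delta a},\,\nabla\tfrac{\delta G}{\delta a}\rangle\,dt$ on the affine space of connections, up to a universal constant, and descends to gauge-invariant functions, in particular to cross fractions --- covariant constancy forces $\nabla\big(\mathbf 1_{(V,v)}\,r\otimes s\big)$, where $r(t)=T_\nabla(t,V)\ell_0$ and $s(t)=\ell_0^\ast\circ T_\nabla(v,t)$, to collapse to the boundary distribution $(\delta_V-\delta_v)\,r\otimes s$ (the sign being that of the arc's orientation). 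Hence the $t$-integral reduces to the two endpoint terms at $V$ and at $v$; a cyclic-trace computation in which the parallel transports telescope shows that each of them equals $Uv\cdot Vu$, whence
\[
\{\log Uu,\log Vv\}_{\mathrm{DS}}\;=\;\frac{Uv\cdot Vu}{Uu\cdot Vv}\,\big(\mathbf 1_{(U,u)}(V)-\mathbf 1_{(U,u)}(v)\big),
\]
possibly up to the overall normalisation of the bracket. The combinatorial factor $\mathbf 1_{(U,u)}(V)-\mathbf 1_{(U,u)}(v)$ is nonzero exactly when precisely one of $V,v$ lies on the arc $(U,u)$ --- i.e.\ when the pairs separate each other on $\mathbb T$ --- with sign given by the cyclic orientation, which is exactly $[Uu,Vv]$. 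This gives the generator identity, hence the theorem.

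The computation itself is short; the real work is the framework around it. One must verify that cross fractions are \emph{acceptable} observables in the sense of the Drinfel'd--Sokolov reduction --- that $\log[X,Y,x,y]$ extends to a gauge-invariant function on a neighbourhood of the oper locus in the affine space of connections with trivial holonomy whose functional derivative lies in the subspace along which the reduction is carried out, so that the current-algebra formula above legitimately computes the reduced bracket --- and one must keep careful track of orientations and of the point-supported terms at $U$ and $u$ coming from the separable-scalar ambiguity of the individual $Uu$ (these cancel between the numerator and the denominator of a cross fraction, again because the bracket is a biderivation, but checking this is delicate). The case $n=2$, where the oper is a projective structure and the Drinfel'd--Sokolov bracket is the Virasoro--Gelfand--Fuchs bracket, provides a convenient check of signs and normalisation and ties the identity to the generalised Wolpert formula proved above. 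Once the reduction framework is in place, the variation-of-transport identity, the covariant constancy of the rank-one tensors, and the residue collapse to the linking number are all routine.
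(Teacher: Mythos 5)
Your proposal follows essentially the same route as the paper: define coordinate functions $Xx$ (the paper's $F_{X,x}$) via parallel transport of a distinguished line and hyperplane, compute their functional derivatives as rank-one endomorphism-valued distributions supported on the arc, feed them into the affine current-algebra bracket, and extract the linking number from the orientations of the arc endpoints. The paper's $\psi^{Y,y,y_0}$ in its Proposition on $\dd F_{Y,y}$ is exactly your characteristic function of the arc, its $\pp^{Y,y}=\sigma_Y^*\otimes\sigma_y$ is your $p\otimes q$, and its final collapse to the linking-number formula matches your endpoint-residue argument.

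Two remarks on details. First, your parenthetical claim that projecting onto $\mathfrak{sl}_n$ ``changes nothing because the relevant pairings are trace-orthogonal to it'' is not accurate: $\tr(\pp^{X,x})=F_{X,x}\ne 0$ generically, and the trace-free projection produces precisely the extra $-\tfrac{1}{n^2}F_{X,x}F_{Y,y}$ summand visible in the paper's Corollary on $\{F_{X,x},F_{Y,y}\}$. The reason this does not harm the theorem is the one you also invoke a few lines earlier: that summand is of the form $\alpha\,Xx\cdot Yy$ and hence drops out of the bracket of cross fractions by the $\alpha$-independence of the multi fraction algebra. So the conclusion stands, but the stated justification for discarding the identity component is wrong, and the correct justification is the $\alpha$-independence you already had. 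Second, you flag but do not carry out the verification that $\log[X,Y,x,y]$ and its ilk are acceptable observables; the paper handles this by regularising the $F_{X,x}$ against compactly supported smooth measures $F_{\mu,\nu}$, computing the bracket $\{F_{\mu,\nu},F_{\bar\mu,\bar\nu}\}$ exactly, and passing to a limit along regularising sequences, which at the same time justifies the distributional formula you write down directly. Filling that step in as the paper does is the one piece of your sketch that is genuinely missing rather than merely compressed.
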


\tableofcontents
\section{The swapping bracket}\label{sec:swap}
In this section, we first recall the properties and definition of the linking number of two ordered pairs of points. We then construct the swapping algebra and prove Theorem \ref{theo:A} which relies on an identity involving the linking numbers of six points.
\subsection{Linking number for pairs of points}\label{sec:link}
We recall that if $(X,x,Y,y)$ is a quadruple of points on the real line the {\em linking number} of $(X,x)$ and $(Y,y)$ is
\begin{eqnarray}
[Xx,Yy]&\defeq &\frac{1}{2}\big(\sig(X-x)\sig(X-y)\sig(y-x)\cr & & -\sig(X-x)\sig(X-Y)\sig(Y-x)\big),\label{In0}
\end{eqnarray}
where $\sig(x)=-1,0,1$ whenever $x<0$, $x=0$ and $x>0$ respectively. By definition, the linking number is invariant by orientation preserving homeomorphisms of the real line.
\begin{enumerate}
\item
When the four points are pairwise distinct, this linking number is also the total linking number of the curve joining $X$ to $x$ with the curve joining $Y$ to $y$ in the upper half plane.

\item The equality cases are as follows:
\begin{enumerate}
\item
For all points $(X,Y,y)$ on the circle
\begin{equation}
[XX,Yy]=0=[Xy,Xy]\, . \label{XX}
\end{equation}
\item If, up to cyclic permutation, $(X,Y,x)$ are pairwise distinct  points and oriented, then 
\begin{equation}[Xx,Yx]=1/2\, .\label{1/2}
\end{equation}
\end{enumerate}
\end{enumerate} 
The first observation shows that we can define the linking number of a quadruple of points on the oriented circle $S^1$ by choosing a point $x_0$ disjoint from the quadruple and defining the linking number as the linking number of the quadruple in $S^1\setminus\{x_0\}\sim\mathbb R$. The linking number so defined does not depend on the choice of $x_0$ and is invariant under orientation preserving homeomorphisms.

\subsubsection{Properties of the linking number} We abstract the useful property (for us) of the linking number of pairs of points in the following definition.
Let ${\PP}$ be any set.

\begin{definition} A {\em linking number} on pair of points of ${\PP}$ is a map from ${\PP}^4$ to a commutative ring
$$
(X,x,Y,y)\to [Xx,Yy],
$$
so that for all points $X,x,Y,y,Z,z$ 
\begin{align}
[Xx,Yy]+[Yy,Xx]&=0\label{In1}\ ,&\hbox{ \sc First antisymmetry,}&\\
\lbrack Xx,Yy]+[Xx,yY] &=0\label{In2}\ ,&\hbox{ \sc Second antisymmetry,}&\\
\lbrack zy,XY]+[zy,YZ]+[zy,ZX]&=0\label{In2b}\ ,&\hbox{ \sc Cocycle identity,}&
\end{align}
and moreover if $(X,x,Y,y)$ are all pairwise distinct then
\begin{align}
\ \ \ \ \ \ \ \ \ \ \ \ [Xx,Yy].[Xy,Yx]&=0\ , \label{prodIn}&\hbox{\sc Linking number alternative.}&
\end{align}
\end{definition}
We illustrate the cocycle identity and the alternative for the standard linking number in Figure \eqref{fig:1}

\begin{figure}[h]
 \centering
 \subfloat[Cocycle identity]{\label{fig:1a}\includegraphics[width=0.5\textwidth]{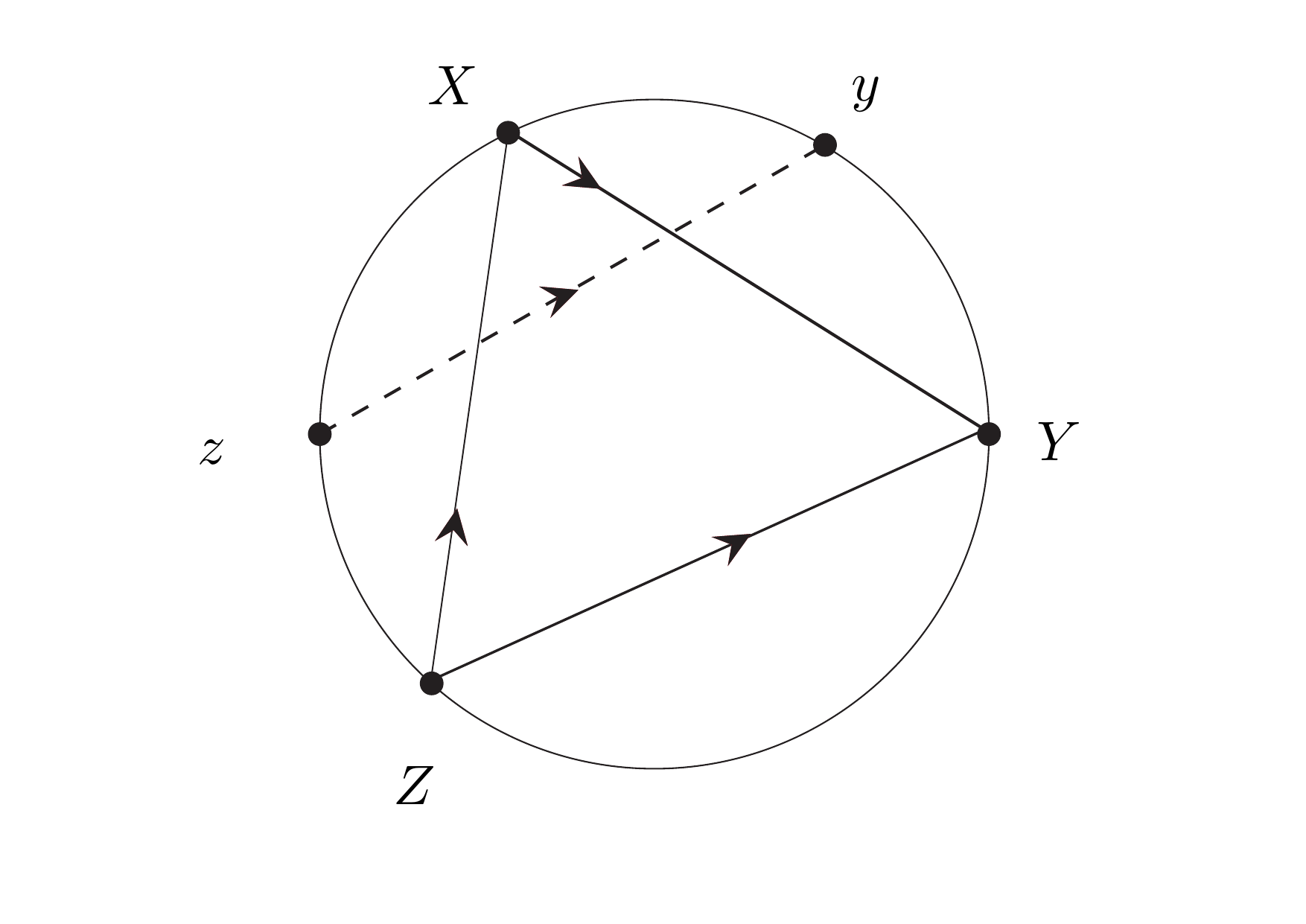} }        
 \subfloat[ Linking number alternative]{\label{fig:1b}\includegraphics[width=0.5\textwidth]{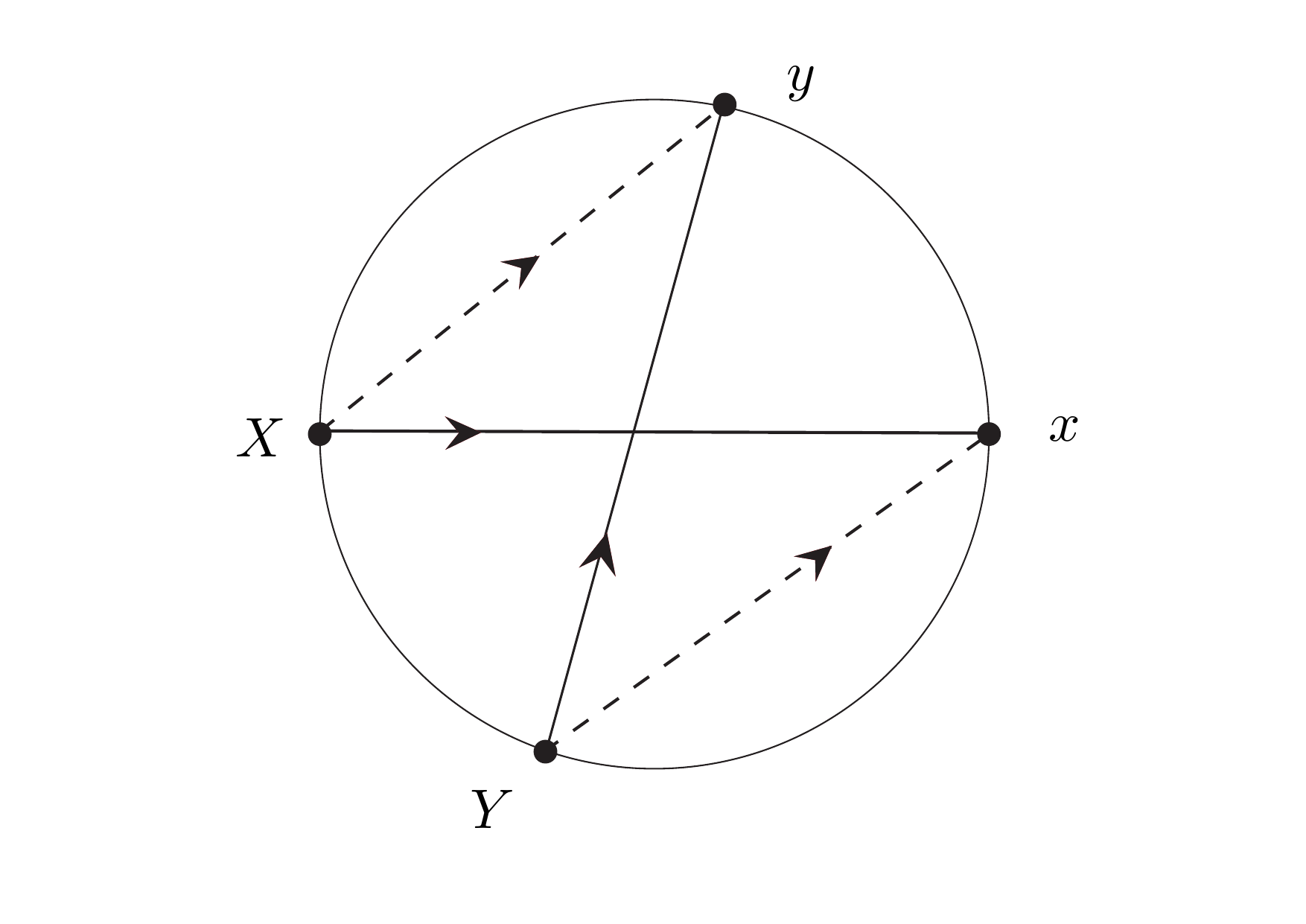}}
 \caption{linking number for pairs of points on the circle}
 \label{fig:1}
\end{figure}
Then we prove
\begin{proposition}
The canonical linking number for pairs of points of the circle is a linking number in the sense of the previous definition. 
\end{proposition}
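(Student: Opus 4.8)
The plan is to reduce the statement to the explicit formula \eqref{In0} on the real line and, from there, to a single algebraic identity. Each of the relations \eqref{In1}--\eqref{prodIn} involves at most five points, so we may cut the circle at an auxiliary point disjoint from them; since the value of the linking number does not depend on the cut point, it suffices to verify \eqref{In1}--\eqref{prodIn} for the function $[Xx,Yy]$ given by \eqref{In0} on $\mathbb R$. Set $\omega(a,b,c):=\sig(a-b)\sig(b-c)\sig(c-a)$. Using $\sig(x-y)=-\sig(y-x)$, a direct expansion of \eqref{In0} gives
$$
[Xx,Yy]=\tfrac12\bigl(\omega(X,Y,x)-\omega(X,y,x)\bigr),
$$
and the same formula holds for any ordered quadruple of points. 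The function $\omega$ is totally antisymmetric, it equals $\pm1$ on triples of pairwise distinct points --- it is the sign of the permutation sorting the triple into increasing order --- and it vanishes as soon as two of its arguments coincide; all of this is immediate. Geometrically, for distinct points the relations \eqref{In1}, \eqref{In2} and \eqref{prodIn} simply record that reversing or exchanging the two oriented arcs changes the sign of each crossing, and that two pairs and their two complementary pairs cannot both interleave on the circle; but the algebraic argument below also covers the degenerate configurations at no extra cost.

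Given this rewriting, the second antisymmetry \eqref{In2} and the cocycle identity \eqref{In2b} are purely formal. Exchanging $Y$ and $y$ swaps the two $\omega$-terms, hence changes the sign of $[Xx,Yy]$, which is \eqref{In2}. For \eqref{In2b}, writing $[zy,XY]=\tfrac12(\omega(z,X,y)-\omega(z,Y,y))$ and the two analogous expressions obtained by cycling $X,Y,Z$, the three terms telescope to $0$. The first antisymmetry \eqref{In1} is the one relation not visible on \eqref{In0}. I would compute
$$
[Xx,Yy]+[Yy,Xx]=\tfrac12\bigl(\omega(X,Y,x)-\omega(X,y,x)+\omega(Y,X,y)-\omega(Y,x,y)\bigr),
$$
then use $\omega(X,y,x)=-\omega(X,x,y)$ and $\omega(Y,X,y)=-\omega(X,Y,y)$ to recognise the right-hand side as $\tfrac12\,E(X,Y,x,y)$, where
$$
E(a,b,c,d):=\omega(a,b,c)-\omega(a,b,d)+\omega(a,c,d)-\omega(b,c,d).
$$
Thus \eqref{In1} reduces to the identity $E\equiv0$. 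For this I would check that $E$ is skew under each of the three adjacent transpositions of its arguments --- three short computations --- hence alternating; since $E=0$ when $a<b<c<d$ (there all four values of $\omega$ are $+1$) and, being alternating, $E$ also vanishes whenever two arguments coincide, it vanishes identically.

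Finally, for the linking number alternative \eqref{prodIn} we assume $X,x,Y,y$ pairwise distinct, so every $\omega$-value below lies in $\{-1,1\}$. Put $u=\omega(X,Y,x)$, $v=\omega(X,x,y)$, $w=\omega(X,Y,y)$, $s=\omega(Y,x,y)$. The rewriting gives $[Xx,Yy]=\tfrac12(u+v)$ and $[Xy,Yx]=\tfrac12(w-v)$, so $[Xx,Yy]\neq0$ forces $u=v$ and $[Xy,Yx]\neq0$ forces $w=-v$; but then $E(X,Y,x,y)=u-w+v-s=3v-s$, which is never $0$, contradicting $E\equiv0$. Hence at least one of the two factors vanishes. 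The only genuinely non-formal step in the whole argument is the vanishing of $E$ --- equivalently, that $\omega$ is a cocycle --- and this is exactly what drives both the first antisymmetry and the alternative; everything else is bookkeeping, and the degenerate configurations need no separate treatment since the computations above hold for all inputs and $\omega$ vanishes on degenerate triples. As a sanity check, the equality cases \eqref{XX} and \eqref{1/2} also drop out of the rewriting at once.
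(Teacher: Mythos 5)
Your proof is correct, and it takes a genuinely different route from the paper's. The paper verifies the four relations one at a time: the antisymmetries ``from the definition,'' the cocycle identity geometrically when $\{x,y\}\cap\{X,Y,Z\}=\emptyset$ and then by a three-way case analysis on the degenerate configurations (including an averaging trick with nearby points), and the alternative again ``from the geometric definition.'' You instead do all the work up front by rewriting the defining formula \eqref{In0} as $[Xx,Yy]=\tfrac12\bigl(\omega(X,Y,x)-\omega(X,y,x)\bigr)$ with the totally antisymmetric cyclic-orientation cocycle $\omega$, after which \eqref{In2} and \eqref{In2b} become formal (sign swap, telescoping) and both \eqref{In1} and \eqref{prodIn} reduce to the single identity $E\equiv0$, which you establish by checking alternation and one ordered configuration. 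This buys you a uniform treatment of the degenerate cases---they come for free, since $\omega$ vanishes when two arguments coincide and everything else is an algebraic identity---at the cost of making the geometry (crossings in the upper half-plane, the picture in Figure~\ref{fig:1}) invisible. It is worth noting that your ``master identity'' $E\equiv0$ is precisely the cocycle relation for the orientation form $\omega$, so the two proofs are not unrelated: you have simply pushed the cocycle property down to the level of $\omega$ rather than invoking it at the level of $[\cdot,\cdot]$, and the reduction of \eqref{prodIn} to $E\equiv0$ via $E(X,Y,x,y)=3v-s\neq0$ is a nice touch the paper does not have. One minor point of presentation: your reduction from the circle to $\mathbb R$ by cutting at an auxiliary point uses the well-definedness statement from Paragraph~\ref{sec:link}; it would be worth saying explicitly that this is the fact being invoked.
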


\begin{proof}
The first two symmetries are checked from the definition. When $\{x,y\}\cap \{X,Y,Z\}=\emptyset$, then Equation \eqref{In2b} follows from the geometric definition of the linking number. It remains to check different cases of equality. We can assume that $(X,Y,Z)$ are pairwise distinct: otherwise the equality follows from the two previous ones and Assertion \eqref{XX}.
\begin{itemize}
\item if $x=y$, the equation is true by Assertion \eqref{XX}.
\item Assume that up to cyclic permutations of $(X,Y,Z)$ we have $x=X$ and $y\not\in \{X,Y,Z\}$, then the equality follows from the following remark. Let $z,t$ be points close enough to $x$ so that $(z,x,t)$ is oriented then when $A=Y$ or $A=Z$, we have
$$
[xy,xA]=\frac{1}{2}\left([zy,xA]+[ty,xA]\right).
$$
\item Assume finally that $(x,y)=(X,Y)$, the the equality reduces to
$$
[XY,ZX]+[XY,YZ]=0,
$$
which is true, by Equation \eqref{1/2} and the fact that $(X,Y,Z)$ has the opposite orientation of $(Y,X,Z)$.
\end{itemize}
Equation \eqref{prodIn} follows from the geometric definition of linking number.
\end{proof}
A linking number satisfies more complicated relations. Namely
\begin{proposition}\label{prelimF}
Let $(X,x,Z,z,Y,y)$ be 6 points on the set ${\PP}$ equipped with a linking number $[ \cdotp, \cdotp]$, then
\begin{align}
[Xy,Zz]+[Yx,Zz]=[Xx,Zz]+[Yy,Zz].\label{In5}
\end{align}
Moreover, if $$
\{X,x\}\cap\{Y,y\}\cap\{Z,z\}=\emptyset,
$$
then
\begin{align}
[Xx,Yy][Xy,Zz]+[Zz,Xx][Zx,Yy]+[Yy,Zz][Yz,Xx]&=0\ ,\label{In3}\\ 
\lbrack Xx,Yy][Yx,Zz]+[Zz,Xx][Xz,Yy]+[Yy,Zz][Zy,Xx]&=0\label{In4}\ .
\end{align}
\end{proposition}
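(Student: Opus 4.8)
The plan is to establish the three identities in order, deducing each from its predecessors and calling on the linking number alternative \eqref{prodIn} only for the last. The identity \eqref{In5} is purely formal: apply the cocycle identity \eqref{In2b} with first pair $Zz$ to the triples $(X,x,y)$ and $(Y,y,x)$ in the second slot and add the two relations. By the second antisymmetry \eqref{In2} the terms $[Zz,xy]+[Zz,yx]$ cancel, and since $[Zz,yX]=-[Zz,Xy]$ and $[Zz,xY]=-[Zz,Yx]$, what remains is $[Zz,Xy]+[Zz,Yx]=[Zz,Xx]+[Zz,Yy]$; applying \eqref{In1} to each bracket turns this into \eqref{In5}, with no hypothesis on the points. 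The same device, applied to \eqref{In2b} with a point repeated in the second slot and using \eqref{In1} and \eqref{In2}, also gives the vanishing $[UU,Vv]=0$ for all $U,V,v$, which will be needed below.

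Next, \eqref{In3} and \eqref{In4} are not independent: the sum of their left-hand sides vanishes already by \eqref{In5} and the antisymmetries. Indeed, the $k$-th term of \eqref{In3} and the $k$-th term of \eqref{In4} share the same left factor, so pairing them and applying \eqref{In5} to the sum of their right factors produces, with $a=[Xx,Yy]$, $b=[Yy,Zz]$, $c=[Zz,Xx]$ and after \eqref{In1}, the quantity $a(b-c)+c(a-b)+b(c-a)=0$. Hence the left-hand side of \eqref{In4} is the opposite of that of \eqref{In3}, and it suffices to prove \eqref{In3}.

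For \eqref{In3} the alternative \eqref{prodIn} is genuinely necessary, as the cocycle identity and the antisymmetries alone do not imply it. I would first dispose of the configurations in which some of the six arguments coincide, using $[UU,Vv]=0$ together with \eqref{In1}, \eqref{In2}, \eqref{In5} and \eqref{prodIn} to reduce each such case either to a relation already proved or to a manifest cancellation; the hypothesis $\{X,x\}\cap\{Y,y\}\cap\{Z,z\}=\emptyset$ is essential here, since \eqref{In3} actually fails for the excluded configurations (for instance when $X=Y=Z$), and it is precisely what keeps this reduction from stalling. One is then reduced to six pairwise distinct points, and here \eqref{prodIn} is applied to the quadruples $(X,x,Y,y)$, $(Y,y,Z,z)$, $(Z,z,X,x)$ and to the three ``mixed'' quadruples $(X,y,Z,z)$, $(Z,x,Y,y)$, $(Y,z,X,x)$: in every configuration this forces enough of the six brackets occurring in \eqref{In3} to vanish that its three products are either all zero or reduce to a positive and a negative term cancelling each other. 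I expect this last point to be the main obstacle: everything preceding it is formal manipulation of the axioms, whereas here one must run a finite but somewhat opaque case analysis combining \eqref{prodIn} with the cocycle identity. A convenient way to organise it is to pass to the canonical model, where $[UV,Ww]=\pm1$ exactly when the chords $UV$ and $Ww$ cross in the disk and $0$ otherwise, so that \eqref{In3} becomes a statement about the cyclic orders of six points on a circle; the axiomatic argument then follows the same bookkeeping.
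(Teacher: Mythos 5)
Your treatment of \eqref{In5} is correct, your derivation of $[UU,Vv]=0$ is correct, and the reduction from \eqref{In4} to \eqref{In3} via $F+G=0$ is sound and is essentially the paper's STEP 1. You also correctly identify that the alternative \eqref{prodIn} is the genuinely new ingredient for \eqref{In3}. But the proof of \eqref{In3} itself is not there, and the strategy you sketch for it does not obviously close. The alternative applied to the quadruples you list produces relations such as $[Xx,Yy][Xy,Yx]=0$, $[Xy,Zz][Xz,Zy]=0$, etc., whose second factors $[Xy,Yx]$, $[Xz,Zy]$, $[Zy,Yx]$, $[Yx,Xz]$, $[Yz,Zy]$, $[Zx,Xz]$ do not occur in $F$; so none of these applications, taken alone, tells you anything about a term of $F$, and you would have to push them through \eqref{In5} and the cocycle identity in a way you have not spelled out. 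Moreover a quick circle check (take the six points in cyclic order $X,Y,Z,x,y,z$) shows that two of the three products in $F$ can both be nonzero and cancel, so the claim ``three products are either all zero or reduce to a positive and a negative term'' is not merely a matter of forcing vanishings via the alternative; you also need to control signs of two surviving terms, and you give no mechanism for that. Finally, retreating to the canonical model $[UV,Ww]=\pm 1$ is not a proof of the abstract statement: the proposition is asserted for an arbitrary linking number with values in an arbitrary commutative ring.

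The step you are missing is the one the paper makes the pivot of its argument: $F$ itself satisfies a cocycle identity in the first pair,
$F(X,x,Y,y,Z,z)+F(x,t,Y,y,Z,z)=F(X,t,Y,y,Z,z)$,
which follows from \eqref{In2b} and \eqref{In5} by a direct manipulation (the paper's STEP 2). Combined with the symmetry of $F$ under permutation of its three pairs (your own observation $F+G=0$ together with cyclic invariance), this cocycle lets one specialize $t$ so as to create a repeated point, reducing the general case to the degenerate ones $F(X,x,Y,y,Z,x)$ and then $F(X,x,Y,y,Y,x)$; only at that last stage does the alternative enter, in the single clean instance $[Xx,Yy][Xy,Yx]=0$, after which everything else cancels by \eqref{In2b}. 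Without this cocycle for $F$, the case analysis you propose has no structure to terminate on, and the proof is incomplete.
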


\rmks\begin{enumerate}
\item We remark that the hypothesis on the configuration of points is necessary: if $X,x,Y,Z$ are pairwise distinct, then for $(X,x,Y,y,Z,z)=(X,x,Y,x,Z,x)$ the left hand side in Equation \eqref{In3} is non-zero in the case of the standard linking number of pairs of points on the circle.
\item A simple way to prove this proposition is to use a mathematical computing software, we give below a mathematical proof. 

\end{enumerate}

\begin{proof} 
Formula \eqref{In5} follows at once from the Cocycle Identity \eqref{In2b}.
We now prove Formulas \eqref{In3} and \eqref{In4}. Let us define 
\begin{align*}
F(X,x,Y,y,Z,z)&\defeq [Xx,Yy][Xy,Zz]+[Zz,Xx][Zx,Yy]+[Yy,Zz][Yz,Xx]\ ,&\\ 
G(X,x,Y,y,Z,z)&\defeq \lbrack Xx,Yy][Yx,Zz]+[Zz,Xx][Xz,Yy]+[Yy,Zz][Zy,Xx]\ .&
\end{align*}
We first prove some symmetries of $F$ and $G$.

Our first observation is that, using the Antisymmetry \eqref{In1}, we get that
\begin{align}
F(X,x,Y,y,Z,z)=-G(Y,y,X,x,Z,z).\label{F=G}
\end{align}
Thus we only need to prove that $F=0$.
\vskip 0.2 truecm
\noindent{{\sc STEP 1: }{\em The expression $F$ is invariant under all permutations of the pairs $(X,x)$, $(Y,y)$ and $(Z,z)$}}
\vskip 0.2 truecm

Using Equations \eqref{In5} and \eqref{In1}, we obtain that
\begin{align*}
F(X,x,Y,y,Z,z)\ +&\ G(X,x,Y,y,Z,z)&\cr=& \ [Xx,Yy][Yy,Zz]+[Xx,Yy][Xx,Zz]&\cr
&+[Zz,Xx][Zz,Yy]+[Zz,Xx][Xx,Yy]&\cr
&+[Yy,Zz][Yy,Xx]+[Yy,Zz][Zz,Xx]&\cr
&=0\ .&
\end{align*}
Hence, by Equation \eqref{F=G}.
\begin{align}
F(X,x,Y,y,Z,z)=F(Y,y,X,x,Z,z)
\end{align}
By construction $F$ is invariant by cyclic permutations and thus from the previous equation $F$ is invariant by all permutations of the pairs $(X,x)$,$(Y,y)$ and $(Z,z)$.

\vskip 0.2 truecm
\noindent
{\sc STEP 2: }{\em The expression $F$ satisfies a cocycle equation
\begin{align}
F(X,x,Y,y,Z,z) +F(x,t,Y,y,Z,z)= F(X,t,Y,y,Z,z).\label{cocF}
\end{align}
We also have the symmetries
\begin{align}
F(X,x,Y,y,Z,z)&=-F(x,X,Y,y,Z,z)\cr
&=-F(X,x,y,Y,Z,z)\cr&=-F(X,x,Y,y,z,Z).\label{Antisym}
\end{align}}
\vskip 0.2 truecm
The Symmetries \eqref{Antisym} follow at once from the Cocycle \eqref{cocF} and the fact that $F(X,X,Y,y,Z,z)=0$.

Let us prove a cocycle equation for $F$. We shall only use the cocycle Equation \eqref{In2b} and the previous symmetries for the linking number. By definition,
\begin{align*}
F(X,x,Y,y,Z,z)\ +&\ F(x,t,Y,y,Z,z)&\cr
=&\ \ [Xx,Yy][Xy,Zz]+[xt,Yy][xy,Zz]&\cr
&+ [Zz,Xx][Zx,Yy]+[Zz,xt][Zt,Yy]&\cr
&+ [Yy,Zz][Yz,Xx]+[Yy,Zz][Yz,xt].&
\end{align*}
Using the cocycle Equation \eqref{In2b} to expand the first and regrouping the fifth and sixth terms of the right hand side, we get 
\begin{align*}
F(X,x,Y,y,Z,z)\ +&\ F(x,t,Y,y,Z,z)&\cr
=&\ \ [Xt,Yy][Xy,Zz]+[tx,Yy][Xy,Zz]+[xt,Yy][xy,Zz] &\cr
&+[Zz,Xx][Zx,Yy]+[Zz,xt][Zt,Yy]&\cr
&+[Yy,Zz][Yz,Xt].&
\end{align*}
Using the cocycle Equation \eqref{In2b} for regrouping the second and third term of the right hand side and rearranging, we get 
\begin{align*}
F(X,x,Y,y,Z,z)\ +&\ F(x,t,Y,y,Z,z)&\cr
=&\ \ [Xt,Yy][Xy,Zz] &\cr
&+[Zz,Xx][Zx,Yy]+[Zz,Xx][xt,Yy]+[Zz,xt][Zt,Yy]&\cr
&+[Yy,Zz][Yz,Xt]&\cr
=&\ F(X,t,Y,y,Z,z).&
\end{align*}
Using the cocycle Equation \eqref{In2b} to regroup the second and third term, then the fourth, of the right hand side, we finally get
\begin{align*}
F(X,x,Y,y,Z,z)\ +&\ F(x,t,Y,y,Z,z)&\cr
=&\ \ [Xt,Yy][Xy,Zz] &\cr
&+[Zz,Xt][Zt,Yy]&\cr
&+[Yy,Zz][Yz,Xt]&\cr
=&\ F(X,t,Y,y,Z,z).&
\end{align*}

\vskip 0.2 truecm
\noindent{\sc STEP 3: }{\em If $(X,x,Y,y)$ are pairwise distinct, then
\begin{align}
F(X,x,Y,y,Y,x)&=0\, ,&\label{F0-1}\\
F(X,x,X,x,Y,y)&=0\, .&\label{F0-2}
\end{align}}
\vskip 0.2 truecm
Let us prove first Equation \eqref{F0-1}. It follows from the Alternative \eqref{prodIn} and the cocycle Equation \eqref{In2b} that
\begin{align*}
F(X,x,Y,&y,Y,x)&\cr
&=[Xx,Yy][Xy,Yx]+([Yx,Xx][Yx,Yy]+[Yy,Yx][Yx,Xx])&\cr
&=0\ .&
\end{align*}
This proves Formula \eqref{F0-1}. Similarly, using the cocycle Formula \eqref{cocF} for $F$ for the first equality, symmetries for the second and our previous Formula \eqref{F0-1} (for $(X,x,y,Y)$ ) for the last, we get 
\begin{align*}
F(X,x,X,&x,Y,y)&\cr
&=F(X,x,X,y,Y,y)+F(X,x,y,x,Y,y)&\cr
&=F(X,x,y,x,y,Y)=F(X,x,y,Y,y,x)\cr
&=0.
\end{align*}
\vskip 0.2 truecm
\noindent{\sc STEP 4: }{\em If $(X,x,Y,y,Z)$ are pairwise distinct, then
\begin{align}
F(X,x,Y,&y,Z,x)=0\, .&\label{F0-3}
\end{align}}
\vskip 0.2 truecm
Using the cocycle  Formula \eqref{cocF} for $F$ and the previous step, we get
\begin{align*}
F(X,x,Y,&y,Z,x)&\cr
&=F(X,x,Y,Z,Z,x)+F(X,x,Z,y,Z,x)&\cr
&=-F(X,x,Z,Y,Z,x)=0\ .&
\end{align*}
\noindent{\sc FINAL STEP: }{\em If $(X,x,Y,y,Z,z)$ are pairwise distinct, then
\begin{align}
F(X,x,Y,&y,Z,z)=0\, .&\label{F0-4}
\end{align}}
Indeed, using the cocycle Formula \eqref{cocF} for $F$ for the first equality, symmetries for the second, and the previous step for the last equality, we get
\begin{align*}
F(X,x,Y,&y,Z,z)&\cr
&=F(X,x,Y,y,Z,Y)+F(X,x,Y,y,Y,z)&\cr
&=F(y,Y,x,X,Z,Y)-F(y,Y,x,X,y,z,Y)&\cr
&=0\ .&
\end{align*} 

This concludes the proof.
\end{proof}
\subsection{The swapping algebra}
Let ${\PP}$ be a set and $[ \cdotp, \cdotp]$ be a linking number with values in an integral domain $A$. We represent a pair $(X,x)$ of points of ${\PP}$ by the expression $Xx$.
We consider the free associative commutative algebra $\cL({\PP})$ generated over $A$ by pair of points on ${\PP}$, together with the relation $XX=0$ for all $X\in\mathsf P$.

Let $\alpha$ be any element in $A$. We define the {\em swapping bracket} of two pairs of points as the following element of $\cL({\PP})$
\begin{align}
\{Xx,Yy\}_{\alpha}\defeq [Xx,Yy](\alpha. Xx.Yy +Xy.Yx).
\end{align}

We extend the swapping bracket to the whole algebra $\cL({\PP})$ using the Leibniz Rule and call the resulting algebra $\cL_\alpha({\PP})$ the {\em swapping algebra}.

\begin{theorem}
The swapping bracket satisfies the Jacobi identity. Hence, 
the swapping algebra $\cL_\alpha({\PP})$ is a Poisson algebra. 
\end{theorem}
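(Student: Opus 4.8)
The plan is to establish the structural properties directly, reduce the Jacobi identity to a finite check on generators, and then carry out that check by an explicit expansion governed by the linking-number identities of Proposition \ref{prelimF}.

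First the easy points. On generators, $\{Xx,Yy\}_\alpha=[Xx,Yy]\bigl(\alpha\,Xx\,Yy+Xy\,Yx\bigr)$ is antisymmetric: by \eqref{In1} the scalar factor changes sign under exchange of $(X,x)$ and $(Y,y)$, while the quadratic factor is invariant since $A$ is commutative. Because $\{,\}_\alpha$ is extended to $\cL(\PP)$ by the Leibniz rule in each slot, antisymmetry propagates to all elements by induction on the length of monomials. Moreover $\{,\}_\alpha$ is well defined on $\cL(\PP)$, i.e.\ it descends through the relations $XX=0$: one has $\{XX,Yy\}_\alpha=[XX,Yy]\bigl(\cdots\bigr)=0$ by \eqref{XX}, so the ideal generated by the elements $XX$ is a Poisson ideal. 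Hence it only remains to verify the Jacobi identity, after which $\cL_\alpha(\PP)$ is a Poisson algebra.

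Write $J(f,g,h):=\{f,\{g,h\}_\alpha\}_\alpha+\{g,\{h,f\}_\alpha\}_\alpha+\{h,\{f,g\}_\alpha\}_\alpha$. For an antisymmetric bracket obeying the Leibniz rule, $J$ is a derivation in each of its three arguments (a standard computation), so $J\equiv 0$ on $\cL_\alpha(\PP)$ as soon as $J(Xx,Yy,Zz)=0$ for every triple of generators. To see this, expand using the Leibniz rule twice: $\{Xx,\{Yy,Zz\}_\alpha\}_\alpha$ equals $[Yy,Zz]$ times the bracket of the single generator $Xx$ against the two degree-two monomials $Yy\,Zz$ and $Yz\,Zy$, and a further Leibniz step turns $J(Xx,Yy,Zz)$ into a sum of terms (product of two linking numbers)$\times$(product of three generator symbols). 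Sorting by powers of $\alpha$, $J=\alpha^2J_2+\alpha J_1+J_0$, and in the generic case---$X,x,Y,y,Z,z$ pairwise distinct---the three pieces are carried by disjoint families of degree-three monomials. The piece $J_2$ sits on the single symmetric monomial $Xx\,Yy\,Zz$ with scalar coefficient the cyclic sum of $[Yy,Zz]\bigl([Xx,Yy]+[Xx,Zz]\bigr)$, which vanishes using only \eqref{In1} and commutativity. In $J_1$ the relevant monomials form one cyclic orbit and, after using \eqref{In1}, the coefficient of each is $[Xx,Yy]$ times $[Yy,Zz]+[Xx,Zz]-[Xy,Zz]-[Yx,Zz]$, which vanishes by \eqref{In5}. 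In $J_0$ the monomials are $Xz\,Yx\,Zy$ and $Xy\,Yz\,Zx$; after using \eqref{In1} their coefficients are, up to sign, exactly the left-hand sides of \eqref{In3} and of \eqref{In4}, so they vanish. (If one prefers, this identity on generators can also be confirmed with a computer algebra system, as suggested for Proposition \ref{prelimF}.)

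The main obstacle is this last step in the non-generic configurations, since \eqref{In3} and \eqref{In4} genuinely require that the sets $\{X,x\},\{Y,y\},\{Z,z\}$ have empty common intersection. These cases must be disposed of directly. If $X=x$, then $[Xx,\cdot]=[XX,\cdot]=0$ by \eqref{XX}, hence $\{Xx,\cdot\}_\alpha=0$ and $J(Xx,Yy,Zz)=0$ trivially; likewise if $Y=y$ or $Z=z$, so we may assume each pair consists of two distinct points. In the remaining bad configurations some point $p$ lies in all three pairs, and then each of the three brackets $\{Xx,Yy\}_\alpha$, $\{Yy,Zz\}_\alpha$, $\{Zz,Xx\}_\alpha$ is \emph{log-canonical}, i.e.\ a scalar multiple of the product of the two generators involved: indeed, if two pairs both contain $p$ then, checking the four possibilities (the two first points agree, the two second points agree, $Xy=pp$, or $Yx=pp$), one finds in each case that $Xy\,Yx$ equals either $Xx\,Yy$ or zero. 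Since the bracket of two generators is then a single monomial, every further Leibniz expansion stays among the three pairwise brackets, and the Jacobi identity follows from the elementary fact that an antisymmetric log-canonical bracket is automatically Poisson (the cyclic sum of $\lambda_{23}(\lambda_{12}+\lambda_{13})$ vanishes by antisymmetry of $\lambda$ and commutativity). Combining $J_2=J_1=J_0=0$ in the generic case with these degenerate cases yields the Jacobi identity, completing the proof that $\cL_\alpha(\PP)$ is a Poisson algebra.
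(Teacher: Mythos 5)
Your proof is correct and follows the same overall strategy as the paper: expand the Jacobiator on three generators, sort by powers of $\alpha$, and show each homogeneous piece vanishes, using antisymmetry for the $\alpha^2$ term, \eqref{In5} for the $\alpha$ term, and Proposition \ref{prelimF} for the constant term. The one place where you genuinely diverge is in handling the constant piece when the triple intersection $\{X,x\}\cap\{Y,y\}\cap\{Z,z\}$ is nonempty, which is exactly where \eqref{In3}--\eqref{In4} fail: the paper argues at the monomial level (showing $Xz.Yx.Zy - Xy.Yz.Zx$ is either zero term-by-term because $aa=0$, or cancels), whereas you note that once a point lies in all three pairs, each pairwise bracket becomes log-canonical, i.e.\ a scalar multiple of the product of the two generators, and then invoke the general fact that an antisymmetric log-canonical bracket automatically satisfies Jacobi. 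Both work; yours is arguably cleaner because it avoids a case split on which coincidence occurs. You also add two routine checks the paper leaves implicit — that the ideal generated by the elements $XX$ is a Poisson ideal (so the bracket descends to $\cL(\PP)$), and that the Jacobiator of a bi-derivation is a tri-derivation (so it suffices to verify Jacobi on generators). One small imprecision: you call ``pairwise distinct'' the generic case, but the coefficient computation for $J_0$ really only needs the weaker hypothesis of Proposition \ref{prelimF} (empty triple intersection), which is exactly the dichotomy your degenerate-case argument uses; the two descriptions are compatible, so this is cosmetic rather than a gap.
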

\begin{proof}
All we need to check is the Jacobi identity 
$$
\{\{Xx,Yy\}_\alpha,Zz\}_\alpha+\{\{Yy,Zz\}_\alpha,Xx\}_\alpha+\{\{Zz,Xx\}_\alpha,Yy\}_\alpha=0,
$$
for the generators of the algebra. 

We make preliminary computations, omitting the subscript $\alpha$ in the bracket. The triple bracket $\{\{A,B\},C\}$ is a polynomial of degree 2 in $\alpha$ and we wish to compute its coefficients.
By definition, using the Leibniz rule for the second equality, we have
\begin{align}
\{\{Xx,Yy\},Zz\}
=& [Xx,Yy]\left(\alpha\{Xx.Yy,Zz\}+\{Xy.Yx,Zz\}\right)&\cr
=&\ \alpha[Xx,Yy]\left(\{Xx,Zz\}.Yy+\{Yy,Zz\}.Xx\right)&\cr
&+[Xx,Yy]\left(\{Xy,Zz\}.Yx+\{Yx,Zz\}.Xy\right).&\label{Jac1}
\end{align}
Now we compute two expressions appearing in the right hand side of the previous equation. We have
\begin{align}
\{Xx,Zz\}.Yy\ +\ &\{Yy,Zz\}.Xx&\cr
=\ &\alpha\left([Xx,Zz]+[Yy,Zz]\right)Xx.Yy.Zz&\cr
&+\left([Xx,Zz]Xz.Yy.Zx+[Yy,Zz] Xx.Yz.Zy\right).&\label{Jac2}
\end{align}
Similarly
\begin{align}
\{Xy,Zz\}.Yx+\ &\{Yx,Zz\}.Xy&\cr
=&\alpha\left([Xy,Zz]+[Yx,Zz]\right)Xy.Yx.Zz&\cr
&+[Xy,Zz] Xz.Yx.Zy +[Yx,Zz]Xy.Yz.Zx\, .\label{Jac3}&
\end{align}
It follows from Equations \eqref{Jac2} and \eqref{Jac3} that the coefficient of $\alpha^2$ in the triple bracket \eqref{Jac1} is
\begin{equation}
P_2\defeq \left([Xx,Yy][Xx,Zz] +[Xx,Yy][Yy,Zz]\right)Xx.Yy.Zz\label{Jac-a2}.\end{equation}
The coefficient of $\alpha$ in the triple bracket \eqref{Jac1} is
\begin{align}
P_1\defeq \ \ 
=\ \ &[Xx,Yy][Xx,Zz]Xz.Yy.Zx+[Xx,Yy][Yy,Zz] Xx.Yz.Zy&\cr
&+([Xx,Yy][Xy,Zz]+[Xx,Yy][Yx,Zz])Xy.Yx.Zz
\label{Jac-a1}.&
\end{align}
Finally the constant coefficient is
\begin{align}
P_0&\defeq [Xx,Yy][Xy,Zz] Xz.Yx.Zy +[Xx,Yy][Yx,Zz]Xy.Yz.Zx\label{Jac-a0},&
\end{align}
so that
\begin{align}
\{\{Xx,Yy\},Zz\}
=&\alpha^2 P_2+\alpha. P_1+P_0.&
\end{align}

In order to check the Jacobi identity, we have to consider the sum $S_2$, $S_1$ and $S_0$ over cyclic permutations of $(Xx,Yy,Zz)$ of the three terms $P_2$, $P_1$ and $P_0$. We consider successively these three coefficients.
\vskip 0,2 truecm

\noindent{\sc Term of degree 0}: We first have
\begin{align}
S_0&=F(X,x,Y,y,Z,z)(Xz.Yx.Zy -Xy.Yz.Zx)\ ,&\label{S0}
\end{align}
Indeed, we have
\begin{align*}
S_0&=A.Xz.Yx.Zy + B.Xy.Yz.Zx\ ,&
\end{align*}
 where
\begin{align*}
A&=[Xx,Yy][Xy,Zz]+[Zz,Xx][Zx,Yy]+[Yy,Zz][Yz,Xx]=F(X,x,Y,y,Z,z)\ ,& \cr
 B&=[Xx,Yy][Yx,Zz]+[Zz,Xx][Xz,Yy]+[Yy,Zz][Zy,Xx]=G(X,x,Y,y,Z,z)\ .&
\end{align*}
Now Equation \eqref{S0} follows from Equation \eqref{F=G}.
\vskip 0.1 truecm
We now prove that $S_0=0$.
It follows from Proposition \ref{prelimF} that if $$
\{X,x\}\cap\{Y,y\}\cap\{Z,z\}=\emptyset,
$$
then $F=0$, hence $S_0=0$. 

Up to cyclic permutations, we just have to consider two cases
\begin{enumerate}
\item 
If $x=y=z$ or $X=Y=Z$ then $$Xz.Yx.Zy -Xy.Yz.Zx=0,$$ hence $S_0=0$.
\item If $x=y=Z$ or $X=Y=z$ or the other cases obtained by cyclic permutations, since $aa=0$, we have
$$
Xz.Yx.Zy=Xy.Yz.Zx=0
$$
Thus $S_0=0$.
\end{enumerate}
We have completed the proof that $S_0=0$.
\vskip 0,2 truecm
\noindent{\sc Term of degree 1.} Next, we write
\begin{align*}
P_1=&\ A_1(X,x,Y,y,Z,z)Xx.Yz.Zy&\cr
 &+A_2(X,x,Y,y,Z,z) Xz.Yy.Zx&\cr
 &+A_3(X,x,Y,y,Z,z) Xy.Yx.Zz\ .&
\end{align*}
Thus
\begin{align*}
S_1&=A_x. Xx.Yz.Zx + A_y. Xz.Yy.Zx. +A_z. Xy.Yx.Zz\ ,&
\end{align*}
where
\begin{align*}
A_z=&A_3(X,x,Y,y,Z,z)+A_2(Y,y,Z,z,X,x)+A_1(Z,z,X,x,Y,y)&\cr
=&\ [Xx,Yy][Xy,Zz]+[Xx,Yy][Yx,Zz]&\cr &+[Yy,Zz][Yy,Xx]+[Zz,Xx][Xx,Yy]&\cr
=&\ [Xx,Yy]\left([Xy,Zz]+[Yx,Zz]-[Yy,Zz]-[Xx,Zz]\right)\ .&
\end{align*}
By Equation \eqref{In5}, $A_z=0$. Therefore, $A_y=A_z=A_x=0$ by cyclic permutations. We have completed the proof that $S_1=0$.
\vskip 0,2 truecm
\noindent{\sc Term of degree 2.} Finally, $S_2=C.Xx.Yy.Zz$, where
\begin{align*}
C=&\ [Xx,Yy][Xx,Zz] +[Xx,Yy][Yy,Zz]&\cr
&+[Yy,Zz][Yy,Xx] +[Yy,Zz][Zz,Xx]&\cr
&+[Zz,Xx][Zz,Yy] +[Zz,Xx][Xx,Yy]\ .&
\end{align*}
Then $C=0$ by the antisymmetry of the linking number. Thus $S_2=0$.
\vskip 0.2 truecm

This concludes the proof of the Jacobi identity. Indeed
\begin{align*}
\{\{Xx,Yy\}_\alpha,Zz\}_\alpha+\{\{Yy,Zz\}_\alpha,Xx\}_\alpha+&\{\{Zz,Xx\}_\alpha,Yy\}_\alpha
&\cr&=\alpha^2 S_2+\alpha S_1+S_0&\cr
&=0.&
\end{align*}

\end{proof}
\subsection{The multifraction algebra}
The swapping algebra is very easy to define. However, in the sequel we shall need to consider other Poisson algebras built out of the swapping algebra: these algebras  will be more precisely subalgebras of the fraction algebra of $\cL(\PP)$. We introduce in this paragraph {\em cross fractions}, {\em multifractions} and the {\em multifraction algebra}.
\subsubsection{Cross fractions and multifractions}
Since $\cL_\alpha(\PP)$ is an integral domain (with respect to the commutative product) we can consider its algebra of fractions $\cQ_\alpha(\PP)$. 

Let $(X,Y,x,y)=:Q$ be a quadruple of points of $\PP$ so that $x\not=Y$ and $y\not=X$. The {\em cross fraction} determined by $Q$ is the element of $\cQ_\alpha(\PP)$ defined by
$$
[X;Y;x;y]\defeq \frac{Xx.Yy}{Xy.Yx}.
$$
More generally, let ${\rm X}\defeq (X_1,\ldots , X_n)$ and ${\rm x}\defeq (x_1,\ldots x_n)$ are two tuples of elements of $\PP$ so that $x_i\not=X_i$ for all $i$, let $\sigma$ be a permutation of 
$\{1,\ldots,n\}$ then the {\em elementary multifraction -- defined over $\PP$ --} defined by this data is
$$
[{\rm X},{\rm x};\sigma]\defeq \frac{\prod_{i=1}^{i=n}X_ix_{\sigma(i)}}{ \prod_{i=1}^{i=n}X_ix_{i}}.
$$
\subsubsection{The multifraction algebra}
Let now $\mathcal B(\PP)$ be the vector space generated by elementary multifractions and let us call any element of $\mathcal B(\PP)$ a {\em multifraction}. We have the following proposition
\begin{proposition}
The vector space $\mathcal B(\PP)$ is a Poisson subalgebra of $\cQ_\alpha(\PP)$. Moreover it is generated as a Poisson algebra by cross fractions. Finally the swapping bracket $\{ \cdotp, \cdotp\}_\alpha$ when restricted to $\mathcal B(\PP)$ does not depend on $\alpha$.
\end{proposition}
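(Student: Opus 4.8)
The plan is to prove the three assertions of the proposition in the order they are stated, reducing everything to the behaviour of the swapping bracket on cross fractions. First I would observe that a cross fraction $[X;Y;x;y]=\frac{Xx.Yy}{Xy.Yx}$ is a ratio of two degree-one monomials in the generators, and that every elementary multi fraction $[{\rm X},{\rm x};\sigma]$ is, up to sign and reindexing, a product of cross fractions: decomposing the permutation $\sigma$ into cycles and telescoping along each cycle $(i_1\,i_2\,\cdots\,i_k)$ one writes $\frac{X_{i_1}x_{i_2}\cdots X_{i_k}x_{i_1}}{X_{i_1}x_{i_1}\cdots X_{i_k}x_{i_k}}$ as a product of the $k-1$ cross fractions $[X_{i_j};X_{i_{j+1}};x_{i_j};x_{i_{j+1}}]$ (a fixed point contributes the trivial factor $1$). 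Hence the multiplicative monoid generated by cross fractions contains all elementary multi fractions, and since $\mathcal B(\PP)$ is by definition the \emph{linear} span of the elementary multi fractions, it suffices to show that $\mathcal B(\PP)$ is closed under the bracket and under multiplication; the statement "generated as a Poisson algebra by cross fractions" then follows from the cycle decomposition above.

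Next I would compute $\{\,[X;Y;x;y],[Z;W;z;w]\,\}_\alpha$ directly. Using the Leibniz rule and the quotient rule $\{a/b,c/d\}=\frac{1}{bd}\{a,c\}-\frac{a}{b^2 d}\{b,c\}-\frac{c}{bd^2}\{a,d\}+\frac{ac}{b^2d^2}\{b,d\}$, and the defining formula $\{Uu,Vv\}_\alpha=[Uu,Vv](\alpha\,Uu.Vv+Uv.Vu)$, one sees that every $\alpha$-term carries a factor $Uu.Vv$ in the numerator which, after dividing by the denominators $Uu$, $Vv$ that are already present in the cross fractions, cancels: concretely the $\alpha$-contribution to $\{a/b,c/d\}$ is $\alpha[\,\cdot\,]\big(\frac{ac}{bd}-\frac{ac}{bd}-\frac{ac}{bd}+\frac{ac}{bd}\big)=0$ for each of the four bracket pairs, because $a,c$ are themselves the relevant products. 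Thus the bracket of two cross fractions is $\alpha$-independent and is again a rational expression; one then checks it lies in $\mathcal B(\PP)$ by expanding it as a linear combination of cross fractions (each surviving monomial is again a ratio of a product of $X_ix_{\sigma(i)}$'s over a product of $X_ix_i$'s). Since brackets of cross fractions are multi fractions and products of cross fractions are multi fractions, the Leibniz rule propagates both closure properties from cross fractions to all of $\mathcal B(\PP)$, giving the "Poisson subalgebra" and "$\alpha$-independence" claims simultaneously.

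The main obstacle I anticipate is the bookkeeping in the last step: after cancellation the bracket of two cross fractions is a sum of several rational terms, and one must verify that each term is, after clearing common factors, genuinely an elementary multi fraction (i.e. that the numerator and denominator organize into matching products $\prod X_i x_{\sigma(i)}$ and $\prod X_i x_i$ over the \emph{same} index set) rather than some more general fraction. The cleanest way to handle this is to note that the linking-number alternative \eqref{prodIn} and the degeneracy relations \eqref{XX} force many of the a priori terms to vanish, and the surviving ones always pair a factor $Ab$ with its "partner" $Ba$, so that the result is manifestly a product of cross fractions times a linking-number coefficient (which lies in the coefficient ring $A$, hence is a scalar from the point of view of $\mathcal B(\PP)$). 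I would therefore organize the computation around the six points involved, invoke Proposition \ref{prelimF} and the equality cases of the linking number to kill the non-multi-fraction terms, and read off the answer as a $\mathbb Z$-linear combination of cross fractions — which also makes the $\alpha$-independence transparent since $\alpha$ has already dropped out before this stage.
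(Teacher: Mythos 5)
Your overall strategy matches the paper's (which records precisely the two ``immediate observations'' that elementary multi fractions are products of cross fractions, and that the bracket of two cross fractions is a multi fraction independent of $\alpha$), but two of the concrete steps you supply are wrong.

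\textbf{The cycle telescoping is incorrect.} For the cycle $(i_1\,\cdots\,i_k)$ you claim
$$\frac{X_{i_1}x_{i_2}\cdots X_{i_k}x_{i_1}}{X_{i_1}x_{i_1}\cdots X_{i_k}x_{i_k}}=\prod_{j=1}^{k-1}[X_{i_j};X_{i_{j+1}};x_{i_j};x_{i_{j+1}}],$$
but already for $k=3$ the right-hand side equals $\frac{X_{i_1}x_{i_1}(X_{i_2}x_{i_2})^2X_{i_3}x_{i_3}}{X_{i_1}x_{i_2}X_{i_2}x_{i_1}X_{i_2}x_{i_3}X_{i_3}x_{i_2}}$, which is not the left-hand side (the $X_{i_j}x_{i_j}$ sit in the wrong place and with the wrong multiplicities). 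The claim itself is true, but the correct telescope is best done with a fixed base index: one checks directly that
$$[{\rm X},{\rm x};\sigma]=\prod_{i=2}^{n}[X_i;X_1;x_{\sigma(i)};x_i],$$
since in the product of cross fractions the factors $X_1x_j$ in numerator and denominator telescope to leave exactly $\frac{X_1x_{\sigma(1)}}{X_1x_1}$.

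\textbf{The $\alpha$-cancellation does not follow from a bare sign count.} You write the $\alpha$-contribution as $\alpha[\cdot]\bigl(\tfrac{ac}{bd}-\tfrac{ac}{bd}-\tfrac{ac}{bd}+\tfrac{ac}{bd}\bigr)=0$, implicitly treating the linking-number prefactor as common to all four quotient-rule terms. It is not: writing $a=Xx.Yy$, $b=Xy.Yx$, $c=Zz.Ww$, $d=Zw.Wz$, the $\alpha$-part of $\{a/b,c/d\}/\bigl((a/b)(c/d)\bigr)$ is the scalar
$$\alpha\sum_{i,j}\epsilon_i\epsilon_j\,[a_i,b_j],$$
a signed sum of sixteen \emph{distinct} linking numbers, and its vanishing is exactly the content of identity \eqref{In5} of Proposition~\ref{prelimF} (applied for each fixed $b_j$ to cancel the inner sum $\sum_i\epsilon_i[a_i,b_j]$). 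So the identity you relegate to ``killing the non-multi-fraction terms'' is in fact what makes $\alpha$ drop out; without it the claim is unproved.

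Conversely, the part you expected to be delicate is actually immediate and needs neither \eqref{prodIn} nor \eqref{XX}: by the logarithmic expansion each summand of $\{A,B\}/(A.B)$ has the form $\pm[Uu,Vv]\,\frac{Uv.Vu}{Uu.Vv}$, and $\frac{Uv.Vu}{Uu.Vv}=[U;V;v;u]$ is itself a cross fraction (or $0$ or $1$ in degenerate cases). Multiplying back by $A.B$, which is a product of cross fractions, exhibits $\{A,B\}$ directly as a $\mathbb Z$-linear combination of products of cross fractions, hence a multi fraction.
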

From now on, we call the Poisson algebra $\cB(\PP)$ the {\em algebra of multifractions}.
\begin{proof} The proposition follows from the following immediate observations:
\begin{itemize}
\item every elementary multifraction is a product of cross fractions,
\item if $A$ and $B$ are two cross fractions then $\{A,B\}_\alpha$ is a multifraction and does not depend on $\alpha$.
\end{itemize}
\end{proof}

\section{Cross ratios and cross fractions}\label{sec:crossratio}
In this section, we interpret cross fractions, and in general multifractions, as functions on the space of cross ratios. 
\subsection{Cross ratios}\label{def:cr}
Recall that a cross ratio on a set ${\PP}$ is a map $\bb$ from
$$
{\PP}^{4*}\defeq \{(X,Y,x,y)\in {\PP}\mid y\not=X, x\not=Y\}
$$
to a field $\mathbb K$ 
which satisfies some algebraic rules. 
These rules encode two conditions which constitute a normalisation, 
and two multiplicative cocycle identities which hold for different sets of variables:
\begin{eqnarray*}
{\hbox{\sc Normalisation:}}\ \ \bb(X,Y,x,y)&=&0\ \ \Leftrightarrow x=X \,,\hbox{ or } Y=y,\\ 
{\hbox{\sc Normalisation:}}\ \ \bb(X,Y,x,y)&=&1 \ \ \Leftrightarrow x=y \,,\hbox{ or } X=Y,\\
{\hbox{\sc Cocycle identity:}}\ \ \bb(X,Y,x,y)&=&{\bb(X,Y,x,z)}{\bb(X,Y,z,y)},\\
{\hbox{\sc Cocycle identity:}}\ \ \bb(X,Y,x,y)&=&\bb(X,Z,x,y)\bb(Z,Y,x,y).
\end{eqnarray*}
Assume $\Gamma$ acts on ${\PP}$, way say the cross ratio $\bb$ is $\Gamma$-invariant, if it is invariant under the diagonal action.

\rmks
\begin{itemize}
\item We change our convention from our previous articles \cite{Labourie:2006,Labourie:2005} in order to be coherent with the formula for the classical projective cross ratio: let $\bb$ be a cross ratio with respect to the definition above, let $b(X,x,Y,y)\defeq \bb(X,Y,x,y)$, then $b$ is a cross ratio using our older convention. Observe that the second normalisation together with the cocycle identities imply the following symmetries: $$\bb(X,Y,x,y)=\bb(Y,X,x,y)^{-1}=\bb(Y,X,y,x)=\bb(X,Y,y,x)^{-1}.$$
\item Assume $\Gamma$ acts on ${\PP}$. Let $\bb$ be a $\Gamma$-invariant cross ratio. Let $\gamma\in\Gamma$ and $\gamma^+$ and $\gamma^-$ be two $\gamma$-fixed points in ${\PP}$, then the following quantity
\begin{eqnarray*}
 \bb(\gamma^+,\gamma^-,\gamma y,y)
 \end{eqnarray*}
 does not depends on the choice of $y$. In particular, let $S$ be a closed connected oriented surface of genus greater than 2, let ${\PP}$ be $\bgrf$ equipped with the action of $\grf$. Let $\gamma^+$ and $\gamma^-$ be respectively the attractive and repulsive fixed point of a non-trivial element $\gamma$ of $\pi_1(S)$ and $\bb$ a $\grf$-invariant cross ratio, then 
 $$
 \ell_\bb(\gamma)\defeq \left\vert\log\left\vert \bb(\gamma^+,\gamma^-,\gamma (y),y)\right\vert\right\vert,
 $$
 is called the {\em period} of $\gamma$.\end{itemize}

 We finally denote by $\BB(\PP)$ the set of cross ratios on $\PP$.
 
 These definitions are closely related to those given by Otal in \cite{Otal:1990th,Otal:1992},
 discussions from various perspectives by Ledrappier in \cite{Ledrappier:1995} 
and work of Bourdon in \cite{Bourdon:1996} in the context of ${\rm CAT}(-1)$-spaces.

\subsection{Multifractions as functions} 
To every cross fraction $[X;Y;x;y]$, we associate a function, denoted by $\overline{[X;Y;x;y]}$, on $\BB(\PP)$ by the following formula
$$
\overline{[X;Y;x;y]}(\bb)\defeq\bb(X,Y,x,y).
$$
The following proposition follows at once from the definition of cross ratio
\begin{proposition}
The map ${[X;Y;x;y]}\to\overline{[X;Y;x;y]}$ extends uniquely to a morphism of commutative associative algebras from $\cB(\PP)$ to the algebra of functions on $\BB(\PP)$.
\end{proposition}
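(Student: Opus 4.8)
The plan is to split the statement into uniqueness and existence, and to reduce the existence part to the two cocycle identities appearing in the definition of a cross ratio. Uniqueness is immediate: by the preceding proposition the cross fractions generate $\cB(\PP)$ as a commutative associative algebra, so any morphism of commutative associative algebras out of $\cB(\PP)$ is determined by its restriction to the cross fractions. For existence it suffices to produce, for every cross ratio $\bb\in\BB(\PP)$, an algebra morphism $\mathrm{ev}_\bb\colon\cB(\PP)\to\mathbb K$ with $\mathrm{ev}_\bb([X;Y;x;y])=\bb(X,Y,x,y)$; one then sets $\overline m(\bb):=\mathrm{ev}_\bb(m)$ for $m\in\cB(\PP)$, which is visibly an algebra morphism $\Phi$ from $\cB(\PP)$ to the algebra of functions on $\BB(\PP)$ extending the prescribed assignment on cross fractions.

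Next I would unwind what ``well defined'' means here. Since $\cB(\PP)\subset\cQ_\alpha(\PP)$ and $\cL(\PP)$ is a polynomial ring over $A$ in the generators $Xx$ with $X\neq x$, every product of cross fractions is either zero or a Laurent monomial in these generators, and distinct Laurent monomials are $A$-linearly independent in $\cQ_\alpha(\PP)$. Hence each element of $\cB(\PP)$ has a canonical expression as an $A$-combination of such products, and every relation inside $\cB(\PP)$ is a consequence of the \emph{multiplicative} relations $\prod_j[Q_j]^{\pm1}=\prod_k[Q'_k]^{\pm1}$ between products of cross fractions, together with the degenerate equalities $[X;Y;x;y]=0$ (when $x=X$ or $y=Y$) and $[X;Y;x;y]=1$ (when $x=y$ or $X=Y$), which hold in $\cB(\PP)$ because $XX=0$. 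Thus $\mathrm{ev}_\bb$ exists as soon as all of these relations survive the substitution $[Q]\mapsto\bb(Q)$. The degenerate equalities survive because they are exactly the two normalisation axioms for $\bb$.

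For the multiplicative relations, the guiding observation is that the tautological assignment $b_0\colon(X,Y,x,y)\mapsto[X;Y;x;y]\in\cQ_\alpha(\PP)$ is itself a cross ratio with values in the field $\cQ_\alpha(\PP)$: the normalisations follow from $XX=0$ together with unique factorisation in the polynomial ring $\cL(\PP)$, and each cocycle identity is obtained by cancelling the single common factor (the pairs $Xz,Yz$, respectively $Zx,Zy$) from the product on the right-hand side. Granting that every multiplicative identity among cross fractions is a formal consequence of the two cocycle identities, we are done: $\bb$, being a cross ratio, satisfies those identities multiplicatively, hence satisfies every consequence of them, so $[Q]\mapsto\bb(Q)$ is well defined, yielding $\mathrm{ev}_\bb$ and $\Phi$. (When $\PP$ is infinite one can make $\mathrm{ev}_\bb$ explicit: choosing reference points $P_0\neq p_0$ away from the finitely many points at hand, one checks via the cocycle identities that $\theta_\bb(Xx):=\bb(X,P_0,x,p_0)$, extended to a ring homomorphism on the localisation of $\cL(\PP)$ inverting the relevant generators, sends $[X;Y;x;y]$ to $\bb(X,Y,x,y)$, and that replacing $P_0,p_0$ multiplies $\theta_\bb(Xx)$ by a factor depending only on $X$ times one depending only on $x$, hence changes nothing on $\cB(\PP)$.)

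The step I expect to carry the genuine content is precisely the bracketed claim: that every multiplicative relation among cross fractions is a consequence of the two cocycle identities. In lattice terms this asserts that the kernel of the homomorphism $\delta_{(X,Y,x,y)}\mapsto\delta_{Xx}+\delta_{Yy}-\delta_{Xy}-\delta_{Yx}$, from the free abelian group on the non-degenerate quadruples to the free abelian group on pairs, is generated by the cocycle vectors. I would prove this by using the cocycle vectors to rewrite an arbitrary $\delta_{(X,Y,x,y)}$, modulo the subgroup generated so far, as the combination $\delta_{(X,O,x,o)}-\delta_{(X,O,y,o)}-\delta_{(Y,O,x,o)}+\delta_{(Y,O,y,o)}$ with two fixed reference points $O,o$, and then remarking that the images of the ``basic'' quadruples $(A,O,a,o)$ are linearly independent, because the coordinate on $\delta_{Aa}$ isolates the coefficient of that basic quadruple. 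The only delicate point is the bookkeeping around degenerate slots and points coinciding with $O$ or $o$ (and, if one wants full generality, the case of a very small $\PP$, which can be checked by hand); once this lemma is in place the remainder of the argument is purely formal.
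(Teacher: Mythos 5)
Your proof is correct, and it does genuinely more than the paper, which dismisses the proposition with ``follows at once from the definition of cross ratio'' and gives no argument at all. You correctly identify the actual content: uniqueness is trivial since cross fractions generate, and existence is a well-definedness question, namely that every relation among products of cross fractions holding inside the fraction field $\cQ_\alpha(\PP)$ must be satisfied after substituting $\bb$. Your reduction of arbitrary relations to multiplicative ones plus the degenerate normalisations, via linear independence of Laurent monomials, is sound, as is the reference-point construction $\theta_\bb(Xx)=\bb(X,P_0,x,p_0)$ and the verification that it reproduces $\bb(X,Y,x,y)$ on cross fractions via the two cocycle identities.

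Two remarks on emphasis. First, of your two routes the ``parenthetical'' one is the better proof, not a sanity check: once you have $\theta_\bb$ defined as a ring morphism on a localisation of $\cL(\PP)$ containing the cross fractions under consideration, well-definedness on $\cB(\PP)$ is automatic, and the lattice lemma (that the kernel of $\delta_{(X,Y,x,y)}\mapsto\delta_{Xx}+\delta_{Yy}-\delta_{Xy}-\delta_{Yx}$ is generated by cocycle vectors) is bypassed entirely. That lemma is interesting but it is heavier machinery than the claim requires, and proving it carefully is where the real bookkeeping lives. Second, your throwaway ``the case of a very small $\PP$ can be checked by hand'' is not quite innocuous: if $\PP$ has exactly four points $\{a,b,c,d\}$ then $[a;b;c;d]=[b;a;d;c]$ holds in $\cB(\PP)$, but no nontrivial instance of either cocycle identity is available on $\PP^{4*}$ (every admissible auxiliary point $z$ or $Z$ makes one of the factors trivially $1$), so the axioms as stated do not force $\bb(a,b,c,d)=\bb(b,a,d,c)$, and the asserted morphism need not exist. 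This is harmless here because the paper's $\PP$ is always an infinite subset of $\partial_\infty\Gamma$ or of the circle, so reference points are always available, but it shows that the reference points are carrying genuine logical weight rather than being a cosmetic device.
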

 
In the sequel, we shall use an identical notation for a multifraction and its image in 
 the space of functions on $\BB(\PP)$. So far also, we did not (and will not) consider any topological structure on $\BB(\PP)$ nor on $\mathsf P$.

\subsection{Multifractions and Hitchin components}
In \cite{Labourie:2006}, we identified the Hitchin component with a space of cross ratios satisfying certain identities. Let us recall notation and definitions

\subsubsection{Hitchin component} Let $S$ be a closed oriented connected surface with genus at least two.

\begin{definition}{\sc[Fuchsian and Hitchin homomorphisms]} An {\em $n$-Fuchsian} homomorphism from $\grf$ to $\sln$ is a
homomorphism $\rho$ which factorises as
$\rho={\ii}\circ\rho_{0}$, where $\rho_{0}$ is a discrete faithful 
homomorphism with values in $\sld$ and ${\ii}$
is an irreducible homomorphism from $\sld$ to 
$\sln$. 

An {\em $n$-Hitchin homomorphism} from $\grf$ to $\sln$ is a homomorphism which may be deformed into an $n$-Fuchsian homomorphism
\end{definition}
 
The {\em Hitchin component} ${\Hn}$ is the space of Hitchin homomorphisms up to conjugacy by an exterior automorphism of $\sln$. All these representations lift to $\mathsf{SL}(n,\mathbb R)$. By construction ${\Hn}$ is identified with a connected component of the character variety. It is a result by Hitchin \cite{Hitchin:1992es} that ${\Hn}$ is homeomorphic to the interior of a ball of dimension $(2g-2)(n^2-1)$. 

As a corollary of the main result of \cite{Labourie:2006}, we have
\begin{theorem}
If $\rho$ is Hitchin, if $\gamma$ is a non-trivial element of $\grf$ then $\rho(\gamma)$ has $n$ distinct positive real eigenvalues.
\end{theorem}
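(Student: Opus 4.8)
The plan is to deduce the statement from the main theorem of \cite{Labourie:2006}, which I use in the following form: every Hitchin homomorphism $\rho\colon\grf\to\sln$ admits a $\rho$-equivariant \emph{hyperconvex Frenet curve}
$$
\xi=(\xi^1,\dots,\xi^{n-1})\colon\bgrf\longrightarrow\operatorname{Flag}(\mathbb R^n),
$$
and $\rho$ is Anosov; concretely, for every non trivial $\gamma\in\grf$, with attracting and repelling fixed points $\gamma^+,\gamma^-$ on $\bgrf$, the element $\rho(\gamma)$ is proximal on every Grassmannian $\operatorname{Gr}_p(\mathbb R^n)$, with attracting fixed point $\xi^p(\gamma^+)$.

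First I would extract the invariant splitting. Since $\gamma$ fixes $\gamma^\pm$, equivariance gives $\rho(\gamma)\,\xi^p(\gamma^\pm)=\xi^p(\gamma^\pm)$ for all $p$. Applying the hyperconvex Frenet property to the pair $(\gamma^+,\gamma^-)$ --- directness of $\xi^{q-1}(\gamma^+)\oplus\xi^{n-q+1}(\gamma^-)$ and of $\xi^{q}(\gamma^+)\oplus\xi^{n-q}(\gamma^-)$ for each $q$ --- produces a $\rho(\gamma)$-invariant decomposition $\mathbb R^n=L_1\oplus\cdots\oplus L_n$ into real lines $L_q:=\xi^q(\gamma^+)\cap\xi^{n-q+1}(\gamma^-)$, with $\xi^p(\gamma^+)=L_1\oplus\cdots\oplus L_p$ for every $p$. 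In particular $\rho(\gamma)$ is diagonalisable over $\mathbb R$; let $\lambda_q$ be its eigenvalue on $L_q$. Now I invoke proximality: $\xi^p(\gamma^+)=L_1\oplus\cdots\oplus L_p$ being the attracting fixed point of $\rho(\gamma)$ on $\operatorname{Gr}_p(\mathbb R^n)$ --- equivalently of $\wedge^p\rho(\gamma)$ on $\mathbb P(\wedge^p\mathbb R^n)$ --- forces $|\lambda_1\cdots\lambda_p|$ to strictly dominate every other $p$-fold product of the $\lambda_i$'s. Taking $p=1,\dots,n-1$ and dividing consecutive inequalities gives $|\lambda_1|>|\lambda_2|>\cdots>|\lambda_n|$. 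Hence the $\lambda_q$ are pairwise distinct; and since the non-real eigenvalues of a real matrix come in conjugate pairs of equal modulus, they are all real. This proves the ``$n$ distinct real eigenvalues'' part.

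It remains to see that the $\lambda_q$ share a common sign, which I would obtain by a connectedness argument. List the eigenvalues of $\rho(\gamma)$ by decreasing modulus; by the previous step they never collide and never vanish as $\rho$ varies over $\Hn$, so each $\lambda_q$ is a continuous function on $\Hn$, and hence each of the $n-1$ quantities $\sig(\lambda_q/\lambda_{q+1})$ is locally constant --- and it is unchanged when $\rho(\gamma)$ is multiplied by a nonzero real scalar, so it is well defined for homomorphisms into $\sln$. Since $\Hn$ is connected --- it is homeomorphic to a ball by \cite{Hitchin:1992} --- and contains an $n$-Fuchsian point $\rho=\ii\circ\rho_0$, it suffices to evaluate at that point: $\rho_0(\gamma)$ is hyperbolic with eigenvalues $\mu,\mu^{-1}$ ($\mu>1$), and the irreducible homomorphism $\ii=\operatorname{Sym}^{n-1}$ sends it to the element with eigenvalues $\mu^{n-1}>\mu^{n-3}>\cdots>\mu^{-(n-1)}>0$. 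Thus $\sig(\lambda_q/\lambda_{q+1})=1$ for all $q$ there, hence on all of $\Hn$; so $\lambda_1,\dots,\lambda_n$ always have one and the same sign.

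The step I expect to be the main obstacle is the passage from the Anosov/hyperconvex data to the strict ordering $|\lambda_1|>\cdots>|\lambda_n|$: one has to make precise that the contraction of the flag curve $\xi$ under the dynamics of $\gamma$ forces strict domination between consecutive eigenvalue moduli, which is where the osculating strength of the hyperconvex Frenet property is genuinely used, rather than merely the existence of a $\rho(\gamma)$-invariant splitting into real lines. Once that ordering is established, the invariant splitting and the connectedness argument are routine.
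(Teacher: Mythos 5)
The paper offers no proof of this statement: it is asserted as a direct corollary of the main theorem of \cite{Labourie:2006} and left at that, so you are reconstructing a derivation rather than reproducing one. Your reconstruction is correct and is the standard route. Two small observations. First, once you have the $\rho(\gamma)$-invariant decomposition $\mathbb R^n=L_1\oplus\cdots\oplus L_n$ into \emph{real} lines coming from $L_q=\xi^q(\gamma^+)\cap\xi^{n-q+1}(\gamma^-)$, each $\lambda_q$ is already a real number by construction (it is the scalar by which $\rho(\gamma)$ acts on a real line), so your closing appeal to ``non-real eigenvalues of a real matrix come in conjugate pairs'' is logically redundant --- harmless, but not needed. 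Second, eigenvalues of an element of $\sln$ are only defined up to a common nonzero scalar (and only up to $\pm 1$ even after lifting to $\mathsf{SL}_n$ when $n$ is even); your choice of working with the quantities $\sig(\lambda_q/\lambda_{q+1})$, which are invariant under rescaling, is exactly what makes the ``same sign'' claim well-posed and the connectedness argument go through, and it is good that you made this explicit. The core content --- that proximality of $\wedge^p\rho(\gamma)$ on $\mathbb P(\wedge^p\mathbb R^n)$ with attracting point $L_1\wedge\cdots\wedge L_p$ forces $|\lambda_1\cdots\lambda_p|$ to strictly dominate, and dividing consecutive inequalities gives $|\lambda_1|>\cdots>|\lambda_n|$ --- is the right use of the Anosov/Frenet input, and the Fuchsian base-point computation with $\operatorname{Sym}^{n-1}$ is correct.
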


By convention, we write these eigenvalues as $\lambda_1(\rho(\gamma)),\ldots,\lambda_n(\rho(\gamma))$ where
$$
\lambda_1(\rho(\gamma))>\ldots>\lambda_n(\rho(\gamma))>0.
$$
This allows us to introduce the following
\begin{definition}\label{def:girth}{\sc[Girth and width]}
The {\em width} of a non-trivial element $\gamma$ of $\grf$ with respect to a Hitchin representation $\rho$ is
$$
\operatorname{width}_\rho(\gamma)\defeq \log\left(\left\vert\frac{\lambda_1(\rho(\gamma))}{\lambda_n(\rho(\gamma))}\right\vert\right).
$$
The {\em girth} of $\rho$ is
\begin{eqnarray}
\gir(\rho)\defeq \sup\left\{\left\vert\frac{\lambda_2(\rho(\gamma))}{\lambda_1(\rho(\gamma))}\right\vert \mid \gamma\in\grf\setminus\{{\rm Id}\}\right\}.
\end{eqnarray}
\end{definition}

The following proposition will be used several times
\begin{proposition}\label{girth}
Let $C$ be a compact subset of $\Hn$ then
\begin{enumerate}
	\item For any positive $A$, the following subset of $\pi_1(S)$ defined by 
	$$S_A=\left\{\gamma\in\pi_1(S)\mid \exists \rho\in C, \, \, \left\vert \frac{\lambda_2(\rho(\gamma))}{\lambda_1(\rho(\gamma))}\right\vert> A\right\}$$
	contains only finitely many conjugacy classes.
	\item Moreover
$$
\sup\{\gir(\rho)\mid\rho\in C\}<1.
$$
\end{enumerate}
\end{proposition}

For the proof of this proposition,
we first need \begin{lemma}\label{L12Anos}
Let $S$ be a hyperbolic surface with unit tangent bundle $\mathsf U S$ equipped with the geodesic flow $\{\phi_t\}_{t\in\mathbb R}$. Let $\rho_0$ be a Hitchin representation in $\Hn$. Then there exists a neighbourhood $W$ of $\rho_0$ in $\Hn$, such that for every $\rho$ in $W$, there exists a function $f_\rho: \mathsf U S\to\mathbb R$ verifying
\begin{itemize}
	\item for every closed orbit $\gamma$ and $x\in\gamma$,
	 $$\int_0^{\ell(\gamma)} f_\rho\circ\phi_s(x)\,{\rm d}s\,=\log\left\vert\frac{\lambda_1(\rho(\gamma))}{\lambda_2(\rho(\gamma))}\right\vert,$$
	 where $\ell(\gamma)$ is the hyperbolic length of $\gamma$;
	\item the function $\rho\mapsto f_\rho$ is continuous from $W$ to $C^0(\mathsf U S,\mathbb R)$ and moreover there exists a positive constant $\epsilon_0$ so that for all $\rho$, $f_\rho>\epsilon_0$ .
\end{itemize}.
	\end{lemma}
\vskip 0.2 truecm	
\noindent{\em Proof of Lemma \ref{L12Anos}:}
This follows from the Anosov property of Hitchin representations and results in \cite{Bridgeman:2015ba}. One could also use results by Guichard--Wienhard \cite{Guichard:2012eg} or combine results of Sambarino \cite{Sambarino:2014jv,Sambarino:2014kc}. Since by Theorem 6.1 of \cite{Bridgeman:2015ba}, the limit maps of a Hitchin representation depend in an analytic way of the representation, we can find
\begin{itemize}
\item a neighbourhood $D$ of $\rho_0$ in $\Hn$,
\item a vector bundle $E$ over $M\defeq D\times \mathsf U S$ smooth along $\mathsf U S$,
\item a splitting of $
E=L_1\oplus\ldots L_n$ into line bundles smooth along the geodesic flow,
\item a continuous lift $\{\Phi_t\}_{t\in\mathbb R}$ on $E$ of the geodesic flow $\{\phi_t\}_{t\in\mathbb R}$ on $M$ preserving this decomposition and smooth along $\mathsf U S$,
\end{itemize}
so that if $\gamma$ is a closed geodesic of hyperbolic length $\ell(\gamma)$ and $u\in L_i\vert_{\{\rho\}\times\gamma}$, then
$$
\Phi_{\ell(\gamma)}(u)= \lambda_i(\rho(\gamma)) \cdotp u.
$$ 
In this last equation, we identify the closed geodesic with the corresponding conjugacy class in $\grf$.
We now construct metrics $\omega_i$ on $L_i$ smooth along the geodesic flow. Let us consider the functions $g_i$ on $M$ so that $g_i \cdotp\omega_i=\left.\frac{\rm d}{{\rm d} t}\right\vert_{t=0}\Phi_t^*\omega_i$. In particular, we have \begin{eqnarray}
\Phi_t^*\omega_i(x)=\exp\left(\int_0^t g_i\circ\phi_s(x)\,{\rm d}s\right) \omega_i(x).
\end{eqnarray}
Then by construction for $x\in \{\rho\}\times\gamma$, we have 
$$
-\log\left(\lambda_i(\rho(\gamma)\right)=\int_0^{\ell(\gamma)}g_i\circ\phi_s(x)\,{\rm d}s.
$$
Let now $g=g_2-g_1$, then
$$
\log\left(\frac{\lambda_1(\rho(\gamma))}{\lambda_2(\rho(\gamma))}\right)=\int_0^{\ell(\gamma)}g\circ\phi_s(x)\,{\rm d}s.
$$
By the Anosov property, there exists some $T>0$, so that the flow $\Phi_T$ contracts uniformly on $\operatorname{Hom}(L_1,L_2)$ along $\{\rho_0\}\times \mathsf U S$. In a more precise way, if we denote by $\omega_1^*$ the dual metric on $L_1^*$ to $\omega_1$, then there exists some $T$ so that along $\{\rho_0\}\times \mathsf U S$, we have
$$
\Phi_T^*(\omega_1^*\otimes \omega_2)= H \cdotp \omega_1^*\otimes \omega_2,
$$ 
where $H$ is a continuous function on $M$ so that along $\{\rho_0\}\times \mathsf U S$, 
$$H<\frac{1}{2}.$$ 
By the continuity of $H$, the previous inequality extends to $M$ after possibly restricting $D$. As a consequence, we have for $x\in M$,
$$
\int_0^T g\circ\phi_s(x)\,{\rm d}s =-\log(H(x))>\log(2).
$$
 Let now 
$$
f(x)\defeq\frac{1}{T}\int_0^T g\circ\phi_s(x)\,{\rm d}s.
$$
Then by construction $$f(x)> \frac{1}{T}\log(2)\eqdef\epsilon_0,$$ and, moreover, for
 $x\in\{\rho\}\times \mathsf U S$,
$$
\int_0^{\ell(\gamma)}f\circ\phi_s(x)\,{\rm d}s=\int_0^{\ell(\gamma)}g\circ\phi_s(x)\,{\rm d}s=\log\left(\frac{\lambda_1(\rho(\gamma))}{\lambda_2(\rho(\gamma))}\right).$$ \qed

\vskip 0.2 truecm	
\noindent{\em Proof of Proposition \ref{girth}:} By compactness, it is enough to prove that every $\rho$ in $\Hn$ possesses a neighbourhood $W$ so that the properties of the proposition hold when $C$ is replaced by $W$. We choose the neighbourhood $W$ obtained in the previous lemma. Let then $f_\rho$ be as in the conclusion of this lemma. Since $f_\rho$ is bounded away from zero by a positive constant $\epsilon_0$, it follows that
$$
\log\left(\frac{\lambda_1(\rho(\gamma))}{\lambda_2(\rho(\gamma))}\right)>\epsilon_0\cdot\,\ell(\gamma).
$$
The first result immediately follows. Then for the second result we use the fact that $S_{\frac{1}{2}}$ contains only finitely many conjugacy classes and that given $\gamma$ the function 
$$
\rho\mapsto \frac{\lambda_2(\rho(\gamma))}{\lambda_1(\rho(\gamma))},
$$
is continuous and with values less than 1.
\qed

\subsubsection{Rank $n$ cross ratios}
For every integer $p$, let $\bgrf^p_*$ be the set of pairs $$
({\rm X},{\rm x})=\left((X_0,X_1,\ldots,X_p),(x_0,x_1,\ldots,x_p)\right),$$ 
of $(p+1)$-tuples of points in $\bgrf$ such that 
$
X_j\not= X_i\not= x_0$ and $x_j\not=x_i\not=X_0$,
whenever $j>i>0$. Let $\chi^p({\rm X},{\rm x})$ be the multifraction defined by
$$\chi^p({\rm X},{\rm x})\defeq \det_{i,j>0}\left([X_i;X_0;x_j;x_0]\right).$$
A cross ratio $\bb$ has {\em rank $n$} if 
\begin{itemize}
\item $\chi^n({\rm X},{\rm x})(\bb)\not=0$, for all $({\rm X},{\rm x})$ in $\bgrf^n_* $,
\item $\chi^{n+1}({\rm X},{\rm x})(\bb)=0$, for all $({\rm X},{\rm x})$ in $\bgrf^{n+1}_* $.
\end{itemize}
The main result of \cite{Labourie:2008tv}
-- which used a result by Guichard \cite{Guichard:2008tu} -- is the following. 
\begin{theorem}\label{hcintro}
There exists a bijection $\phi$ from the set of $n$-Hitchin representations to the set of $\grf$-invariant rank $n$ cross ratios, such that if $\bb=\phi(\rho)$ then
\begin{enumerate}
\item for any non-trivial element $\gamma$ of $\grf$
$$
\ell_{\bb}(\gamma)=\operatorname{width}_\rho(\gamma),
$$
where $\ell_{\bb}(\gamma)$ is the period of $\gamma$ given with respect to $\bb=\phi(\rho)$, and $\operatorname{width}_\rho(\gamma)$ is the width of $\gamma$ with respect to $\rho$.
\item Moreover, if $\gamma_1$ and $\gamma_2$ are two non-trivial elements of $\grf$, if $e_i$, (respectively $E_i$) is an eigenvector of $\rho(\gamma_i)$ of maximal eigenvalue (respectively eigenvector of $\rho^*(\gamma_i)$ of minimum eigenvalue) then
\begin{eqnarray}
\bb(\gamma_1^+,\gamma_2^+,\gamma_2^-,\gamma_1^-)&=&\frac{\langle E_2,e_1\rangle\langle E_1,e_2\rangle}{\langle E_1,e_1\rangle\langle E_2,e_2\rangle}.\label{crhitchin}
\end{eqnarray}
\end{enumerate}
\end{theorem}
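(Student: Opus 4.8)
The plan is to build the bijection $\phi$ out of the equivariant limit curves attached to Hitchin representations, and to invert it by reconstructing such a curve from an arbitrary $\grf$-invariant rank $n$ cross ratio.

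\emph{Construction of $\phi$.} Let $\rho$ be an $n$-Hitchin homomorphism. By the main result of \cite{Labourie:2006} it admits a $\rho$-equivariant hyperconvex Frenet curve; taking its first and $(n-1)$-st osculating subspaces yields an equivariant pair
$$
\xi\colon \bgrf\longrightarrow \pn,\qquad \xi^{*}\colon \bgrf\longrightarrow \mathbb P\bigl((\mathbb R^{n})^{*}\bigr)
$$
with $\xi$ $\rho$-equivariant, $\xi^{*}$ equivariant for the contragredient $\rho^{*}$, and $\langle \xi^{*}(x),\xi(X)\rangle\neq 0$ whenever $x\neq X$. I would then set, for $x\neq X$ and $y\neq Y$,
$$
\phi(\rho)(X,Y,x,y):=\frac{\langle \xi^{*}(x),\xi(X)\rangle\,\langle \xi^{*}(y),\xi(Y)\rangle}{\langle \xi^{*}(y),\xi(X)\rangle\,\langle \xi^{*}(x),\xi(Y)\rangle}
$$
This does not depend on the choice of representatives, because each of $\xi(X),\xi(Y),\xi^{*}(x),\xi^{*}(y)$ occurs once in the numerator and once in the denominator. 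The two normalisations and the two multiplicative cocycle identities of a cross ratio then follow immediately from bilinearity of the pairing and the transversality just recorded; $\grf$-invariance is exactly equivariance of $\xi,\xi^{*}$; and H\"older regularity is inherited from the limit curves. So $\phi(\rho)\in\BB(\bgrf)$.

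\emph{Rank, formula \eqref{crhitchin}, and periods.} Writing the cross fraction $[X_{i};X_{0};x_{j};x_{0}]=\phi(\rho)(X_{i},X_{0},x_{j},x_{0})$ through the pairings and pulling the factors that depend only on the row index $i$, or only on the column index $j$, out of the determinant, one finds that $\chi^{p}({\rm X},{\rm x})(\phi(\rho))$ equals, up to a nonzero scalar, $\det_{1\le i,j\le p}\langle \xi^{*}(x_{j}),\xi(X_{i})\rangle$. For $p=n+1$ the $n+1$ vectors $\xi(X_{i})$ are linearly dependent in $\mathbb R^{n}$, so this determinant vanishes; for $p=n$ its non-vanishing on all of $\bgrf^{n}_{*}$ is precisely the general position of the osculating flags along a hyperconvex Frenet curve. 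Hence $\phi(\rho)$ has rank $n$. Since $\rho(\gamma)$ has $n$ distinct real eigenvalues of the same sign (by the result recalled above), $\xi(\gamma^{\pm})$ are its attracting and repelling eigenlines and $\xi^{*}(\gamma^{\pm})$ the corresponding eigenhyperplanes; substituting $(X,Y,x,y)=(\gamma_{1}^{+},\gamma_{2}^{+},\gamma_{2}^{-},\gamma_{1}^{-})$ and $e_{i}=\xi(\gamma_{i}^{+})$, $E_{i}=\xi^{*}(\gamma_{i}^{-})$ gives \eqref{crhitchin} on the nose. For the period, recall from Section \ref{def:cr} that $\phi(\rho)(\gamma^{-},\gamma y,\gamma^{+},y)$ is independent of $y$; letting $y$ tend to $\gamma^{\pm}$ and expanding in the eigenbasis of $\rho(\gamma)$, the cocycle relations collapse it to the ratio of the extreme eigenvalues of $\rho(\gamma)$, so $\ell_{\phi(\rho)}(\gamma)=\operatorname{width}_{\rho}(\gamma)$.

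\emph{Bijectivity.} For injectivity, from a cross ratio $\bb$ and a choice of base points $X_{0},\dots,X_{n},x_{0}$ the cross fractions $[X_{i};X_{0};x;x_{0}]$, $i=1,\dots,n$, serve as projective coordinates for a curve $x\mapsto\xi_{\bb}(x)$; when $\bb=\phi(\rho)$ this recovers the limit data of $\rho$ up to a projective transformation, and since a Hitchin representation is determined by its limit curve, $\bb$ determines $\rho$ up to conjugacy. Surjectivity is the heart of the matter. Given a $\grf$-invariant rank $n$ cross ratio $\bb$, I would run the preceding formulas in reverse: the cross fractions yield a $\grf$-equivariant curve $\xi_{\bb}$ into a projective space together with a dual curve; the rank $n$ condition ($\chi^{n}\neq0$ and $\chi^{n+1}\equiv0$) forces the image to span exactly $\mathbb R^{n}$ and to be hyperconvex, while the cocycle identities make the configuration equivariant for a homomorphism $\rho\colon\grf\to\ms{PGL}_{n}(\mathbb R)$; Guichard's characterisation \cite{Guichard:2008} of hyperconvex curves as limit curves of Hitchin representations then identifies $\rho$, after conjugation into $\sln$, as an $n$-Hitchin homomorphism whose limit curve is $\xi_{\bb}$, so that $\phi(\rho)=\bb$ by construction. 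I expect this reconstruction to be the main obstacle: one must show that ``rank $n$'' is exactly the nondegeneracy needed for the naively built curve to be genuinely hyperconvex and Frenet, and that the equivariance data lift from $\ms{PGL}_{n}(\mathbb R)$ to $\sln$; it is here that Guichard's theorem, and the delicate analysis of hyperconvex curves behind it, is indispensable.
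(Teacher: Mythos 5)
This theorem is not proved in the paper you were given: it is stated as a quoted result, attributed to the author's earlier article on cross ratios (``The main result of \cite{Labourie:2007} -- which used a result by Guichard \cite{Guichard:2008} -- is the following''), and no argument is supplied. So there is no in-paper proof to compare your attempt against, and the honest answer is that you would have to consult that reference to check your outline in detail.

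That said, your sketch is a sound top-level reconstruction of the strategy used in the cited source, and it is consistent with everything this paper does use downstream. The formula you posit for $\phi(\rho)$ is exactly the ``weak cross ratio'' the paper later writes in Equation \eqref{FRE}; your verification of the two normalisations and the two cocycle identities from transversality and bilinearity of the pairing is correct; your identification of $e_{i}=\xi(\gamma_{i}^{+})$ and $E_{i}=\xi^{*}(\gamma_{i}^{-})$ with the eigenvector/eigencovector of extreme eigenvalue is right and gives \eqref{crhitchin} directly; and the determinant computation showing $\chi^{p}$ equals, up to a nonzero factor, $\det_{i,j}\langle\xi^{*}(x_{j}),\xi(X_{i})\rangle$ is the correct way to see that Frenet hyperconvexity translates into the rank $n$ condition. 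You also correctly flag where all the real work lies: the reconstruction of a Frenet curve from an abstract $\grf$-invariant rank $n$ cross ratio, and the application of Guichard's characterisation to recognise the resulting representation as Hitchin. In the cited paper that reconstruction is a substantial analytic argument (it is not enough to write down a map to $\mathbb P(\mathbb R^{n})$ in coordinates; one must establish H\"older regularity, continuity of the osculating flag, and hyperconvexity in the Frenet sense), so ``run the formulas in reverse'' is a placeholder rather than a proof. Your period computation is similarly gestured at (``letting $y$ tend to $\gamma^{\pm}$ and expanding in the eigenbasis''), but that step is routine once \eqref{crhitchin} and the cocycle identity are in hand. In short: the outline is faithful to the cited argument, and the gaps you acknowledge are precisely the ones that required the full machinery of the earlier papers.
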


In particular, every multifraction defines a function on the Hitchin component.
\section{Wilson loops, multifractions and length functions}
In this section, we shall relate Wilson loops -- which are regular functions on the character variety -- to multifractions. We will also introduce {\em elementary functions} which are limits of Wilson loops, prove that they generate the multifraction algebra and that they are smooth functions on the Hitchin component.
We finally introduce {\em length functions} in Paragraph \ref{def:lf}.

\subsection{Wilson loops} Let $\gamma$ be an element of $\grf$ and $\rho$ be an element of $\Hn$. The {\em Wilson loop} associated to $\gamma$ is the function $\ww(\gamma)$ on $\Hn$ defined by
$$
\ww(\gamma)(\rho)\defeq\tr (\rho(\gamma)).
$$
Wilson loops only depends on conjugacy classes.
Let us introduce the following definition.

\begin{definition}{\sc [Class of an element]}
Let $\gamma$ be a non-trivial element of $\grf$, the {\em class} $[\gamma]$ of $\gamma$ is the oriented pair $(\gamma^+,\gamma^-)$ of points of $\bgrf$ where $\gamma^+$ and $\gamma^-$ are respectively the attractive and repulsive fixed points of $\gamma$.
 \end{definition} Recall that $[\gamma]=[\eta]$ if and only if there exist positive integers $m$ and $n$ so that $\gamma^m=\eta^n$.
\subsubsection{Asymptotics of Wilson loops}
Let $\rho$ be a Hitchin representation. Recall that for any $\gamma$ in $\grf$ we can write
$$
\rho(\gamma)=\sum_{1\leq i\leq n}\lambda_i(\rho(\gamma)) p_i(\gamma),
$$
where $p_i(\gamma)$ is a projector of trace 1, and $\lambda_i(\rho(\gamma))$ are real numbers so that
$$
0<\vert\lambda_n(\rho(\gamma))\vert<\ldots<\vert\lambda_1(\rho(\gamma))\vert.
$$
Let us denote $\bu\rho(\gamma)=p_1(\gamma)$. We denote by 
$\Eig{\mathsf A}$ the set of eigenvectors of a purely loxodromic matrix $\mathsf A$, and observe that $\Eig{\mathsf A^n}=\Eig{\mathsf A}$. We choose an auxiliary norm, denoted $\Vert \cdotp\Vert$ on $\mathbb R^n$. Then we have
\begin{proposition}\label{Asym1}
For any $\gamma$ in $\grf$, and $p\in\mathbb N$ we have
\begin{eqnarray}
\left\Vert \frac{\rho(\gamma^p)}{\ww(\gamma^p)(\rho)}- \bu\rho(\gamma)\right\Vert
&\leq& \gir(\rho)^pK\left(\Eig{\rho(\gamma)}\right),
\end{eqnarray}
where $K$ is a continuous function on the set of $n$ lines in general position. 
\end{proposition}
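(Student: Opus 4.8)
The statement to prove is the asymptotic estimate
$$
\left\Vert \frac{\rho(\gamma^p)}{\ww(\gamma^p)(\rho)}- \bu\rho(\gamma)\right\Vert
\leq \gir(\rho)^pK\left(\Eig{\rho(\gamma)}\right).
$$
The plan is to diagonalise $\rho(\gamma)$ using the spectral decomposition recalled just above, $\rho(\gamma)=\sum_{i}\lambda_i(\gamma)p_i(\gamma)$, and to estimate directly the error term after normalising by the leading eigenvalue. First I would observe that $\ww(\gamma^p)(\rho)=\tr(\rho(\gamma^p))=\sum_{i}\lambda_i(\gamma)^p$, since each $p_i(\gamma)$ has trace $1$; writing $\mu_i:=\lambda_i(\gamma)/\lambda_1(\gamma)$ (so $\mu_1=1$ and $|\mu_i|\le\gir(\rho)<1$ for $i\ge 2$ by Proposition \ref{girth}), we get $\ww(\gamma^p)(\rho)=\lambda_1(\gamma)^p\bigl(1+\sum_{i\ge 2}\mu_i^p\bigr)$ and $\rho(\gamma^p)=\lambda_1(\gamma)^p\bigl(p_1(\gamma)+\sum_{i\ge 2}\mu_i^p\, p_i(\gamma)\bigr)$.

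Next I would form the difference: with $r:=\sum_{i\ge 2}\mu_i^p$ (which satisfies $|r|\le (n-1)\gir(\rho)^p\le \tfrac12$ for $p$ large, and in any case $|r|<1$ once $\gir(\rho)$ is controlled), one has
$$
\frac{\rho(\gamma^p)}{\ww(\gamma^p)(\rho)}-\bu\rho(\gamma)
=\frac{1}{1+r}\Bigl(\sum_{i\ge 2}\mu_i^p\,p_i(\gamma)\Bigr)-\frac{r}{1+r}\,p_1(\gamma).
$$
Taking norms and using $|1+r|^{-1}\le 2$ (for $p$ large) together with $|\mu_i^p|\le\gir(\rho)^p$ yields a bound of the form $\gir(\rho)^p\cdot C$, where $C$ depends only on the norms $\Vert p_i(\gamma)\Vert$ of the spectral projectors. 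The point is then that these projectors — hence $C$ — depend only on the configuration of eigenlines and eigen-hyperplanes of $\rho(\gamma)$, i.e. on $\Eig{\rho(\gamma)}$: indeed $p_i(\gamma)$ is the projection onto the $i$-th eigenline along the sum of the others, a quantity determined by $\Eig{\rho(\gamma)}$ alone (the eigenvalues $\lambda_i$ do not enter the projectors). This is the observation $\Eig{A^n}=\Eig{A}$ made in the text. Setting $K(\Eig{\rho(\gamma)}):=C$ gives the claim, at least for $p$ sufficiently large; the finitely many small values of $p$ are absorbed by enlarging the constant (still a function of $\Eig{\rho(\gamma)}$, since $\rho(\gamma)$ itself, up to scaling, is determined by $\Eig{\rho(\gamma)}$ and the choice of which eigenvalue is largest, and $\gir(\rho)\in(0,1)$).

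The main obstacle is bookkeeping rather than conceptual: one must make sure that the constant genuinely depends only on $\Eig{\rho(\gamma)}$ and not on $\rho$ or $\gamma$ beyond that data, and that the denominator $1+r$ is bounded away from $0$. The latter is where Proposition \ref{girth} is essential — without a uniform gap $\gir(\rho)<1$ one could not rule out cancellation in $\sum_i\lambda_i(\gamma)^p$; but since here $\gir(\rho)<1$ strictly, $|r|\le (n-1)\gir(\rho)^p\to 0$, so $1+r$ is eventually bounded below by $\tfrac12$, and the estimate goes through. I would also remark that $\bu\rho(\gamma)=p_1(\gamma)$ is exactly $\lim_{p\to\infty}\rho(\gamma^p)/\ww(\gamma^p)(\rho)$, which is the qualitative form of the statement and a useful sanity check on the signs above.
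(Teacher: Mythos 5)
Your spectral-decomposition approach is the same as the paper's, but there is a genuine gap in how you handle the denominator, and the workaround you propose for small $p$ does not work.

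The paper's proof hinges on the observation that since $\rho$ is Hitchin, every $\rho(\gamma)$ has $n$ distinct real eigenvalues \emph{of the same sign} (this is the theorem quoted earlier in the section), so in $\ms{PSL}_n(\mathbb R)$ one may take the representative with all $\lambda_i>0$. Then $\tr(\rho(\gamma^p))=\sum_i\lambda_i^p\geq\lambda_1^p$ for \emph{every} $p$, i.e.\ $r:=\sum_{i\geq 2}\mu_i^p>0$ and $1+r\geq 1$ with no threshold in $p$, and the estimate
$$\left\Vert \frac{\rho(\gamma^p)}{\tr(\rho(\gamma^p))}-p_1\right\Vert\leq\Frac{\lambda_2}{\lambda_1}^{p}\sum_{i\geq 2}\Vert p_i-p_1\Vert\leq\gir(\rho)^{p}\sum_{i\geq 2}\Vert p_i-p_1\Vert$$
holds uniformly in $p$ with a constant that manifestly depends only on the projectors, hence on $\Eig{\rho(\gamma)}$. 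You instead work with $|\mu_i|$, conclude only $|r|\leq(n-1)\gir(\rho)^p$, and thereby need $p$ large to get $|1+r|$ bounded below; you then try to absorb the finitely many small $p$ by enlarging the constant. But this patch is not available: $K$ is required to depend only on $\Eig{\rho(\gamma)}$, which by the paper's definition is the \emph{set of eigenvectors} (equivalently the data of the projectors $p_i$), and for a fixed small $p$ the quantity $\Vert\rho(\gamma^p)/\tr(\rho(\gamma^p))-p_1\Vert$ depends on the eigenvalue ratios through the trace in the denominator, which can in principle be arbitrarily close to $0$ if cancellations are allowed. Your justifying remark that ``$\rho(\gamma)$ itself, up to scaling, is determined by $\Eig{\rho(\gamma)}$ and the choice of which eigenvalue is largest'' is false --- two diagonal matrices $\mathrm{diag}(3,2,1)$ and $\mathrm{diag}(100,50,1)$ share the same eigenvector data and the same ordering but are not proportional --- so the eigenvalue ratios are genuinely extra data and cannot be hidden inside $K(\Eig{\rho(\gamma)})$. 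The fix is to invoke the same-sign property and work with positive eigenvalues from the start, as the paper does; with that observation your computation goes through for all $p$ at once.
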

\begin{proof} Let $\mathsf A=\rho(\gamma)$. Since   ${\ms A}$ is a real diagonalisable matrix then $$
{\ms A}=\sum_{i=1}^{i=n}\lambda_i {\ms p}_i,
$$
where ${\ms p}_i$ are projectors and the eigenvalues $\lambda_i$ satisfy $\lambda_1> \cdots>\lambda_n>0$. Thus 
\begin{eqnarray}
\left\Vert \frac{{\ms A}^p}{\tr( {\ms A}^p)}-{\ms p}_1\right\Vert&\leq&\frac{1}{\sum_{i=1}^{i=n}\lambda_i^p}\left\Vert
\sum_{i=2}^{i=n}\lambda_i^p{\ms p}_i-\left(\sum_{i=2}^{i=n}\lambda_i^p\right){\ms p}_1\right\Vert\cr
&\leq&\left(\frac{\lambda_2}{\lambda_1}\right)^p\left(n\Vert {\ms p}_1\Vert +\sum_{i=2}^{i=n}\Vert {\ms p}_2\Vert\right).
\end{eqnarray}
Thus the inequality follows by taking 
$$
K(\Eig{{\ms A}})=n\Vert {\ms p}_1\Vert +\sum_{i=2}^{i=n}\Vert {\ms p}_2\Vert.
$$
\end{proof}

As a corollary, we get
\begin{corollary}\label{Asym1-coro}
	Let $\gamma_1$, $\gamma_2$,$\ldots$, $\gamma_q$ be $p$ coprime elements of $\Gamma$, let $m_1$, $m_2$,$\ldots$, $m_q$ be positive numbers then 
	\begin{eqnarray}
\left\Vert \frac{\prod_{i=1}^q\rho(\gamma_i^{m_i})}{\ww\left(\prod_{i=1}^q\gamma_i^{m_i}\right)(\rho)}- \frac{\bu\rho(\gamma_1)\bu\rho(\gamma_q)}{\tr\left(\bu\rho(\gamma_1)\bu\rho(\gamma_q)\right)}\right\Vert
&\leq& \gir(\rho)^m K,
\end{eqnarray}
where $m=\inf(m_i)$ and  $K$ depends continuously on the eigenvectors of $\rho(\gamma_i)$ and their relative configurations. 
	\end{corollary}
	\begin{proof}
		We restate the previous Proposition by saying that
		
		\begin{eqnarray}
\rho(\gamma^p)&=&\ww(\gamma^p)(\rho)\cdotp\left(\bu\rho(\gamma)
+ \gir(\rho)^p\cdotp K(\gamma,\rho)\right),
		\end{eqnarray}
		where $K(\gamma,\rho)$ is continuous in $\rho$ and  only depends on the eigenvectors of $\rho(\gamma)$.
		Thus
				\begin{eqnarray}
\prod_{i=1}^q\rho(\gamma_i^{m_i})&=&\prod_{i=1}^q\ww(\gamma_i^{m_i})(\rho)\cdotp\prod_{i=1}^q\left(\bu\rho(\gamma_i)
+ \gir(\rho)^{m_i}\cdotp K(\gamma_i,\rho)\right)\cr
&=&\prod_{i=1}^q\ww(\gamma_i^{m_i})(\rho)\cdotp\left(\prod_{i=1}^q\bu\rho(\gamma_i) 
+ \gir(\rho)^m\cdotp K_0(\gamma_1,\ldots,\gamma_p;\rho)\right),\label{eq:corAsym11}
		\end{eqnarray}
where $K_0(\gamma_1,\ldots,\gamma_p;\rho)$ is continuous in $\rho$ and  only depends on the eigenvectors of 
$\rho(\gamma_i)$.
Thus 
				\begin{eqnarray}
\frac{\ww\left(\prod_{i=1}^q\gamma_i^{m_i}\right)(\rho)}{\prod_{i=1}^q\ww(\gamma_i^{m_i})(\rho)}&=&\tr\left(\prod_{i=1}^q\bu\rho(\gamma_i)\right)
+ \gir(\rho)^m\cdotp K_1(\gamma_1,\ldots,\gamma_p;\rho),\label{eq:corAsym12}
		\end{eqnarray}
		where $K_1(\gamma_1,\ldots,\gamma_p;\rho)$ is continuous in $\rho$ and  only depends on the eigenvectors of 
$\rho(\gamma_i)$.
Combining equations \eqref{eq:corAsym11} and \eqref{eq:corAsym12}, we obtain that
				\begin{eqnarray}
\frac{\prod_{i=1}^q\rho(\gamma_i^{m_i})}{\ww\left(\prod_{i=1}^q\gamma_i^{m_i}\right)(\rho)}&=&\frac{\prod_{i=1}^q\bu\rho(\gamma_i)}{\tr\left(\prod_{i=1}^q\bu\rho(\gamma_i)\right)}
+ \gir(\rho)^m\cdotp K_2(\gamma_1,\ldots,\gamma_p;\rho),\label{eq:corAsym13}
		\end{eqnarray}
where $K_1(\gamma_1,\ldots,\gamma_p;\rho)$ is continuous in $\rho$ and  only depends on the eigenvectors of 
$\rho(\gamma_i)$ and their relative positions. To conclude the proof of the corollary, we remark that is $A$ is an endormorphism $\rm p$ and $\rm q$ are projectors so that
$\tr({\rm p} A {\rm q})\not=0\not=\tr({\rm p} A {\rm q})$, then 
$$
\frac{{\rm p} A {\rm q}}{\tr({\rm p} A {\rm q})}=\frac{{\rm p}  {\rm q}}{\tr({\rm p} {\rm q})}\ .
$$
Using this remark, we get that
\begin{eqnarray*}
\frac{\prod_{i=1}^q\bu\rho(\gamma_i)}{\tr\left(\prod_{i=1}^q\bu\rho(\gamma_i)\right)}
&=&\frac{\bu\rho(\gamma_1)\bu\rho(\gamma_q)}{\tr\left(\bu\rho(\gamma_1)\bu\rho(\gamma_q)\right)}
\, .
		\end{eqnarray*}
Combining this last equality with Equality \eqref{eq:corAsym13} yields the statement of the corollary.	
	\end{proof}

We begin with the following proposition where we consider multifractions as functions on $\Hn$.

\begin{proposition}\label{Asym2}
Let $\gamma_1,\ldots, \gamma_k$ be non-trivial elements of $\pi_1(S)$.
Then the sequence
$$
\left\{\frac{\ww(\gamma_1^p\ldots \gamma_k^p)}{\ww(\gamma_1^p)\ldots \ww(\gamma_k^p)}\right\}_{p\in \mathbb N}
$$
converges uniformly on every compact of $\Hn$ to a multifraction when $p$ goes to infinity. More precisely,
$$
\lim_{p\rightarrow\infty}\left(\frac{\ww(\gamma_1^p\ldots \gamma_k^p)}{\ww(\gamma_1^p)\ldots \ww(\gamma_k^p)}\right)=\frac{\prod_{i=1}^{i=k} \gamma_{i+1}^+\gamma_i^-}{\prod_{i=1}^{i=k} \gamma_{i}^+\gamma_i^-}
=[{\rm G}^+,{\rm G}^-;\tau],$$
where ${\rm G}^\pm=(\gamma_1^\pm,\ldots,\gamma_k^\pm)$ and $\tau(i)=i-1$, using the convention that $k+1=1$.
\end{proposition}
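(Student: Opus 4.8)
The plan is to reduce the statement to the pointwise convergence established in Proposition \ref{Asym1}, then upgrade to uniform convergence on compacta using the uniform bounds from Proposition \ref{girth}. Fix a compact set $C\subset\Hn$. For $\rho\in C$ and $\gamma$ a nontrivial element, write $\rho(\gamma)=\sum_i\lambda_i(\gamma)p_i(\gamma)$ as in Proposition \ref{Asym1}, so that $\rho(\gamma^p)/\ww(\gamma^p)(\rho)=\bu\rho(\gamma)+O(\gir(\rho)^p)$ with the error controlled by $K(\Eig{\rho(\gamma)})$. The first step is to multiply these asymptotic expansions together: since
$$
\frac{\ww(\gamma_1^p\cdots\gamma_k^p)}{\ww(\gamma_1^p)\cdots\ww(\gamma_k^p)}
=\tr\!\left(\prod_{i=1}^{k}\frac{\rho(\gamma_i^p)}{\ww(\gamma_i^p)(\rho)}\right),
$$
and each factor converges to $\bu\rho(\gamma_i)=p_1(\gamma_i)$, the product converges to $\tr\!\left(\prod_{i=1}^k p_1(\gamma_i)\right)$. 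The elementary estimate $\|\prod A_i-\prod B_i\|\le\sum_i\|A_i-B_i\|\prod_{j<i}\|A_j\|\prod_{j>i}\|B_j\|$ together with the bounds $\|\rho(\gamma_i^p)/\ww(\gamma_i^p)(\rho)\|\le 1+\text{(const)}\gir(\rho)^p$ and $\gir(\rho)\le g_0<1$ uniformly on $C$ (Proposition \ref{girth}) shows the convergence is uniform on $C$; this handles the "converges uniformly on every compact" assertion once the limit is identified.

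The second step is to identify $\tr\!\left(p_1(\gamma_1)\cdots p_1(\gamma_k)\right)$ with the claimed multi fraction. Each $p_1(\gamma_i)$ is the rank-one projector onto the attracting eigenline $e_i$ of $\rho(\gamma_i)$ along the hyperplane $\ker E_i$, where $E_i$ is the minimal eigenvector of $\rho^*(\gamma_i)$; in bracket notation $p_1(\gamma_i)=\frac{e_i\otimes E_i}{\langle E_i,e_i\rangle}$. Computing the trace of the product of rank-one operators gives the telescoping expression
$$
\tr\!\left(\prod_{i=1}^k p_1(\gamma_i)\right)=\frac{\prod_{i=1}^{k}\langle E_i,e_{i+1}\rangle}{\prod_{i=1}^{k}\langle E_i,e_i\rangle},
$$
with indices cyclic. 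Now I invoke the identification of the Hitchin component with rank-$n$ cross ratios, Theorem \ref{hcintro}: formula \eqref{crhitchin} says precisely that a ratio of the shape $\frac{\langle E,e'\rangle\langle E',e\rangle}{\langle E,e\rangle\langle E',e'\rangle}$ equals the value of the cross ratio $\bb$ on the corresponding fixed points, i.e. equals a cross fraction $[X;Y;x;y]$ evaluated via $\phi(\rho)$. Matching indices, the quantity $\langle E_i,e_{i+1}\rangle/\langle E_i,e_i\rangle$ corresponds, after the telescoping cancellations, to the value of the elementary multi fraction $[{\rm G}^+,{\rm G}^-;\tau]$ with $\tau(i)=i-1$; concretely the product equals $\prod_i \gamma_{i+1}^+\gamma_i^-\big/\prod_i\gamma_i^+\gamma_i^-$ after re-indexing. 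This gives the explicit limit claimed, and since $\gamma_i^-\ne\gamma_j^+$ etc. for distinct classes the denominators are nonzero so the multi fraction is well defined.

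The main obstacle, and the place requiring care, is the bookkeeping in the second step: one must check that the attracting/repelling fixed points $\gamma_i^\pm$ of the surface group elements match correctly with the eigenvectors $e_i,E_i$ under $\phi$, and that the cyclic index shift in $\tau$ is consistent with the cyclic trace of the product of projectors (in particular getting the direction of the permutation right, $\tau(i)=i-1$ versus $i+1$). A secondary point is that Proposition \ref{Asym1} is stated for a single $\gamma$, so I should note that $\bu\rho(\gamma_i)$ depends only on $\Eig{\rho(\gamma_i)}$ and that the constants $K(\Eig{\rho(\gamma_i)})$ are bounded on $C$ by continuity of $\rho\mapsto\Eig{\rho(\gamma_i)}$ on the compact $C$ — this is what makes the finitely many error terms simultaneously uniformly small. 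Once these identifications are in place the proposition follows, and it is worth remarking that this is the engine behind the "elementary functions as limits of Wilson loops" principle that will be used in the proof of Theorem \ref{theo:C}.
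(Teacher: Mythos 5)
Your proof is correct and follows essentially the same route as the paper's: the paper's proof also consists of the trace identity $\tr(\bu\rho(\gamma_1)\cdots\bu\rho(\gamma_k))=\prod_i\langle E_i,e_{i+1}\rangle$ (with the normalization $\langle E_i,e_i\rangle=1$), the identification of the right-hand side with the multi fraction via Equation \eqref{crhitchin}, and the concluding remark that convergence "follows at once from Propositions \ref{Asym1} and \ref{girth}." You have simply filled in the details that the paper leaves implicit: the telescoping product of rank-one operators, the product-difference estimate, and the uniformity on compacta via continuity of $\rho\mapsto\Eig{\rho(\gamma_i)}$ and the uniform bound $\gir\le g_0<1$. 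One small inaccuracy worth fixing: the bound $\bigl\Vert\rho(\gamma_i^p)/\ww(\gamma_i^p)(\rho)\bigr\Vert\le 1+\mathrm{const}\cdot\gir(\rho)^p$ should read $\le\Vert\bu\rho(\gamma_i)\Vert+\mathrm{const}\cdot\gir(\rho)^p$, since the projector $\bu\rho(\gamma_i)$ need not have norm bounded by $1$; what matters (and what you actually use) is that these norms are uniformly bounded on $C$ by continuity, so the conclusion is unaffected.
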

\begin{proof} We first observe that if $e_i$, (respectively $E_i$) is an eigenvector of $\rho(\gamma_i)$ of maximal eigenvalue (respectively eigenvector of $\rho^*(\gamma_i)$ of minimum eigenvalue) so that $\langle E_i,e_i \rangle=1$ then
$$
\tr (\bu\rho(\gamma_1)\ldots\bu\rho(\gamma_k))=\prod_{i}\langle E_i,e_{i+1}\rangle.
$$
By Equation \eqref{crhitchin}, 
$$
\prod_{i}\langle E_{i},e_{i+1}\rangle=\left(\frac{\prod_{i=1}^{i=p} \gamma_{i+1}^+\gamma_i^-}{\prod_{i=1}^{i=k} \gamma_{i}^+\gamma_i^-}\right)(\rho).
$$
It thus follows that
$$
\tr (\bu\rho(\gamma_1)\ldots\bu\rho(\gamma_k))=[{\rm G}^+,{\rm G}^-;\tau].
$$
Then the result follows at once from Propositions \ref{Asym1} and \ref{girth}.\end{proof}
\subsection{Elementary functions}\label{sec:elemfunct}

Proposition \ref{Asym2} leads us to the following definition. 

\begin{definition}  
The multifraction
\begin{eqnarray}
\tw(\gamma_1,\ldots,\gamma_p)\defeq \frac{\prod_{i=1}^{i=p} \gamma_{i+1}^+\gamma_i^-}{\prod_{i=1}^{i=p} \gamma_{i}^+\gamma_i^-}\label{defelem}
\end{eqnarray}
is an {\em elementary function of order $p$}
\end{definition}
By the previous proposition and its proof, we have the following equalities
\begin{eqnarray}
\tw(\gamma_1,\ldots,\gamma_p) &=& \lim_{n\rightarrow\infty}\frac{\ww(\gamma_1^n\ldots \gamma_p^n)}{\ww(\gamma_1^n)\ldots \ww(\gamma_p^n)}\label{elem-limi}\\
\tw(\gamma_1,\ldots,\gamma_p) &=&\tr(\bu\rho(\gamma_1)\ldots\bu\rho(\gamma_p)).\label{elem-proj}
\end{eqnarray}

The following formal properties of elementary functions are then easily checked.\begin{proposition}
The following properties of elementary functions hold
\begin{enumerate}
\item {\sc Cyclic invariance:} for every cyclic permutation $\sigma$ of $\{1,\ldots,p\}$ we have 
$$
\tw(\gamma_1,\ldots,\gamma_p)=\tw(\gamma_{\sigma(1)},\ldots,\gamma_{\sigma(p)})
$$ 
\item {\sc Class invariance:} if $[\eta_i]=[\gamma_i]$ then 
$$
\tw(\gamma_1,\ldots\gamma_p)=\tw(\eta_1,\ldots,\eta_p).
$$
\item if $[\gamma_p]=[\gamma_{p-1}]$ then
$$
\tw(\gamma_1,\ldots\gamma_p)=\tw(\gamma_1,\ldots\gamma_{p-1})
$$
\item if $[\gamma_p]=[\gamma_{p-1}^{-1}]$ then
$$
\tw(\gamma_1,\ldots\gamma_p)=0
$$
\item {\sc Relations} Assume that $[\gamma_i]\not=[\gamma_{i+1}]$ then
$$
\tw(\gamma_1,\ldots,\gamma_p)=\frac{\tw(\gamma_1,\gamma_2)\tw(\gamma_1,\gamma_p)\tw(\gamma_2,\gamma_3,\ldots,\gamma_p)}{\tw(\gamma_p,\gamma_2,\gamma_1)}
$$
\end{enumerate}
\end{proposition}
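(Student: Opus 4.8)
The plan is to prove all five assertions by direct manipulation of the defining product \eqref{defelem}, regarded as an element of the algebra of fractions $\cQ_\alpha(\PP)$, using only that $\cL(\PP)$ is commutative and that $Xx=0$; no use of the Wilson--loop or projector descriptions \eqref{elem-limi}--\eqref{elem-proj} is needed, although the projector formula gives a convenient sanity check that I would record at the end. Throughout I would abbreviate $a_i:=\gamma_i^+$ and $b_i:=\gamma_i^-$, so that, with the cyclic convention $a_{p+1}:=a_1$,
$$
\tw(\gamma_1,\ldots,\gamma_p)=\frac{\prod_{i=1}^{p}a_{i+1}b_i}{\prod_{i=1}^{p}a_ib_i},
$$
the denominator being a product of nonzero generators since $\gamma_i^+\neq\gamma_i^-$.

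Assertions (1) and (2) are then immediate: a cyclic permutation of $(\gamma_1,\ldots,\gamma_p)$ merely permutes the factors $a_{i+1}b_i$ of the numerator among themselves, and likewise those of the denominator, so commutativity gives (1); and the whole expression depends on each $\gamma_i$ only through the ordered pair $(\gamma_i^+,\gamma_i^-)=[\gamma_i]$, which is (2). For (3) and (4) I would isolate the factors carrying the index $p$: the numerator contains $a_pb_{p-1}$ (the term $i=p-1$) and $a_1b_p$ (the term $i=p$), and the denominator contains $a_pb_p$. If $[\gamma_p]=[\gamma_{p-1}]$, so that $a_p=a_{p-1}$ and $b_p=b_{p-1}$, then $a_pb_{p-1}$ becomes $a_{p-1}b_{p-1}$ and cancels one of the now two copies of $a_{p-1}b_{p-1}$ in the denominator, while $a_1b_p$ becomes the wrap-around term of the shorter tuple; what remains is exactly $\tw(\gamma_1,\ldots,\gamma_{p-1})$. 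If instead $[\gamma_p]=[\gamma_{p-1}^{-1}]$, so that $a_p=b_{p-1}$, then $a_pb_{p-1}$ becomes $b_{p-1}b_{p-1}=0$, the numerator vanishes, and $\tw(\gamma_1,\ldots,\gamma_p)=0$.

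The one assertion requiring genuine bookkeeping — and hence the main, if mild, obstacle — is the relation (5). The route I would take is to expand each of $\tw(\gamma_1,\gamma_2)$, $\tw(\gamma_1,\gamma_p)$, $\tw(\gamma_2,\gamma_3,\ldots,\gamma_p)$ and $\tw(\gamma_p,\gamma_2,\gamma_1)$ as an explicit ratio of products of generators $a_ib_j$, form the quotient on the right-hand side, and verify that the three factors $a_2b_p$, $a_1b_2$ and $a_pb_1$ cancel off completely, leaving numerator $\prod_{i=1}^p a_{i+1}b_i$ and denominator $\prod_{i=1}^p a_ib_i$, that is, $\tw(\gamma_1,\ldots,\gamma_p)$. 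The hypothesis $[\gamma_i]\neq[\gamma_{i+1}]$ serves only to guarantee that the cross-fraction building blocks on the right are the intended non-degenerate elements of $\cQ_\alpha(\PP)$, so that division by $\tw(\gamma_p,\gamma_2,\gamma_1)$ is legitimate; equivalently, one reads (5) as an identity of rational functions in the formal variables $a_i,b_i$, which is a purely mechanical check. As a final sanity check I would note that, viewed as functions on $\Hn$ via \eqref{elem-proj}, writing the rank-one projector $\bu\rho(\gamma_i)$ as $e_i\otimes E_i$ with $\langle E_i,e_i\rangle=1$ turns $\tw(\gamma_1,\ldots,\gamma_p)$ into $\prod_i\langle E_i,e_{i+1}\rangle$, from which all of (1)--(5) are evident rearrangements of products of pairings — with (4) reflecting that $[\gamma_p]=[\gamma_{p-1}^{-1}]$ puts $e_p$ in the kernel of the functional $E_{p-1}$, so that $\langle E_{p-1},e_p\rangle=0$. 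This is only an a priori weaker statement, but it corroborates the algebraic computation.
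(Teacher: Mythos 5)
Your proof is correct, and it takes the approach that the paper implicitly intends, since the paper dismisses this proposition with ``are then easily checked'' and gives no argument at all. Your direct manipulation of the product formula \eqref{defelem}, together with $XX=0$, is precisely the routine verification the paper expects, and your projector sanity check via \eqref{elem-proj} is a reasonable corroboration.
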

We deduce from the last statement the following corollary
\begin{corollary}\label{ElemGen}
Let $\PP$ be the set of fixed points in $\bgrf$ of non-trivial elements of $\grf$. Then every restriction of an elementary multifraction over $\PP$ is a quotient of product of elementary functions of order 2 and 3.
\end{corollary}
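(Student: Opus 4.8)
The plan is to reduce the assertion to cross fractions, and then to write a single cross fraction over $\PP$ as a quotient of products of elementary functions of order three by means of a telescoping identity that uses one auxiliary element.

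First I would recall, from the proof of the proposition identifying $\mathcal B(\PP)$ as a Poisson subalgebra of $\cQ_\alpha(\PP)$, that every elementary multi fraction is a product of cross fractions; when the multi fraction is defined over $\PP$, every point of these cross fractions lies in $\PP$. So it is enough to handle a cross fraction $[A;B;C;D]$ with $A,B,C,D\in\PP$. Every point of $\PP$ is the attractive fixed point of some non-trivial element of $\grf$ (replace $\eta$ by $\eta^{-1}$ if it is the repulsive one), so I would pick $a,b,c,d\in\grf$ with $a^+=A$, $b^+=B$, $c^+=C$, $d^+=D$, together with a non-trivial $e\in\grf$ whose axis differs from the axes of $a,b,c,d$ -- possible since $\grf$ has infinitely many pairwise distinct axes. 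Note that $e^+$ and $e^-$ belong to $\PP$ as well.

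The core of the proof is the identity
$$
[A;B;C;D]=\frac{\tw(a,e,c^{-1})\,\tw(b,e,d^{-1})}{\tw(a,e,d^{-1})\,\tw(b,e,c^{-1})}.
$$
To establish it I would spell out each order-three elementary function on the right, using $(c^{-1})^+=c^-$, $(c^{-1})^-=c^+=C$, and similarly for $d$: for instance
$$
\tw(a,e,c^{-1})=\frac{e^+\,a^-\cdot c^-\,e^-\cdot A\,C}{A\,a^-\cdot e^+\,e^-\cdot c^-\,C},
$$
and likewise for the other three. When the right-hand product is expanded, every factor $Xx$ containing one of $a^-,b^-,c^-,d^-,e^+,e^-$ appears exactly once in the numerator and once in the denominator, so they all cancel, leaving precisely $\dfrac{AC\cdot BD}{AD\cdot BC}=[A;B;C;D]$; equivalently, the two sides have the same divisor as monomials of $\cQ_\alpha(\PP)$. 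Since $[A;B;C;D]$ is defined only when $D\not=A$ and $C\not=B$, item (4) of the preceding proposition applies to neither factor of $\tw(a,e,d^{-1})\,\tw(b,e,c^{-1})$, so the denominator never vanishes and the identity holds without restriction; in the degenerate positions it reads $1=1$ (when $A=B$ or $C=D$) and $0=0$ (when $A=C$ or $B=D$), matching $[A;B;C;D]$. As the right-hand side is a quotient of products of elementary functions of order three, this proves the corollary for cross fractions, hence for every elementary multi fraction over $\PP$. If one prefers orders two and three to appear literally, one may instead invoke the Relations identity (5) of the preceding proposition, which by induction on $p$ reduces any elementary function of order $p\ge 4$ to a quotient of products of elementary functions of orders two and three.

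The main obstacle is finding and verifying the telescoping identity above; the remainder is routine bookkeeping. The one point requiring care is the choice of $e$: its axis must be disjoint from the axes of $a,b,c,d$ so that, in each of $\tw(a,e,c^{-1})$, $\tw(b,e,d^{-1})$, $\tw(a,e,d^{-1})$, $\tw(b,e,c^{-1})$, no two cyclically consecutive entries are mutually inverse and the six fixed points involved are pairwise distinct -- otherwise the corresponding elementary function would collapse to $0$.
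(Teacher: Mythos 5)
Your proof is correct, and it takes a genuinely different route from the paper's. The paper establishes the single identity
\begin{equation*}
\frac{\tw(a,b,c)\,\tw(c,d)}{\tw(a,d,c)\,\tw(c,b)}=[b^+;d^+;a^-;c^-],
\end{equation*}
which involves four group elements $a,b,c,d$ and no auxiliary element: two of the four points of the cross fraction are realised as attractive fixed points ($b^+,d^+$), the other two as repulsive ones ($a^-,c^-$), and the expression naturally mixes elementary functions of orders 2 and 3. By contrast, you realise all four points $A,B,C,D$ as attractive fixed points, introduce one auxiliary element $e$ whose axis avoids those of $a,b,c,d$, and write the cross fraction as a telescoping quotient of four order-3 elementary functions $\tw(a,e,c^{-1})$, $\tw(b,e,d^{-1})$, $\tw(a,e,d^{-1})$, $\tw(b,e,c^{-1})$, after which $e^\pm$ and all the repulsive fixed points $a^-,b^-,c^-,d^-$ cancel. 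The paper's identity is more economical (four factors, no auxiliary choice, no need to check disjointness of axes), whereas your identity treats the four points of the cross fraction symmetrically, at the cost of having to verify that an admissible $e$ exists and that the denominator does not collapse by item (4) of the preceding proposition -- points you handle correctly. One small remark: your closing parenthetical invoking the Relations identity (5) is unnecessary here, since your main identity already lands in orders 2 and 3 (the statement permits order 3 alone), and that identity by itself would not settle the corollary for arbitrary cross fractions without a bridging step like the one you or the paper actually use.
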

\begin{proof} Let us consider $a,b,c,d$ be four non-trivial elements of $\grf$, then we have
\begin{eqnarray}
\frac{\tw(a,b,c).\tw(c,d)}{\tw(a,d,c)\tw(c,b)}=[b^+;d^+;a^-;c^-].\label{BirElem}
\end{eqnarray}
The result follows.
\end{proof}

Recall that in this section we choose $\PP$ to be the set of fixed points of non-trivial elements of $\grf$. We now prove, 

\begin{proposition}\label{prop:smoothmulti}
Every multifraction -- defined over $\PP$ -- is a smooth function on $\Hn$.
\end{proposition}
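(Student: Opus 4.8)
The plan is to reduce the statement, via Corollary~\ref{ElemGen}, to the smoothness of elementary functions, and then to deduce the latter from the spectral formula \eqref{elem-proj}. A multi fraction is a finite linear combination of elementary multi fractions, and by Corollary~\ref{ElemGen} every elementary multi fraction over $\PP$ is a quotient of products of elementary functions of orders $2$ and $3$. Since the smooth functions on $\Hn$ form an algebra, and since the quotient of a smooth function by a nowhere vanishing smooth function is smooth, it is enough to prove two things: (a) every elementary function $\tw(\gamma_1,\ldots,\gamma_p)$ is smooth on $\Hn$; and (b) the elementary functions occurring in the denominators produced by Corollary~\ref{ElemGen} vanish nowhere on $\Hn$. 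For (b) I would, if necessary, first use the cocycle identities for cross ratios to rewrite a given cross fraction as a product of cross fractions whose four points are pairwise distinct, choosing the auxiliary points among the elements of $\PP$ in general position (no two of the geodesics involved sharing an endpoint); this does not affect smoothness.

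For (a), formula \eqref{elem-proj} gives $\tw(\gamma_1,\ldots,\gamma_p)(\rho)=\tr(\bu\rho(\gamma_1)\cdots\bu\rho(\gamma_p))$, where $\bu\rho(\gamma)=p_1(\gamma)$ is the projector onto the top eigenline of $\rho(\gamma)$ parallel to the sum of the remaining eigenspaces. By the corollary of \cite{Labourie:2006} recalled above, $\rho(\gamma)$ has $n$ distinct real eigenvalues for every $\rho\in\Hn$ and every nontrivial $\gamma$, so its largest eigenvalue is simple; hence $\bu\rho(\gamma)=\frac{1}{2i\pi}\oint_{C}(z\,\id-\rho(\gamma))^{-1}\,dz$ for a small circle $C$ enclosing $\lambda_1(\rho(\gamma))$ and no other eigenvalue, and this depends real-analytically, in particular smoothly, on $\rho$, because $\rho\mapsto\rho(\gamma)$ is polynomial in the matrix entries of a representative. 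Therefore $\tw(\gamma_1,\ldots,\gamma_p)$, being a fixed polynomial in the entries of finitely many such projectors, is a smooth function on $\Hn$.

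For (b), write $\bu\rho(\gamma)=e_\gamma\otimes E_\gamma$ with $e_\gamma$ spanning the top eigenline of $\rho(\gamma)$ and $E_\gamma$ the linear form with $\langle E_\gamma,e_\gamma\rangle=1$ whose kernel is the sum of the other eigenspaces of $\rho(\gamma)$ (up to scale, the smallest-eigenvalue eigenvector of $\rho^*(\gamma)$, as in \eqref{crhitchin}). Then \eqref{elem-proj} becomes, cyclically, $\tw(\delta_1,\ldots,\delta_k)(\rho)=\prod_{i=1}^{k}\langle E_{\delta_i},e_{\delta_{i+1}}\rangle$. A factor $\langle E_{\delta_i},e_{\delta_{i+1}}\rangle$ vanishes exactly when the attracting eigenline of $\rho(\delta_{i+1})$ lies in the repelling hyperplane $\ker E_{\delta_i}$ of $\rho(\delta_i)$, which by the transversality of the limit curves of a Hitchin representation --- the Anosov property invoked in Proposition~\ref{girth} --- occurs if and only if $\delta_{i+1}^+=\delta_i^-$. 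After the general-position reduction of the first paragraph, the endpoints entering the denominators produced by Corollary~\ref{ElemGen} are pairwise distinct in the required way --- this being precisely the non-degeneracy $y\neq X$, $x\neq Y$ built into the definition of a cross fraction --- so these denominators are nowhere vanishing on $\Hn$. Together with (a) this proves the proposition.

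The only genuinely non-formal point is (b): the location of the zeros of an elementary function is governed by the transversality (Anosov) properties of Hitchin representations, whereas the smoothness in (a) is a routine application of the holomorphic functional calculus. One could instead bypass both (a) and (b) by deriving from \eqref{crhitchin} and the cocycle identities a closed formula expressing $[X;Y;x;y](\rho)$ as a ratio of four pairings between attracting eigenlines and repelling hyperplanes of elements realizing $X,Y,x,y$ as fixed points, from which smoothness and the non-vanishing of the denominator (using only $y\neq X$ and $x\neq Y$) are immediate.
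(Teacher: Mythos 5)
Your argument is essentially the paper's: reduce to elementary functions via Corollary~\ref{ElemGen}, then use formula \eqref{elem-proj} together with the smoothness of the top-eigenprojector $A\mapsto p_A$ to conclude, working on $\operatorname{Hom}(n,S)$ and descending to the quotient $\Hn$ (which is what your ``polynomial in the entries of a representative'' phrasing amounts to). You go further than the printed proof in explicitly checking step (b), that the denominators produced by Corollary~\ref{ElemGen} are nowhere zero on $\Hn$ --- via the transversality $\langle E_{\delta_i},e_{\delta_{i+1}}\rangle\neq 0 \Leftrightarrow \delta_{i+1}^+\neq\delta_i^-$ supplied by the Frenet/Anosov property, together with a general-position choice of the auxiliary elements $a,b,c,d$ in \eqref{BirElem} --- a point the paper leaves implicit, so your version closes a small gap in the published argument.
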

\begin{proof} Let $\operatorname{Hom}(n,S)$ be the space of Hitchin homomorphisms. Let $\pi$ be the submersion $$
\pi:\operatorname{Hom}(n,S)\to\Hn=\operatorname{Hom}(n,S)/\operatorname{Aut}(\sln).$$
For every loxodromic element $A$ in $\sln$, let $p_A$ be the projection on the eigenspace of maximal eigenvalue with respect to the other eigenspaces. The map $A\to p_A$ (from the space of loxodromic elements) is smooth. It follows that for any elements $\gamma_1,\ldots,\gamma_k$ in $\grf$ the map from $\operatorname{Hom}(n,S)$ to $\mathbb R$ defined by
$$
\Psi:\rho\to\tr (p_{\rho(\gamma_1)}.\ldots.p_{\rho(\gamma_k)})
$$ 
is smooth. We conclude by observing that $\Psi$ is $\operatorname{Aut}(\sln)$-invariant and that by Equation \eqref{elem-proj}
$$
\Psi=\tw(\gamma_1,\ldots,\gamma_k)\circ\pi.
$$
Thus every elementary function is smooth and by the previous result every multifraction is smooth.
\end{proof}

\subsection{The swapping bracket of elementary functions}
For the sequel, we shall need to compute the swapping brackets of elementary functions. This is given by the following proposition whose proof follows by an immediate application of the definition. We first say that two non-trivial elements $\gamma$ and $\eta$ in $\grf$ are {\em coprime} if for all non-zero integers $m$ and $n$, $\gamma^n\not=\eta^m$.

\begin{proposition}\label{prop:braelem}
Let $\gamma_0,\ldots,\gamma_p$, respectively $\eta_0,\ldots,\eta_q$ be elements of $\grf\setminus\{1\}$ such that 
$(\gamma_i,\gamma_{i+1})$ as well as $(\eta_j,\eta_{j+1})$ are pairwise coprime. Let 
\begin{eqnarray}
{\rm a}_{i,j}&\defeq &[\gamma_i^+\gamma_i^-,\eta_j^+\eta_j^-]\, ,\cr
{\rm b}_{i,j}&\defeq &[\gamma_{i+1}^+\gamma_i^-,\eta_{j+1}^+\eta_j^-]\, ,\cr
{\rm c}_{i,j}&\defeq &[\gamma_{i}^+\gamma_i^-,\eta_{j+1}^+\eta_j^-]\, ,\cr
{\rm d}_{i,j}&\defeq &[\gamma_{i+1}^+\gamma_i^-,\eta_{j}^+\eta_j^-]\, ,\cr
\tw_\gamma&\defeq &\tw(\gamma_0,\ldots,\gamma_p)\cr
\tw_\eta&\defeq &\tw(\eta_0,\ldots,\eta_q)\, .
\end{eqnarray}
 Then
\begin{eqnarray}
\frac{\{\tw_\gamma,\tw_\eta\}}{\tw_\gamma.\tw_\eta}&=&\mathop{\sum_{0\leq i\leq q,}}_{0\leq j\leq p}\left({\rm a}_{i,j}\tw(\gamma_i,\eta_{j})+{\rm b}_{i,j}\frac{\tw(\eta_{j+1},\eta_{j},\gamma_{i+1},\gamma_{i})}
{\tw(\eta_j,\eta_{j+1})\tw(\gamma_i,\gamma_{i+1})}\right)\cr
&-&\mathop{\sum_{0\leq i\leq q}}_{0\leq j\leq p}\left({\rm c}_{i,j}\frac{\tw(\gamma_i,\eta_{j+1},\eta_{j})}
{\tw(\eta_j,\eta_{j+1})}+{\rm d}_{i,j}\frac{\tw(\eta_j,\gamma_{i+1},\gamma_{i})}
{\tw(\gamma_i,\gamma_{i+1})}\right).
\end{eqnarray}
\end{proposition}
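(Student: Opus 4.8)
The plan is to expand $\{\tw_\gamma,\tw_\eta\}$ using that the swapping bracket is a biderivation, reduce it to brackets of pairs of generators of $\cL(\PP)$, and then identify the resulting rational expressions with the asserted combinations of elementary functions by cancelling generators. First, since $\tw_\gamma$ and $\tw_\eta$ are elementary functions they belong to $\cB(\PP)$, so by the proposition asserting that the swapping bracket restricted to $\cB(\PP)$ is independent of $\alpha$, I may compute with $\alpha=0$, i.e. with $\{Xx,Yy\}=[Xx,Yy]\,Xy.Yx$. Write $\tw_\gamma=N_\gamma/D_\gamma$ with $N_\gamma=\prod_{i=0}^{p}\gamma_{i+1}^+\gamma_i^-$ and $D_\gamma=\prod_{i=0}^{p}\gamma_i^+\gamma_i^-$ (indices read cyclically), and likewise $\tw_\eta=N_\eta/D_\eta$. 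The coprimality hypothesis guarantees that two consecutive $\gamma_i$'s (resp. $\eta_j$'s) have no common fixed point, so each factor above is a genuine generator of $\cL(\PP)$ — never of the forbidden form $XX$ — the products $N_\gamma,D_\gamma,N_\eta,D_\eta$ are nonzero, and the order-$2$ elementary functions occurring as denominators on the right are invertible in $\cQ_\alpha(\PP)$. Applying the Leibniz rule in each slot then yields
$$
\frac{\{\tw_\gamma,\tw_\eta\}}{\tw_\gamma\,\tw_\eta}=\sum_{a}\sum_{c}\varepsilon_a\,\varepsilon_c\,\frac{\{a,c\}}{a\,c},
$$
where $a$ ranges over the generator-factors of $N_\gamma D_\gamma$, with sign $\varepsilon_a=+1$ or $-1$ according as $a$ comes from $N_\gamma$ or $D_\gamma$, and similarly $c$ for $\tw_\eta$. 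Grouping the terms by the index pair $(i,j)$ splits this into a quadruple sum with four families of terms, according to whether $a$ is a numerator or a denominator factor of $\tw_\gamma$ and whether $c$ is a numerator or a denominator factor of $\tw_\eta$; these carry respectively the linking numbers ${\rm b}_{i,j}$, ${\rm d}_{i,j}$, ${\rm c}_{i,j}$ and ${\rm a}_{i,j}$.

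It then remains to evaluate each family. From the $\alpha=0$ formula, $\{a,c\}/(a\,c)$ equals the relevant linking number times a ratio of four generators of the shape $Xy.Yx/(Xx.Yy)$, and one recognises this ratio as a product or quotient of elementary functions of order $2$, $3$ and $4$. Concretely, the denominator--denominator term directly gives $\tw(\gamma_i,\eta_j)$; the numerator--numerator term, after cancelling the four ``diagonal'' generators $\gamma_i^+\gamma_i^-$, $\gamma_{i+1}^+\gamma_{i+1}^-$, $\eta_j^+\eta_j^-$, $\eta_{j+1}^+\eta_{j+1}^-$ against the order-$2$ denominators, becomes $\tw(\eta_{j+1},\eta_j,\gamma_{i+1},\gamma_i)/\bigl(\tw(\eta_j,\eta_{j+1})\,\tw(\gamma_i,\gamma_{i+1})\bigr)$; and the two mixed terms become $\tw(\eta_j,\gamma_{i+1},\gamma_i)/\tw(\gamma_i,\gamma_{i+1})$ and $\tw(\gamma_i,\eta_{j+1},\eta_j)/\tw(\eta_j,\eta_{j+1})$. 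Each of these identifications is the same normalisation bookkeeping already used in the proof of Corollary \ref{ElemGen}, see \eqref{BirElem}: writing an elementary function as $N/D$ introduces precisely the diagonal generators $\delta^+\delta^-$, and dividing by the relevant order-$2$ functions $\tw(\cdot,\cdot)$ removes them again. Summing over $(i,j)$ produces the stated formula.

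I do not expect a genuine obstacle here: once $\alpha$-independence has been invoked to pass to the $\alpha=0$ bracket, the whole argument is a mechanical expansion followed by cancellation of generators in a commutative fraction ring. The only point requiring care is combinatorial — keeping the signs straight across the four families and matching the cyclic index conventions (the $k+1=1$ convention for $N_\gamma$, $N_\eta$, and the order of the arguments in the order-$3$ and order-$4$ elementary functions on the right-hand side) so that each ratio of generators lands on exactly the displayed term.
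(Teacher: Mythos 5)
Your proposal is correct and follows essentially the same route as the paper: expand $\{\tw_\gamma,\tw_\eta\}/(\tw_\gamma\tw_\eta)$ as a signed double sum of logarithmic derivatives $\{a,c\}/(ac)$ over the generator-factors of the numerators and denominators, then recognise each resulting ratio $Xy.Yx/(Xx.Yy)$ as one of the displayed combinations of elementary functions of orders $2$, $3$ and $4$. The only cosmetic difference is that you explicitly invoke $\alpha$-independence of the bracket on $\cB(\PP)$ to set $\alpha=0$ before expanding, whereas the paper simply writes the expansion without mentioning the $\alpha$-terms; your justification is slightly more explicit but the substance is the same.
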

\begin{proof}
Using ``logarithmic derivatives", we have
\begin{align*}
\frac{\{\tw_\gamma,\tw_\eta\}}{\tw_\gamma.\tw_\eta}&=\mathop{\sum_{0\leq i\leq p,}}_{0\leq j\leq q}\Bigg(\left(\frac{\{\gamma_{i+1}^+\gamma_i^-,\eta_{j+1}^+\eta_j^-\}}
{\gamma_{i+1}^+\gamma_i^-.\eta_{j+1}^+\eta_j^-}
 + 
 \frac
 {\{\gamma_{i}^+\gamma_i^-,\eta_{j}^+\eta_j^-\}}
 {\gamma_{i}^+\gamma_i^-.\eta_{j}^+\eta_j^-} \right)\cr&-\left(\frac{\{\gamma_{i}^+\gamma_i^-,\eta_{j+1}^+\eta_j^-\}}
{\gamma_{i}^+\gamma_i^-.\eta_{j+1}^+\eta_j^-}
 + 
 \frac
 {\{\gamma_{i+1}^+\gamma_i^-,\eta_{j}^+\eta_j^-\}}
 {\gamma_{i+1}^+\gamma_i^-.\eta_{j}^+\eta_j^-} \right)\Bigg)\cr
 &=\sum_{\substack{0\leq i\leq q\\ 0\leq j\leq p}}\Bigg({\rm b}_{i,j}\frac{\gamma_{i+1}^+\eta_j^-.\eta_{j+1}^+\gamma_i^-}
{\gamma_{i+1}^+\gamma_i^-.\eta_{j+1}^+\eta_j^-}+{\rm a}_{i,j}\frac{\gamma_{i}^+\eta_j^-.\eta_{j}^+\gamma_i^-}{\gamma_{i}^+\gamma_i^-.\eta_{j}^+\eta_j^-}\cr &-{\rm d}_{i,j}\frac{\gamma_{i+1}^+\eta_j^-.\eta_{j}^+\gamma_i^-}{\gamma_{i+1}^+\gamma_i^-.\eta_{j}^+\eta_j^-}-{\rm c}_{i,j}\frac{\gamma_{i}^+\eta_j^-.\eta_{j+1}^+\gamma_i^-}{\gamma_{i}^+\gamma_i^-.\eta_{j+1}^+\eta_j^-}\Bigg).
\end{align*}

From the definition of elementary functions \eqref{defelem}, we get that
\begin{eqnarray*}
\frac{\tw(\eta_{j+1},\eta_{j},\gamma_{i+1},\gamma_{i})}
{\tw(\eta_j,\eta_{j+1})\tw(\gamma_i,\gamma_{i+1})}&=&\frac{\gamma_{i+1}^+\eta_j^-.\eta_{j+1}^+\gamma_i^-}
{\gamma_{i+1}^+\gamma_i^-.\eta_{j+1}^+\eta_j^-}\, ,\cr
\tw(\gamma_i,\eta_{j})&=&\frac{\gamma_{i}^+\eta_j^-.\eta_{j}^+\gamma_i^-}
{\gamma_{i}^+\gamma_i^-.\eta_{j}^+\eta_j^-}\, ,\\
\frac{\tw(\eta_j,\gamma_{i+1},\gamma_{i})}
{\tw(\gamma_i,\gamma_{i+1})}&=&\frac{\gamma_{i+1}^+\eta_j^-.\eta_{j}^+\gamma_i^-}{\gamma_{i+1}^+\gamma_i^-.\eta_{j}^+\eta_j^-}\, ,\\
\frac{\tw(\gamma_i,\eta_{j+1},\eta_{j})}
{\tw(\eta_j,\eta_{j+1})}&=&\frac{\gamma_{i}^+\eta_j^-.\eta_{j+1}^+\gamma_i^-}
{\gamma_{i}^+\gamma_i^-.\eta_{j+1}^+\eta_j^-}.
\end{eqnarray*}
This concludes the proof of the proposition
\end{proof}

\subsection{Length functions}\label{def:lf}

We introduce in this paragraph length functions.
\subsubsection{Length functions from the point of view of the multifraction algebra}
\par
Recall first that $\grf$ acts on $\bgrf$ and thus on $\mathcal B(\bgrf)$.
For any $y\in\bgrf$ and $\beta$ a non-trivial element in $\grf$, let us introduce the following cross fraction, 
$$
p_\beta(y)=
\frac{(
\beta^+,\beta(y)) \cdotp(\beta^-,\beta^{-1}(y))}
{(\beta^+,\beta^{-1}(y)) \cdotp(\beta^-,\beta(y))},
$$

where for readibility we revert to the classical notation $(X,x)$ for pair of points rather than the concatenated notation $Xx$.
We have, for any $\beta$ in $\grf$
$$
\frac{p_\beta(y)}{p_\beta(z)}=\frac{(\beta^2)*F_{y,z}}{F_{y,z}},
$$
where
$$
F_{y,z}=\frac{(\beta^+,\beta^{-1}(y)) \cdotp(\beta^-,\beta^{-1}(z))}{(\beta^+,\beta^{-1}(z)) \cdotp(\beta^-,\beta^{-1}(y))}
$$
In particular, the restriction of $p_\beta(y)$ to the space of $\grf$-invariant cross ratios is independent on the choice of $y$.

For the sake of simplicity, we introduce the following formal series of multifractions and call it a {\em length function}.
$$
\hat\ell_\beta(y)\defeq \frac{1}{2}\log(p_\beta(y)),
$$
extending the bracket by the ``log derivative" formulas
\begin{eqnarray}
\{\hat\ell_\beta(y),q\}\defeq \frac{\{p_\beta(y),q\}}{2\,p_\beta(y)},\ \ \{\hat\ell_\beta(y),\hat\ell_\gamma(z)\}\defeq \frac{\{p_\beta(y),p_\gamma(z)\}}{4\, p_\beta(y).p_\gamma(z)}.
\end{eqnarray}
Observe that
${\rm I}_S(\hat\ell_{\beta^n}(y))=n.{\rm I}_S(\hat\ell_\beta(y))$. 

\subsubsection{Length functions and the character variety}

We can further relate these objects with the period and length defined in Paragraph \ref{def:cr}.
Let $${\rm I}_S: \mathcal B(\bgrf)\to C^\infty(\Hn),$$ denote the restriction of functions from $\mathbb B(\bgrf)$ to
$\Hn$. 

We have for $\beta\in\grf$ that $$
{\rm I}_S(\hat\ell_\beta(y))=\ell_\beta,$$
where $$\ell_\beta(\rho)\defeq\ell_b(\beta),$$
and $\ell_b$ is the period of $\beta$ which respect to the cross ratio associated to $\rho$ (see Section \ref{def:cr}).

\section{The Goldman algebra}
In this section, we first recall the construction of the Atiyah--Bott--Goldman symplectic form on the character variety. We then explain the construction of the Goldman algebra which allows to compute the bracket of Wilson loops in terms of a Lie bracket on the vector space generated by free homotopy classes of loops. 
\subsection{The Atiyah-Bott-Goldman symplectic form}
In \cite{Atiyah:1983}, Atiyah and Bott introduced a symplectic structure on the character variety of representations of closed surface groups in compact Lie group, generalising Poincaré duality. This was later generalised by Goldman for non-compact groups in \cite{Goldman:1986,Goldman:1984} and connected to the Weil--Petersson Kähler form. 
If we identify the tangent space of $\Hn$ at $\rho$ with $H^1_\rho(\mk g)$, where $\mk g$ is the Lie algebra of $\sln$ then the symplectic form is given 
\begin{eqnarray}
\omega_S\left([A],[B]\right)=\int_S \tr(A\wedge B),\label{defABG}
\end{eqnarray}
where $A$ and $B$ are de Rham representatives of the cohomology classes $[A]$ and $[B]$. We denote by $\{ \cdotp, \cdotp\}_S$ the associated Poisson bracket, called the {\em Atiyah--Bott--Goldman (ABG) Poisson bracket} in the sequel, and $\cA(S)$ the Poisson algebra of smooth functions on $\Hn$.
In the next paragraph, we show how to compute the Atiyah--Bott--Goldman bracket, in the case of $\sln$, for the Wilson loops that we introduced in the previous section.
\subsection{Wilson loops and the Goldman algebra}
We describe in this paragraph the Goldman algebra and how it helps computing the ABG-Poisson bracket. Let $\rm C$ be the set of free homotopy class of closed curves on an oriented surface ${S}$. Let ${\mathbb Q}[\rm C]$ be the vector space generated by $\rm C$ over $\mathbb Q$. We extend linearly Wilson loops so that the map $\gamma\mapsto \ww(\gamma)$ is now a linear map from ${\mathbb Q}[\rm C]$ to $C^\infty(\Hn)$. 

Goldman introduced in \cite{Goldman:1986} a Lie bracket on ${\mathbb Q}[\rm C]$. We define it for two elements $\gamma_1$ and $\gamma_2$ of $\rm C\subset\mathbb Q[\rm C]$ and then extend it to $\mathbb Q[\rm C]$ linearly. We choose two curves representing $\gamma_1$ and $\gamma_2$, which we denote the same way. 

If $\gamma_1$ and $\gamma_2$ are two curves from $S^1$ to $S$, an {\em intersection point} is a pair $(a,b)$ in $S^1\times S^1$ so that $\gamma_1(a)=\gamma_2(b)$. By a slight abuse of language, we usually identify an intersection point $(a,b)$ with its image $x=\gamma_1(a)=\gamma_2(b)$. We further assume that $\gamma_1$ and $\gamma_2$ have transverse intersection points.
 
For every intersection point $x$, let $\ii _x$ be the local intersection number at $x$, let $\gamma_1\jl_x \gamma_2$ be the free homotopy class of the curve obtained by composing $\gamma_1$ and $\gamma_2$ in $\pi_1({S},x)$ and finally let
$$
\ii (\gamma_1,\gamma_2)\defeq \sum_{x\in \gamma_1\cap \gamma_2}\ii _x,
$$
be the global intersection number. 

\begin{definition}
The {\em Goldman bracket} of $\gamma_1$ and $\gamma_2$ is the element of $\mathbb Q[\rm C]$ defined by
\begin{eqnarray}
\{\gamma_1,\gamma_2\}&\defeq &\sum_{x\in \gamma_1\cap \gamma_2}\ii _x \cdotp \gamma_1\jl_x \gamma_2.\label{eq:fond}
\end{eqnarray}

\end{definition}

We illustrate in Picture \ref{fig:GoldBra}, the Goldman bracket of two curves.
\begin{figure}
 \centering
 \subfloat[Two curves]{\label{fig:gull}\includegraphics[width=0.5\textwidth]{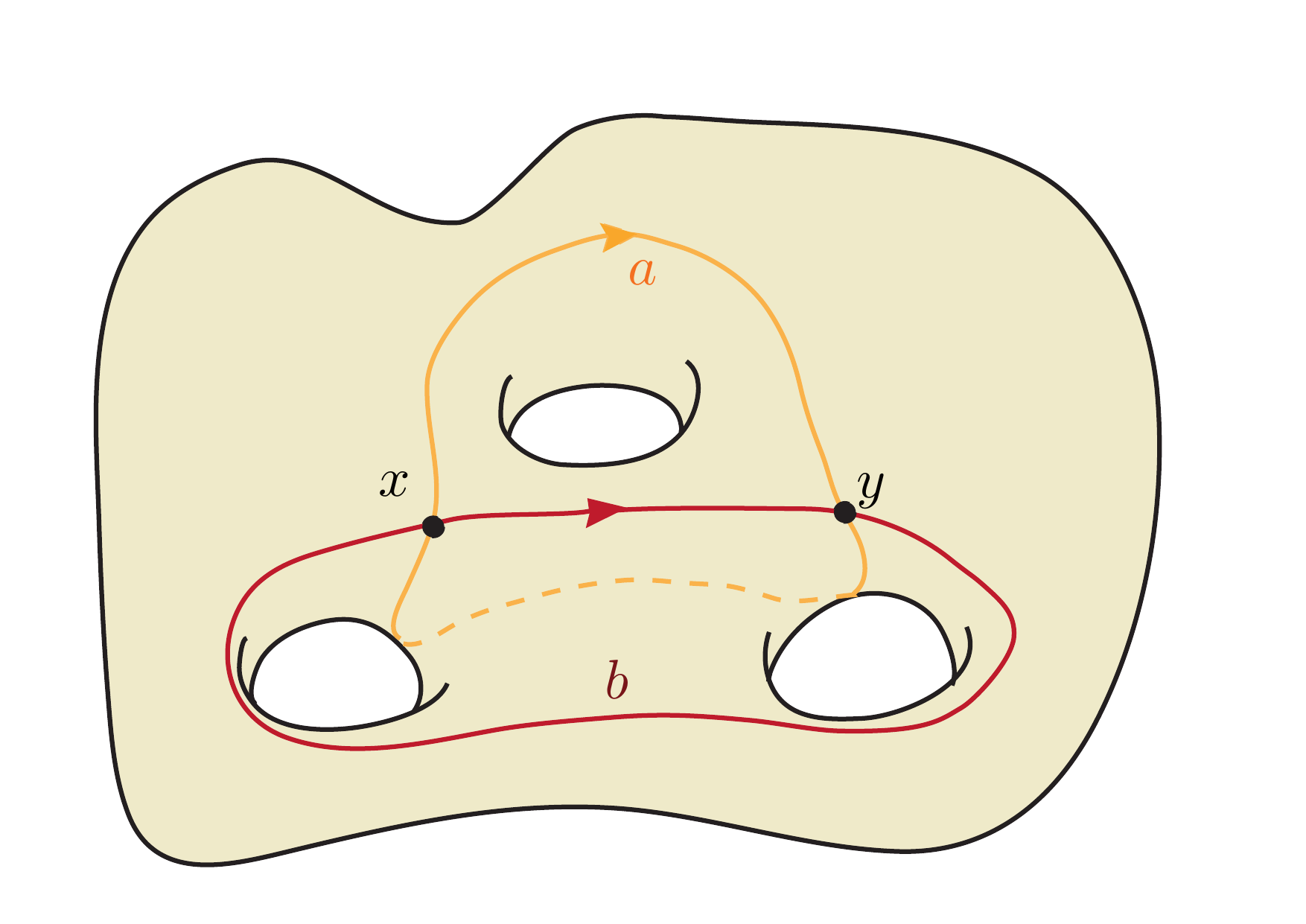}}        
 \subfloat[Their Goldman bracket]{\label{fig:tiger}\includegraphics[width=0.5\textwidth]{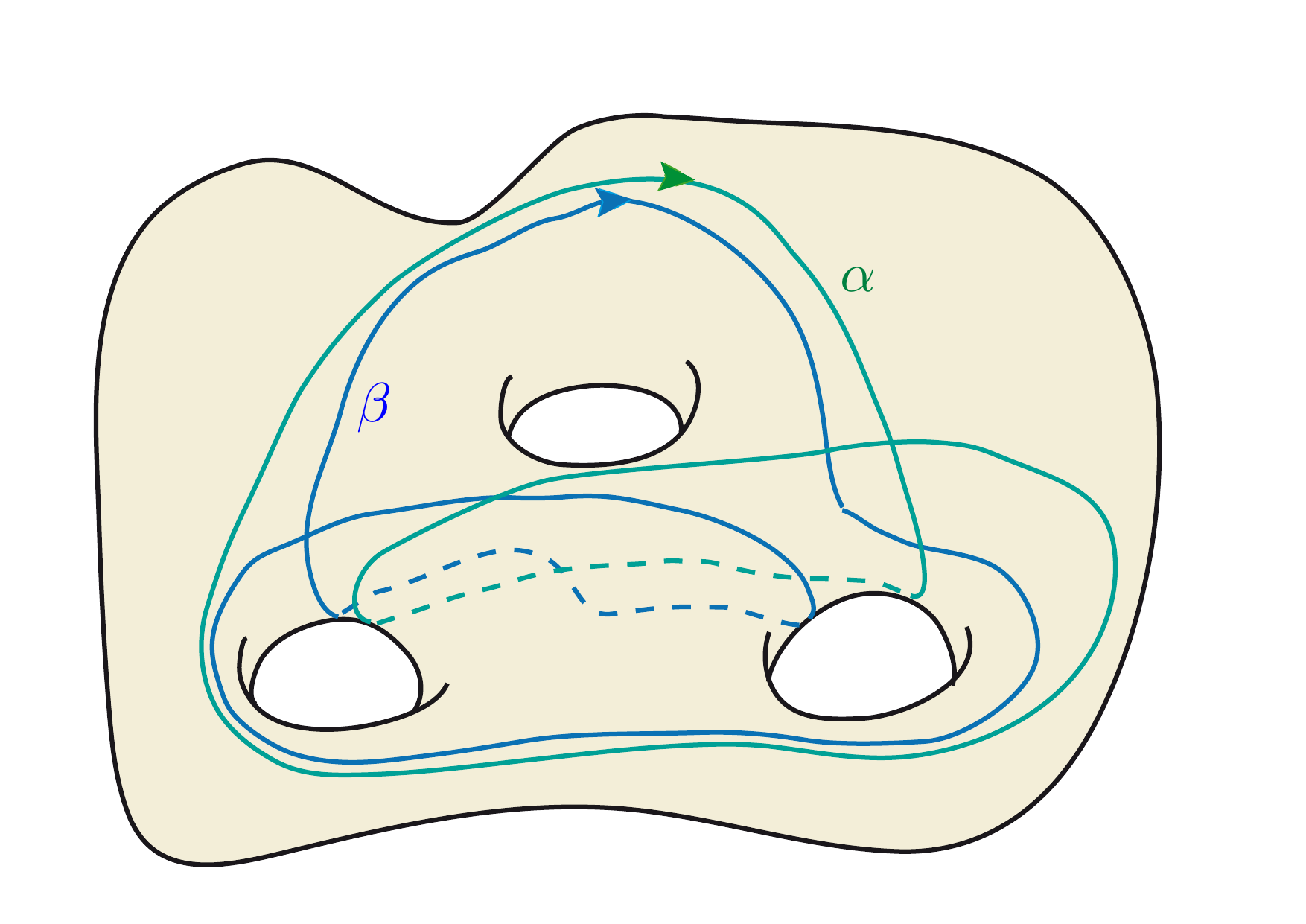}}
 \caption{$\{b,a\}=\alpha-\beta$}
 \label{fig:GoldBra}
\end{figure}
Goldman proved in \cite{Goldman:1986} that this bracket does not depend on the choice of representatives and is a Lie bracket. Moreover this bracket is related to the ABG-Poisson bracket as follows.
\begin{theorem}{\sc{[Goldman]}}
Let $\gamma_1$ and $\gamma_2$ be two loops on $S$. Then the ABG-Poisson bracket of the two corresponding Wilson loops in $\Hn$ is 
\begin{eqnarray}
\{\ww({\gamma_1}),\ww({\gamma_2})\}_S=\ww({\{\gamma_1,\gamma_2\}})- \frac{\ii (\gamma_1,\gamma_2)}{n} \ww({\gamma_1}) \cdotp\ww({\gamma_2})\, .\label{ABGw}
\end{eqnarray}
\end{theorem}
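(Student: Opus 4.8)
The statement to prove is Goldman's formula \eqref{ABGw} expressing the Atiyah--Bott--Goldman bracket of two Wilson loops $\ww(\gamma_1)$ and $\ww(\gamma_2)$ in terms of the Goldman Lie bracket on $\mathbb Q[\mathcal C]$, together with the correction term $-\tfrac{\ii(\gamma_1,\gamma_2)}{n}\ww(\gamma_1)\ww(\gamma_2)$ coming from the fact that we are in $\sln=\ms{PSL}_n(\mathbb R)$ rather than $\ms{GL}_n$. This is a theorem of Goldman, so I would structure the argument as a recollection of his proof adapted to the present normalization, making the $\mathfrak{sl}_n$ trace-correction explicit. The plan is to compute the Hamiltonian vector field of a Wilson loop, pair it against a de Rham representative of the second loop via the symplectic form \eqref{defABG}, and identify the resulting sum over intersection points.

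First I would set up the differential of the Wilson loop. Fixing a basepoint $x_0$ on $S$ and writing $\rho$ for the holonomy, one has for a tangent vector $[A]\in H^1_\rho(\mathfrak g)$ represented by a $\mathfrak g$-valued $1$-cocycle (equivalently a de Rham $\mathfrak g$-valued $1$-form $A$), the classical variation formula
\begin{equation}
\dd\ww(\gamma)([A]) \;=\; \tr\!\Big(\rho(\gamma)\!\int_{\gamma}\mathrm{Ad}(\rho(\gamma_{x_0\to s}))^{-1} A\Big),
\end{equation}
i.e. the trace pairs the ``variation of holonomy'' against $\rho(\gamma)$; concretely this is the $1$-form $u_\gamma$ on $S$ which, along the curve $\gamma$, is the $\mathfrak g$-valued $1$-form obtained by transporting $\rho(\gamma)$ (projected to its trace-free part, since we work in $\mathfrak{sl}_n$ with the trace form) back around the loop. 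The key point, following Goldman, is that the Hamiltonian vector field $X_{\ww(\gamma)}$ is then the cohomology class dual, under $\omega_S$, to the form supported near $\gamma$ built from the variation field $F_\gamma := \bu$ applied to $\rho(\gamma)$; when $\mathfrak g = \mathfrak{sl}_n$ one must subtract $\tfrac{1}{n}\tr(\rho(\gamma))\,\id$ to land in $\mathfrak{sl}_n$, and it is exactly this projection that produces the $-\tfrac{1}{n}\ii(\gamma_1,\gamma_2)\ww(\gamma_1)\ww(\gamma_2)$ term.

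Next I would evaluate $\{\ww(\gamma_1),\ww(\gamma_2)\}_S = \omega_S(X_{\ww(\gamma_1)},X_{\ww(\gamma_2)})$. By Poincar\'e duality the pairing of the two duals localizes at the intersection points $\gamma_1\cap\gamma_2$: at each transverse intersection $x$ with local index $\ii_x=\pm1$, the contribution is $\ii_x$ times $\tr$ of the product of the two transported holonomies, read in $\pi_1(S,x)$. Using that for matrices $\tr(AB)$ at the splice point equals the Wilson loop of the concatenated loop $\gamma_1\jl_x\gamma_2$, the trace-form contribution is $\sum_x \ii_x\,\ww(\gamma_1\jl_x\gamma_2) = \ww(\{\gamma_1,\gamma_2\})$, which is the first term. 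The $\tfrac1n$-corrections at each intersection point contribute $\ii_x\cdot(-\tfrac1n)\tr(\rho(\gamma_1))\tr(\rho(\gamma_2))$, summing to $-\tfrac{\ii(\gamma_1,\gamma_2)}{n}\ww(\gamma_1)\ww(\gamma_2)$. Independence of the choice of transverse representatives, and of the choice of de Rham representatives, follows from Goldman's invariance arguments (the bracket on $\mathbb Q[\mathcal C]$ is well defined, and $\omega_S$ descends to cohomology).

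The main obstacle is the careful bookkeeping of the localization: one must show that the wedge-product integral $\int_S\tr(A_1\wedge A_2)$ of the two Hamiltonian one-forms genuinely reduces to a finite sum over $\gamma_1\cap\gamma_2$ with the right signs, which requires choosing the de Rham representatives to be Thom forms (bump forms) concentrated in tubular neighborhoods of $\gamma_1$ and $\gamma_2$ and computing the transverse intersection pairing explicitly. Equivalently, one works on the group-cohomology side with the cup product $H^1_\rho(\mathfrak g)\times H^1_\rho(\mathfrak g)\to H^2_\rho(\mathfrak g)\cong\mathbb R$ and a simplicial model adapted to the curves. This is precisely the content of Goldman's \cite{Goldman:1986}; I would cite it for the geometric localization step and give in full only the $\mathfrak{sl}_n$ trace-normalization computation, since that is where the present statement differs (through the explicit $1/n$) from the $\ms{GL}_n$ case.
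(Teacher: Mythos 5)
The paper does not actually prove this statement; it quotes Goldman's theorem from \cite{Goldman:1986} without proof, exactly as you do for the geometric localization step. Your sketch is a faithful outline of Goldman's argument, and your identification of the $\mathfrak{sl}_n$ trace-projection $\rho(\gamma)\mapsto\rho(\gamma)-\tfrac1n\tr(\rho(\gamma))\,\id$ as the source of the $-\tfrac{1}{n}\ii(\gamma_1,\gamma_2)\ww(\gamma_1)\ww(\gamma_2)$ term is the correct computation: $\tr(A_0B_0)=\tr(AB)-\tfrac1n\tr(A)\tr(B)$ for the trace-free parts $A_0,B_0$, so there is nothing to add here.
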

We just stated Goldman theorem for the case of $\Hn$, but the theorem has a formulation in the general case of character varieties for semi-simple groups. A different proof can also be found in \cite{Labourie:2013ka}.

\section{Vanishing sequences and the main results}\label{sec:main}

In this section, we first recall the definition of the length functions on the character varieties, introduce the notion of a vanishing sequence of finite index subgroups of a surface group and state our main results relating the swapping algebra to the Goldman algebra. All these results will be proved in Section \ref{sec:gold}. Let as usual $${\rm I}_S: \mathcal B(\bgrf)\to C^\infty(\Hn),$$ denote the restriction of functions from $\mathbb B(\bgrf)$ to
$\Hn$. 

\subsection{Poisson brackets of length functions}

We explain in this section our results concerning length functions (See Paragraph \ref{def:lf} for notations and definitions).
Our first result relates the Goldman and the swapping Poisson bracket.

\begin{theorem}\label{theo:braleng}
Let $\gamma$ and $\eta$ be two geodesics with at most one intersection point, then we have 
$$
\lim_{n\to\infty}{\rm I}_S(\{\hat\ell_{\gamma^n}(y),\hat\ell_{\eta^n}(y)\})=\frac{1}{4}\{\ell_{\gamma},\ell_{\eta}\}_S\, .
$$
\end{theorem}

In the course of the proof of this result, we prove the following result of independent interest which is an extension of Wolpert Formula \cite{Wolpert:1983td,Wolpert:1982eo}
\begin{theorem}{\sc [Generalised Wolpert Formula]}\label{theo:genWolp}.

Let $\gamma$ and $\eta$ two closed geodesics with a unique intersection point then the Goldman bracket of the two length functions $\ell_\gamma$ and $\ell_\eta$ seen as functions on the Hitchin component is 
\begin{equation}
\{\ell_\gamma,\ell_\eta\}_S=\ii(\gamma,\eta)\sum_{\epsilon,\epsilon^\prime\in\{-1,1\}}\epsilon\epsilon^\prime.\tw(\gamma^{\epsilon}.\eta^{\epsilon^\prime}),
\end{equation}
where we recall that
$$
\tw(\xi,\zeta)(\rho)=\bb_\rho(\xi^+,\zeta^+,\zeta^-,\xi^-).
$$
\end{theorem}

We prove these two results in Paragraph \ref{proof-length}

\subsection{Poisson brackets of multifractions}

We now relate in general the swapping bracket and the Goldman bracket. Our result can be described by saying that the swapping bracket is an inverse limit (with respect to sequences of covering) of the Goldman bracket, or in other words that the swapping racket is a universal (in genus) Goldman bracket.

\subsubsection{Vanishing sequences}\label{par:vaseq}

We now assume that $S$ is equipped with an auxiliary hyperbolic metric. Let $\tilde S$ be the universal cover of $S$ so that $S=\tilde S/\pi_1(S)$. For any $\gamma$ in $\pi_1(S)$, we denote by $\tilde\gamma$ its axis in $\tilde S$ and $\langle\gamma\rangle$ the cyclic subgroup that it generates.
Recall that we say that two elements $\gamma$ and $\eta$ of $\grf$ are {\em coprime} if $\langle\gamma\rangle\cap\langle\eta\rangle=\{1\}$. 

Let $\seq{\Gamma}$ be a sequence of nested finite index subgroups of $\Gamma_0\defeq \grf$. Then let $S_n\defeq \tilde S/\Gamma_n$. For any $\gamma\in\Gamma$ let $\langle\gamma\rangle_n\defeq \langle\gamma\rangle\cap\Gamma_n$. Finally, let $\pi_n$ be the projection from $\tilde S$ to $S_n$ and let $\tilde \gamma_n\defeq \pi_n(\tilde\gamma)$.

\begin{definition}
Let $\seq{\Gamma}$ be a sequence of nested finite index normal subgroups of $\Gamma_0\defeq \grf$. We say that $\seq{\Gamma}$ is a {\em vanishing sequence} 
if for all $\gamma$ and $\eta$ in $\grf$, for any set $H$, invariant by left multiplication by $\gamma$ and right multiplication by $\eta$, whose projection in $\langle\eta\rangle\backslash\grf/\langle\gamma\rangle$ is finite, there exists $n_0$, such that for all $n>n_0$, $H\cap\Gamma_n\subset\langle\eta\rangle.\langle\gamma\rangle$.
\end{definition}

We shall use freely the following immediate consequence
\begin{proposition}\label{VSbis} Let $\seq{\Gamma}$ be a {\em vanishing sequence} with $\Gamma_0=\grf$.
For any $\eta$ and $\gamma$ in $\grf$, for any finite subset $H_0$ of $\grf$ so that $H_0\cap\left(\langle\eta\rangle \cdotp\langle\gamma\rangle\right)=\emptyset$, there exists $p_0$ so that for all $p>p_0$, then  
$$H_0\cap\left(\langle\eta\rangle \cdotp\Gamma_p \cdotp\langle\gamma\rangle\right)=\emptyset.$$
\end{proposition}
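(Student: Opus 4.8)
The goal is to deduce Proposition \ref{VSbis} from the definition of a vanishing sequence. First I would unwind the statement: we are given $\eta$, $\gamma$ in $\Gamma$ and a finite subset $H_0$ of $\Gamma$ which is disjoint from the double coset union $\langle\eta\rangle.\langle\gamma\rangle$, and we want to conclude that for $p$ large the set $\langle\eta\rangle.\Gamma_p.\langle\gamma\rangle$ is disjoint from (the appropriate saturation of) $H_0$. Note there is a typo in the statement --- the ``$H$'' appearing in the conclusion should be the $\langle\eta\rangle$-left, $\langle\gamma\rangle$-right invariant set generated by $H_0$, call it $H := \langle\eta\rangle.H_0.\langle\gamma\rangle$. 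Since $H_0$ is finite, the image of $H$ in the double coset space $\langle\eta\rangle\backslash\Gamma/\langle\gamma\rangle$ is finite, so $H$ is an admissible ``$H$'' in the sense of the definition of a vanishing sequence.

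The plan is then to argue by contradiction, or rather contrapositively, by applying the definition to a cleverly chosen invariant set. Suppose the conclusion fails: then there is a subsequence $p_k \to \infty$ and elements $h_k \in H \cap (\langle\eta\rangle.\Gamma_{p_k}.\langle\gamma\rangle)$. Write $h_k = a_k g_k b_k$ with $a_k \in \langle\eta\rangle$, $g_k \in \Gamma_{p_k}$, $b_k \in \langle\gamma\rangle$; then $g_k = a_k^{-1} h_k b_k^{-1} \in \langle\eta\rangle.H.\langle\gamma\rangle = H$. So $g_k \in H \cap \Gamma_{p_k}$. Now apply the defining property of the vanishing sequence to the set $H$ (which is $\langle\eta\rangle$-left and $\langle\gamma\rangle$-right invariant with finite double-coset image): there is $n_0$ such that for $n > n_0$, $H \cap \Gamma_n \subset \langle\eta\rangle.\langle\gamma\rangle$. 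Hence for $p_k > n_0$ we get $g_k \in \langle\eta\rangle.\langle\gamma\rangle$, so $h_k = a_k g_k b_k \in \langle\eta\rangle.\langle\gamma\rangle.\langle\gamma\rangle = \langle\eta\rangle.\langle\gamma\rangle$. But $h_k \in H = \langle\eta\rangle.H_0.\langle\gamma\rangle$, and one checks (using that $H_0$ is disjoint from $\langle\eta\rangle.\langle\gamma\rangle$, which is itself bi-invariant) that $\langle\eta\rangle.H_0.\langle\gamma\rangle$ is disjoint from $\langle\eta\rangle.\langle\gamma\rangle$ --- a contradiction. Therefore for all $p$ sufficiently large, $H \cap (\langle\eta\rangle.\Gamma_p.\langle\gamma\rangle) = \emptyset$.

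The only step requiring a small verification is the last one, that $H_0 \cap \langle\eta\rangle.\langle\gamma\rangle = \emptyset$ implies $\langle\eta\rangle.H_0.\langle\gamma\rangle \cap \langle\eta\rangle.\langle\gamma\rangle = \emptyset$: this is immediate because $\langle\eta\rangle.\langle\gamma\rangle$ is invariant under left multiplication by $\langle\eta\rangle$ and right multiplication by $\langle\gamma\rangle$, so if some $a h_0 b \in \langle\eta\rangle.\langle\gamma\rangle$ with $a \in \langle\eta\rangle$, $b \in \langle\gamma\rangle$, $h_0 \in H_0$, then $h_0 = a^{-1}(a h_0 b) b^{-1} \in \langle\eta\rangle.\langle\gamma\rangle$, contradicting disjointness. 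I do not expect any serious obstacle here; the proposition is genuinely an ``immediate consequence'' once one matches the quantifiers correctly and identifies the right bi-invariant set $H$ to feed into the definition. The one subtlety worth stating explicitly in the write-up is the identification of the set ``$H$'' in the conclusion of Proposition \ref{VSbis} with $\langle\eta\rangle.H_0.\langle\gamma\rangle$, and the observation that finiteness of $H_0$ is exactly what guarantees $H$ has finite image in the double coset space, which is the hypothesis needed to invoke the vanishing property.
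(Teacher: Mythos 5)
Your proof is correct. The paper gives no proof at all---it just labels the proposition an ``immediate consequence'' of the definition of a vanishing sequence---and your argument (pass to the bi-invariant saturation $H=\langle\eta\rangle H_0\langle\gamma\rangle$, apply the vanishing-sequence property to get $H\cap\Gamma_n\subset\langle\eta\rangle\langle\gamma\rangle$ for large $n$, then use bi-invariance of $H$ to reduce membership in $\langle\eta\rangle\Gamma_p\langle\gamma\rangle$ to membership in $\Gamma_p$) is exactly the intended one; you also read the two typographical slips (``$H$'' for $H_0$ in the conclusion, and ``$H\subset\Gamma_n\cap\dots$'' for ``$H\cap\Gamma_n\subset\dots$'' in the definition) the way the paper's later usage confirms they should be read, and the only inessential detour is the subsequence/contradiction wrapper, since the conclusion follows directly for every $p>n_0$.
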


We prove in Appendix \ref{sec:vanishexist} that vanishing sequences exist. This is an immediate consequence of a result by G.~Niblo \cite{Niblo:1992uy}.

\subsubsection{Sequences of subgroups and limits}

Let $\PP$ be the subset of $\bgrf$ given by the end points of periodic geodesics. Let $\ms G$ be the set of pairs of points $\gamma=(\gamma^-,\gamma^+)$ in $\PP$ which correspond to fixed points of by an element of the group $\partial_\infty(\pi_1({S})$. Observe that given any finite index subgroup $\Gamma$ of $\pi_1(S)$, the set $\ms G$ is in bijection with the set of primitive elements of $\Gamma$.

In the sequel, we shall freely identify elements of $\ms G$ with primitive elements in $\grf$ or any of its finite index subgroup.

We associate to a sequence $\sigma=\seq{\Gamma}$ of finite index subgroups of $\pi_1({S})$ the inverse limit ${S}_\sigma$ of $\seq{S_m\defeq \tilde{S}/\Gamma}$, where $\tilde S$ is the universal cover of $S$.

Observe that we have a map ${\ms I}$ from $\mathcal B(\PP)$ to $\cA({S}_{\sigma})$ which by definition is the projective  limit of $\{\cA (S_m)\}_{m\in\mathbb N}$.

\begin{definition}
Let $\seq{g}$ be a sequence of functions, so that $g_m\in \cA({S}_m)$, we say that $\seq{g}$ {\em converges} to the function $h$ in $\cA({S}_{\sigma})$ and write
$$
\lim_{m\rightarrow\infty}g_m=h,
$$
if for all $p$
$$
\lim_{n\rightarrow\infty}{\ms I}_{{S}_p}(g_n)={\ms I}_{{S}_p}(h),
$$
where ${\ms I}_{{S}_p}$ is the restriction with value in $\cA(S_p)$. 
\end{definition}

\subsubsection{Poisson brackets of multifractions}

The following result explains that the algebra of multifractions is an inverse limit of Goldman algebras with respect to vanishing sequences.

\begin{theorem}\label{vanish-sequence} Let $\seq{\Gamma}$ be a vanishing sequence of subgroups of $\pi_1(S)$. Let $\PP\subset\bgrf$ be the set of end points of geodesics. Let $b_0$ and $b_1$ be two multifractions in $\mathcal B(\PP)$. Then we have
$$
\lim_{n\rightarrow\infty}\{{\rm I}(b_0),{\rm I}(b_1)\}_{{S}_n}={\ms I}\left(\{b_0,b_1\}_{W}\right).
$$
\end{theorem}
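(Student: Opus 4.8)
The plan is to reduce the general statement to the case of elementary functions and ultimately to the case of elementary functions of orders $2$ and $3$, using the multiplicativity properties of the swapping bracket and the density of these in the multi fraction algebra. First I would invoke Corollary \ref{ElemGen}: every multi fraction over $\PP$ is a quotient of products of elementary functions of order $2$ and $3$. Since both brackets $\{\cdot,\cdot\}_{S_n}$ and $\{\cdot,\cdot\}_W$ satisfy the Leibniz rule, and since the map ${\ms I}_{S_p}$ is a morphism of commutative algebras, it suffices to prove the convergence statement when $b_0$ and $b_1$ are elementary functions $\tw(\gamma_0,\dots,\gamma_p)$ and $\tw(\eta_0,\dots,\eta_q)$, and even more: using Proposition \ref{prop:braelem} which expresses $\{\tw_\gamma,\tw_\eta\}_W$ as a combination of linking numbers ${\rm a}_{i,j},{\rm b}_{i,j},{\rm c}_{i,j},{\rm d}_{i,j}$ times elementary functions, one sees the whole question is governed by the pairwise combinatorics of the $\gamma_i$ and $\eta_j$. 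So the core is: for two elements $\gamma,\eta$ of $\Gamma$, understand $\lim_n {\ms I}_{S_p}\left(\{\tw(\dots),\tw(\dots)\}_{S_n}\right)$.

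Next I would use Equation \eqref{elem-limi}, $\tw(\gamma_1,\dots,\gamma_k)=\lim_{m}\ww(\gamma_1^m\cdots\gamma_k^m)/(\ww(\gamma_1^m)\cdots\ww(\gamma_k^m))$, which holds uniformly on compacta of $\Hn$ (Proposition \ref{Asym2}), to replace elementary functions by normalized products of Wilson loops. For these, Goldman's theorem \eqref{ABGw} computes the bracket $\{\ww(\alpha),\ww(\beta)\}_{S_n}$ on the surface $S_n$ in terms of intersection points of the geodesic representatives of $\alpha$ and $\beta$ on $S_n$. The key geometric input is the vanishing sequence hypothesis: by Proposition \ref{VSbis} and the definition of vanishing sequences, when one lifts $\gamma$ and $\eta$ to $S_n$ for $n$ large, the intersection points of $\tilde\gamma_n$ with $\tilde\eta_n$ — equivalently, the double cosets $\langle\eta\rangle h\langle\gamma\rangle$ with $h\in\Gamma_n$ contributing an intersection — become "minimal", i.e. only the unavoidable intersections survive and each contributes its local linking number, which matches the linking number $[\gamma^+\gamma^-,\eta^+\eta^-]$ appearing in Proposition \ref{prop:braelem}. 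Concretely: an intersection point of the axes on $S_n$ corresponds to a pair of lifts $\tilde\gamma$, $h\tilde\eta$ in $\tilde S$ that cross, modulo $\Gamma_n\cap(\langle\eta\rangle\langle\gamma\rangle)$; the vanishing condition forces, for $n$ large, that the only such $h$ (up to the relevant cosets) are the ones producing the "geometric" linking, and the curve $\gamma\jl_x\eta$ obtained by surgery at that intersection is (a power-normalized version of) the concatenation whose attractive/repulsive endpoints give exactly $\gamma^{\epsilon}\eta^{\epsilon'}$-type elements, hence whose Wilson-loop-normalized limit is the elementary function $\tw$ appearing on the right-hand side of Proposition \ref{prop:braelem}.

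The main obstacle — and where the real work lies — is the interchange of the three limits: the limit $m\to\infty$ defining the elementary functions as normalized Wilson loops, the limit $n\to\infty$ over the vanishing sequence, and the evaluation/restriction ${\ms I}_{S_p}$. One must show uniform control so that $\lim_n\lim_m = \lim_m\lim_n$ is legitimate on each fixed $\cA(S_p)$; the uniform convergence on compacta from Proposition \ref{Asym2} together with the uniform girth bound of Proposition \ref{girth} provides the needed equicontinuity, but assembling this carefully with Goldman's formula \eqref{ABGw} — whose error term $-\frac{\ii(\gamma_1,\gamma_2)}{n}\ww(\gamma_1)\ww(\gamma_2)$ must be tracked and shown to produce precisely the $\alpha$-independent correction already built into the multi fraction bracket — is the delicate point. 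A secondary subtlety is bookkeeping the local linking numbers $\ii_x$ on $S_n$ against the abstract linking numbers $[\cdot,\cdot]$ on $\bgrf$: one must verify that the combinatorial sum over intersection points on $S_n$, after the vanishing hypothesis has discarded the spurious terms, reorganizes exactly into the double sum over $i,j$ of Proposition \ref{prop:braelem}, including signs. I would organize the proof so that Proposition \ref{prop:braelem} is the target identity, Goldman's formula applied on $S_n$ is the starting point, and the vanishing sequence hypothesis is the bridge that kills every term not present in Proposition \ref{prop:braelem} while sending each surviving term to its claimed limit.
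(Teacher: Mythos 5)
Your high-level plan coincides with the paper's: reduce to elementary functions via Corollary \ref{ElemGen}, take Proposition \ref{prop:braelem} as the target, represent elementary functions as limits of normalized Wilson loops (Equation \eqref{elem-limi}, Proposition \ref{Asym2}), apply Goldman's formula \eqref{ABGw} on $S_n$, and let the vanishing-sequence hypothesis do the combinatorial cleanup. So you have correctly identified the scaffolding. However, you flag the interchange of the three limits as ``where the real work lies'' and then do not resolve it, and the paper's resolution is a substantive departure from what you suggest. The paper does \emph{not} swap limits. Instead, it proves a single uniform-in-$p$ asymptotic identity (Corollary \ref{apcomp}): for $\Gamma_k$ satisfying the Good Position Hypothesis and $N$-nice with respect to all pairs $(\gamma_i,\eta_j)$, the normalized Goldman bracket $\{\tw_\gamma,\tw_\eta\}_{S_k}/(\tw_\gamma\tw_\eta)$ equals the swapping-bracket expression of Proposition \ref{prop:braelem} plus an explicit error $K(\gir_k(\rho)+\gir_0(\rho)^N)$, with $K$ locally bounded. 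This is obtained by a chain of precise estimates (Propositions \ref{pro:fpm}, \ref{asymconj}, \ref{AsymBP}, \ref{AsymBP0}) rather than by a qualitative equicontinuity argument, and the Goldman correction term $-\frac{\ii}{n}\ww\ww$ disappears automatically in the double difference $A_p$ of Proposition \ref{vacote}, not by matching it to an ``$\alpha$-independent correction.''

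The second gap is the translation of local intersection data on $S_n$ into the linking numbers $[\gamma^+\gamma^-,\eta^+\eta^-]$. You assert this translation without a mechanism, whereas the paper's Section on bouquets supplies the decisive ingredient: the Product Formula (Propositions \ref{prodform}, \ref{prodform2}) written for a bouquet in \emph{homotopically good position}, and Proposition \ref{SettingArcGoodPosition}, which constructs such bouquets from geodesic arcs in $\tilde S$ and proves the identities like ${\rm f}_{i,j}=[\gamma_i^-\gamma_i^+,\eta_j^-\eta_j^+]$. A further subtlety you do not address is the $\frac{1}{2}$ values of the linking number (Equation \eqref{1/2}) when axes share an endpoint: the paper handles these by constructing a ``left'' and a ``right'' bouquet $\mathcal C^L$, $\mathcal C^R$ and averaging their intersection counts. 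Without the product formula for bouquets, the good-position construction, and the left/right averaging, the bookkeeping you describe in your last paragraph cannot actually be carried out, and the claim that ``only the unavoidable intersections survive'' remains a heuristic. In short: right roadmap, but the two places you single out as ``the delicate point'' and ``a secondary subtlety'' are exactly where the proof's content lies, and neither is filled in.
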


We prove this result in Paragraph \ref{proof:vanish-sequence}.

\section{Product formulas and bouquet in good position}

In this section, we wish to describe the Goldman bracket of curves which are compositions of many arcs. We shall call such a description a {\em product formula} and produce several instances of such formulas. This section is part of the technical core of this article.

The first formula -- see Proposition \ref{prodform} -- deals with a rather general situation computing the Goldman bracket of curves which are compositions of many arcs. Then, considering repetition, and using special collection of arcs called {\em bouquets in good positions} -- see Definition \ref{def:gp} -- we prove a refinement of the product formula in Proposition \ref{prodform2}. Proposition \ref{prodform2} is the first key result of this section.

Finally, in Proposition \ref{SettingArcGoodPosition}, we explain under which topological condition we can find bouquet in good position and compute the various intersection numbers involved in Proposition \ref{prodform2}. Proposition \ref{SettingArcGoodPosition} is the second key result of this section.

\subsection{An alternative formulation of the Goldman bracket} We first need to give an alternative description of the Goldman bracket.

Let $\bar\gamma_1$ and $\bar\gamma_2$ be two arcs passing through a base point $x_0$. For any point $x$ in $\bar\gamma_i$, let $a_i(x)$ be the path along $\bar\gamma_i$ joining $x_0$ to $x$. 
\begin{definition}{\sc[Intersection loops]}
Following this notation, for any $x\in\bar\gamma_1\cap\bar\gamma_2$, the homotopy class 
$$c_x(\bar\gamma_1,\bar\gamma_2)\defeq a_1(x). a_2(x)^{-1}\in\pi_1(S,x_0)$$ is called an {\em intersection loop} at $x$.
-- see Picture \ref{fig:InterLoop}.
\end{definition}

\begin{figure}[h]
 \begin{center}
  \includegraphics[width=0.6\textwidth]{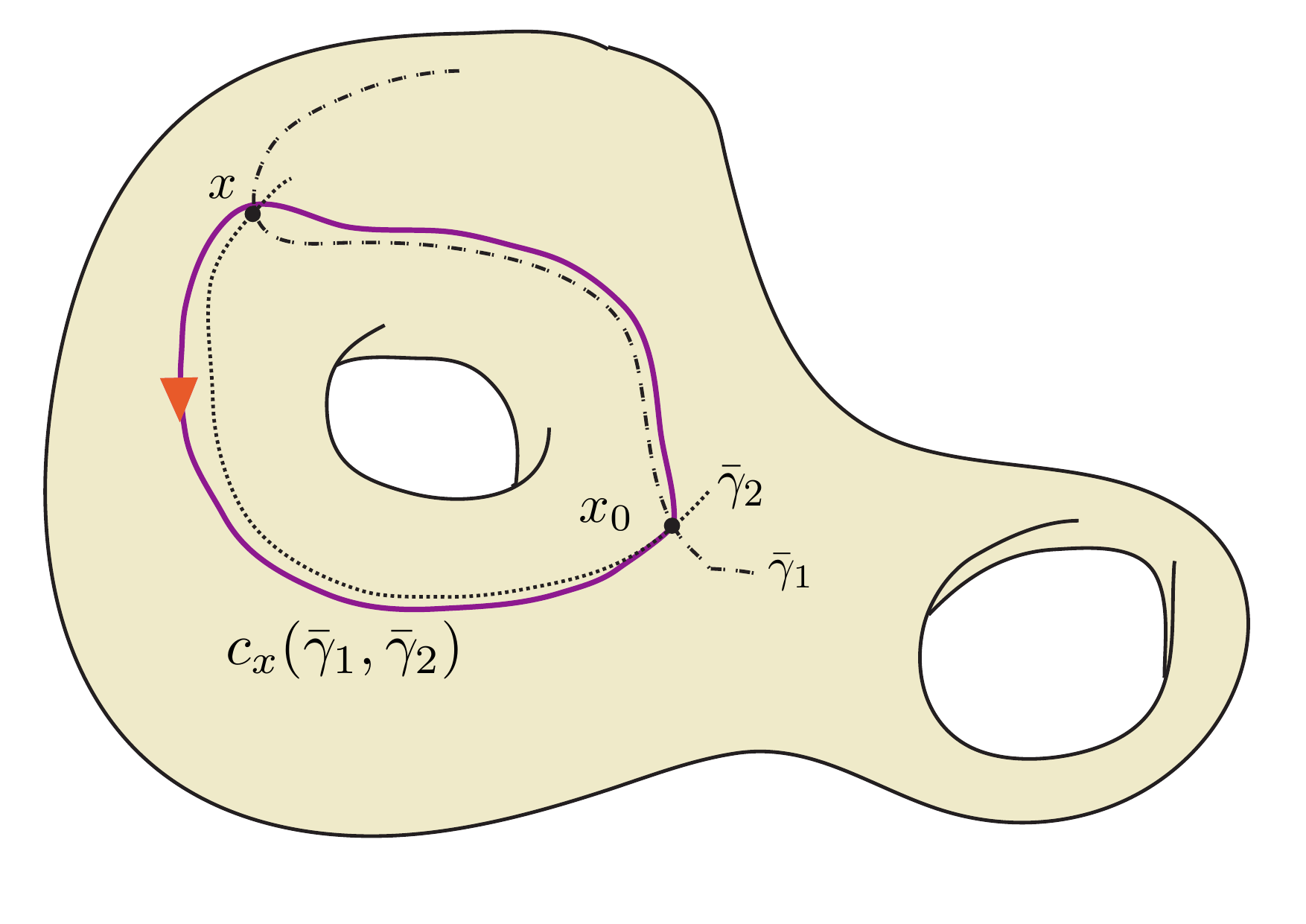}
 \end{center}
 \caption{Intersection loop}
 \label{fig:InterLoop}
\end{figure}
The goal of this paragraph is the following proposition
\begin{proposition}\label{alt-Goldbracket]}
Let $\gamma_1$ and $\gamma_2$ be two free homotopy classes of loops represented by curves $\bar\gamma_1$ and $\bar\gamma_2$ passing though $x_0$.
Then, the Goldman bracket in $\mathbb Q[\rm C]$ of the associated loops is given using intersection loops by
\begin{eqnarray}
\{\gamma_1,\gamma_2\}_S=\sum_{x\in \gamma_1\cap\gamma_2}{\ii}_x\bar\gamma_1\cdotp c_x\cdotp\bar\gamma_2. c_x^{-1}.
\end{eqnarray}
\end{proposition}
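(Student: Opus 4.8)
The plan is to translate the standard Goldman bracket, as defined in Equation \eqref{eq:fond} using the ``loop composition'' $\gamma_1 \jl_x \gamma_2$ in $\pi_1(S,x)$, into the language of loops based at a single common point $x_0$. The only thing to verify is that at each intersection point $x$, the free homotopy class $\gamma_1 \jl_x \gamma_2 \in \pi_1(S,x)$ agrees with the free homotopy class of $\bar\gamma_1 \cdot c_x \cdot \bar\gamma_2 \cdot c_x^{-1} \in \pi_1(S,x_0)$, and that the local intersection indices ${\ii}_x$ match. The index part is immediate since it is a purely local datum at $x$, unchanged by any basepoint bookkeeping; so the content is the comparison of the two based-loop recipes.

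First I would fix an intersection point $x \in \gamma_1 \cap \gamma_2$ and recall the definitions. Writing $\bar\gamma_i$ as a loop based at $x_0$, we have chosen (as in the statement) paths $a_i(x)$ along $\gamma_i$ from $x_0$ to $x$, so that $c_x = a_1(x) \cdot a_2(x)^{-1} \in \pi_1(S,x_0)$. Conjugating by $a_i(x)$ identifies $\pi_1(S,x_0)$ with $\pi_1(S,x)$; under $a_1(x)$, the class of $\bar\gamma_1$ is sent to the loop $\delta_1 := a_1(x)^{-1} \cdot \bar\gamma_1 \cdot a_1(x)$ based at $x$, which is just $\gamma_1$ read starting and ending at $x$, i.e. a representative of the conjugacy class of $\gamma_1$ in $\pi_1(S,x)$; similarly for $\delta_2 := a_2(x)^{-1} \cdot \bar\gamma_2 \cdot a_2(x)$. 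By Goldman's definition, $\gamma_1 \jl_x \gamma_2$ is the free homotopy class of $\delta_1 \cdot \delta_2 \in \pi_1(S,x)$.

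Next I would simply compute. Transporting $\delta_1 \cdot \delta_2$ back to the basepoint $x_0$ via $a_1(x)$ (this does not change the free homotopy class) gives
\begin{eqnarray*}
a_1(x) \cdot \delta_1 \cdot \delta_2 \cdot a_1(x)^{-1}
&=& \bar\gamma_1 \cdot a_1(x) \cdot \delta_2 \cdot a_1(x)^{-1} \\
&=& \bar\gamma_1 \cdot \big(a_1(x) \cdot a_2(x)^{-1}\big)\cdot \bar\gamma_2 \cdot \big(a_2(x) \cdot a_1(x)^{-1}\big) \\
&=& \bar\gamma_1 \cdot c_x \cdot \bar\gamma_2 \cdot c_x^{-1},
\end{eqnarray*}
using $c_x = a_1(x) \cdot a_2(x)^{-1}$ in the last line. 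So the free homotopy classes coincide term by term, and summing over $x \in \gamma_1 \cap \gamma_2$ with the weights ${\ii}(x)$ gives the claimed formula, which is just Equation \eqref{eq:fond} rewritten. I would also note that the result is independent of the auxiliary choices of the paths $a_i(x)$ (changing them changes $c_x$ but only by a conjugation that is absorbed into the free homotopy class) and of the representatives $\bar\gamma_i$, which is exactly the invariance content already in Goldman's theorem.

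There is no real obstacle here: the statement is a bookkeeping reformulation, and the only mild care needed is to keep straight which conjugating path one uses and to remember that $\gamma_1 \jl_x \gamma_2$ and all the expressions above are being taken as \emph{free} homotopy classes, so conjugations are harmless. If anything, the subtlety worth a sentence is that transversality of $\bar\gamma_1$ and $\bar\gamma_2$ is assumed so that the intersection points $x$ and their local indices ${\ii}(x)$ are well-defined, and that one should pass to general position first exactly as in Goldman's original setup.
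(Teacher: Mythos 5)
Your proof is correct and takes essentially the same approach as the paper: you reduce the claim to the identity $\gamma_1 \jl_x \gamma_2 = \bar\gamma_1\, c_x\, \bar\gamma_2\, c_x^{-1}$ as free homotopy classes, and verify it by conjugating through the paths $a_i(x)$ linking the basepoints $x_0$ and $x$. The paper's own proof performs the identical algebraic manipulation, merely starting from $\pi_1(S,x)$ and conjugating by $a_1^{-1}$ rather than, as you do, starting from the $\delta_i$ at $x$ and conjugating by $a_1$; these are the same computation read in opposite directions.
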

This proposition is an immediate consequence of the following
\begin{proposition}
Let $\gamma_1$ and $\gamma_2$ be two loops passing though $x_0$. Then for every $x\in\gamma_1\cap\gamma_2$, we have
$$
\gamma_1\jl_x\gamma_2=\gamma_1.c_x(\gamma_1,\gamma_2)\gamma_2.c_x(\gamma_1,\gamma_2)^{-1},
$$
as free homotopy classes of curves
\end{proposition}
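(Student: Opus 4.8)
The statement to be proved is the identity of free homotopy classes
$$
\gamma_1\jl_x\gamma_2=\gamma_1.c_x(\gamma_1,\gamma_2).\gamma_2.c_x(\gamma_1,\gamma_2)^{-1},
$$
where the left hand side is the class defined by Goldman's original recipe: slide both loops to the intersection point $x$ and multiply them in $\pi_1(S,x)$. The plan is to unwind both sides in $\pi_1(S,x_0)$ and observe they are conjugate — hence equal as free homotopy classes. First I would fix notation: write $a_i=a_i(x)$ for the path in $\bar\gamma_i$ from $x_0$ to $x$, and write $\bar\gamma_i=a_i.\delta_i.a_i^{-1}$ where $\delta_i$ is precisely the loop based at $x$ obtained by going around $\gamma_i$ starting and ending at $x$; this $\delta_i$ is the element of $\pi_1(S,x)$ that enters Goldman's definition, so $\gamma_1\jl_x\gamma_2$ is the free homotopy class of $\delta_1.\delta_2\in\pi_1(S,x)$.

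Next I would compute the right hand side. Since $c_x=c_x(\gamma_1,\gamma_2)=a_1.a_2^{-1}$, substitute $\bar\gamma_i=a_i\delta_i a_i^{-1}$:
$$
\bar\gamma_1.c_x.\bar\gamma_2.c_x^{-1}=(a_1\delta_1 a_1^{-1})(a_1 a_2^{-1})(a_2\delta_2 a_2^{-1})(a_2 a_1^{-1}).
$$
The cancellations $a_1^{-1}a_1$, $a_2^{-1}a_2$ and $a_2^{-1}a_2$ collapse this to $a_1.\delta_1.\delta_2.a_1^{-1}$, which is the conjugate by $a_1$ of the element $\delta_1.\delta_2\in\pi_1(S,x)$ — in other words, exactly the class $\gamma_1\jl_x\gamma_2$ transported back to the base point $x_0$ along $a_1$. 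Since conjugate loops represent the same free homotopy class, this finishes the identification.

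Finally I should check the orientation/sign conventions match Goldman's: the ordering $\delta_1.\delta_2$ (first around $\gamma_1$, then $\gamma_2$) is the one used in \eqref{eq:fond}, and the local index ${\ii}_x$ is untouched by this rearrangement, so plugging into \eqref{eq:fond} gives Proposition \ref{alt-Goldbracket]} by summing over $x\in\gamma_1\cap\gamma_2$. The only genuinely delicate point is bookkeeping: one must be careful that the path $a_i(x)$ used to define $c_x$ is the \emph{same} path (not merely a homotopic one) used to write $\bar\gamma_i$ as $a_i\delta_i a_i^{-1}$, so that the cancellations above are literal cancellations of paths and no hidden loop is introduced; once that is set up the computation is a one-line telescoping and the proof is complete.
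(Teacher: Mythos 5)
Your proof is correct and is essentially the same as the paper's: both reduce the claim to the telescoping cancellation $a_1^{-1}a_1$, $a_2^{-1}a_2$ after substituting $c_x=a_1 a_2^{-1}$. The paper starts from $\gamma_1\jl_x\gamma_2=a_1^{-1}\gamma_1 a_1 a_2^{-1}\gamma_2 a_2$ and rewrites it as $a_1^{-1}(\gamma_1 c_x\gamma_2 c_x^{-1})a_1$, while you expand the right-hand side and contract to $a_1\delta_1\delta_2 a_1^{-1}$; these are the same computation read in opposite directions.
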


\begin{proof}
Let as before $a_i$ the arc along $\gamma_i$ joining $x_0$ to $x$ and $c_x=a_1.a_2^{-1}$, then
\begin{eqnarray*}
\gamma_1\jl_x\gamma_2&=&a_1^{-1} \gamma_1 a_1 a_2^{-1} \gamma_2 a_2\cr
&=&a_1^{-1} \gamma_1 c_x \gamma_2 a_2\cr
&=&a_1^{-1} \gamma_1 c_x \gamma_2 c_x^{-1} a_1
\end{eqnarray*}
Thus $\gamma_1\jl_x\gamma_2$ is freely homotopic to $\gamma_1 c_x \gamma_2c_x^{-1}$ 
\end{proof}

\subsection{The product formula}\label{pf1} We need to express the Goldman bracket of Wilson loops of curves consisting of many arcs. We work with the following data, see Figure \ref{fig:flower} for a partial drawing:
\begin{itemize}
\item Let $\xi_0,\ldots,\xi_q$ and $\zeta_0,\ldots,\zeta_{q'}$ be two tuples of arcs so that $A=\xi_0\ldots\xi_q$ and $B=\zeta_0\ldots\zeta_q$ are closed curves.
\item Assume furthermore that for all pairs $(i,j)$, $\xi_i$ and $\zeta_j$ have transverse intersections and do not intersect at their end points.
\item Let $u_i$ -- respectively $v_i$ -- be arcs joining a base point $x_0$ to the origin of $\xi_i$ -- respectively $\zeta_i$.
\end{itemize}
Let us introduce the following notations 
\begin{itemize}
\item for every $x\in \xi_i\cap\zeta_j$, let
$
c^{i,j}_x\defeq c_x(u_i\xi_i,v_j\zeta_j),
$
\item for any $\xi\in\pi_1(S)$, let 
$
I_{i,j}(\xi)\defeq \sum_{x\in\xi_i\cap\xi_j\mid \xi=c^{i,j}_x}\ii(x),
$
\item let us denote  $A_i\defeq u_i\xi_{i}\xi_{i+1}\ldots\xi_{i-1}u_i^{-1}$ and $B_j\defeq v_j\zeta_{j}\zeta_{j+1}\ldots\zeta_{j-1}v_{j}^{-1}$.
\end{itemize}

\begin{figure}[h]
 \begin{center}
  \includegraphics[width=0.7\textwidth]{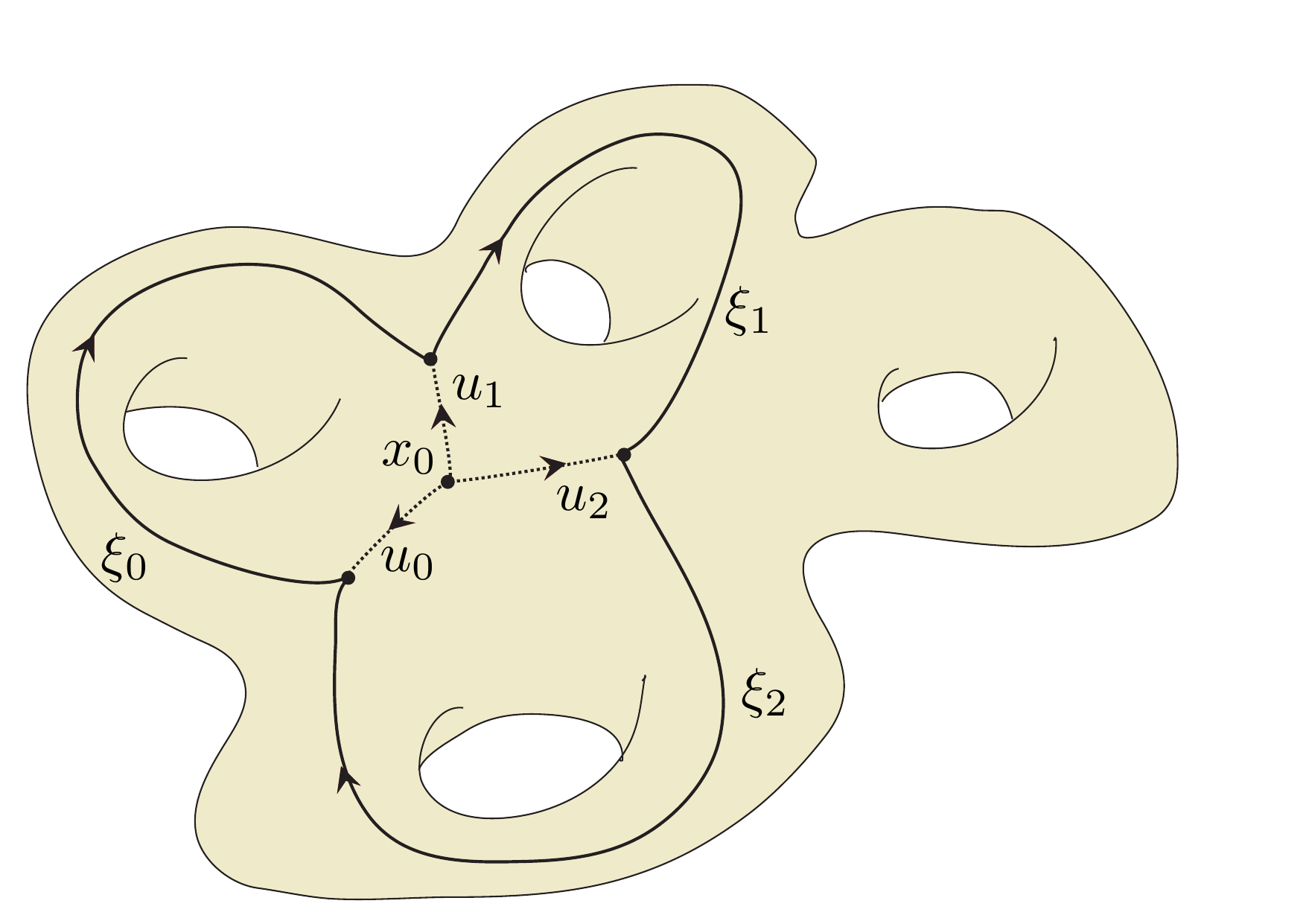}
 \end{center}
 \caption{Arcs $\xi_i$ and $u_i$}
 \label{fig:flower}
\end{figure}
\begin{proposition}{\sc[Product formula]}\label{prodform} Using the notations and assumptions described above, we have the following equality in $\mathbb Q[\rm C]$,
\begin{eqnarray}
\{A,B\}
&=&\sum_{\substack{0\leq i\leq q\\ 0\leq j\leq q'}}\left(\sum_{x\in \xi_i\cap\zeta_j}{\ii}_xA_i .c_x^{i,j}. B_j (c_x^{i,j})^{-1}\right)\label{eq:pf1}\\
&=&\sum_{\substack{0\leq i\leq q\\ 0\leq j\leq q'}}\left(\sum_{\xi\in \pi_1(S)}{I_{i,j}(\xi)}A_i .\xi. B_j \xi^{-1}\right)\, .\label{eq:pf1b}
\end{eqnarray}
\end{proposition}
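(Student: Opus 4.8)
\textbf{Plan for the proof of the Product Formula (Proposition~\ref{prodform}).}
The strategy is to reduce the statement to the alternative description of the Goldman bracket established in Proposition~\ref{alt-Goldbracket]}, and then to sort the resulting sum of intersection contributions according to which arc $\xi_i$ of $A$ and which arc $\zeta_j$ of $B$ contains the intersection point. First I would fix representatives of the free homotopy classes $A$ and $B$ by the concatenated curves $\xi_0\ldots\xi_q$ and $\zeta_0\ldots\zeta_{q'}$, all arranged to pass through the base point $x_0$ after inserting the connecting arcs $u_i$ and $v_j$; the hypotheses guarantee that the intersections are transverse and avoid the arc endpoints, so every point of $A\cap B$ lies in the interior of a unique pair $(\xi_i,\zeta_j)$. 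This gives a disjoint partition
\begin{equation}
A\cap B=\bigsqcup_{0\le i\le q,\ 0\le j\le q'}\bigl(\xi_i\cap\zeta_j\bigr),
\end{equation}
and the local linking numbers are unchanged by the partition.

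The key step is to identify, for a point $x\in\xi_i\cap\zeta_j$, the intersection loop $c_x(A,B)$ computed with respect to the \emph{whole} curves $A$ and $B$ with the loop $A_i\, c_x^{i,j}\, B_j\, (c_x^{i,j})^{-1}$ appearing on the right-hand side. Here the point is purely combinatorial: the arc along $A$ from $x_0$ to $x$ decomposes as ``go around $A$ up to the start of $\xi_i$, i.e.\ along $u_i$ after re-basing, then along $\xi_i$ up to $x$'', and similarly for $B$; writing this out and using that $A_i=u_i\xi_i\cdots\xi_{i-1}u_i^{-1}$ is just the curve $A$ re-based along $u_i$ (and likewise $B_j$), one checks by a cancellation of arcs exactly as in the proof of the preceding proposition that
\begin{equation}
A\jl_x B\ \simeq\ A_i\, c_x^{i,j}\, B_j\, (c_x^{i,j})^{-1}
\end{equation}
as free homotopy classes, where $c_x^{i,j}=c_x(u_i\xi_i,v_j\zeta_j)$. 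Substituting this into the formula of Proposition~\ref{alt-Goldbracket]} and regrouping the sum over $x\in A\cap B$ along the partition above yields~\eqref{eq:pf1}. Finally, Equation~\eqref{eq:pf1b} is obtained from~\eqref{eq:pf1} by collecting all intersection points $x\in\xi_i\cap\zeta_j$ with a common intersection loop $c_x^{i,j}=\xi$, which is exactly the definition of $I_{i,j}(\xi)$.

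The main obstacle I anticipate is the bookkeeping in the second step: one must be careful that the ``re-basing'' conjugations by $u_i$ and $v_j$ are consistent between the intersection loop $c_x^{i,j}$ (defined using the arcs $u_i\xi_i$ and $v_j\zeta_j$) and the re-based loops $A_i$, $B_j$, and that the freely-homotopy-class identity does not secretly depend on the choice of the connecting arcs $u_i,v_j$ — it does not, because changing $u_i$ conjugates both $A_i$ and every $c_x^{i,j}$ (for $x\in\xi_i\cap\zeta_j$) by the same element, leaving $A_i c_x^{i,j} B_j (c_x^{i,j})^{-1}$ unchanged up to free homotopy. Once this compatibility is in place, everything else is a direct reindexing of the sum in Proposition~\ref{alt-Goldbracket]}, and no further geometric input is needed.
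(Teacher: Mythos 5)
Your proposal is correct, and it takes a genuinely more direct route than the paper. The paper proves the product formula by first establishing a base case (Proposition~\ref{prelim-prod}, where $A$ and $B$ are each the concatenation of two \emph{closed} curves, handled by explicit case analysis of where the intersection point lies), then extending to longer concatenations by induction, and finally passing from closed curves to general arcs via an auxiliary-arc cancellation trick (Assertion~\eqref{inter-cancel}). You instead propose to verify directly, for each intersection point $x\in\xi_i\cap\zeta_j$, the free-homotopy identity $A\jl_x B \simeq A_i\,c_x^{i,j}\,B_j\,(c_x^{i,j})^{-1}$, and then simply regroup the sum in Proposition~\ref{alt-Goldbracket]}. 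This sidesteps both the induction and the arc-versus-closed-curve distinction: rebasing $A\jl_x B$ at $x_0$ along $u_i\cdot\xi_i|_{[\text{start},x]}$ and inserting $q^{-1}q$ (where $q$ is the portion of $\zeta_j$ from its start to $x$) yields $u_i\xi_i\cdots\xi_{i-1}u_i^{-1}\cdot(u_i p q^{-1}v_j^{-1})\cdot v_j\zeta_j\cdots\zeta_{j-1}v_j^{-1}\cdot(v_jqp^{-1}u_i^{-1}) = A_i\,c_x^{i,j}\,B_j\,(c_x^{i,j})^{-1}$, exactly as you assert. Your observation about independence of the choice of $u_i,v_j$ is also correct and worth keeping: replacing $u_i$ by $w u_i$ conjugates both $A_i$ and every $c_x^{i,j}$ by $w$, leaving the free homotopy class unchanged. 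The one thing you should spell out rather than leave implicit is the point the paper addresses with Assertion~\eqref{inter-cancel}: when you apply Proposition~\ref{alt-Goldbracket]} to $x_0$-based representatives $\bar A=u_0 A u_0^{-1}$ and $\bar B=v_0 B v_0^{-1}$, the sum there runs over $\bar A\cap\bar B$, which may include spurious intersections on the inserted arcs $u_0,v_0$; those occur in cancelling $\pm$ pairs, so they contribute nothing, but this needs to be said.
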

We first prove a preliminary proposition and postpone the proof of Proposition \ref{prodform} until the next paragraph. 
\subsubsection{A preliminary case}
We first study the following simple situation
\begin{itemize}
\item Let $\xi$ and $\eta$ be two closed curves. Assume that $\xi=\xi_1.\xi_2$ and $\zeta=\zeta_1.\zeta_2$. Assume that of all $i,j$, $\xi_i$ and $\zeta_j$ are closed curves with transverse intersections that do not intersect at their origin.
\item  Let $u_i$ and $v_j$ be arcs from $x_0$ to $\xi_i$ and $\eta_j$ respectively.
\item Let $\tilde\xi_i\defeq u_i\xi_iu_{i+1}^{-1}$, $\tilde\zeta_j\defeq v_j\zeta_jv_{j+1}^{-1}$ and $c^{i,j}_x\defeq c_x(\tilde\xi_i,\tilde\zeta_j)\in\pi_1(S,x_0)$ for $x\in \xi_i\cap\zeta_j$. 
\end{itemize}

\begin{proposition}\label{prelim-prod}
We have the following equality in $\mathbb Q[\rm C]$,
\begin{eqnarray}
\sum_{x\in \xi\cap\zeta}{\ii}_x\cdotp\xi\jl_x\zeta=\mathop{\sum_{\substack{x\in \xi_i\cap\zeta_j\\ 1\leq i,j\leq 2}}}{\ii}_x\cdotp\tilde\xi_{i}.\tilde\xi_{i+1}.c^{i,j}_x.\tilde\zeta_{j}.\tilde\zeta_{j+1}.(c^{i,j}_x)^{-1}.
\end{eqnarray}
\end{proposition}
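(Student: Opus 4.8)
\textbf{Proof plan for Proposition \ref{prelim-prod}.}

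The plan is to reduce the statement to the alternative formulation of the Goldman bracket (Proposition \ref{alt-Goldbracket]}) applied to the decomposed loops, and to track carefully how an intersection point $x$ of the composite curves $\xi$ and $\zeta$ is recorded as an intersection point of one of the sub-arcs $\xi_i$ with one of the sub-arcs $\zeta_j$. Concretely, I would first fix base-point representatives: writing $\xi=\xi_1\xi_2$ and $\zeta=\zeta_1\zeta_2$ as curves through chosen points, and choosing the auxiliary arcs $u_i,v_j$ from $x_0$, one gets the loops $\tilde\xi_i=u_i\xi_iu_i^{-1}$ and $\tilde\zeta_j=v_j\zeta_jv_j^{-1}$ based at $x_0$, with $\xi$ freely homotopic to $\tilde\xi_1\tilde\xi_2$ and $\zeta$ freely homotopic to $\tilde\zeta_1\tilde\zeta_2$. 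Since the Goldman bracket depends only on free homotopy classes, I may compute $\{\xi,\zeta\}$ using these based representatives.

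The second step is to observe that the set of transverse intersection points of $\tilde\xi_1\tilde\xi_2$ with $\tilde\zeta_1\tilde\zeta_2$ is, after a homotopy pushing the connecting arcs $u_i,v_j$ off each other and off the $\xi_i,\zeta_j$ except near $x_0$, the disjoint union over $(i,j)\in\{1,2\}^2$ of $\xi_i\cap\zeta_j$ (this is exactly the hypothesis that the $\xi_i$ and $\zeta_j$ intersect transversally and not at their origins). The local intersection sign at such a point $x\in\xi_i\cap\zeta_j$ is the same whether $x$ is viewed inside $\xi_i$ versus $\zeta_j$ or inside the full curves, because it is computed from the tangent directions, which agree. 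Then Proposition \ref{alt-Goldbracket]} gives
$$
\{\xi,\zeta\}_S=\sum_{\substack{x\in\xi_i\cap\zeta_j\\ 1\le i,j\le 2}}\ii(x)\,(\tilde\xi_1\tilde\xi_2)\cdot d_x\cdot(\tilde\zeta_1\tilde\zeta_2)\cdot d_x^{-1},
$$
where $d_x$ is the intersection loop of $\tilde\xi_1\tilde\xi_2$ and $\tilde\zeta_1\tilde\zeta_2$ at $x$.

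The third and main step is the bookkeeping identifying $d_x$ with $\tilde\xi_{i+1}^{-1}c_x^{i,j}$ up to the cyclic freedom in the free homotopy class, so that the word $(\tilde\xi_1\tilde\xi_2)\, d_x\,(\tilde\zeta_1\tilde\zeta_2)\, d_x^{-1}$ is freely homotopic to $\tilde\xi_i\tilde\xi_{i+1}c_x^{i,j}\tilde\zeta_j\tilde\zeta_{j+1}(c_x^{i,j})^{-1}$ (indices mod $2$). Here $c_x^{i,j}=c_x(\tilde\xi_i,\tilde\zeta_j)=\alpha_i(x)\beta_j(x)^{-1}$ is the intersection loop of the \emph{sub}-loops, where $\alpha_i(x)$ is the path from $x_0$ along $\tilde\xi_i$ to $x$ and $\beta_j(x)$ the path from $x_0$ along $\tilde\zeta_j$ to $x$. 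The point is: when $x\in\xi_i\cap\zeta_j$, the path along $\tilde\xi_1\tilde\xi_2$ from $x_0$ to $x$ is $\tilde\xi_1\cdots\tilde\xi_{i-1}$ followed by $\alpha_i(x)$ — but since we are computing a free homotopy class, the prefix $\tilde\xi_1\cdots\tilde\xi_{i-1}$ and the suffix $\tilde\xi_{i+1}\cdots\tilde\xi_2$ can be cyclically permuted to the front, so after conjugation the class is $\tilde\xi_i\tilde\xi_{i+1}\cdots\tilde\xi_{i-1}\cdot\alpha_i(x)\cdot\big(\tilde\zeta_j\tilde\zeta_{j+1}\cdots\zeta_{j-1}\ \text{read from } x\big)\cdot\alpha_i(x)^{-1}$, and the proof of the previous proposition (the one showing $\gamma_1\jl_x\gamma_2=\gamma_1 c_x\gamma_2 c_x^{-1}$) shows this equals $\tilde\xi_i\tilde\xi_{i+1}c_x^{i,j}\tilde\zeta_j\tilde\zeta_{j+1}(c_x^{i,j})^{-1}$. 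I expect this index-chasing — in particular keeping the cyclic ordering of the two-term products $\tilde\xi_i\tilde\xi_{i+1}$ consistent, and checking that the base point of the sub-loop intersection loop matches — to be the only delicate point; once the dictionary is set up the identity is immediate. I would also remark that the left-hand side $\sum_{x\in\xi\cap\zeta}\ii(x)\,\xi\jl_x\zeta$ is, by definition \eqref{eq:fond}, just $\{\xi,\zeta\}_S$, so the two sides literally are the Goldman bracket expressed in two ways, and the whole proposition is the $q=q'=1$ case that will be bootstrapped by induction to Proposition \ref{prodform}.
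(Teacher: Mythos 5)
Your proof plan follows essentially the same route as the paper's proof: decompose the intersection set as $\bigsqcup_{i,j}(\xi_i\cap\zeta_j)$, express the intersection loop $c_x$ of the composite based loops $\tilde\xi_1\tilde\xi_2$ and $\tilde\zeta_1\tilde\zeta_2$ in terms of the sub-intersection loops $c^{i,j}_x$, and then cyclically conjugate the resulting word to obtain the form on the right-hand side. However, the summary formula in your third step, $d_x=\tilde\xi_{i+1}^{-1}c^{i,j}_x$, is wrong, and wrong in a way that matters since identifying $d_x$ is exactly the content of the proposition. The correct identity, which your own subsequent detailed sentence (tracking the prefix $\tilde\xi_1\cdots\tilde\xi_{i-1}$ of the path from $x_0$ to $x$ along $\tilde\xi_1\tilde\xi_2$, and likewise the $\tilde\zeta$-prefix) in fact produces, is
$$
d_x=(\tilde\xi_1\cdots\tilde\xi_{i-1})\,c^{i,j}_x\,(\tilde\zeta_1\cdots\tilde\zeta_{j-1})^{-1},
$$
which in the two-piece case specialises to $c^{1,1}_x$, $\tilde\xi_1 c^{2,1}_x$, $c^{1,2}_x\tilde\zeta_1^{-1}$, and $\tilde\xi_1 c^{2,2}_x\tilde\zeta_1^{-1}$ according to $(i,j)$, exactly the paper's four-case computation. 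For $x\in\xi_1\cap\zeta_1$, for instance, your formula gives $\tilde\xi_2^{-1}c^{1,1}_x$ rather than $c^{1,1}_x$, and the extraneous factor of $\tilde\xi_2^{\pm1}$ is not absorbed by free conjugation of the outer word $\tilde\xi_1\tilde\xi_2\,d_x\,\tilde\zeta_1\tilde\zeta_2\,d_x^{-1}$. Since the more careful paragraph that follows your formula does the bookkeeping correctly, this appears to be a notational slip rather than a conceptual gap, but you must replace the stated identification of $d_x$ before it can be used. One further remark: your second step, which perturbs the connecting arcs $u_i,v_j$ so as to discard their intersections, introduces an extra cancellation argument that the paper's proof avoids, since the paper simply takes the intersection set of the curves $\xi_1\xi_2$ and $\zeta_1\zeta_2$ themselves (which is $\bigsqcup_{i,j}\xi_i\cap\zeta_j$ by hypothesis) and uses the based loops $\tilde\xi_1\tilde\xi_2$, $\tilde\zeta_1\tilde\zeta_2$ only to define the intersection loop $c_x$ at those points; a cancellation argument of the sort you allude to is deferred to the passage from loops to arcs in the proof of Proposition \ref{prodform}.
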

\begin{proof}
First, we observe that for any two pairs of curves $(\xi_1,\xi_2)$ and 
$(\zeta_1,\zeta_2)$ we have $$(\xi_1.\xi_2)\cap(\zeta_1.\zeta_2)=\bigsqcup_{i,j}(\xi_i\cap\zeta_j).$$
Let us denote
$$
c_x\defeq c_x(\tilde\xi_1.\tilde\xi_2,\tilde\zeta_1.\tilde\zeta_2).
$$
We then have
\begin{eqnarray*}
x\in \xi_1\cap\zeta_1&\implies& c_x=c^{1,1}_x,\\
x\in \xi_2\cap\zeta_1&\implies& c_x=\tilde\xi_1.c^{2,1}_x,\\
x\in \xi_1\cap\zeta_2&\implies& c_x=c_x^{1,2}.\tilde\zeta_1^{-1},\\
x\in \xi_2\cap\zeta_2&\implies& c_x=\tilde\xi_1.c_x^{2,2}.\tilde\zeta_1^{-1}.
\end{eqnarray*}
Thus in all cases, if $x\in \xi_i\cap\zeta_j$ we have the following equality of free homotopy classes
$$
\tilde\xi_1\tilde\xi_2c_x\tilde\zeta_1\tilde\zeta_2=\tilde\xi_i\tilde\xi_{i+1}. c_x^{i,j}.\tilde\zeta_j.\tilde\zeta_{j+1}.(c_x^{i,j})^{-1},
$$
Thus we obtain the product formula.
\begin{multline}
\sum_{x\in (\xi_1\xi_2)\cap(\zeta_1\zeta_2)}{\ii}_x\left(\tilde\xi_1.\tilde\xi_2. c_x.\tilde\zeta_1.\tilde\zeta_2.c_x^{-1}\right)\\=\sum_{i,j}\left(\sum_{x\in \xi_i\cap\zeta_j}{\ii}_x (\tilde\xi_i\tilde\xi_{i+1}. c_x^{i,j}.\tilde\zeta_j.\tilde\zeta_{j+1}.(c_x^{i,j})^{-1})\right).
\end{multline}
This concludes the proof.
\end{proof}

\subsubsection{Proof of Proposition \ref{prodform}}

Obviously Formula \eqref{eq:pf1b} is an immediate consequence of Formula \eqref{eq:pf1}, so we concentrate on the latter. 

First, we observe that the product formula when $\xi_i$ and $\zeta_j$ are closed curves follows by induction from Proposition \ref{prelim-prod}.

Let us now make the following observation. Let $a$, $\xi$ and $\zeta$ be three arcs, transverse to a curve $\kappa$. Assume that $\xi.a.a^{-1}.\zeta$ is a closed curve, then we have the following equalities in $\mathbb Q[\rm C]$
\begin{eqnarray}
\xi.a.a^{-1}.\zeta&=&\xi.\zeta,\cr
\sum_{x\in (\xi.\zeta)\cap\kappa}\!\!\!\!{\ii}_x\xi.\zeta .c_x\kappa .c_x^{-1}&=& \!\!\!\!\!\!\!\!\!\!\!\!\sum_{x\in (\xi.a.a^{-1}.\zeta)\cap\kappa}\! \!\!\!{\ii}_x\xi.a.a^{-1}.\zeta. c_x\kappa.c_x^{-1}.\label{inter-cancel}
\end{eqnarray}
The first equality is obvious. For the second we notice that
every intersection point of $a$ with $\kappa$ appears twice with a different sign. 

We can now extend the product formula to arcs: we choose auxiliary arcs $\alpha_i$ joining $x_0$ to the initial point of $\xi_i$, similarly auxiliary arcs $\beta_i$ joining $x_0$ to the initial point of $\zeta_i$ and replace $\xi_i$ by the closed curves $\hat\xi_i=\alpha_i\xi_i\alpha_{i+1}^{-1}$ and $\hat\zeta_i=\beta_i\zeta_i\beta_{i+1}^{-1}$ respectively.
From Assertion \eqref{inter-cancel}, since the product formula holds for the closed curves $\hat\zeta_j$ and $\hat\xi_i$, it holds for the arcs $\zeta_j$ and $\xi_i$.

\subsection{Bouquets in good position and the product formula}\label{sec:gp}

We shall need a special case of the product formula when we allow some repetitions in the arcs.

\subsubsection{Bouquets in good position}

\begin{definition}{\sc[Flowers and bouquets]}
\begin{enumerate}
\item A {\em flower} based at $(x_0,\ldots,x_q)$ is a collection of arcs
$$
\mathcal S\defeq ((g_0,\ldots,g_q),(\alpha_0,\ldots,\alpha_q)),
$$
such that 
 \begin{itemize}
\item $g_i$ are closed curves based at $x_i$ representing primitive elements in the fundamental group, 
\item  $\alpha_i$ are arcs, called {\em connecting arcs}, joining $x_{i}$ to $x_{i+1}$.
\end{itemize}
\item A {\em bouquet} is a triple
$$\cF=(\mathcal S_0 ,\mathcal S_1 , V),$$
where $\mathcal S_1 $ and $\mathcal S_0 $ are flowers based at $(x_0,\ldots,x_q)$ and $(y_0,\ldots,y_{q'})$ respectively and $V$ is an arc joining $x_0$ and $y_0$. 
\item We finally say that the bouquet
 $\cF$ {\em represents} $((\gamma_0,\ldots,\gamma_q),(\eta_0,\ldots,\eta_{q'}))$, where $\gamma_i$ and $\eta_j$ are the elements of $\pi_1(S,x_0)$ defined by $\gamma_i=U_ig_i U_i^{-1}$ and $\eta_j=V_jh_j V_j^{-1}$, where $U_i\defeq \alpha_0\ldots\alpha_{i-1}$ and $V_j\defeq V.\beta_0\ldots\beta_{j-1}$, 
\end{enumerate}
\end{definition}
We shall also need bouquets which have specially neat configurations: let
$$
\cF=\left(\left((g_0,\ldots,g_q),(\alpha_0,\ldots,\alpha_q)\right),\left((h_0,\ldots,h_{q'}),(\beta_0,\ldots,\beta_{q'})\right),V\right)
$$
be a bouquet of flowers based respectively at $(x_0,\ldots,x_q)$ and $(y_0,\ldots,y_{q'})$. 
 
\begin{definition}\label{def:gp}{\sc[Good position]} We say
\begin{enumerate} \item $\cF$ is in a {\em good position} if \begin{itemize}
\item the arcs $\alpha_i$ and $g_i$ intersect transversely the arcs $\beta_j$ and $h_j$ at points different than $x_i$ and $y_j$ for all $i,j$,
\item the closed curves $\alpha_0\ldots\alpha_q$ and $\beta_0\ldots\beta_{q'}$ are homotopic to zero.
\end{itemize}
\item $\cF$ is in a {\em homotopically good position} if it is in a good position and if the following intersection loops are homotopically trivial
\begin{eqnarray}
c_x(U_i.\alpha_i, V_j.\beta_j),\hbox{ for } x\in \alpha_i\cap\beta_j\cr
c_x(U_i.\alpha_i, V_j.h_j),\hbox{ for } x\in \alpha_i\cap h_j\cr
c_x(U_i.g_i, V_j.\beta_j),\hbox{ for } x\in g_i\cap\beta_j,
\end{eqnarray}
where $U_i\defeq \alpha_0\ldots\alpha_{i-1}$ and $V_j\defeq V.\beta_0\ldots\beta_{i-1}$.
\end{enumerate}
\end{definition}

In Figure \ref{fig:flower2}, we have represented two flowers, one in blue, the other in red where the connecting arcs $\alpha_i$ and $\beta_i$ are dotted. In this figure all intersection loops corresponding to the four yellow transverse intersection points are drawn in the orange contractible region. Thus the bouquet is in a homotopically good position.
\begin{figure}[h]
 \begin{center}
  \includegraphics[width=1\textwidth]{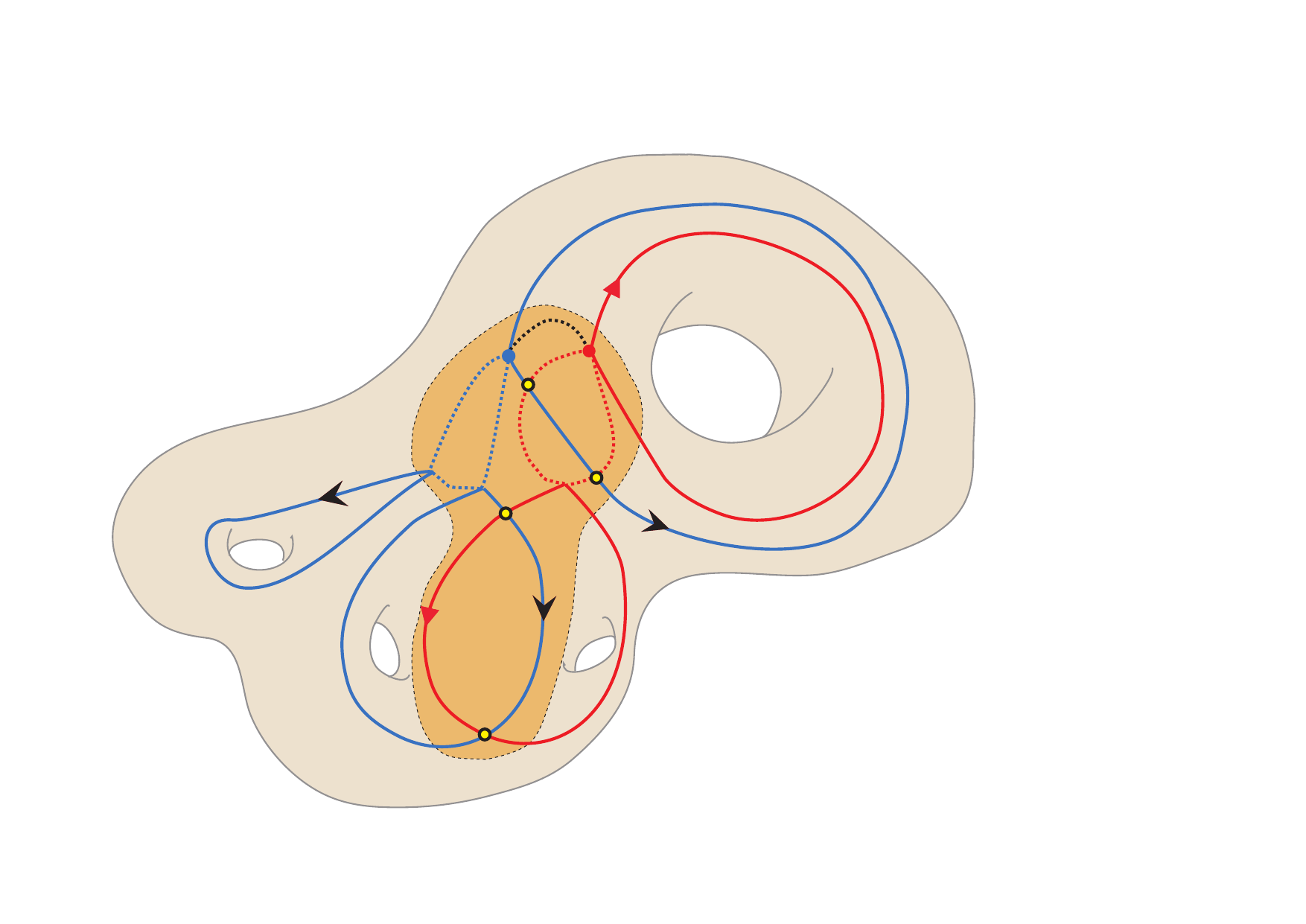}
 \end{center}
 \caption{Bouquet in good position}
 \label{fig:flower2}
\end{figure}

\subsubsection{Product formula for bouquets}\label{FpGp}

Let $\cF$ be a bouquet as above in good position. Let us consider the closed curves
\begin{eqnarray*}
{\bf F}_{i}^{(p,n)}&\defeq &U_i.g_i^{n}.(\alpha_i.g_{i+1}^p\alpha_{i+1}\ldots g_{i-1}^p\alpha_{i-1})g_i^{p-n}U_i^{-1},\\ 
{\bf G}_{i}^{(p,n)}&\defeq &V_i.h_i^{n}.(\beta_i.h_{i+1}^p\beta_{i+1}\ldots h_{i-1}^p\beta_{i-1})h_i^{p-n}V_i^{-1}.\end{eqnarray*}
To simplify notation, let us write
$
 {\bf F}^{(p)}\defeq {\bf F}_0^{(p,0)}$ and $
 {\bf G}^{(p)}\defeq {\bf G}_0^{(p,0)}$.
 Let us denote
 \begin{eqnarray*}
 H_{i,j}&\defeq &\{c_x(U_i.g_i,V_j.h_j)\mid x\in g_i\cap h_j, c_x(U_i.g_i,V_j.h_j)\hbox{ is homotopically trivial }\},\cr
 C_{i,j}&\defeq &\{c_x(U_i.g_i,V_j.h_j)\mid x\in g_i\cap h_j, c_x(U_i.g_i,V_j.h_j)\hbox{ not homotopically trivial }\}.
 \end{eqnarray*}
Let finally
\begin{align}
{\rm f}_{i,j}(\cF)&\defeq \sum_{\xi\in H_{i,j}}I_{i,j}(\xi),&
{\rm m}_{i,j}(\cF)&\defeq {\ii}(g_i,\beta_j),\cr
{\rm n}_{i,j}(\cF)&\defeq {\ii}(\alpha_i,h_j),&
{\rm q}_{i,j}(\cF)&\defeq {\ii}(\alpha_i,\beta_j),\label{fmnq}
\end{align}
where we recall that for any $\xi \in\pi_1(S)$, we denote
$$
I_{i,j}(\xi)=\sum_{x\in g_i\cap h_j\mid c_x(U_i.g_i,V_j.h_j)=\xi}\ii_x.
$$ 

We can rewrite the product formula.
\begin{proposition}\label{prodform2}{\sc[Product Formula in good position]}
Assuming the bouquet $\cF$ is in a homotopically good position and using the above notation, we have the following equality in $\mathbb Q[\rm C]$,
\begin{eqnarray} \{{\bf F}^{(p)},{\bf G}^{(p)}\} &=&\sum_{\substack{0\leq i\leq q\\ 0\leq j\leq q'}}\Bigg(\mathop{\sum_{1\leq m'\leq p}}_{1\leq m\leq p}{\rm f}_{i,j} (\cF)\left({\bf F}^{(p,m')}_i {\bf G}^{(p,m)}_j\right)
+\mathop{\sum}_{1\leq m\leq p}{\rm m}_{i,j}(\cF) \left({\bf F}^{(p,m)}_i {\bf G}^{(p,0)}_j \right)\cr
& &
+\mathop{\sum_{1\leq m'\leq p}}{\rm n}_{i,j}(\cF) \left({\bf F}^{(p,0)}_i  {\bf G}^{(p,m')}_j \right)+{\rm q}_{i,j}(\cF) \left({\bf F}^{(p,0)}_i {\bf G}^{(p,0)}_j \right)\Bigg)
\cr
& &+\sum_{\substack{0\leq i\leq q\\ 0\leq j\leq q'}}\sum_{\xi\in C_{i,j}}I_{i,j}(\xi)\Bigg(\mathop{\sum_{1\leq m'\leq p}}_{1\leq m\leq p}\left({\bf F}^{(p,m')}_i \xi{\bf G}^{(p,m)}_j\xi^{-1}\right)\Bigg).\label{eq:pf2}\end{eqnarray}
\end{proposition}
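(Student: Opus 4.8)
The plan is to derive Proposition \ref{prodform2} from the general product formula \eqref{eq:pf1b} by applying it to the specific arc decompositions of ${\bf F}^{(p)}$ and ${\bf G}^{(p)}$ and then carefully bookkeeping the resulting terms. First I would write ${\bf F}^{(p)}$ as a concatenation of the elementary arcs that build it: namely the $p$ copies of each closed curve $g_i$ (appearing as $g_i^n$ and $g_i^{p-n}$ around $U_i$) together with the connecting arcs $\alpha_i$, and similarly for ${\bf G}^{(p)}$. The total list of arcs of ${\bf F}^{(p)}$ is thus indexed by a pair $(i,m)$ with $0\le i\le q$ and $1\le m\le p$ (the $m$-th copy of $g_i$) together with the connecting arcs; likewise for ${\bf G}^{(p)}$ with index $(j,m')$. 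The key point is that the conjugated loop $A_i$ attached to the arc which is the $m$-th copy of $g_i$ is, up to free homotopy, exactly ${\bf F}^{(p,m)}_i$ — this is a direct unravelling of the definitions of $A_i$ in Proposition \ref{prodform} and of ${\bf F}_i^{(p,m)}$ — and similarly the loop attached to the $m'$-th copy of $h_j$ is ${\bf G}^{(p,m')}_j$. For the connecting arcs $\alpha_i$ the associated loop is ${\bf F}^{(p,0)}_i$ (cyclically $g_i^p$ conjugated by $U_i$, which is freely homotopic to what one gets by cutting at the connecting arc), and similarly $\beta_j$ gives ${\bf G}^{(p,0)}_j$.

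Next I would organize the double sum $\sum_{(\text{arc of }{\bf F})}\sum_{(\text{arc of }{\bf G})}$ from \eqref{eq:pf1b} into four families according to whether each arc is a copy of a $g$/$h$ or a connecting arc: (copy of $g_i$, copy of $h_j$), (copy of $g_i$, $\beta_j$), ($\alpha_i$, copy of $h_j$), ($\alpha_i$, $\beta_j$). The first family is responsible both for the ${\rm f}_{i,j}$-terms and for the $C_{i,j}$-terms: for a fixed pair of copies, the intersection points $x\in g_i\cap h_j$ contribute according to their intersection loop $c_x(U_i g_i, V_j h_j)$, which by the good-position hypothesis is either homotopically trivial (giving a contribution $\sum_{\xi\in H_{i,j}} I_{i,j}(\xi) = {\rm f}_{i,j}(\mathcal C)$ to the plain product ${\bf F}^{(p,m')}_i{\bf G}^{(p,m)}_j$, since a trivial $\xi$ drops out of the conjugation) or nontrivial (giving the last line of \eqref{eq:pf2}). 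Summing over all $(m,m')$ with $1\le m,m'\le p$ produces the quoted multiplicities. The remaining three families involve the connecting arcs, and here the homotopically-good-position hypothesis is essential: all the intersection loops $c_x(U_i\alpha_i, V_j\beta_j)$, $c_x(U_i\alpha_i, V_j h_j)$, $c_x(U_i g_i, V_j\beta_j)$ are homotopically trivial, so the conjugating element $\xi$ again disappears and each such family contributes a plain product of the relevant loops weighted by the corresponding intersection number ${\rm q}_{i,j}$, ${\rm n}_{i,j}$, ${\rm m}_{i,j}$ (these are literally ${\ii}(\alpha_i,\beta_j)$, ${\ii}(\alpha_i,h_j)$, ${\ii}(g_i,\beta_j)$ summed over the appropriate range of copies, $1\le m\le p$ or $1\le m'\le p$ or the single $m=m'=0$ slot).

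Assembling the four families gives exactly the right-hand side of \eqref{eq:pf2}. The main obstacle — and the step deserving the most care — is the precise identification of the loops $A_i$ and $B_j$ coming out of Proposition \ref{prodform} with the combinatorially-defined curves ${\bf F}^{(p,m)}_i$ and ${\bf G}^{(p,m)}_j$, together with keeping track of how the cyclic relabelling of the arc list (choosing where to cut ${\bf F}^{(p)}$ and ${\bf G}^{(p)}$ into arcs) matches the $m$-indexing: one must check that the $m$-th copy of $g_i$, read as a sub-arc of the cyclic word, is flanked by precisely $U_i g_i^{m-1}$ on the left and the complementary word on the right, so that the associated based loop is $U_i g_i^{m-1}\cdot g_i\cdot (\text{rest})\cdot g_i^{p-m}U_i^{-1}$, freely homotopic to ${\bf F}^{(p,m)}_i$. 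I would also note at the outset that the arcs $\alpha_i$ and $g_i$ only meet the arcs $\beta_j$ and $h_j$ away from the basepoints (good position), so the hypotheses of Proposition \ref{prodform} are met and the decomposition into transverse intersection points in \eqref{eq:pf1b} is legitimate; the cancellation of spurious intersections created by the auxiliary connecting arcs has already been absorbed into Proposition \ref{prodform} via \eqref{inter-cancel}, so nothing further is needed on that front.
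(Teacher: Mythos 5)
Your proposal is correct and follows essentially the same route as the paper's own proof: both apply Proposition \ref{prodform} to the natural arc decomposition of ${\bf F}^{(p)}$ and ${\bf G}^{(p)}$ into copies of $g_i$, $h_j$ and the connecting arcs $\alpha_i$, $\beta_j$, identify the resulting based loops $A_i$, $B_j$ with ${\bf F}^{(p,m)}_i$, ${\bf G}^{(p,m')}_j$, and invoke the homotopically-good-position hypothesis to discard the conjugating elements in every term where a connecting arc is involved while splitting the $g_i\cap h_j$ contribution according to whether $c_x(U_ig_i,V_jh_j)$ is trivial. One small wording slip: in your description of the first family the phrase ``which by the good-position hypothesis is either homotopically trivial\ldots or nontrivial'' is misleading, since for the $g_i\cap h_j$ intersections the dichotomy is just a tautology and good position is not what makes trivial conjugators disappear; it is only invoked (as you then correctly say) for the three families touching the connecting arcs.
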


\begin{proof} This will be just another way to write the product formula. 
We consider the arcs $\xi_i$ defined by
\begin{itemize}
\item $\xi_i\defeq g_{j}$ if $i=j.(p+1)+n$ with $1\leq n\leq p$,
\item $\xi_i\defeq \alpha_{j}$ if $i=j(p+1)$.
\end{itemize}
Similarly, we consider the arcs $\zeta_i$ 
\begin{itemize}
\item $\zeta_i\defeq h_{j}$ if $i=j.(p+1)+n$ with $1\leq n\leq p$,
\item $\zeta_i\defeq \beta_{j}$ if $i=j.(p+1)$.
\end{itemize}
Let now finally consider the following arcs,
\begin{itemize}
\item $u_i\defeq U_j=\alpha_0.\ldots\alpha_j,$ if $i=j.(p+1)+n$ with $1\leq n\leq p$,
\item $v_i\defeq V_j=V.\beta_0.\ldots\beta_j,$, if $i=j.(p+1)+n$ with $1\leq n\leq p$,
\end{itemize}
so that $u_i$, respectively $v_i$, goes from $x_0$ to $x_j$, respectively $x_0$ to $y_j$.

We now apply Formulae \eqref{eq:pf1} and \eqref{eq:pf2} for the arcs $\xi_i,u_i,\zeta_j,v_j$.
Observe that using the notation of Paragraph \ref{pf1}, we have
$${\bf F}^{(p)}=A, \ \ {\bf G}^{(p)}=B.$$ 
 We now have to identify the term in the right hand sides of Formulae \eqref{eq:pf1} and \eqref{eq:pf2}, and in particular understand the arcs 
$A_i$, $B_j$, $c_x^{i,j}$ that appears in the right hand side of Formula \eqref{eq:pf2}. By definition
$$A_i=u_i\xi_{i}\xi_{i+1}\ldots\xi_{i-1}u_i^{-1}.$$
Thus if $i=j(p+1)+m$ with $0\leq m\leq p$
$$
A_i={\bf F}_j^{(p,m)},
$$
and by a similar argument
$$
B_i={\bf G}_j^{(p,m)}.
$$
By definition if $x\in\xi_i\cap\zeta_j$, 
$$
c_x^{i,j}=c_x(u_i\xi_i,v_j\zeta_j).
$$
We now observe that,
\begin{enumerate}
\item if $i=j.(p+1)$, then $u_i\xi_i=U_{j}.g_j$,
\item if $i=j.(p+1)+n$ with $1\leq m\leq p$, then $u_i\xi_i=U_{j}.\alpha_j$,
\end{enumerate}
and similarly
\begin{enumerate}
\item if $i=j.(p+1)$, then $v_i\zeta_i=V_{j}.h_j$,
\item if $i=j.(p+1)+n$ with $1\leq m\leq p$, then $v_i\zeta_i=V_{j}.\beta_j$,
\end{enumerate}

 Then the special Product Formula \eqref{eq:pf2} is a consequence of the Product Formula \eqref{eq:pf1}: indeed, thanks to the ``homotopically good position" hypothesis, many of the intersection loops  $c_x^{i,j}$ are homotopically trivial.
\end{proof}

\subsection{Bouquets and covering}

Let $\pi:S_1\to S_0$ be a finite covering. Let $$
\cF=\left(\left((g_0,\ldots,g_q),(\alpha_0,\ldots,\alpha_q)\right),\left((h_0,\ldots,h_{q'}),(\beta_0,\ldots,\beta_q)\right),V\right)
$$
be a bouquet of flowers in $S_0$ based respectively at $(x_0,\ldots,x_q)$ and $(y_0,\ldots,y_{q'})$. 
Let $\hat x_0$ be a lift of $x_0$ in $S_1$.

\begin{definition}
The bouquet of flowers in $S_1$
$$
\widehat{\cF}=\left(\left((\hat g_0,\ldots,\hat g_q),(\hat\alpha_0,\ldots,\hat\alpha_q)\right),\left((\hat h_0,\ldots,\hat h_{q'}),(\hat\beta_0,\ldots,\hat\beta_q)\right),\hat V\right)
$$
is {\em the lift of $\cF$ through $\hat x_0$} if,\begin{itemize}
\item all arcs $\hat V$, $\hat\alpha_i$ and $\hat\beta_i$ are lifts of the arcs $V$, $\alpha_i$ and $\beta_i$.
\item $\hat g_0$ is based at $\hat x_0$.
\item the closed curves $\hat g_i$ and $\hat h_j$ are the {\em primitive lifts} of 
the curves $g_i$ and $h_j$, in other words the primitive curves which are lift of positive powers of the curves $g_i$ and $h_j$.  \end{itemize}
\end{definition}

Observe that the lift of a bouquet in homotopically good position is itself a bouquet in homotopically good position.

\subsection{Finding bouquets in good position}

Let $S$ be a closed hyperbolic surface and $\tilde S$ its universal cover. Let $G=(\gamma_0,\ldots,\gamma_q)$ and $F=(\eta_0,\ldots,\eta_{q'})$ be two tuples of primitive elements of $\pi_1(S)$ such that for all $i$, $(\gamma_i$, $\gamma_{i+1})$, are pairwise coprime as well as $(\eta_i,\eta_{i+1})$, where the index $i$ lives in $\mathbb Z/q\mathbb Z$ and $\mathbb Z/q'\mathbb Z$ respectively.  Recall that we denote by $\tilde\zeta$ the axis of the element $\zeta\in\pi_1(S)$. 

\begin{definition}\label{def:GPH}
We say $G$ and $F$ satisfy the {\em Good Position Hypothesis} if there exists a metric ball $B$ in $\tilde S$ such that
\begin{enumerate}
\item \label{GP1} for all $i$ and $j$ so that $\gamma_i$ and $\eta_j$ are coprime,
\begin{equation}
\tilde\gamma_i\cap\tilde\eta_j\subset B\label{hyp:gp3}
\end{equation}
\item \label{GP2} for all $\xi\in\pi_1(S)\setminus\{ 1\}$ we have
\begin{equation}
B\cap\xi(B)=\emptyset. \label{hyp:gp2}
\end{equation} 
\item \label{GP3}for all $\zeta\in \{\gamma_0,\ldots,\gamma_q,\eta_0,\ldots,\eta_{q'}\}$, for all $\xi\in\pi_1(S)\setminus\langle\zeta\rangle$ we have
\begin{equation}
B\cap\xi(\tilde\zeta)=\emptyset. \label{hyp:gp4}
\end{equation}
\item \label{GP4}for all $\zeta\in \{\gamma_0,\ldots,\gamma_q,\eta_0,\ldots,\eta_{q'}\}$, for all $\xi\in\pi_1(S)\setminus\langle\zeta\rangle$ we have
\begin{equation}
\tilde\zeta\cap\xi(\tilde\zeta)=\emptyset. \label{hyp:gp5}
\end{equation}
In other words, the closed geodesic corresponding to $\zeta$ is embedded.
\end{enumerate}
\end{definition}

Then, we have the following result
\begin{proposition}\label{SettingArcGoodPosition}
With the notations above, assume that $G$, $F$ and $\pi_1(S)$ satisfy the Good Position Hypothesis, then there exist two bouquets $\cF_L$ and $\cF_R$ in $S$ in a homotopically good position, both representing $(G,F)$ such that furthermore 
\begin{eqnarray}
\frac{1}{2}({\rm f}_{i,j}({\cF_L})+{\rm f}_{i,j}(\cF_R))&=&[\gamma_i^-\gamma_i^+,\eta_j^-\eta_j^+],\label{eq:f}\\
\frac{1}{2}({\rm n}_{i,j}(\cF_L)+{\rm n}_{i,j}(\cF_R))&=&[\gamma_i^-\gamma_{i+1}^-,\eta_j^-\eta_j^+],\label{eq:m}\\
\frac{1}{2}({\rm m}_{i,j}(\cF_L)+{\rm m}_{i,j}(\cF_R))&=&[\gamma_i^-\gamma_i^+,\eta_j^-\eta_{j+1}^-],\label{eq:n}\\
\frac{1}{2}({\rm q}_{i,j}(\cF_L)+{\rm q}_{i,j}(\cF_R))&=&[\gamma_i^-\gamma_{i+1}^-,\eta_{j}^-\eta_{j+1}^-].\label{eq:q}
\end{eqnarray}
\end{proposition}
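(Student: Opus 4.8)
The plan is to construct the two bouquets $\mathcal C_L$ and $\mathcal C_R$ explicitly inside the ball $B$ furnished by the Good Position Hypothesis, and then to match the four averaged intersection numbers with the linking numbers of the corresponding endpoints. First I would fix a lift picture in $\tilde S$: choose representative axes $\tilde\gamma_i$ and $\tilde\eta_j$, and since by \eqref{hyp:gp3} every relevant pair of axes meets only inside $B$ and by \eqref{hyp:gp2} the translates of $B$ are disjoint, the combinatorics of all the intersections $\xi_i\cap\zeta_j$ is entirely determined by what happens in $B$. The closed curves $g_i$, $h_j$ will be obtained by projecting to $S$ a fundamental arc of the axis $\tilde\gamma_i$ (resp.\ $\tilde\eta_j$) chosen so that it runs through $B$; the connecting arcs $\alpha_i$, $\beta_j$ will be short arcs inside $B$ joining the chosen basepoints. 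The two bouquets $\mathcal C_L$ and $\mathcal C_R$ differ only by the side from which the connecting arcs approach the flowers — this is the standard "two-sided" resolution used to symmetrize the local intersection contributions, exactly as in the Wolpert formula: pushing a connecting arc to the left versus the right of a geodesic changes each local linking number $\ii(x)$ at the relevant crossings, and the average of the two is precisely the (half-integer-valued) linking number of pairs of points on the circle from Paragraph \ref{sec:link}.

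Next I would verify that both bouquets are in a \emph{homotopically} good position. Good position (the transversality of the arcs and contractibility of $\alpha_0\cdots\alpha_q$ and $\beta_0\cdots\beta_{q'}$) is immediate from the construction since all connecting arcs live in the simply connected region $B$. For homotopic triviality of the relevant intersection loops $c_x(U_i\alpha_i,V_j\beta_j)$, $c_x(U_i\alpha_i,V_j h_j)$, $c_x(U_i g_i,V_j\beta_j)$, I would argue that each such loop is a concatenation of arcs that, after lifting to $\tilde S$, closes up inside a single translate of $B$ — here I use \eqref{hyp:gp4} and \eqref{hyp:gp5}, which guarantee that the translates $\xi(\tilde\zeta)$ of the axes do not re-enter $B$ and that each geodesic is embedded, so no unexpected crossings create non-contractible loops. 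The remaining intersection loops, namely $c_x(U_i g_i,V_j h_j)$ for $x\in g_i\cap h_j$, fall into the homotopically trivial set $H_{i,j}$ (those coming from the genuine axis crossing inside $B$) and the non-trivial set $C_{i,j}$; the construction is arranged so that the trivial part is exactly the crossings inside $B$.

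Then I would compute the four quantities. For $\mathrm f_{i,j}(\mathcal C)=\sum_{\xi\in H_{i,j}}I_{i,j}(\xi)$, the sum of local signs over the crossings of $\tilde\gamma_i$ with $\tilde\eta_j$ inside $B$, the left/right average gives the linking number $[\gamma_i^-\gamma_i^+,\eta_j^-\eta_j^+]$ by the geometric interpretation of the linking number as the total linking of the arc from $\gamma_i^-$ to $\gamma_i^+$ with the arc from $\eta_j^-$ to $\eta_j^+$ (item (1) of Paragraph \ref{sec:link}), together with the fact that averaging the two sides is exactly the averaging in the equality cases \eqref{1/2}. For $\mathrm n_{i,j}=\ii(\alpha_i,h_j)$, $\mathrm m_{i,j}=\ii(g_i,\beta_j)$ and $\mathrm q_{i,j}=\ii(\alpha_i,\beta_j)$: a connecting arc $\alpha_i$ joins a point near $\gamma_i$ to a point near $\gamma_{i+1}$, so up to homotopy it "is" the arc with endpoints $\gamma_i^-$ and $\gamma_{i+1}^-$ (the repulsive fixed points, since the basepoints sit at the tail ends of the chosen fundamental arcs), and $h_j$ "is" the arc from $\eta_j^-$ to $\eta_j^+$; hence $\frac12(\mathrm n_{i,j}(\mathcal C_L)+\mathrm n_{i,j}(\mathcal C_R))=[\gamma_i^-\gamma_{i+1}^-,\eta_j^-\eta_j^+]$, and symmetrically for $\mathrm m$ and $\mathrm q$, yielding \eqref{eq:m}, \eqref{eq:n}, \eqref{eq:q}.

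\textbf{Main obstacle.} The genuinely delicate step is the bookkeeping of \emph{which endpoints} (attractive $+$ versus repulsive $-$) label each arc, i.e.\ pinning down the orientation conventions so that the connecting arcs $\alpha_i$ contribute $\gamma_i^-$ and $\gamma_{i+1}^-$ rather than some other combination, and simultaneously checking that the left and right bouquets produce local signs whose average is the half-integer linking number rather than an integer off by one. Concretely, I expect the hard part to be controlling the behaviour of intersection loops near the shared ball $B$: one must be sure that sliding a connecting arc from the left to the right side of a flower changes $\ii(x)$ at precisely the crossings it should, and does not create or destroy crossings with the other flower or with other connecting arcs. This is exactly where hypotheses \eqref{hyp:gp2}--\eqref{hyp:gp5} are used in full strength, and where a careful picture (Figure \ref{fig:flower2}) replaces a long computation; once the sign bookkeeping is fixed, equations \eqref{eq:f}--\eqref{eq:q} follow from the geometric definition of the linking number and the equality cases \eqref{XX}, \eqref{1/2}.
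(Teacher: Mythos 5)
Your overall plan — construct two bouquets inside the ball $B$, average their intersection numbers, and compare against the geometric definition of the linking number, using the ball hypotheses to verify homotopic triviality — is the right one and agrees with the paper's strategy. But there is a genuine gap in \emph{what} you propose to perturb to obtain the left/right pair. You place both $g_i$ and $h_j$ directly on the axes $\tilde\gamma_i$ and $\tilde\eta_j$, and let $\mathcal C_L$ and $\mathcal C_R$ ``differ only by the side from which the connecting arcs approach the flowers.'' This breaks down exactly in the case the averaging is designed to handle: the proposition allows $\gamma_i$ and $\eta_j$ to be non-coprime (coprimeness is only imposed between consecutive $\gamma$'s and consecutive $\eta$'s), and then $\tilde\gamma_i$ and $\tilde\eta_j$ are the \emph{same} axis, so your $g_i$ and $h_j$ run along the same closed geodesic and are not transverse — neither of your bouquets is even in good position. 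Perturbing only the connecting arcs cannot repair this, because the degeneracy is between $g_i$ and $h_j$ themselves.

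The paper's construction instead takes the arc $h_j$ along a curve $\tilde\eta_j^\epsilon$ at distance $\epsilon$ from the axis $\tilde\eta_j$, with $\mathcal C_L$ and $\mathcal C_R$ distinguished by whether $\tilde\eta_j^\epsilon$ sits on the left or on the right of $\tilde\eta_j$; the $g_i$ remain on the axes. This simultaneously keeps $g_i$ and $h_j$ transverse when $\tilde\gamma_i=\tilde\eta_j$, and, in that degenerate case, produces ${\rm m}_{i,j}(\mathcal C_L)=0$ and ${\rm m}_{i,j}(\mathcal C_R)=1$, whose average is the half-integer $[\gamma_i^-\gamma_i^+,\eta_j^-\eta_{j+1}^-]=1/2$. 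Your identification of the connecting arcs with arcs running between the repulsive endpoints $\gamma_i^-$ and $\gamma_{i+1}^-$, and of $g_i$, $h_j$ with arcs from $\gamma_i^-$ to $\gamma_i^+$ and from $\eta_j^-$ to $\eta_j^+$, is consistent with the paper's Step~3 and would give the stated formulas once the perturbation scheme is corrected; so is your sketch of the homotopic-triviality check via translates of $B$ and hypotheses \eqref{hyp:gp2}--\eqref{hyp:gp5}.
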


\begin{proof} Let $G$ and $F$ be as above and $B$ be a metric ball in $\tilde S$ satisfying the assumptions \eqref{hyp:gp3}, \eqref{hyp:gp2} and \eqref{hyp:gp4}. We subdivide the proof in several steps. We denote by $\pi$ the projection from $\tilde S$ to $S$.

\vskip 0.2truecm
\par\noindent{\sc Step 1: Construction of the bouquet in good position}

Let $\tilde\gamma_i$ be the axis of $\gamma_i$, let $\epsilon$ be some constant that we shall choose later to be very small and $\tilde\eta^\epsilon_j$ be a curve (with constant geodesic curvature) at distance $\epsilon$ of the axis $\tilde\eta_j$ of $\eta_j$ (Notice that we have two such curves, for the moment we choose arbitrarily one of them). We choose $\epsilon$ small enough so that Assertions \eqref{hyp:gp3} and \eqref{hyp:gp4} still hold when $\tilde\eta_j$ are replaced by $\tilde\eta^\epsilon_j$. 

For every $i$, let $x_i\in \tilde\gamma_i\cap B$ so that
$$
\tilde\gamma_i\cap B\subset [x_i,\gamma_i^+[,
$$
similarly, let $y_j\in \tilde\eta^\epsilon_j$ so that
$$
\tilde\eta^\epsilon_i\cap B \subset [y_i,\eta_i^+[_\epsilon,
$$
where $[a,b]_\epsilon$ denote an arc joining $a$ to $b$ along a curve at a distance $\epsilon$ to a geodesic (see Figure \ref{fig:BouleB}).

\begin{figure}[h] \begin{center}
  \includegraphics[width=0.6\textwidth]{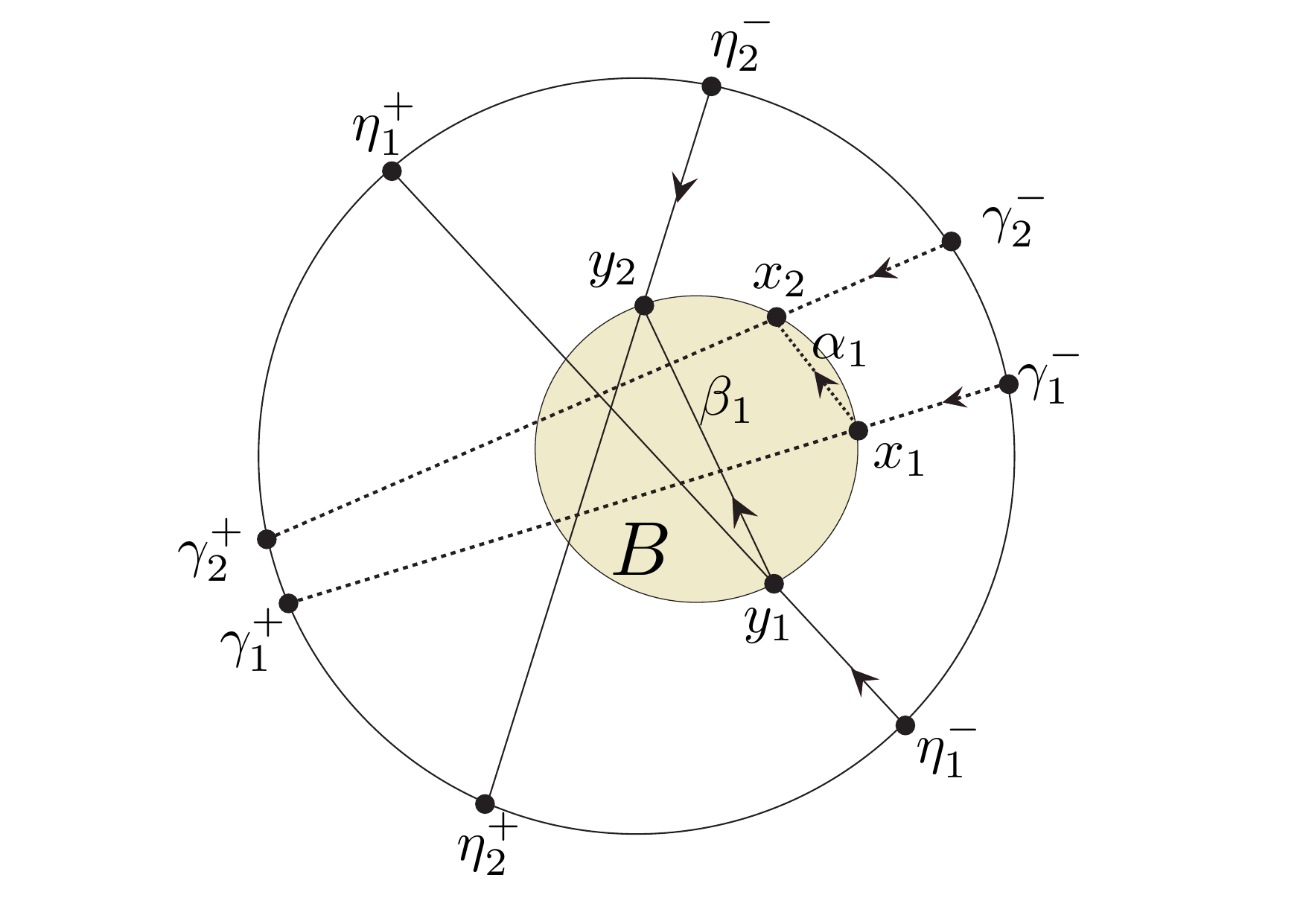}
 \end{center}
 \caption{Finding a bouquet in good position}
 \label{fig:BouleB}
\end{figure}
We now consider geodesics arcs $\tilde\alpha_i$, $\tilde\beta_j$ and $\tilde V$ in $\tilde S$ joining respectively $x_i$ to $x_{i+1}$, $y_j$ to $y_{j+1}$ and $x_0$ to $y_0$. We furthermore choose $B$ (and $\epsilon$) so that all the arcs $\tilde\alpha_i$, $\tilde\eta^\epsilon_j$ $\tilde\gamma_j$ and $\tilde\beta_j$ are transverse. 
In particular, if \begin{eqnarray}
\alpha_i=\pi(\tilde\alpha_i),&&\,\beta_i=\pi(\tilde\beta_i),\,V=\pi(\tilde V),\cr
\tilde g_i=[x_i,\gamma_(x_i)]),&& \tilde h_j=[y_j, \eta_j(y_j)]_\epsilon,\cr
 g_i=\pi([x_i,\gamma_(x_i)]),&& h_j=\pi([y_j, \eta_j(y_j)]_\epsilon),
\end{eqnarray}
 then
 $$\cF=\left(\left((g_0,\ldots,g_q),(\alpha_0,\ldots,\alpha_q)\right),\left((h_0,\ldots,h_{q'}),(\beta_0,\ldots,\beta_{q'})\right), V\right)$$ is in good position. Observe furthermore that $\cF$ represents $(G,F)$.

\vskip 0.2truecm
\par\noindent{\sc Step 2: homotopically good position}

Let us now prove that $\cF$ is in a homotopically good position. Let as usual
\begin{equation}
U_i=\alpha_0\ldots\alpha_{i-1},\ \ \ 
V_j=V.\beta_0\ldots\beta_{j-1},
\end{equation}
and \begin{equation}
\tilde U_i=\tilde\alpha_0\ldots\tilde\alpha_{i-1},\ \ \ 
\tilde V_j=\tilde V.\tilde\beta_0\ldots\tilde\beta_{j-1}.
\end{equation}
Then $\tilde U_i$ and $\tilde V_j$ are the lifts of $U_i$ and $V_j$ respectively starting from $x_0$ and $y_0$ an ending respectively in $x_i$ and $y_j$.

Observe that all the arcs $\tilde\alpha_k$, $\tilde\beta_l$ and $\tilde V$ lie in $B$.
Thus so do the paths $\tilde U_i$ and $\tilde V_j$.

Let $W_i$ be equal to $\alpha_i$ or $g_i$. Let $\widehat W_j$ be equal to $\beta_j$ or $h_j$. Let us fix from now $x\in W_i\cap\widehat W_j$. 
Let us introduce some notation,

\begin{itemize}
\item Let $a$ be the path along $W_i$ from $\pi(x_i)$ to $x$, and $\widehat a$ be the path along $\widehat W_j$ from $\pi(y_j)$ point to $x$.
\item Let $b$ and $\widehat b$ the lifts of $a$ and $\widehat a$ in $\tilde S$ starting respectively from $x_i$ and $x_j$,
\item Let finally $z$ and $\widehat z$ be the endpoints of $b$ and $\widehat b$ and $\zeta\in\pi_1(S)$ so that $z=\zeta(\widehat z)$.
\end{itemize}
By construction $\zeta$ is conjugated to the intersection loop $c_x(U_iW_i,U_j\widehat W_j)$. 

Let us now consider the various possibility about the position of $z$ and $\widehat z$.
\begin{enumerate}
\item $W_i=\alpha_i$, then $b\subset\tilde\alpha_i$ and thus $z$ belongs to $B$.
\item $W_i=g_i$, then $z\in[x_i,\gamma_i(x_i)[\subset [x_i,\gamma_i^+[$,
\item $\widehat W_j=\beta_j$, then, symmetrically, $\widehat z=\zeta(z)$ belongs to $B$.
\item $\widehat W_j=h_i$, then, symmetrically, $\widehat z\in[y_j,\eta_j^\epsilon(x_j)[_\epsilon \subset [y_i,\eta_j^+[_\epsilon$ (where the intervals are subsets of $\tilde\eta_j^\epsilon$). 
\end{enumerate}
Our goal is now to prove that $\zeta=1$ unless, maybe, $W_i=g_i$ and $\widehat W_j=h_j$.
\begin{enumerate}
\item $W_i=\alpha_i$ and  $\widehat W_j=\beta_j$ then by Assertions (1) and (3) above, both $z$ and $\zeta(z)$ belong to $B$, and thus by Assertion \eqref{hyp:gp2}, $\zeta=1$.
\item $W_i=\alpha_i$ and  $\widehat W_j=h_j$, then by Assertions (1) and (4), $\zeta(z)\in\tilde\eta^\epsilon_j$ and $z\in B$. Thus $\zeta^{-1}(\tilde\eta_i^\epsilon)\cap B\not=\emptyset$. Then by Hypothesis \eqref{hyp:gp4}, $
\zeta\in\langle\eta_j\rangle$. In particular $z\in B \cap\tilde\eta_j^\epsilon$ and thus we have 
\begin{eqnarray}
z\in [y_j,\eta_j(y_j)[_\epsilon,\label{proo:zep1}
\end{eqnarray}
Recall that from Assertion (4)
\begin{eqnarray}
\zeta(z)=\widehat z \in [y_j,\eta_i(y_j)[_\epsilon.\label{proo:zep2}
\end{eqnarray}
Since $\eta_j$ is primitive and $\zeta\in\langle\eta_j\rangle$, we obtain from Assertions \eqref{proo:zep1} and \eqref{proo:zep2} that $\zeta=1$.
\item A symmetric argument proves that when $W_i=g_i$ and  $\widehat W_j=\beta_j$, then $\zeta=1$.

\end{enumerate}
This finishes the proof that $\cF$ is in a homotopically good position.
\vskip 0.2truecm
\par\noindent{\sc Step 3: computation of the intersection numbers}

Recall that for each (oriented) axis $\tilde\eta_j$ we had two choices of curves at distance $\epsilon$. Let us denote by $\tilde\eta_j^L$ -- respectively $\tilde\eta_j^R$ -- the curve on the left --respectively on the right-- to $\tilde\eta_j$. 
Let then $\cF^L$ and $\cF^R$ the corresponding collections of arcs.

We have proved that both $\cF^L$ and $\cF^R$ are in homotopically good position. Let us now compute the intersection numbers. We will do that step by step.

We shall repeat the following observation several times: let $g$ and $h$ be two curves in $S$ passing through a point $x_0$, intersecting transversely  in a finite number of points $x_1,\ldots x_n$. Let $\tilde g$ and $\tilde h$ be the lift of these curves in $\tilde S$ passing through a point $\tilde x_0$. Then the projection realises a bijection between the set of those $x_i$ whose intersection loop is trivial, and the intersections points of $\tilde g$ and $\tilde h$.

In particular
\begin{eqnarray}
\sum_{x\in g\cap h\mid c_x(g,h)=1}\ii(x)&=&\sum_{z\in \tilde g\cap \tilde h}\ii(z)\label{egalinter}
\end{eqnarray}

\vskip 0.5 truecm
\noindent{\em Proof of Equation \eqref{eq:f}:} If $\gamma_i$ and $\eta_j$ are coprime, by Formula \eqref{egalinter} and since two geodesics have at most one intersection point, we have that
$$
{\rm f}_{i,j}=[\gamma_i^-\gamma_i^+,\eta_j^-\eta_j^+].
$$
If $\gamma_i$ and $\eta_j$ are not coprime, since $g_i$ is embedded by Assumption \eqref{hyp:gp5}, it follows that
$$
\ii(g_i,h_j)=0=[\gamma_i^-\gamma_i^+,\eta_j^-\eta_j^+].
$$
Thus in both cases
$$
{\rm f_{i,j}}(\cF^L)={\rm f_{i,j}}(\cF^R)=[\gamma_i^-\gamma_i^+,\eta_j^-\eta_j^+].
$$
\vskip 0.5 truecm
\noindent{\em Proof of Equation \eqref{eq:m}:}
Since all the corresponding intersection loops are trivial, we see that
$$
\ii(g_i,\beta_j)=\ii(\tilde\gamma_i,\tilde\beta_j).
$$
We know that $\tilde\beta_j\subset B$. To simplify, let us first consider the case when $\gamma_i$ and $\eta_{k}$ are coprime for $k=j,j+1$. Then $\tilde\gamma_i\cap\tilde\eta^\epsilon_k\subset B$ and thus
$$
\ii(\tilde\gamma_i,]\eta_k^-,y_k]_\epsilon)=0.
$$
It follows then that
$$
\ii(g_i,\beta_j)=\ii(\tilde\gamma_i,]\eta_j^-,y_j]\cup\tilde\beta_j\cup]y_{j+1},\eta_{j+1}^-[)=[\gamma_i^-\gamma_i^+,\eta_j^-\eta_{j+1}^-].
$$
We illustrate that situation in Figure \ref{fig:case1}

\begin{figure}
 \centering
 \subfloat[non-coprime elements]{\label{fig:case1}\includegraphics[width=0.5\textwidth]{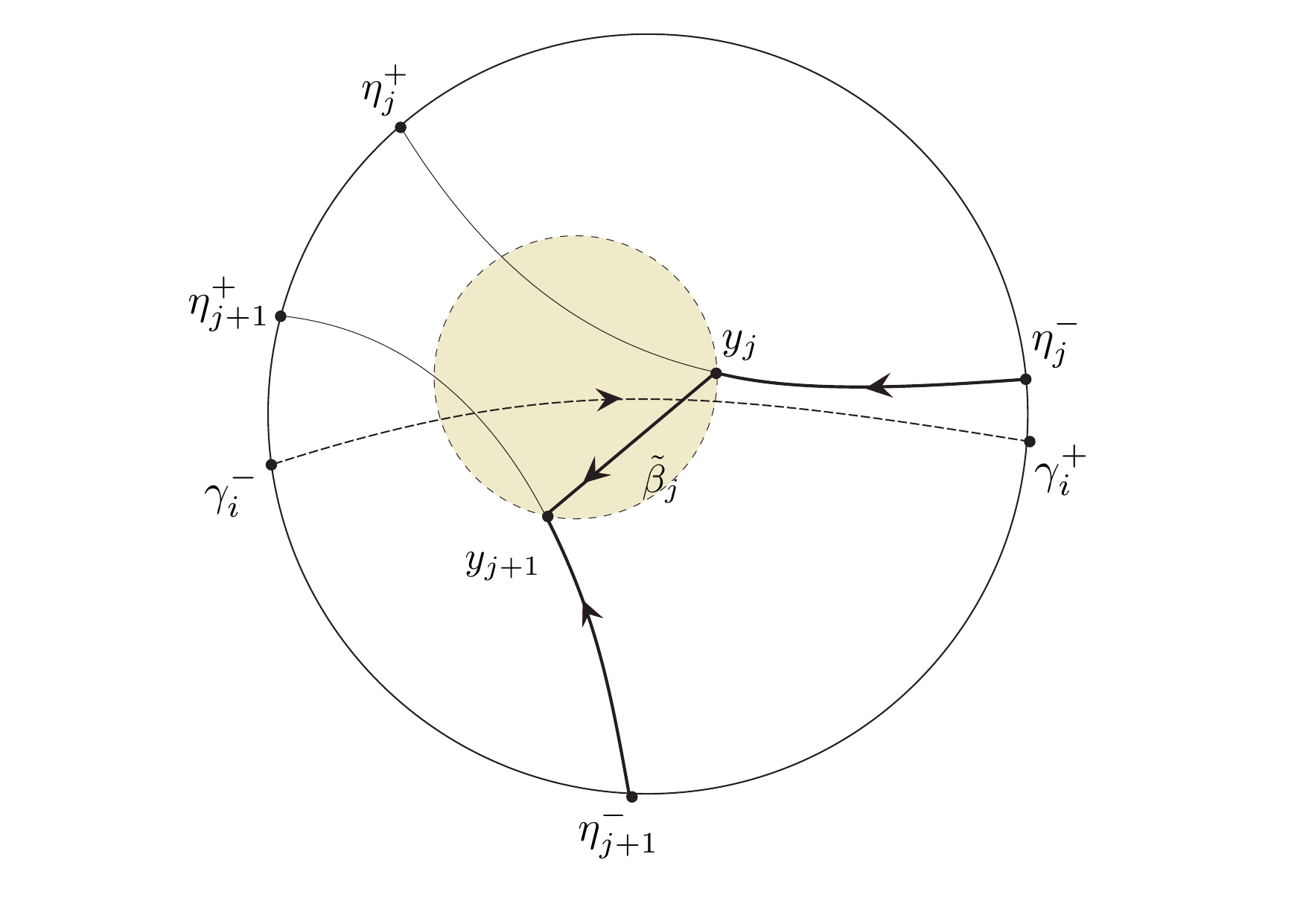}}        
 \subfloat[Coprime elements]{\label{fig:case2}\includegraphics[width=0.5\textwidth]{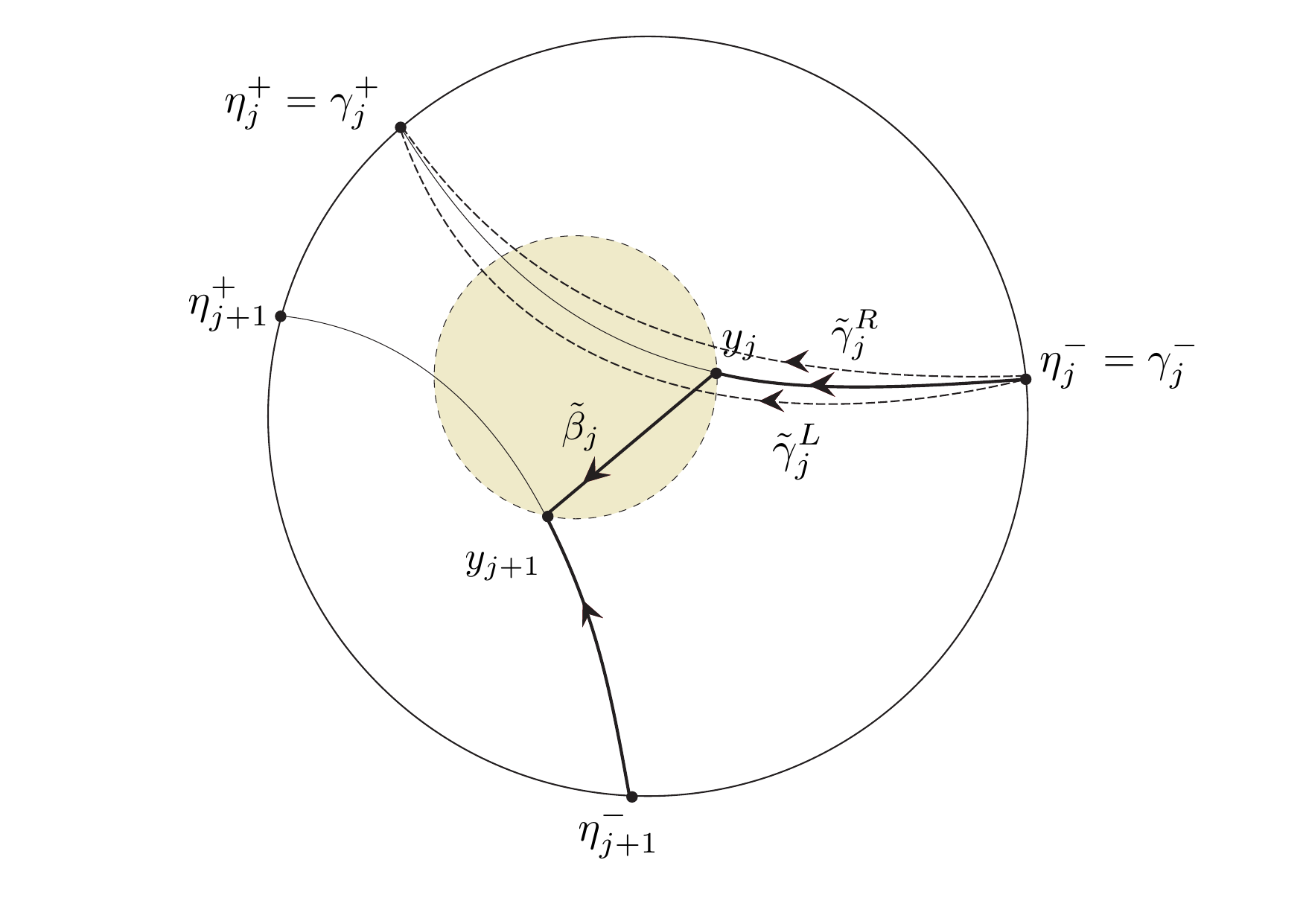}}
 \caption{Intersection computations}
 \label{fig:Cases}
\end{figure}

Let us move to the remaining cases.
The purpose of taking the ``left and right perturbations" of $\tilde\eta_j$ is to take care of the situation when $\eta_j$ (or $\eta_{j+1}$) and $\gamma_i$ are not coprime. So let us assume now that $\tilde\eta_j=\tilde\gamma_i$ (the case when $\tilde\eta_{j+1}=\tilde\gamma_i$ is symmetric).

Then in this case assume that $\eta_{j+1}^-$ is on the left of $\tilde\eta_j$ and $\tilde\gamma_i$ has the same orientation as $\tilde\eta_j$ (the other cases being symmetric). It then follows that
\begin{eqnarray}
{\rm m}_{i,j}({\cF}^L)=\ii(\tilde\gamma_i,\tilde\beta_j^L)&=&0,\\
{\rm m}_{i,j}({\cF}^R)= \ii(\tilde\gamma_i,\tilde\beta_j^R)&=&1.
\end{eqnarray}
It follows that
$$
\frac{1}{2}({\rm m}_{i,j}({\cF}^L)+{\rm m}_{i,j}({\cF}^R))
=\frac{1}{2}=[\gamma_i^-\gamma_i^+,\eta_j^-\eta_{j+1}^-].$$
We illustrate that case in Figure \ref{fig:case2}.
This finishes the proof of Equation \eqref{eq:m}.

\vskip 0.5 truecm
\noindent{\em Proof of Equation \eqref{eq:n} and \eqref{eq:q}:} The proof follows from the same ideas as the previous ones.
\vskip 0.5 truecm
\end{proof}

\section{Asymptotics}
This section is the main computational core of this article. Our goal is to compute asymptotic product formulas, namely understand the behaviour of the special product formula when the repetition in the arcs becomes infinite. This allows us to describe the limit of certain Wilson loops as elementary functions -- see Proposition \ref{AsymBP}.

The goal of this section is to obtain Corollary \ref{apcomp} which is an asymptotic product formula for the Goldman bracket of elementary functions.

We first need some facts about vanishing sequences 
\subsection{Properties of vanishing sequences}

In this paragraph, we shall be given a vanishing sequence $\{\Gamma_p\}_{p\in\mathbb N}$ of finite index subgroups of $\pi_1(S)$. We need some notation and definition 
\begin{itemize}
\item Let $\gir_p$ be the function defined on $\Hn$ by $$\gir_p(\rho)\defeq\gir\left(\left.\rho\right\vert_{\Gamma_p}\right).$$
\item For any positive integer $p$ and primitive element $\xi$ in $\Gamma_0$, let $\xi(p)$ be the positive integer so that
$$
\langle\xi ^{\xi(p)}\rangle=\langle\xi\rangle\cap\Gamma_p.
$$
We write $\xi_p=\xi^{\xi(p)}$ and we denote the associated closed geodesic by $\tilde\xi_p$. 
\end{itemize}
\begin{definition}{\sc [$N$-nice covering]}
 Let $\gamma$ and $\eta$ be primitive coprime elements of $\Gamma_0=\pi_1(S)$. Let $N$ be a positive integer. We say that $\Gamma_p$ is {\em $N$-nice} with respect to $\gamma$ and $\eta$, if the intersection loop $c_x(\tilde\gamma_p,\tilde\eta_p)$ is either trivial or satisfies 
$$
\pi_p \left(c_x(\tilde\gamma_p,\tilde\eta_p)\right)=\gamma^{k_1}.\eta^{-k_2},
$$
where $k_1$ and $k_2$ satisfy $$\gamma(p)-N> k_1 >N \hbox{ and }\eta(p)-N> k_2 >N.$$ \end{definition}

We need the following properties of vanishing sequences which we summarise in the next proposition. \begin{proposition}\label{GPH0}
Let $\{\Gamma_p\}_{p\in\mathbb N}$ be a vanishing sequence of finite index subgroups of $\pi_1(S)$ and $\{S_p\}_{p\in\mathbb N}$ the corresponding sequence of coverings so that $\pi_1(S_p)=\Gamma_p$. Then
\begin{enumerate}
\item when $p$ goes to infinity, $\gir_p$ converge uniformly to 0 on every compact of $\Hn$,
\item for any primitive coprime elements $\gamma$ and $\eta$, for all $N$, there exists $p_0$ so that for every $p>p_0$, $\Gamma_p$ is $N$-nice with respect to $\gamma$ and $\eta$.
\item Let $G=(\gamma_0,\ldots,\gamma_p)$ and $F=(\eta_0,\ldots,\eta_q)$ be tuples of primitive elements of $\grf\setminus\{1\}$ such that 
$(\gamma_i,\gamma_{i+1})$ as well as $(\eta_j,\eta_{j+1})$ are pairwise coprime. Then for $q$ large enough, $G$ and $F$ satisfy the Good Position Hypothesis \ref{def:GPH} as elements of $\pi_1(S_q)$. 
\end{enumerate}
\end{proposition}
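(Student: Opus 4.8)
The three assertions share a common backbone, so the plan is to isolate it first. The key point is that a vanishing sequence has $\bigcap_p\Gamma_p=\{1\}$, and in particular $\operatorname{sys}(S_p):=\min\{\ell_S(\xi)\mid\xi\in\Gamma_p\setminus\{1\}\}$ tends to $\infty$. For the first fact: given $g\neq 1$, choose primitive $\gamma,\eta$ of translation length $>2\ell_S(g)$ with $g$ in no cyclic group $\langle\gamma\rangle$ or $\langle\eta\rangle$, so that $g\notin\langle\eta\rangle\langle\gamma\rangle$; Proposition \ref{VSbis} applied to $H_0=\{g\}$ then gives $g\notin\langle\eta\rangle\Gamma_p\langle\gamma\rangle\supset\Gamma_p$ for large $p$. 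Since each $\Gamma_p$ is normal, the second fact follows: for any $R$ only finitely many conjugacy classes of $\Gamma_0$ have translation length $\le R$, each representative leaves $\Gamma_p$ eventually, and by normality so does its whole class.

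Assertion (1) is then immediate from the uniform quantitative Anosov property already invoked for Proposition \ref{girth}: on a compact $C\subset\Hn$ there are $c,C'>0$ with $|\lambda_{j+1}(\rho(\xi))/\lambda_j(\rho(\xi))|\le C'e^{-c\,\ell_S(\xi)}$ for all $\rho\in C$, all $\xi\in\Gamma_0\setminus\{1\}$ and all $j$; since $\gir(\rho|_{\Gamma_p})$ is the supremum of such consecutive eigenvalue ratios over $\xi\in\Gamma_p\setminus\{1\}$, we get $\gir_p(\rho)\le C'e^{-c\,\operatorname{sys}(S_p)}\to 0$ uniformly on $C$.

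For assertion (3) I would fix once and for all a metric ball $B\subset\tilde S$ — which is the universal cover of $S$ and of every $S_p$ — large enough to contain the finite, $p$-independent set of points $\tilde\gamma_i\cap\tilde\eta_j$ (note that the axis of $\gamma_i^{\gamma_i(p)}$ is the axis $\tilde\gamma_i$ of $\gamma_i$, and that two distinct geodesics of $\tilde S$ meet in at most one point), so that \eqref{hyp:gp3} holds for this $B$ and all $p$; it remains to make $p$ large enough that the same $B$ satisfies \eqref{hyp:gp2}, \eqref{hyp:gp4} and \eqref{hyp:gp5} with $\pi_1(S_p)=\Gamma_p$. For \eqref{hyp:gp2}: any $\xi\in\Gamma_p\setminus\{1\}$ moves points by at least $\operatorname{sys}(S_p)>\operatorname{diam}(B)$. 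For \eqref{hyp:gp4}: the set $E_i:=\{\xi\in\Gamma_0\mid\xi\tilde\gamma_i\cap B\neq\emptyset\}$ is finite and $p$-independent, and $E_i\setminus\langle\gamma_i\rangle$ is disjoint from $\Gamma_p$ for $p$ large since $\bigcap_p\Gamma_p=\{1\}$, so $E_i\cap\Gamma_p\subset\langle\gamma_i\rangle\cap\Gamma_p=\langle\gamma_i^{\gamma_i(p)}\rangle$. For \eqref{hyp:gp5}: since $\gamma_i$ is primitive, the set of $\xi\in\Gamma_0$ with $\tilde\gamma_i\cap\xi\tilde\gamma_i\neq\emptyset$ and $\xi\notin\langle\gamma_i\rangle$ is a finite union of double cosets $\langle\gamma_i\rangle h\langle\gamma_i\rangle$ disjoint from $\langle\gamma_i\rangle$ (one per self-intersection of the closed geodesic $\gamma_i$ on $S$), hence disjoint from $\Gamma_p$ for $p$ large by Proposition \ref{VSbis} with $\gamma=\eta=\gamma_i$; that is, $\gamma_i^{\gamma_i(p)}$ is embedded in $S_p$. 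The same applies to the $\eta_j$, and taking $p$ past all the finitely many thresholds produced yields the Good Position Hypothesis of Definition \ref{def:GPH}.

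Assertion (2) is the delicate one and is where the real work lies. The plan is to describe the intersection points of $\tilde\gamma_p$ and $\tilde\eta_p$ in $S_p$, together with their intersection loops, purely in terms of the double-coset combinatorics of $\langle\gamma\rangle$ and $\langle\eta\rangle$ in $\Gamma_0$ — exactly as in the configuration analysis underlying Proposition \ref{prelim-prod} and Step 2 of Proposition \ref{SettingArcGoodPosition} — so that each such loop is conjugate in $\Gamma_0$ to an element $\gamma^{k_1}\eta^{-k_2}$ for well-defined integers $k_1,k_2$, the distinct intersection points corresponding to the distinct admissible residues $(k_1\bmod\gamma(p),\,k_2\bmod\eta(p))$ (when $\ii(\gamma,\eta)>1$ one runs this argument at each of the finitely many intersection points of $\gamma$ and $\eta$ on $S$, inserting the relevant coset representative). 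Granting this dictionary, $N$-niceness becomes the statement that for $p$ large every admissible pair lying in the ``boundary strip'' ($k_1\le N$ or $k_1\ge\gamma(p)-N$, or the analogue for $k_2$) forces the associated loop to be trivial; concretely this amounts to showing that the finitely many elements $\gamma^{k}$ and $\eta^{k}$ with $1\le|k|\le N$ eventually escape $\langle\eta\rangle\Gamma_p$, respectively $\langle\gamma\rangle\Gamma_p$, and that $\langle\bar\gamma\rangle\cap\langle\bar\eta\rangle$ becomes trivial in $\Gamma_0/\Gamma_p$ — both of which are instances of the vanishing property (Proposition \ref{VSbis}) once the combinatorics is unwound. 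I expect setting up this dictionary cleanly, and making the two normalisations ($k_1$ versus $k_2$, and the ``co-small'' ranges near $\gamma(p)$ and $\eta(p)$) line up with what Proposition \ref{prodform2} subsequently needs, to be the main obstacle; the rest is bookkeeping with Proposition \ref{VSbis}.
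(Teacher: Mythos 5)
Two of the three items are not actually established.

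\textbf{Assertion (2) is not proved.} You explicitly present it as a plan (``The plan is to describe\ldots'', ``I expect setting up this dictionary\ldots to be the main obstacle'') and stop. The paper does carry out exactly the dictionary you describe, in two steps: first, for $0<k\le N$ and any $m$, the elements $\gamma^k\eta^m$ and $\gamma^m\eta^{-k}$ escape $\Gamma_p$ (via Proposition~\ref{VSbis} applied to the finite set $\{\gamma^k\mid 0<k\le N\}$, which is disjoint from $\langle\eta\rangle$ by coprimeness); and second, every intersection loop of $\tilde\gamma_p$ and $\tilde\eta_p$ projects to $\gamma^{k_2}\eta^{-k_1}$, obtained by identifying intersection points with double cosets in $\langle\gamma_p\rangle\backslash A_p/\langle\eta_p\rangle$ where $A_p=\{\xi\in\Gamma_p\mid \xi(\tilde\eta)\cap\tilde\gamma\not=\emptyset\}$, and pushing $A_0$ into $\langle\gamma\rangle\langle\eta\rangle$ by double-coset separability. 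Your sketch is aimed in the right direction, but the normalisation and the double-coset bookkeeping are exactly where the content is, and you leave them undone; this cannot be counted as a proof of~(2).

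\textbf{The argument for condition \eqref{hyp:gp4} in assertion~(3) contains a false claim.} You assert that $E_i=\{\xi\in\Gamma_0\mid \xi\tilde\gamma_i\cap B\neq\emptyset\}$ is finite. It is not: $E_i$ is right-invariant under $\langle\gamma_i\rangle$ (since $\gamma_i$ preserves $\tilde\gamma_i$), so it is a union of infinite cosets, and so is $E_i\setminus\langle\gamma_i\rangle$ whenever nonempty. Consequently the appeal to $\bigcap_p\Gamma_p=\{1\}$ does not suffice — that statement only removes a fixed finite set from $\Gamma_p$ in finite time, not an infinite one. What is finite is the image $E_i/\langle\gamma_i\rangle$; the correct conclusion is obtained by applying Proposition~\ref{VSbis} with $\gamma=\id$, $\eta=\gamma_i$ to a finite set of coset representatives outside $\langle\gamma_i\rangle$, as the paper does. (Your treatment of \eqref{hyp:gp5} via double cosets and Proposition~\ref{VSbis} with $\gamma=\eta=\gamma_i$ is correct, so this is precisely the one place where you dropped the coset invariance.)

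A smaller issue: your derivation of $\bigcap_p\Gamma_p=\{1\}$ goes through the unjustified implication ``$\gamma,\eta$ have translation length $>2\ell_S(g)$ and $g$ lies in neither $\langle\gamma\rangle$ nor $\langle\eta\rangle$, therefore $g\notin\langle\eta\rangle\langle\gamma\rangle$.'' That is not obvious and not needed: applying Proposition~\ref{VSbis} with $\gamma=\eta=\id$ and $H_0=\{g\}$ gives the statement immediately, since then $\langle\eta\rangle\langle\gamma\rangle=\{1\}$.

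Assertion~(1) is fine and is essentially the paper's argument, stated perhaps a bit more crisply via the uniform exponential eigenvalue-gap estimate for Anosov representations; both reduce to the observation that the elements contributing girth $>\epsilon$ on a compact $C\subset\Hn$ form finitely many conjugacy classes, which eventually miss the normal subgroups $\Gamma_p$.
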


\subsubsection{Proof of Proposition \ref{GPH0}}
Proposition \ref{GPH0} will the concatenation of Propositions \ref{VanishGPH1}, \ref{VanishGPH3} and \ref{VanishGPH2} proved thereafter.  Proposition \ref{VanishGPH2a} is an intermediate step in proving Proposition \ref{VanishGPH2}. We fix in this paragraph a vanishing sequence $\{\Gamma_p\}_{p\in\mathbb N}$.

Remember that we identify primitive elements in $\grf$ and in any of its finite index subgroup.

\begin{proposition}\label{VanishGPH1}
When $p$ goes to infinity, $\gir_p$ converge uniformly to 0 on every compact of $\Hn$.
\end{proposition}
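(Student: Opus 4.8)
\textbf{Proof proposal for Proposition \ref{VanishGPH1}.}

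The plan is to reduce the statement to an estimate on how much longer a closed geodesic becomes when lifted to a cover in a vanishing sequence, and then to feed that length estimate into the Anosov/domination property that controls the girth. Recall from Proposition \ref{girth} that for any compact $C\subset\Hn$ there is $\delta<1$ with $\gir(\rho)\leq\delta$ for all $\rho\in C$; more precisely the Anosov property gives constants $C_1>0$ and $0<\mu<1$, uniform over $C$, such that for every nontrivial $\xi\in\pi_1(S)$ and every $j>k$ one has $\bigl|\lambda_j(\rho(\xi))/\lambda_k(\rho(\xi))\bigr|\leq C_1\,\mu^{\,\ell(\xi)}$, where $\ell(\xi)$ is the length of the closed geodesic associated to $\xi$ in the fixed hyperbolic metric on $S$ (equivalently the translation length of $\xi$ on $\tilde S$). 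The point is that the eigenvalue gaps of $\rho(\xi)$ decay exponentially in the \emph{geometric} length of $\xi$, uniformly on $C$.

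First I would unwind the definition of $\gir_p$. By definition
$$
\gir_p(\rho)=\sup\Bigl\{\bigl|\lambda_j(\rho(\xi))/\lambda_k(\rho(\xi))\bigr|\ \Bigm|\ j>k,\ \xi\in\Gamma_p\setminus\{1\}\Bigr\},
$$
so the supremum is taken over the \emph{smaller} set $\Gamma_p\setminus\{1\}$ rather than all of $\pi_1(S)\setminus\{1\}$. Every nontrivial $\xi\in\Gamma_p$ is a power $\xi=\zeta^{m}$ of a primitive element $\zeta\in\Gamma_p$ with $|m|\geq 1$, and since $\Eig{\rho(\xi)}=\Eig{\rho(\zeta)}$ the eigenvalue ratios for $\xi$ are the $|m|$-th powers of those for $\zeta$; hence it suffices to bound the ratios over primitive elements of $\Gamma_p$. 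Now the key geometric input: there is a function $L(p)\to\infty$ as $p\to\infty$ such that every primitive element of $\Gamma_p$ has geodesic length at least $L(p)$. Indeed, fix any primitive $\zeta\in\Gamma_p$; it is a power $\zeta=\xi^{\,\xi(p)}$ of a primitive element $\xi$ of $\Gamma_0$, so its length is $\xi(p)\cdot\ell(\xi)\geq\xi(p)\cdot(\text{systole of }S)$. So I must show $\inf_{\xi\ \mathrm{primitive}}\xi(p)\to\infty$; this is exactly the kind of statement the vanishing hypothesis controls — apply the defining property (or Proposition \ref{VSbis}) with $\gamma=\xi$, $\eta=1$: for any fixed finite collection of primitive elements $\xi$ of $\Gamma_0$ and any fixed $K$, eventually $\langle\xi\rangle\cap\Gamma_p\subset\langle\xi^{K}\rangle$, i.e. $\xi(p)\geq K$. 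A counting/pigeonhole argument then upgrades this from "each fixed $\xi$" to "uniformly over all primitive $\xi$": there are only finitely many primitive conjugacy classes of length below any bound, and for a primitive $\zeta\in\Gamma_p$ of small length its $\Gamma_0$-generator $\xi$ has length at most that of $\zeta$, so it lies in this finite set — on which the previous statement is uniform.

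Granting $L(p)\to\infty$, combine with the Anosov estimate: for any $\rho\in C$ and any nontrivial $\xi\in\Gamma_p$,
$$
\Bigl|\frac{\lambda_j(\rho(\xi))}{\lambda_k(\rho(\xi))}\Bigr|\leq C_1\,\mu^{\,\ell(\xi)}\leq C_1\,\mu^{\,L(p)},
$$
whence $\sup_{\rho\in C}\gir_p(\rho)\leq C_1\,\mu^{\,L(p)}\to 0$ as $p\to\infty$. I expect the main obstacle to be the \emph{uniformity} in the geometric length lower bound $L(p)\to\infty$: making the step "for each fixed primitive $\xi$, $\xi(p)\to\infty$" into a uniform-over-all-primitives statement requires the finiteness-of-short-geodesics remark together with a careful application of the vanishing property to the (finite) family of short primitive classes, and one must check that passing to a cover does not create new primitive elements of uniformly bounded length — which is precisely what the lower bound on $\xi(p)$ prevents. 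Everything else (the reduction to primitives, the power trick for eigenvalue ratios, and invoking the uniform Anosov constants from Proposition \ref{girth} / Sambarino) is routine.
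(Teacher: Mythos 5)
Your proposal is correct and follows essentially the same strategy as the paper's proof: exhibit a finite (up to conjugacy) set of ``short'' elements, use the vanishing property to push $\Gamma_p$ off it, and convert the resulting lower bound into a girth bound via the uniform Anosov domination on the compact $C$. The paper phrases the finite set via the width function $\operatorname{width}_\rho(\gamma)$ rather than hyperbolic geodesic length and invokes the vanishing property once on $S_K^0$ (normality handling conjugates), whereas you reduce to primitives and quote the exponential estimate $|\lambda_j/\lambda_k|\leq C_1\mu^{\ell(\xi)}$ explicitly, but the underlying mechanism and inputs are the same.
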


\begin{proof}  For all positive number $K$ and compact $C$ in $\mathsf H(n,S)$, let consider the following subset of $\grf$
$$
Z_K\defeq \left\{\gamma\in\grf\setminus\{\id\}\mid \exists \rho\in C, \ \ \left\vert\frac{\lambda_2(\rho(\gamma))}{\lambda_1(\rho(\gamma))}\right\vert>K\right\}.
$$
By Proposition \ref{girth}, the set of conjugacy classes in $Z_K$ is a finite set. Let $Z^0_K$ be a finite set in $\grf$ of representatives of the conjugacy classes of $Z_K$.
From the definition of vanishing sequences, it follows that there exists $p_0$ that that for all $p>p_0$, we have
$$
Z^0_K\cap\Gamma_{p}=\emptyset.
$$
Since $\Gamma_p$ is normal, it follows that
$$
Z_K\cap\Gamma_{p}=\emptyset.
$$
Then by definition, the girth of any representation in $C$ restricted to $\Gamma_p$ is smaller than $K$.
Thus the family of functions $\gir_p$ converges uniformly to zero on $C$, when $p$ goes to $\infty$.
\end{proof}
The following proposition is well known.

\begin{proposition}\label{VanishGPH2a} Let $\gamma$ be an element of $\Gamma_0$. Then 
there exists $p_0$ such that for all $p>p_0$,
 the geodesics $\tilde\gamma_p$ is simple.\end{proposition}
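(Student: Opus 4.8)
Proposition \ref{VanishGPH2a} asserts that for a fixed $\gamma \in \Gamma_0$, the projected geodesic $\tilde\gamma_p$ becomes simple in $S_p = \tilde S/\Gamma_p$ for $p$ large enough. Here $\tilde\gamma_p$ is the image in $S_p$ of the axis $\tilde\gamma$ of $\gamma$ in $\tilde S$, i.e.\ the closed geodesic representing the primitive element $\gamma_p = \gamma^{\gamma(p)}$ generating $\langle\gamma\rangle \cap \Gamma_p$.

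The plan is to unwind what ``simple'' means at the level of the group. The closed geodesic $\tilde\gamma_p$ in $S_p$ fails to be simple precisely when there exist $\xi \in \Gamma_p$ with $\xi \notin \langle\gamma_p\rangle = \langle\gamma\rangle \cap \Gamma_p$ such that $\tilde\gamma \cap \xi(\tilde\gamma) \neq \emptyset$; that is, two distinct translates of the axis under $\Gamma_p$ cross. So the statement I want is: for $p$ large enough, every $\xi \in \Gamma_p$ with $\xi(\tilde\gamma) \cap \tilde\gamma \neq \emptyset$ already lies in $\langle\gamma\rangle$. First I would note that the set
$$
H := \{\xi \in \Gamma_0 \mid \xi(\tilde\gamma) \cap \tilde\gamma \neq \emptyset\}
$$
is invariant under left and right multiplication by $\langle\gamma\rangle$ (since $\gamma$ preserves $\tilde\gamma$), and its image in the double coset space $\langle\gamma\rangle\backslash\Gamma_0/\langle\gamma\rangle$ is \emph{finite}. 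This finiteness is the standard fact that a fixed closed geodesic on the compact surface $S_0$ has only finitely many self-intersections, so only finitely many double cosets contribute translates of $\tilde\gamma$ meeting $\tilde\gamma$.

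Now I would invoke the definition of a vanishing sequence directly with this $H$: taking $\gamma = \eta$ in that definition, there exists $n_0$ such that for all $p > n_0$, $H \cap \Gamma_p \subset \langle\gamma\rangle$ (using that $\Gamma_p \cap \langle\gamma\rangle\langle\gamma\rangle = \Gamma_p \cap \langle\gamma\rangle$). Equivalently, Proposition \ref{VSbis} applied to the finite set $H_0$ of those double coset representatives lying outside $\langle\gamma\rangle$ gives: for $p$ large, no translate $\xi(\tilde\gamma)$ with $\xi \in \Gamma_p \setminus \langle\gamma\rangle$ meets $\tilde\gamma$. Translating back downstairs, this says exactly that $\tilde\gamma_p$ has no self-intersections in $S_p$, hence is simple.

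The only genuine point needing care is the claim that $H$ maps to a finite subset of $\langle\gamma\rangle\backslash\Gamma_0/\langle\gamma\rangle$ — everything else is a formal application of the vanishing property. I would justify this by the compactness of $S_0$: the geodesic $\pi_0(\tilde\gamma)$ has finite length, so finitely many self-intersection points, and self-intersection points of $\pi_0(\tilde\gamma)$ correspond bijectively to the non-trivial double cosets in the image of $H$ (an intersection point of $\pi_0(\tilde\gamma)$ with itself lifts to a pair $\tilde\gamma, \xi(\tilde\gamma)$ crossing, well-defined up to the $\langle\gamma\rangle \times \langle\gamma\rangle$-action on the endpoints of the two strands). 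This is why the proposition is labelled ``well known''; I would state it with a one-line justification rather than a detailed argument. With that in hand, the conclusion follows immediately from the vanishing sequence hypothesis.
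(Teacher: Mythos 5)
Your proposal is correct and follows essentially the same route as the paper: define the set $A_\gamma$ of $\xi\in\Gamma_0$ whose translate of the axis meets the axis, note it descends to a finite subset of the double coset space, and invoke the vanishing-sequence property. If anything you are slightly more careful than the paper, which claims finiteness of the image in the one-sided quotient $\Gamma_0/\langle\gamma\rangle$ -- that set is actually infinite, and the two-sided quotient $\langle\gamma\rangle\backslash\Gamma_0/\langle\gamma\rangle$ you use is the correct one for the finiteness claim to hold.
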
 
 
 \begin{proof}
Let 
$$
\hat A_\gamma\defeq \{\xi\in\Gamma_0\mid \xi(\tilde\gamma)\cap\tilde\gamma\not=\emptyset.\}\subset\Gamma_0/\langle \gamma\rangle.
$$
Observe that $\hat A_\gamma$ is invariant by right multiplication by $\gamma$ and that its projection in $\Gamma_0/\langle \gamma\rangle $ is a finite set. Thus there exists $p_0$ so that for every $p>p_0$,
$$
A_\gamma\cap\Gamma_p\subset \langle\gamma\rangle.
$$
This implies that the projection of $\tilde\gamma$ in $S_p$ is a simple closed geodesic, indeed the existence of a self intersection point implies the existence of an element $\xi$ in $\Gamma_p$ so that $\xi(\tilde\gamma)\cap\tilde\gamma\not=\emptyset$.
\end{proof}

We finally need.

\begin{proposition}\label{VanishGPH3}
Let $\gamma$ and $\eta$ two coprime primitive elements of $\Gamma_0=\pi_1(S)$. Let $N$ be a positive integer. 

Then there exists $p_0$ such that for all $p>p_0$, $\Gamma_p$ is $N$-nice with respect to $\gamma$ and $\eta$. \end{proposition}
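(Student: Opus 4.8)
The plan is to convert $N$-niceness into a statement about which elements of $\Gamma_0$ can occur as intersection loops of the closed geodesics $\tilde\gamma_p$ and $\tilde\eta_p$ on $S_p$, and then to eliminate the ``bad'' elements by applying the vanishing hypothesis, in the form of Proposition~\ref{VSbis}, to a finite list of explicit subsets of $\Gamma_0$.

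First I would set up the usual dictionary between intersection points and double cosets. For $p$ large $\tilde\gamma_p$ and $\tilde\eta_p$ are simple by Proposition~\ref{VanishGPH2a}, and $\gamma(p),\eta(p)\to\infty$, so the window asked for in the definition is non-empty. Lifting to $\tilde S$, the preimage of $\tilde\gamma_p$ is the union of the geodesics $\xi(\tilde\gamma)$ with $\xi$ running over $\Gamma_p/(\langle\gamma\rangle\cap\Gamma_p)$, and likewise for $\tilde\eta_p$; since two distinct geodesics of $\tilde S$ meet at most once, the intersection points of $\tilde\gamma_p$ and $\tilde\eta_p$ on $S_p$ are in bijection with the double cosets $(\langle\eta\rangle\cap\Gamma_p)\backslash H_p/(\langle\gamma\rangle\cap\Gamma_p)$, where $H_p:=\{\xi\in\Gamma_p\mid \xi(\tilde\gamma)\cap\tilde\eta\neq\emptyset\}$. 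Reading the intersection loop of such a point first along $\tilde\gamma_p$ and then back along $\tilde\eta_p$, from a chosen base intersection point, expresses it in $\Gamma_0$ in the shape $\gamma^{k_1}\,\xi_0\,\eta^{-k_2}$, where $\xi_0$ is a representative of the $\langle\eta\rangle$--$\langle\gamma\rangle$ double coset of $\xi$, the integer $k_1$ (resp.\ $k_2$) counts the fundamental segments of $\tilde\gamma$ (resp.\ $\tilde\eta$) traversed, so $0\le k_1<\gamma(p)$ and $0\le k_2<\eta(p)$; by coprimality ($\langle\gamma\rangle\cap\langle\eta\rangle=\{1\}$) the loop is trivial precisely when $\xi_0=1$ and $k_1=k_2=0$, and moreover $k_1=0$ already forces $\gamma^{k_1}\eta^{-k_2}\in\Gamma_p$ with $0\le k_2<\eta(p)$, whence $k_2=0$, and symmetrically, so the two ``endpoints'' of the windows are automatically the trivial loop.

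The first substantial step is to remove the twisted part $\xi_0$. Set $H:=\{\xi\in\Gamma_0\mid \xi(\tilde\gamma)\cap\tilde\eta\neq\emptyset\}$; it is invariant under left multiplication by $\langle\eta\rangle$ and right multiplication by $\langle\gamma\rangle$, and since $\gamma$ and $\eta$ are coprime their closed geodesics on the compact hyperbolic surface $S$ are distinct, hence meet in finitely many points, so the image of $H$ in $\langle\eta\rangle\backslash\Gamma_0/\langle\gamma\rangle$ is finite. The representatives that do not lie in $\langle\eta\rangle\langle\gamma\rangle$ form a finite set $H'$ with $H'\cap(\langle\eta\rangle\langle\gamma\rangle)=\emptyset$, so Proposition~\ref{VSbis} yields $p_1$ with $H'\cap(\langle\eta\rangle\Gamma_p\langle\gamma\rangle)=\emptyset$ for all $p>p_1$. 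If some $\xi\in H_p$ sat in the double coset of such a representative $\xi_0\in H'$, we could write $\xi_0=\eta^{a}\xi\gamma^{b}\in\langle\eta\rangle\Gamma_p\langle\gamma\rangle$, a contradiction; hence for $p>p_1$ every element of $H_p$ lies in $\langle\eta\rangle\langle\gamma\rangle$, and consequently (after the normalisation of the previous paragraph) every intersection loop of $\tilde\gamma_p\cap\tilde\eta_p$ is trivial or of the form $\gamma^{k_1}\eta^{-k_2}$ with $0\le k_1<\gamma(p)$ and $0\le k_2<\eta(p)$.

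It remains to obtain the window bounds $N<k_1<\gamma(p)-N$ and $N<k_2<\eta(p)-N$ for the non-trivial loops, and this is where I expect the main obstacle. Through the dictionary, an intersection point whose loop has, say, $1\le k_1\le N$ (the case $\gamma(p)-N\le k_1\le\gamma(p)-1$, and the two analogues in $k_2$, being handled in the same way) forces its indexing element $\xi\in H_p$ to lie in a $\Gamma_0$--translate $\gamma^{j}H$ or $H\eta^{j}$ of $H$ with $1\le |j|\le N$; I would then list the finitely many $\Gamma_0$--double cosets so obtained — those of $\gamma^{j}\xi_0$ and $\xi_0\eta^{j}$ for $\xi_0$ a representative of $H$ and $1\le|j|\le N$ — check, using coprimality and the primitivity of $\gamma$ and $\eta$, that each of them is either disjoint from $\langle\eta\rangle\langle\gamma\rangle$ or corresponds to an endpoint already identified with the trivial loop, and apply Proposition~\ref{VSbis} once more to this finite set, obtaining $p_0\ge p_1$ such that for $p>p_0$ no intersection point of $\tilde\gamma_p\cap\tilde\eta_p$ has a loop in a forbidden window; this is exactly $N$-niceness, and taking $p_0$ to be the largest of the finitely many thresholds above finishes the proof. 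The delicate point is the bookkeeping in this last step: translating ``the loop of an intersection point lies within $N$ windings of an end'' into membership of the indexing element in an explicit finite union of double cosets disjoint from $\langle\eta\rangle\langle\gamma\rangle$, and making sure the base-point conventions used to normalise $k_1$ and $k_2$ match those of the definition of $N$-niceness (and of the bouquets built from it); everything else is formal once the dictionary is in place and Proposition~\ref{VSbis} is available.
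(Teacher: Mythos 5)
Your overall plan --- encode intersection points as double cosets via the lift to $\tilde S$, eliminate the ``twisted'' double cosets by a first application of Proposition~\ref{VSbis}, then impose the window bounds by a second application --- has the same skeleton as the paper's argument (the paper's Step~2 is your second paragraph, its Step~1 is your third), and your second paragraph, reducing loops to the form $\gamma^{k_1}\eta^{-k_2}$, is correct and essentially what the paper does.

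The gap is in the third paragraph, and it is exactly at the point you flag as ``delicate''. You propose to translate ``$1\leq k_1\leq N$'' into membership of the indexing element in a finite union of $\langle\eta\rangle$--$\langle\gamma\rangle$ double cosets disjoint from $\langle\eta\rangle\langle\gamma\rangle$, and to hit that finite list with Proposition~\ref{VSbis}. But $\gamma^{j}\langle\gamma\rangle=\langle\gamma\rangle$, so the $\langle\eta\rangle$--$\langle\gamma\rangle$ double coset of $\gamma^{j}$ (and hence of any $\gamma^{k_1}\eta^{-k_2}$) is $\langle\eta\rangle\langle\gamma\rangle$ itself, i.e., the \emph{same} as that of the identity. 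The elements you need to exclude all sit in the trivial double coset; an application of~\ref{VSbis} with the pair $\bigl(\langle\eta\rangle,\langle\gamma\rangle\bigr)$ literally cannot distinguish $\gamma^{j}$ from $1$, so the translation you want does not exist. The fix --- which is what the paper does in its Step~1 --- is to invoke~\ref{VSbis} with a \emph{different} pair of cyclic subgroups. Take $H_0=\{\gamma^{k}\mid 0<k\leq N\}$ and apply~\ref{VSbis} with the pair $\bigl(\{\id\},\langle\eta\rangle\bigr)$: coprimality gives $H_0\cap\langle\eta\rangle=\emptyset$, so for $p$ large $H_0\cap\Gamma_p\langle\eta\rangle=\emptyset$, i.e.\ $\gamma^{k}\eta^{m}\notin\Gamma_p$ for $0<k\leq N$ and all $m$. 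Symmetrically, with $\{\eta^{k}\mid 0<k\leq N\}$ and the pair $\bigl(\langle\gamma\rangle,\{\id\}\bigr)$, one gets $\gamma^{m}\eta^{-k}\notin\Gamma_p$. The lower bounds $k_1,k_2>N$ for an intersection loop $\gamma^{k_1}\eta^{-k_2}\in\Gamma_p$ follow directly; the upper bounds follow by multiplying by $\gamma_p^{-1}=\gamma^{-\gamma(p)}$ (resp.\ $\eta_p$), which stays in $\Gamma_p$ and reduces $\gamma(p)-N\leq k_1<\gamma(p)$ to the excluded range $0<\gamma(p)-k_1\leq N$.
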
 
 \begin{proof} We assume using the previous proposition that $\tilde\gamma_p$ and $\tilde\eta_p$ are simple.
\vskip 0.2truecm
\par\noindent{\sc Step 1} We shall prove the following assertion
\vskip 0.2truecm
\par{\em For any $N>0$, there exists $p_0$ such that for any $p>p_0$, for any integers $k$ such that $0< k \leq N$ and for any $m$ then 
$$
\gamma^{k}\eta^{m}\not\in\Gamma_p\hbox{ and }\gamma^{m}\eta^{-k}\not\in\Gamma_p.
$$
}
This is an immediate application of Proposition \ref{VSbis}. Let $H\defeq \{\gamma^k\mid 0<k\leq N\}$. Since $\gamma$ and $\eta$ are coprime then $H\cap\langle\eta\rangle=\emptyset$. Using Proposition, \ref{VSbis}, we get that there exists $p_0$ so that for all $p>p_0$, $$H\cap\left(\Gamma_p.\langle\eta\rangle\right)=\emptyset.$$ In other words, for all $n$ and $k$ so that $0<k\leq N$, 
$$
\gamma^k.\eta^n\not\in\Gamma_p.
$$
A symmetric argument concludes the proof.
\vskip 0.2truecm
\par\noindent{\sc Step 2} We now prove.
\vskip 0.2truecm
\par{\em If $x\in\gamma_p\cap\eta_p$, then there exists positive integers $k_1$ and $k_2$ such that the intersection loop $c_p(x)\defeq c_x(\tilde\gamma_p,\tilde\eta_p)$ satisfies
$$
\pi_p \left(c_p(x)\right)=\gamma^{k_2}.\eta^{-k_1},
$$
where the equality is as homotopy classes in $S_0=S$}
\vskip 0.2truecm
We can as well assume (using the first step and a shift in $p$) that the projection of the axis of $\gamma$ and $\eta$ are simple geodesics in $S_0$. Let also
$$
A_p\defeq \{\xi\in\Gamma_p\mid \xi(\tilde\eta)\cap\tilde\gamma\not=\emptyset\}\subset \Gamma_p.
$$
Observe that $A_p$ is invariant by left multiplication by $\gamma$ and right multiplication by $\eta$. Let $\hat A_p$ be the projection of $A_p$ in $\langle \eta_p\rangle\backslash \Gamma_p/\langle \eta_p\rangle$. Observe also that we have a bijection from $\hat A_p$ to 
$$
I_p\defeq \pi_p(\tilde\gamma)\cap\pi_p(\tilde\eta)\subset S_p
$$
given by 
$$
\langle\gamma\rangle.\xi.\langle\eta\rangle\to\pi_{p}(\xi(\tilde\eta)\cap\tilde\gamma).
$$
In particular $\hat A_p$ is finite since $I_p$ is finite. Moreover, if $x$ in $I_p$ comes in this procedure from an element $a$ in $A_p$, then $a$ represents the intersection loop of $x$.

Since $\hat A_0$ is finite, using the double coset separability property, there exists $p_0$ such that for all $p>p_0$, we have
$$
A_0\cap \Gamma_p\subset \langle\gamma\rangle\langle\eta\rangle,
$$
Since 
$
 A_p\subset A_0\cap\Gamma_p,
$
It follows that the projection in $S_0$ of any intersection loop $c_x(\gamma_p,\eta_p)$ is homotopic to $\gamma^n.\eta^{-m}$ with $n$ and $m$ positive integers.

\vskip 0.2truecm
\par\noindent{\sc Conclusion of the proof} The Proposition follows at once from the two steps of the proof.
\end{proof}

\begin{proposition}\label{VanishGPH2}
 Let $G=(\gamma_0,\ldots,\gamma_p)$ and $F=(\eta_0,\ldots,\eta_q)$ be tuples of primitive elements of $\grf\setminus\{1\}$ such that 
$(\gamma_i,\gamma_{i+1})$ as well as $(\eta_j,\eta_{j+1})$ are pairwise coprime. Then for $m$ large enough, $G$ and $F$ satisfy the Good Position Hypothesis \ref{def:GPH} as elements of $\pi_1(S_m)$. 
\end{proposition}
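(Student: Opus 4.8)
The plan is to build a single metric ball $B\subset\tilde S$ that will serve as the witness in Definition~\ref{def:GPH} for all sufficiently large $p$, exploiting the fact that, as the groups $\Gamma_p$ shrink, conditions \eqref{hyp:gp2}, \eqref{hyp:gp4} and \eqref{hyp:gp5} only get \emph{easier} to meet, whereas \eqref{hyp:gp3} does not improve at all. Accordingly I would first fix $B$ so that \eqref{hyp:gp3} holds for every $p$, and only then push $p$ up to obtain the remaining three.

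Note first that \eqref{hyp:gp5} for an element $\zeta$ just says that the closed geodesic $\tilde\zeta_p$ is embedded in $S_p$, so applying Proposition~\ref{VanishGPH2a} to each of the finitely many elements $\gamma_0,\dots,\gamma_q,\eta_0,\dots,\eta_{q'}$ and taking the largest resulting threshold secures \eqref{hyp:gp5}. For \eqref{hyp:gp3}: the element $\gamma_i$, viewed in $\Gamma_p$, equals $\gamma_i^{\gamma_i(p)}$ and has the very same axis $\tilde\gamma_i$ (likewise for the $\eta_j$), and two primitive elements are coprime in $\Gamma_p$ exactly when they have distinct axes, i.e.\ exactly when they are coprime in $\Gamma_0$. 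Hence the finite family of points $\tilde\gamma_i\cap\tilde\eta_j$, taken over all coprime pairs $(i,j)$ — each such intersection being a single point since two distinct geodesics of $\tilde S$ meet at most once — does not depend on $p$, and one may fix once and for all a metric ball $B$ containing all of them. Then \eqref{hyp:gp3} holds for every $p$.

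It remains to arrange \eqref{hyp:gp2} and \eqref{hyp:gp4} by enlarging $p$, using proper discontinuity of the $\Gamma_0$-action together with Proposition~\ref{VSbis}. For \eqref{hyp:gp2}: the set $E:=\{\xi\in\Gamma_0 : \xi(\bar B)\cap\bar B\neq\emptyset\}$ is finite, and applying Proposition~\ref{VSbis} with $\gamma=\eta=1$ and $H_0=E\setminus\{1\}$ gives $p_0$ with $E\cap\Gamma_p=\{1\}$ for $p>p_0$, i.e.\ $\xi(B)\cap B=\emptyset$ for all $\xi\in\Gamma_p\setminus\{1\}$. For \eqref{hyp:gp4}: fix $\zeta$ among the finitely many $\gamma_i,\eta_j$; only finitely many distinct lifts of the closed geodesic of $\zeta$ meet $\bar B$, so the set $A_\zeta:=\{\xi\in\Gamma_0 : \xi(\tilde\zeta)\cap\bar B\neq\emptyset\}$, which is invariant under right multiplication by $\langle\zeta\rangle$, is a union of finitely many right $\langle\zeta\rangle$-cosets; pick a representative $\xi_m$ for each such coset other than $\langle\zeta\rangle$, forming a finite set $H_0$ disjoint from $\langle\zeta\rangle$. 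Proposition~\ref{VSbis} with $\eta=1$ and $\gamma=\zeta$ then provides $p_0$ with $H_0\cap(\Gamma_p\langle\zeta\rangle)=\emptyset$ for $p>p_0$; since any $\xi\in A_\zeta\cap\Gamma_p$ with $\xi\notin\langle\zeta\rangle$ would equal $\xi_m\zeta^l$ for some $m,l$, giving $\xi_m=\xi\zeta^{-l}\in\Gamma_p\langle\zeta\rangle$, this forces $A_\zeta\cap\Gamma_p\subset\langle\zeta\rangle\cap\Gamma_p=\langle\zeta\rangle_p$, which is precisely \eqref{hyp:gp4} for $\zeta$ in $S_p$. Taking the maximum of all thresholds over the finite sets of elements involved completes the argument.

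The only genuinely delicate point is the one stressed at the start: \eqref{hyp:gp3} must be incorporated into the choice of $B$ before any passage to a deeper cover, since — unlike \eqref{hyp:gp2}, \eqref{hyp:gp4} and \eqref{hyp:gp5} — it is not aided by shrinking $\Gamma_p$; the rest is a routine blend of proper discontinuity with the double-coset separability packaged in Proposition~\ref{VSbis}.
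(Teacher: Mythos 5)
Your proof is correct and takes essentially the same route as the paper: fix a ball $B$ large enough to contain the finitely many intersection points $\tilde\gamma_i\cap\tilde\eta_j$ (settling condition (1) once and for all, since it cannot improve as $p$ grows), then invoke Proposition~\ref{VanishGPH2a} for condition (4) and Proposition~\ref{VSbis}, applied first with $\gamma=\eta=\id$ and then with one of the two elements equal to $\zeta$, for conditions (2) and (3) respectively. Your added remark that (1) has to be arranged before shrinking $\Gamma_p$ is a useful explicit articulation of what the paper does implicitly, but the decomposition and supporting lemmas are identical.
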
 

\begin{proof}
Let us check the four conditions of the Good Position Hypothesis.
Let $G=(\gamma_0,\ldots,\gamma_p)$ and $F=(\eta_0,\ldots,\eta_q)$ be primitive elements of $\grf\setminus\{1\}$ such that 
$(\gamma_i,\gamma_{i+1})$ as well as $(\eta_j,\eta_{j+1})$ are pairwise coprime. 

\begin{enumerate}
\item 
Let $B\subset \tilde S$ be a ball  containing all the intersections         $\tilde\gamma_i\cap \tilde\eta_j$ when $\gamma_i$ and $\eta_j$ are coprime. Thus Condition \eqref{GP1} of the Good Position Hypothesis is satisfied.
\item Let 
$$
F\defeq \{\xi\in\grf \mid B\cap\xi(B)\not=\emptyset\}.
$$
The set $F$ is finite. Thus, by Proposition \ref{VSbis} (applied to $\gamma=\eta=\id$), there exists $p_0$, such that for all $p>p_0$, we have
$$
F\cap \Gamma_p=\{\id\}.
$$
Thus Condition \eqref{GP2} of the Good Position Hypothesis is satisfied.

\item Next, for every $\zeta\in \{\gamma_0,\ldots\gamma_p,\eta_0,\ldots,\eta_q\}$ the set
$$
H_\zeta\defeq \{\xi\in \Gamma/\langle\zeta\rangle\mid \xi(\tilde\zeta)\cap B\not=\emptyset\}.
$$
finite. Thus by Proposition \ref{VSbis} (applied to $\gamma=\id$, $\eta=\zeta$), there exists $p_0$, such that for all $p>p_0$, we have
$$
H_\zeta\langle\zeta\rangle\cap \Gamma_p=\langle\zeta\rangle.
$$
Thus Condition \eqref{GP3} of the Good Position Hypothesis is satisfied.

\item Finally Condition \eqref{GP4} of the Good Position Hypothesis is satisfied for $p$ large enough by Proposition \ref{VanishGPH2a}.

\end{enumerate}

 \end{proof}

\subsection{Asymptotic product formula for Wilson loops}

In all this paragraph, we shall be given a finite index subgroup $\Gamma_k$ of $\Gamma_0=\pi_1(S)$, corresponding to a covering $S_k\to S_0=S$. Then, if $\rho$ is a Hitchin representation of $\pi_1(S)$ in $\sln$, $\rho_k$ will denote the restriction of $\rho$ to $\Gamma_k$.

Let $(\gamma_0,\ldots,\gamma_q)$ and $(\eta_0,\ldots,\eta_{q'})$ be two tuples of primitive elements of $\pi_1(S)$. We assume that $(\gamma_i,\gamma_{i+1})$ as well as $(\eta_j,\eta_{j+1})$ are all pairwise coprime.

Let then $\hat\gamma_i$ and $\hat\eta_i$ be the representatives of $\gamma_i$ and $\eta_i$ in $\Gamma_k$, and 

\begin{eqnarray}
{\bf F}^{(p)}=\hat\gamma_{1}^p\ldots\hat\gamma_{q}^p, & & {\bf G}^{(p)}=\hat\eta_{1}^p\ldots\hat\eta_{q'}^p.
\end{eqnarray}

We want to understand the asymptotics when $p$ goes to infinity of the following function
$$
{B^k_p}(\gamma_0,\ldots,\gamma_q;\eta_0,\ldots,\eta_{q'}):\, \ms H (n,S_k)\to \mathbb R,
$$
defined by
\begin{equation}
{B^k_p}(\gamma_0,\ldots,\gamma_q;\eta_0,\ldots,\eta_{q'})\defeq \frac{\ww(\{{\bf G}^{(p)},{\bf F}^{(p)}\}_{S_k})}{{\ww({\bf G}^{(p)})\ww({\bf F}^{(p)})}}\label{eq:bpdef}
\end{equation}
Let then
\begin{align}
{\rm f}_{i,j}=[\gamma_i^-\gamma_i^+,\eta_j^-\eta_j^+],& &
{\rm n}_{i,j}=[\gamma_i^-\gamma_{i+1}^-,\eta_j^-\eta_j^+],\cr
{\rm m}_{i,j}=[\gamma_i^-\gamma_i^+,\eta_j^-\eta_{j+1}^-],& &
{\rm q}_{i,j}=[\gamma_i^-\gamma_{i+1}^-,\eta_{j}^-\eta_{j+1}^-].\label{eq:fmnq}
\end{align}
The following paragraph is devoted to the proof of the following Proposition
\begin{proposition}\label{AsymBP0}{\sc [Asymptotic product formula]}
For every compact set $U$ in $\Hn$, for every positive integer $N$, for $k$ large enough, we have for every $\rho$ in $U$
\begin{multline}
B^k_p(\gamma_0,\ldots,\gamma_q,\eta_0,\ldots,\eta_{q'})(\rho)
=\sum_{\substack{0\leq i\leq q\\ 0\leq j\leq q'}}\Bigg((p-1)^2 {\rm f}_{i,j}\tw(\gamma_i,\eta_j)\\
+(p-1)\left(\frac{\tw(\gamma_{i+1},\gamma_{i},\eta_j)}
{\tw(\gamma_i,\gamma_{i+1})}\left({\rm n}_{i,j}+{\rm f}_{i+1,j}
\right)
+\frac{\tw(\gamma_i,\eta_{j+1},\eta_j)}
{\tw(\eta_j,\eta_{j+1})}\left({\rm m}_{i,j}+{\rm f}_{i,j+1}
\right)\right)
\\
+\frac{\tw(\gamma_{i+1},\gamma_i,\eta_{j+1},\eta_j)}{\tw(\gamma_{i+1},\gamma_i)\tw(\eta_j,\eta_{j+1})}\left({\rm q}_{i,j}+{\rm n}_{i,j+1}+{\rm m}_{i+1,j}+{\rm f}_{i+1,j+1}\right)\Bigg)\\
+p^2 {\rm R}_{i,j}\tw(\gamma_i,\eta_j)+K \cdotp\left(\gir_k(\rho)+\gir_0(\rho)^N\right),
\end{multline}

where
 \begin{itemize}
\item $K$ is bounded on $U$, 
\item  $\gir_k(\rho)=\gir\left(\left.\rho\right\vert_{\Gamma_k}\right)$ where $\gir(\rho)$ is the girth of $\rho$ -- see Definition \ref{def:girth}.
\item the integers ${\mathrm f}_{i,j}$, ${\mathrm m}_{i,j}$, ${\mathrm n}_{i,j}$ and ${\mathrm q}_{i,j}$ are defined in Equations \eqref{eq:fmnq} .
\item ${\rm R}_{i,j}$ is an integer that only depends on $\gamma_i$ and $\eta_j$. 
\end{itemize}
\end{proposition}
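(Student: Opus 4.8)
The plan is to combine three ingredients that have been assembled in the preceding sections: the product formula in homotopically good position (Proposition \ref{prodform2}), the fact that vanishing sequences eventually produce bouquets in good position with controlled intersection loops ($N$-niceness and Proposition \ref{GPH0}, in particular via Proposition \ref{SettingArcGoodPosition}), and the asymptotic expansion of Wilson loops of products of powers (Proposition \ref{Asym2}, together with the error estimate of Proposition \ref{Asym1} and the uniform girth bound of Proposition \ref{girth}). The first step is to fix a compact set $U\subset\Hn$ and an integer $N$, and to invoke Proposition \ref{GPH0} to choose $k$ large enough that $\Gamma_k$ is $N$-nice with respect to each coprime pair among the $\gamma_i$ and $\eta_j$, that $G$ and $F$ satisfy the Good Position Hypothesis in $\pi_1(S_k)$, and that $\gir_k$ is as small as we wish on $U$. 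By Proposition \ref{SettingArcGoodPosition} we then have two bouquets $\mathcal C_L$, $\mathcal C_R$ in homotopically good position representing $(G,F)$, whose intersection numbers ${\rm f}_{i,j},{\rm m}_{i,j},{\rm n}_{i,j},{\rm q}_{i,j}$ average to the linking numbers appearing in Equations \eqref{eq:fmnq}.

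Next I would apply the special product formula \eqref{eq:pf2} to ${\bf F}^{(p)}$ and ${\bf G}^{(p)}$ for each of the two bouquets, and then apply the Wilson loop functional $\ww$ and divide by $\ww({\bf F}^{(p)})\ww({\bf G}^{(p)})$. Each term on the right of \eqref{eq:pf2} is a free homotopy class of the shape ${\bf F}^{(p,m')}_i\,\xi\,{\bf G}^{(p,m)}_j\,\xi^{-1}$ (with $\xi$ trivial in the ``homotopically good" part, and $\xi\in C_{i,j}$ in the remaining part). The key point is that ${\bf F}^{(p,m')}_i$ is, up to conjugacy and cyclic reordering, a product of the type $\gamma_{i+1}^p\cdots\gamma_{i-1}^p\gamma_i^p$ — i.e. a product of $q+1$ high powers — and similarly for ${\bf G}^{(p,m)}_j$; so the concatenation ${\bf F}^{(p,m')}_i\,{\bf G}^{(p,m)}_j$ is again a product of roughly $q+q'+2$ high powers. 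Proposition \ref{Asym2} (applied in $S_k$, where the girth is controlled by $\gir_k(\rho)$ and we also keep an error $O(\gir_0(\rho)^N)$ coming from the powers that are only as large as the $N$-niceness guarantees) then identifies
$$
\frac{\ww\!\left({\bf F}^{(p,m')}_i\,\xi\,{\bf G}^{(p,m)}_j\,\xi^{-1}\right)}{\ww({\bf F}^{(p)})\,\ww({\bf G}^{(p)})}
$$
with an elementary function of the fixed points $\gamma_\bullet^\pm,\eta_\bullet^\pm$ plus an error bounded by $K(\gir_k(\rho)+\gir_0(\rho)^N)$ with $K$ bounded on $U$. When $\xi$ is trivial the resulting elementary function simplifies, using the relations in Proposition on elementary functions (cyclic invariance, class invariance, the ``Relations'' identity), to the cross-fraction expressions $\tw(\gamma_i,\eta_j)$, $\tw(\gamma_{i+1},\gamma_i,\eta_j)/\tw(\gamma_i,\gamma_{i+1})$, $\tw(\gamma_i,\eta_{j+1},\eta_j)/\tw(\eta_j,\eta_{j+1})$ and $\tw(\gamma_{i+1},\gamma_i,\eta_{j+1},\eta_j)/(\tw(\gamma_{i+1},\gamma_i)\tw(\eta_j,\eta_{j+1}))$ appearing in the statement; crucially this value does not depend on the indices $m,m'$, so the double sum $\sum_{1\le m,m'\le p}$ contributes the factor $(p-1)^2$, the mixed sums contribute $(p-1)$, and the ${\rm q}$-term contributes $1$, while the linear-in-$p$ term $p\cdot{\rm f}_{i,j}$ comes from the first line of \eqref{eq:pf2} where ${\bf F}^{(p,m)}_i$ meets ${\bf G}^{(p,0)}_j$ and the trace quotient tends to $1$. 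Finally, summing over $i,j$, averaging the formula obtained from $\mathcal C_L$ with that obtained from $\mathcal C_R$, and using \eqref{eq:f}--\eqref{eq:q} to replace the averaged intersection numbers by the linking numbers ${\rm f}_{i,j},{\rm m}_{i,j},{\rm n}_{i,j},{\rm q}_{i,j}$ of \eqref{eq:fmnq}, we recover exactly the claimed identity; the contributions of $\xi\in C_{i,j}$, together with the homotopically-trivial intersection loops of $g_i\cap h_j$ that are neither of the above coprime type, get absorbed into the term $p^2{\rm R}_{i,j}\tw(\gamma_i,\eta_j)$ with ${\rm R}_{i,j}$ depending only on $\gamma_i,\eta_j$ (here one uses that $\gir_k\to 0$ to see that the trace quotient for such conjugated products still tends to $\tw(\gamma_i,\eta_j)$ up to the allowed error).

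The main obstacle I expect is bookkeeping the error terms uniformly over the compact $U$ and over the range $1\le m,m'\le p$: one must check that the constant $K\left(\Eig{\rho(\cdot)}\right)$ of Proposition \ref{Asym1}, applied to the matrices ${\bf F}^{(p,m')}_i{\bf G}^{(p,m)}_j$, stays bounded independently of $m,m'$ and $\rho\in U$ — this should follow because the eigendata of these products depend only on a fixed finite set of elements and their fixed points, not on the exponents, combined with the uniform width bound in Proposition \ref{girth} — and that the contribution of the ``not quite large enough'' powers (those bounded by the $N$-niceness thresholds $k_1,k_2$ rather than by $p$) is genuinely $O(\gir_0(\rho)^N)$. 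A second, more delicate point is verifying that the intersection loops $c_x^{i,j}$ that are non-trivial but lie in $C_{i,j}$ — together with the $g_i\cap h_j$ intersections of non-coprime pairs — do not produce terms of order $(p-1)^2$ or $(p-1)$ in a way that escapes the $p^2{\rm R}_{i,j}\tw(\gamma_i,\eta_j)$ bookkeeping; this is where the $N$-niceness (which forces $c_x^{i,j}=\gamma^{k_1}\eta^{-k_2}$ with $k_1,k_2$ in a controlled window) is used to show that, after conjugation, the relevant trace quotient still collapses to $\tw(\gamma_i,\eta_j)$ up to the stated error, and that the total count of such terms is an integer ${\rm R}_{i,j}$ depending only on $\gamma_i$ and $\eta_j$. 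Everything else is a routine assembly of the product formula, the averaging of $\mathcal C_L$ and $\mathcal C_R$, and the elementary-function identities already proved.
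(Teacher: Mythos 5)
Your proposal follows the same architecture as the paper: apply the product formula in good position (\ref{prodform2}) to the bouquets $\mathcal C_L$ and $\mathcal C_R$ furnished by Proposition \ref{SettingArcGoodPosition}, identify the resulting trace quotients with elementary functions, and average. You have the decomposition right, including the role of the $N$-nice hypothesis in taming the non-trivial intersection loops $\xi\in C_{i,j}$, and the way the double/mixed/constant sums over $m,m'$ produce the $(p-1)^2$, $(p-1)$ and constant coefficients.

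The gap is in what you call the ``main obstacle'' and then dismiss as ``routine assembly.'' Proposition \ref{Asym2} is a limit statement for products of pure $p$-th powers $\gamma_1^p\cdots\gamma_k^p$; it does not directly control the quantities that actually appear in the product formula, which are traces $\tr\bigl(\wh{\ms F}_i^{(p,m')}\wh{\ms G}_j^{(p,m)}\bigr)$ and, worse, $\tr\bigl(\wh{\ms F}_i^{(p,m')}\rho(\xi)\wh{\ms G}_j^{(p,m)}\rho(\xi)^{-1}\bigr)$ with $m,m'$ ranging over $\{0,\dots,p\}$ and $\xi$ ranging over $C_{i,j}$. The statement to be proved is not an asymptotic as $p\to\infty$ but a formula valid for \emph{every} $p$ with a $p$-independent error $K\bigl(\gir_k(\rho)+\gir_0(\rho)^N\bigr)$; extracting this requires the normalized matrix estimates of Proposition \ref{pro:fpm} (which show $\wh{\ms F}_i^{(p,m)}/\tr\wh{\ms F}_i^{(p,0)}$ converges to $\bu{\ms g}_i$, $\bu{\ms g}_i\bu{\ms g}_{i-1}/\tr(\cdots)$ or $\bu{\ms g}_{i+1}\bu{\ms g}_i/\tr(\cdots)$ according to whether $m$ is interior or at the boundary, with error $\gir_k^{\min(m,p-m)}$), and then the more delicate Proposition \ref{asymconj}, which sums the conjugated quotients over $\xi\in C_{i,j}$ and uses the $N$-nice window on the exponents $N_\xi,M_\xi$ to show that the geometric-series tails collapse to $O\bigl(\gir_0(\rho)^N\bigr)$. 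Your suggestion of applying Proposition \ref{Asym1} directly to the product matrices $\wh{\ms F}_i^{(p,m')}\wh{\ms G}_j^{(p,m)}$ does not work: those matrices depend on $p,m,m'$ and the eigendata-dependent constant $K(\Eig{\cdot})$ would not be uniformly bounded; the paper avoids this precisely by normalizing each factor separately. So the proposal is correct in its plan, but the two quantitative lemmas you defer are the substance of the proof, not bookkeeping.

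One smaller inaccuracy: the ``homotopically trivial but non-coprime'' intersections of $g_i$ and $h_j$ you allude to do not exist. When $\gamma_i$ and $\eta_j$ are not coprime, Condition \eqref{hyp:gp5} (simplicity of the closed geodesic) forces the perturbed representatives $g_i$ and $h_j$ to be disjoint, so ${\rm f}_{i,j}=0$ and nothing to absorb. The $p^2{\rm R}_{i,j}$ term comes entirely from the coprime pairs via the $\xi\in C_{i,j}$ contributions, with ${\rm R}_{i,j}=I_{i,j}(1)\sharp(C_{i,j})$ controlled by the $N$-niceness.
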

We will use bouquets to express this asymptotics using our product formula for bouquets.
\subsubsection{Preliminary asymptotics}\label{par:prelimas}
Let $\rho$ be a representation of $\Gamma_0=\pi_1(S)$. For any $k$, let $\rho_k\defeq \left.\rho\right\vert_{\Gamma_k}$. Let $\gamma_0,\ldots,\gamma_q$ and $\eta_0,\ldots,\eta_{q'}$ be primitive elements of $\Gamma_0$ and $\wh\gamma_0,\ldots,\wh\gamma_q$ and $\wh\eta_0,\ldots,\wh\eta_{q'}$ be the corresponding elements in $\Gamma_k$, so that
\begin{eqnarray}
\wh{\gamma}_i=\gamma_i^{Q_i}, & &\wh{\eta}_j=\eta_j^{P_j}, 
\end{eqnarray}
where $Q_i$ and $P_j$ are positive integers.
In this proof, $K$, $K_0$,$K_1$, \ldots will be the generic symbol for a function of $\rho$ bounded by a continuous function that only depends on the relative position of the eigenvectors of $\rho(\gamma_i)$ and $\rho(\eta_i)$ and does not depend on $k$. Let us define
\begin{eqnarray}
\wh{\ms g}_i=\rho_k(\wh\gamma_i), & &
\wh{\ms h}_i=\rho_k(\wh\eta_i),\\
{\ms g}_i=\rho(\gamma_i), & &
{\ms h}_i=\rho(\eta_i),
\end{eqnarray}
and
\begin{eqnarray}
\wh{\ms F}_i^{(p,m)}&\defeq &\wh{\ms g}_i^{m}\wh{\ms g}_{i+1}^{p}\ldots\wh{\ms g}_{i-1}^{p}\wh{\ms g}_i^{p-m},\\
\wh{\ms G}_i^{(p,m)}&\defeq &\wh{\ms h}_i^{m}\wh{\ms h}_{i+1}^{p}\ldots\wh{\ms h}_{i-1}^{p}\wh{\ms h}_i^{p-m},\\
{\ms F}_i^{(p,m)}&\defeq &{\ms g}_i^{m}{\ms g}_{i+1}^{p}\ldots{\ms g}_{i-1}^{p}{\ms g}_i^{p-m},\\
{\ms G}_i^{(p,m)}&\defeq &{\ms h}_i^{m}{\ms h}_{i+1}^{p}\ldots{\ms h}_{i-1}^{p}{\ms h}_i^{p-m}.
\end{eqnarray}
We prove in this paragraph two propositions
\begin{proposition}\label{pro:fpm}
For all positive integer $p$, for all integer $m$, with $0<m<p$, we have for any $\rho$ in a compact set $U$ of $\Hn$,
\begin{eqnarray}
\frac{\wh{\ms F}_{i}^{(p,0)}}{\tr \left(\wh{\ms F}_{i}^{(p,0)}\right)}&=&
\frac{\bu{\ms g}_{i}\bu{\ms g}_{i-1}}{\tr (\bu{\ms g}_{i}\bu{\ms g}_{i-1})}+K_1.\gir_k(\rho)^p,\\
\frac{\wh{\ms F}_{i}^{(p,p)}}{\tr \left(\wh{\ms F}_{i}^{(p,0)}\right)}&=&
\frac{\bu{\ms g}_{i+1}\bu{\ms g}_{i}}{\tr (\bu{\ms g}_{i}\bu{\ms g}_{i+1})}+K_2.\gir_k(\rho)^p,\\
\frac{\wh{\ms F}_{i}^{(p,m)}}{\tr \left(\wh{\ms F}_{i}^{(p,0)}\right)}&=&\bu{\ms g}_{i}+K_3.\gir_k(\rho)^{\inf(m,p-m)},
\end{eqnarray}
where $K_i$ are locally bounded functions of $\rho$.
\end{proposition}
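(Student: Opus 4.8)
The plan is to reduce each of the three identities to Proposition \ref{Asym1}, applied block by block to the restricted representation $\rho_k$, and then multiply the resulting single-block expansions along the word. Since $\wh\gamma_i=\gamma_i^{Q_i}$ and $\wh\eta_j=\eta_j^{P_j}$ with $Q_i,P_j\geq 1$, the top eigenprojector and the set of eigenlines of $\wh{\ms g}_i$ coincide with those of ${\ms g}_i$; in particular $\bu{\wh{\ms g}_i}=\bu{\ms g}_i$ and $\Eig{\wh{\ms g}_i}=\Eig{{\ms g}_i}$. Also $\gir_k(\rho)=\gir(\rho_k)\leq\gir_0(\rho)=\gir(\rho)$, so by Proposition \ref{girth} there is a constant $c<1$ with $\gir_k(\rho)\leq c$ for every $\rho\in U$ and every $k$, while $\Eig{{\ms g}_i}$ varies continuously with $\rho\in U$, so the quantities $K(\Eig{\cdot})$ of Proposition \ref{Asym1} stay bounded on $U$. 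Feeding this into Proposition \ref{Asym1} (for the group $\Gamma_k$ and the elements $\wh\gamma_i,\wh\eta_j$) gives, for every exponent $r\geq 1$ and uniformly on $U$,
$$
\frac{\wh{\ms g}_j^{r}}{\tr(\wh{\ms g}_j^{r})}=\bu{\ms g}_j+\mathrm{O}\bigl(\gir_k(\rho)^{r}\bigr),
$$
with the same estimate for the $\wh{\ms h}_j$.

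Next I would write $\wh{\ms F}_i^{(p,m)}=\wh{\ms g}_i^{m}\,\wh{\ms g}_{i+1}^{p}\cdots\wh{\ms g}_{i-1}^{p}\,\wh{\ms g}_i^{p-m}$, divide each block by its own trace, substitute the expansion above, and multiply out. Each $\bu{\ms g}_j$ has operator norm bounded on $U$ and each remainder is bounded by $\gir_k(\rho)$ to the power of that block, so the product equals $\prod_j\bu{\ms g}_j$ plus an error dominated by the largest block error; the interior blocks carry exponent $p$ and the two $\wh{\ms g}_i$-blocks carry exponents $m$ and $p-m$, so this error is $\mathrm{O}\bigl(\gir_k(\rho)^{\min(m,p-m)}\bigr)$ when $0<m<p$, and $\mathrm{O}\bigl(\gir_k(\rho)^{p}\bigr)$ in the degenerate cases $m=0$ or $m=p$, where the block $\wh{\ms g}_i^{0}$ simply drops out. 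Writing $\bu{\ms g}_j=e_j\otimes E_j$ with $\langle E_j,e_j\rangle=1$ and using $(e_a\otimes E_a)(e_b\otimes E_b)=\langle E_a,e_b\rangle\,(e_a\otimes E_b)$, the leading product $\prod_j\bu{\ms g}_j$ telescopes to a scalar multiple of a single rank-one operator: a multiple of $\bu{\ms g}_i$ when $0<m<p$, and a multiple of one of the two consecutive products $\bu{\ms g}_{i+1}\bu{\ms g}_i$ or $\bu{\ms g}_i\bu{\ms g}_{i-1}$ in the two extreme cases, as recorded in the statement. Here one invokes the fact that for a Hitchin representation the Frenet flags attached to the pairwise distinct attracting fixed points $\gamma_j^+$ are pairwise transverse (\cite{Labourie:2006}): this makes every scalar $\langle E_j,e_{j+1}\rangle$ nonzero, so the telescoped operator is genuinely rank one and $\tr\bigl(\prod_j\bu{\ms g}_j\bigr)=\prod_j\langle E_j,e_{j+1}\rangle\neq 0$.

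It remains to normalise by $\tr(\wh{\ms F}_i^{(p,m)})$, which equals $\tr(\wh{\ms F}_i^{(p,0)})$ by cyclic invariance of the trace; since the trace of the leading operator is a nonzero constant on $U$, for $p$ large this division is legitimate and contributes only a further error of the same order, yielding the three formulas with $K_1,K_2,K_3$ bounded on $U$. The main obstacle is the error bookkeeping: one must check carefully that once the $\wh{\ms g}_i$-power is split as $m+(p-m)$ the governing exponent drops from $p$ to $\min(m,p-m)$, and one must verify that the normalising trace never degenerates uniformly on $U$ — the latter being precisely where the transversality of the limit flags of a Hitchin representation, hence the consecutive coprimality of the $\gamma_i$ and $\eta_j$, is used.
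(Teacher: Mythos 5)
Your proposal is correct and follows essentially the same route as the paper: apply Proposition \ref{Asym1} block by block, multiply out, and exploit the rank-one structure of the top eigenprojectors to collapse the leading product. The only cosmetic difference is the way you collapse: you telescope using the decomposition $\bu{\ms g}_j=e_j\otimes E_j$ and the identity $(e_a\otimes E_a)(e_b\otimes E_b)=\langle E_a,e_b\rangle\,(e_a\otimes E_b)$, while the paper invokes the equivalent observation that for rank-one projectors $\ms p,\ms q$ with $\tr(\ms p\ms q)\neq 0$ one has $\ms p A\ms q=\frac{\tr(\ms p A\ms q)}{\tr(\ms p\ms q)}\ms p\ms q$; these amount to the same thing. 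You also make explicit, correctly, that the nonvanishing of the normalising trace comes from transversality of the Frenet flags, and you silently repair the paper's $\gir_k(\rho)^{\inf(m,m-p)}$ (which would be a \emph{negative} power when $0<m<p$) to the intended $\gir_k(\rho)^{\min(m,p-m)}$.
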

We recall that $\bu{\ms g}$ is the projector on the eigendirection of the highest eigenvalue of $\ms g$.
\begin{proof} Observe that for all $m$, 
$$
\tr \left(\wh{\ms F}_{i}^{(p,m)}\right)=\tr \left(\wh{\ms F}_{i}^{(p,0)}\right).
$$
We use Corollary \ref{Asym1-coro} and get that for all $p$,\begin{eqnarray}
\frac{\wh{\ms F}_{i}^{(p,0)}}{\tr \left(\wh{\ms F}_{i}^{(p,0)}\right)}&=&
\frac{\bu{\ms g}_{i}\bu{\ms g}_{i-1}}{\tr (\bu{\ms g}_{i}\bu{\ms g}_{i-1})}+K_3.\gir_k(\rho)^p,\\
\frac{\wh{\ms F}_{i}^{(p,p)}}{\tr \left(\wh{\ms F}_{i}^{(p,0)}\right)}&=&
\frac{\bu{\ms g}_{i+1}\bu{\ms g}_{i}}{\tr (\bu{\ms g}_{i}\bu{\ms g}_{i+1})}+K_4.\gir_k(\rho)^p,\\
\frac{\wh{\ms F}_{i}^{(p,m)}}{\tr \left(\wh{\ms F}_{i}^{(p,m)}\right)}&=&\bu{\ms g}_{i}+K_5.\gir_k(\rho)^{\inf(m,p-m)}, \hbox{ for }m\not\in\{0,p\}.
\end{eqnarray}
\end{proof}
We use the same notation as in the beginning of the paragraph.

\begin{proposition}\label{asymconj} Let us fix $i$ and $j$. Let
\begin{itemize}
\item $\{N_1,\ldots, N_r\}$ be a sequence of pairwise distinct integers so that $N_l\geq N$ and $Q_j-N_l\geq N$.
\item $\{M_1,\ldots, M_r\}$ be a sequence of pairwise distinct integers so that $M_l\geq N$ and $P_j-M_l\geq N$.
\end{itemize}
Then,  for any $\rho$ in a compact set $U$ in $\Hn$, for any positive integers $p$, $m$
and $m'$, we have
\begin{equation}
\sum_{1\leq l\leq r}
\frac{\gg_i^{-N_l} \wh{\ms F_i}^{(p,m)}\gg_i^{N_l}.\hh_j^{-M_l} \wh{\ms G_j}^{(p,m')}\gg_i^{M_l}}
{\tr \left(\wh{\ms F}_i^{(p,0)}\right)\tr \left(\wh{\ms G}_j^{(p,0)}\right)} =r.\bu\gg_i\bu\hh_j+K \cdotp\gir_0(\rho)^{M+N}+r\gir_0(\rho)^{Np},\label{eq:asymconj1}
\end{equation}
where $K$ is a locally bounded function of $\rho$ and
$$
M=\inf(Q_i(m-1),P_j(m'-1),Q_ip-Qm',M_jp-m)\, .
$$
\end{proposition}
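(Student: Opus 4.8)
The plan is to deduce the identity from the single-word asymptotics already established in Propositions~\ref{Asym1} and~\ref{pro:fpm}: after trace-normalization each of the $r$ summands converges to $\bu\gg_i\bu\hh_j$ with a controlled error, and there are exactly $r$ of them, which accounts for the factor $r$. The one new ingredient is that conjugating the word $\wh{\ms F}_i^{(p,m)}$ by a power of $\gg_i$ changes only the exponents of $\gg_i$ sitting at the two ends of the word.

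First I would expand the conjugation. Since $\wh{\ms F}_i^{(p,m)}=\wh\gg_i^{\,m}\wh\gg_{i+1}^{\,p}\cdots\wh\gg_{i-1}^{\,p}\wh\gg_i^{\,p-m}$ and $\wh\gg_i=\gg_i^{\,Q_i}$, one has
$$
\gg_i^{-N_l}\,\wh{\ms F}_i^{(p,m)}\,\gg_i^{N_l}=\gg_i^{\,Q_im-N_l}\,\wh\gg_{i+1}^{\,p}\cdots\wh\gg_{i-1}^{\,p}\,\gg_i^{\,Q_i(p-m)+N_l},
$$
and similarly $\hh_j^{-M_l}\wh{\ms G}_j^{(p,m')}\hh_j^{M_l}=\hh_j^{\,P_jm'-M_l}\wh\hh_{j+1}^{\,p}\cdots\wh\hh_{j-1}^{\,p}\hh_j^{\,P_j(p-m')+M_l}$. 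Since $m,m'$ are positive integers at most $p$ and, by hypothesis, $N\leq N_l\leq Q_i-N$ and $N\leq M_l\leq P_j-N$, the four boundary exponents are bounded below by $Q_i(m-1)+N$, $Q_i(p-m)+N$, $P_j(m'-1)+N$ and $P_j(p-m')+N$ respectively. In particular each conjugated word is again of the shape treated in Proposition~\ref{pro:fpm}: a product of powers of primitive loxodromic elements with large powers of a single one at both ends.

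Next I would run the computation in the proof of Proposition~\ref{pro:fpm}, together with the fact that, by Theorem~\ref{hcintro}, $\bu\gg_i$ is a rank one projector, so that $\bu\gg_i\,X\,\bu\gg_i=\tr(\bu\gg_i X)\,\bu\gg_i$ for every endomorphism $X$; this collapses the product of the interior projectors into precisely the scalar by which $\wh{\ms F}_i^{(p,0)}$ differs from $\bu\gg_i$ after normalization. One gets, for each $l$,
$$
\frac{\gg_i^{-N_l}\wh{\ms F}_i^{(p,m)}\gg_i^{N_l}}{\tr\!\big(\wh{\ms F}_i^{(p,0)}\big)}=\bu\gg_i+K\,\gir_0(\rho)^{a_l},\qquad
\frac{\hh_j^{-M_l}\wh{\ms G}_j^{(p,m')}\hh_j^{M_l}}{\tr\!\big(\wh{\ms G}_j^{(p,0)}\big)}=\bu\hh_j+K\,\gir_0(\rho)^{b_l},
$$
with $a_l\geq\inf\big(Q_im-N_l,\ Q_i(p-m)+N_l\big)$ and $b_l\geq\inf\big(P_jm'-M_l,\ P_j(p-m')+M_l\big)$; the interior blocks $\wh\gg_{i+1}^{\,p},\dots$ contribute only errors that are $\gir_0(\rho)$ to a multiple of $p$, absorbed exactly as in Proposition~\ref{pro:fpm}. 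Here $K$ depends only on the relative position of the eigenvectors of the $\gg_i$ and $\hh_j$, hence is bounded on the compact $U$ by Proposition~\ref{girth}, and I have written the errors with $\gir_0$ using $\gir_k\leq\gir_0$.

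Finally I would multiply the two displayed identities, expand, and sum over $l$. The leading part is $\sum_{l}\bu\gg_i\bu\hh_j=r\,\bu\gg_i\bu\hh_j$, and the remainder is bounded by $K\sum_l\big(\gir_0(\rho)^{a_l}+\gir_0(\rho)^{b_l}\big)$. Since $\sup_U\gir_0<1$ (Proposition~\ref{girth}) and the $N_l$ (resp. $M_l$) are pairwise distinct integers lying in $[N,\,Q_i-N]$ (resp. $[N,\,P_j-N]$), each of the four geometric series $\sum_l\gir_0^{Q_im-N_l}$, $\sum_l\gir_0^{Q_i(p-m)+N_l}$, $\sum_l\gir_0^{P_jm'-M_l}$, $\sum_l\gir_0^{P_j(p-m')+M_l}$ is bounded by a constant times $\gir_0(\rho)$ raised respectively to $Q_i(m-1)+N$, $Q_i(p-m)+N$, $P_j(m'-1)+N$, $P_j(p-m')+N$; keeping the smallest exponent yields the asserted bound $K\,\gir_0(\rho)^{M+N}$ with $M$ as in the statement. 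The step I expect to be the main obstacle is this last one: tracking, after the conjugation, which power of $\gir_0(\rho)$ each boundary and interior error term actually carries, and checking that the geometric summation over the distinct integers $N_l,M_l$ genuinely collapses to the single clean exponent $M+N$ rather than to a cruder estimate.
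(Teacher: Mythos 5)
Your proof is correct and follows essentially the same route as the paper's: expand the conjugation so that $\gg_i^{-N_l}\wh{\ms F}_i^{(p,m)}\gg_i^{N_l}=\gg_i^{Q_im-N_l}\widetilde{\ms F}_i^{(p)}\gg_i^{Q_i(p-m)+N_l}$ (using $\tr\wh{\ms F}_i^{(p,m)}=\tr\wh{\ms F}_i^{(p,0)}$ by conjugacy invariance), apply the word asymptotics coming from Proposition~\ref{Asym1} together with the rank-one collapsing identity, and then control the sum over $l$ by geometric series over the pairwise distinct integers $N_l,M_l$. The only cosmetic difference is that the paper applies Proposition~\ref{Asym1} directly to the product and immediately records the single rate exponent $R_l=\inf(Q_im-N_l,Q_i(p-m)+N_l,P_jm'-M_l,P_j(p-m')+M_l)$, then partitions $\{1,\dots,r\}$ by which term attains the infimum before summing, whereas you bound $\gir_0^{\inf(\cdot)}$ by the sum of the four individual powers and sum each series; both yield the same final estimate.
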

\begin{proof} In this proof, as usual $K_i$ will denote a locally bounded function of $\rho$.
For the purpose of this proof, we define
$$
\widetilde{F}_i^{(p)}=\wh{\ms g}_{i+1}^p\ldots\wh{\ms g}_{i-1}^p, \ \ \widetilde {G}_j^{(p)}=\wh{\ms h}_{j+1}^p\ldots\wh{\ms h}_{j-1}^p.
$$
By definition, if $m\geq 1$, $m'\geq 1$, $n<Q_i$ and $r<P_j$,
\begin{eqnarray*}
\gg_i^{-n} \wh{\ms F}_i^{(p,m)}\gg_i^{n}&=&\ms g_i^{Q_im-n}\widetilde{\ms F}_i^{(p)}\ms g_i^{Q_i(p-m)+n}, \\
\hh_j^{-r} \wh{\ms G}_j^{(p,m')}\hh_j^{r}&=&\ms h_j^{P_jm'-r}\widetilde{\ms G}_j^{(p)}\ms h_i^{P_j(p-m')+r}.
\end{eqnarray*}
Observe also that 
\begin{eqnarray*}
\tr \left(\wh{\ms F}_i^{(p,0)}\right)&=&\tr \left(\wh{\ms F}_i^{(p,m)}\right),\\
\tr \left(\wh{\ms G}_j^{(p,0)}\right)&=&\tr \left(\wh{\ms G}_j^{(p,m')}\right).
\end{eqnarray*}
Thus using the asymptotics of Corollary \ref{Asym1-coro}, we get that
\begin{eqnarray*}
\frac{\gg_i^{-N_l} \wh{\ms F}_i^{(p,m)}\gg_i^{N_l}} 
{\tr \left(\wh{\ms F}_i^{(p,0)}\right)}& = &\bu{\ms g}_{i}+K_3.\gir_0(\rho)^{R_l},
\end{eqnarray*}
where $A_l=\inf(Q_im-N_l,Q_i(p-m)+N_l,Np)$ and we have observed that for all $k$, $Q_k\geq N$.
Similarly
\begin{eqnarray*}
\frac{\hh_j^{-M_l} \wh{\ms G}_j^{(p,m')}\hh_i^{M_l}}
{\tr \left(\wh{\ms G}_j^{(p,0)}\right)} & = &\bu{\ms h}_{i}+K_4.\gir_k(\rho)^{M}.
\end{eqnarray*}
where $B_l=\inf(P_jm'-N_l,P_j(p-m')+N_l,Np )$. 
Thus 
\begin{eqnarray*}
\sum_{1\leq l\leq r}\frac{\gg_i^{-N_l} \wh{\ms F}_i^{(p,m)}\gg_i^{N_l}.\hh_j^{-M_l} \wh{\ms G}_j^{(p,m')}\hh_i^{M_l}}
{\tr \left(\wh{\ms F}_i^{(p,0)}\right)\tr \left(\wh{\ms G}_j^{(p,0)}\right)}
& =& r\bu\gg_i\bu\hh_j+K_0 \cdotp\left(
\sum_{1\leq l\leq r} \gir_0(\rho)^{R_l}\right)\, ,
\end{eqnarray*}
where   
$$
R_l=\inf(Q_im-N_l,Q_i(p-m)+N_l,P_jm'-M_l,P_j(p-m')+M_l,Np )).
$$
To conclude the proof, we will show that 
\begin{equation}
\sum_{1\leq l\leq r} \gir_0(\rho)^{R_l}\leq \frac{4\gir_0(\rho)^{N+M}}{1-\gir_0(\rho)}+r\gir_0(\rho){Np}.\label{eq:asymconj2}
\end{equation}
Let 
\begin{eqnarray*}
\mathcal A&=&\{l\mid R_l=Q_im-N_l\},\cr
\mathcal B&=&\{l\mid R_l=Q_ip-Qm+N_l\},\cr
\cF&=&\{l\mid R_l=P_jm'-M_l\},\cr
\mathcal D&=&\{l\mid R_l=P_jp-Pm'+M_l\}.
\end{eqnarray*}
By definition, 
\begin{eqnarray*}
\sum_{l\in\mathcal A} \gir_0(\rho)^{R_l}&=&\sum_{l\in\mathcal A} \gir_0(\rho)^{Qm-N_l}\cr
&\leq&\sum_{n\geq Q_i(m-1)+N}\gir_0(\rho)^n\cr
&\leq&\frac{\gir_0(\rho)^{N+Q_i(m-1)}}{1-\gir_0(\rho)}.
\end{eqnarray*}
Symmetric arguments show that
\begin{eqnarray*}
\sum_{i\in\mathcal B} 
\gir_0(\rho)^
{R_l}
\leq \frac{\gir_0(\rho)^{N+Q_i(p-m)}}{1-\gir_0(\rho)}\, ,\cr
\sum_{l\in\cF} 
\gir_0(\rho)^
{R_l}
\leq \frac{\gir_0(\rho)^{N+P_j(m'-1)}}{1-\gir_0(\rho)}\, ,\cr
\sum_{l\in\mathcal D} 
\gir_0(\rho)^
{R_l}
\leq \frac{\gir_0(\rho)^{N+P_j(p-m')}}{1-\gir_0(\rho)}\, .
\end{eqnarray*}
Inequality \eqref{eq:asymconj2} -- and thus the result -- follows.
\end{proof}

\subsubsection{Asymptotics and bouquets}

We use the same notations as in the beginning of this section: Let $G=(\gamma_0,\ldots,\gamma_q)$ and $(F=\eta_0,\ldots,\eta_{q'})$ be two tuples of primitive elements of $\pi_1(S)$. We assume that $(\gamma_i,\gamma_{i+1})$ as well as $(\eta_j,\eta_{j+1})$ are all pairwise coprime. We shall use the notation
 of Paragraph \ref{FpGp}.

\begin{proposition}\label{AsymBP} 
Assume that $G$ and $F$ and $\Gamma_k$ satisfy the Good Position Hypothesis. Assume also that $\Gamma_k$ is $N$-nice for all pairs $(\gamma_i,\eta_j)$. Let $C$ be a bouquet in a good position representing $G$ and $F$.

Then for every compact set $U$ in $\Hn$, we have for every $\rho$ in $U$, \begin{multline}
B^k_p(\gamma_0,\ldots,\gamma_q,\eta_0,\ldots,\eta_{q'})(\rho)
=\sum_{\substack{0\leq i\leq q\\ 0\leq j\leq q'}}\Bigg((p-1)^2 {\rm f}_{i,j}(\cF)\tw(\gamma_i,\eta_j)\\
+(p-1)\left(\frac{\tw(\gamma_{i+1},\gamma_{i},\eta_j)}
{\tw(\gamma_i,\gamma_{i+1})}\left({\rm n}_{i,j}(\cF)+{\rm f}_{i+1,j}(\cF)
\right)
+\frac{\tw(\gamma_i,\eta_{j+1},\eta_j)}
{\tw(\eta_j,\eta_{j+1})}\left({\rm m}_{i,j}(\cF)+{\rm f}_{i,j+1}(\cF)
\right)\right)
\\
+\frac{\tw(\gamma_{i+1},\gamma_i,\eta_{j+1},\eta_j)}{\tw(\gamma_{i+1},\gamma_i)\tw(\eta_j,\eta_{j+1})}\left({\rm q}_{i,j}(\cF)+{\rm n}_{i,j+1}(\cF)+{\rm m}_{i+1,j}(\cF)+{\rm f}_{i+1,j+1}(\cF)\right)\Bigg)\\
+p^2\left(\sum_{i,j}I_{i,j}(1)\sharp(C_{i,j})\tw(\gamma_i,\eta_j)\right)
+K \cdotp(\gir_k(\rho)+\gir_0(\rho)^N)\,,
\end{multline}
where
 \begin{itemize}
\item $K$ is bounded by a continuous function that only depends on the relative position of the eigenvectors of $\rho(\gamma_i)$ and $\rho(\eta_j)$.
\item $\gir(\rho)$ is the girth of $\rho$ as defined in Definition \ref{def:girth} and $\gir_k(\rho)=\gir\left(\left.\rho\right\vert_{\Gamma_k}\right)$
\item the integers ${\mathrm f}_{i,j}(\cF)$, ${\mathrm m}_{i,j}(\cF)$, ${\mathrm n}_{i,j}(\cF)$ and ${\mathrm q}_{i,j}(\cF)$ are defined in Equations \eqref{fmnq}.
\end{itemize}

\end{proposition}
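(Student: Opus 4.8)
The plan is to feed the Product Formula in good position (Proposition \ref{prodform2}) through the Wilson loop map and then apply the trace asymptotics of Propositions \ref{pro:fpm} and \ref{asymconj}. Since $G$, $F$ and $\Gamma_k$ satisfy the Good Position Hypothesis, the bouquet $\mathcal C$ is in a homotopically good position (this is what Step 2 of the proof of Proposition \ref{SettingArcGoodPosition} establishes), so Proposition \ref{prodform2} applies and writes $\{{\bf F}^{(p)},{\bf G}^{(p)}\}$ as the explicit $\mathbb Q$-linear combination of free homotopy classes ${\bf F}^{(p,m')}_i\,\xi\,{\bf G}^{(p,m)}_j\,\xi^{-1}$ appearing in \eqref{eq:pf2}.

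First I would apply $\ww$, extended linearly to $\mathbb Q[\mathcal C]$, to this combination and divide by $\ww({\bf F}^{(p)})\ww({\bf G}^{(p)})$. A short computation using the defining relations $\gamma_i=U_ig_iU_i^{-1}$, $\eta_j=V_jh_jV_j^{-1}$ of a bouquet representing $(G,F)$ shows that, as elements of $\pi_1(S_k,x_0)$, the block ${\bf F}^{(p,m)}_i$ is the word $\hat\gamma_i^m\hat\gamma_{i+1}^p\dots\hat\gamma_{i-1}^p\hat\gamma_i^{p-m}$ in the $\Gamma_k$-representatives $\hat\gamma_i$ of $\gamma_i$, and similarly ${\bf G}^{(p,m)}_j=\hat\eta_j^m\hat\eta_{j+1}^p\dots\hat\eta_j^{p-m}$, with no leftover conjugators; hence $\ww({\bf F}^{(p,m')}_i\,{\bf G}^{(p,m)}_j)=\tr\!\big(\wh{\ms F}^{(p,m')}_i\wh{\ms G}^{(p,m)}_j\big)$, $\ww({\bf F}^{(p,m')}_i\,\xi\,{\bf G}^{(p,m)}_j\,\xi^{-1})=\tr\!\big(\wh{\ms F}^{(p,m')}_i\,\rho_k(\xi)\,\wh{\ms G}^{(p,m)}_j\,\rho_k(\xi)^{-1}\big)$, and $\ww({\bf F}^{(p)})=\tr\!\big(\wh{\ms F}^{(p,0)}_i\big)$, $\ww({\bf G}^{(p)})=\tr\!\big(\wh{\ms G}^{(p,0)}_j\big)$ for every $i,j$ since ${\bf F}^{(p,0)}_i$ is conjugate to ${\bf F}^{(p)}$. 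Thus $B^k_p(\rho)$ becomes a sum of normalised traces, organised along the five families of terms of \eqref{eq:pf2}.

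Next I would evaluate these families. For the four families carried by homotopically trivial intersection loops — weighted by ${\rm f}_{i,j}(\mathcal C)$, ${\rm m}_{i,j}(\mathcal C)$, ${\rm n}_{i,j}(\mathcal C)$, ${\rm q}_{i,j}(\mathcal C)$ — I would substitute Proposition \ref{pro:fpm}: after division by $\tr(\wh{\ms F}^{(p,0)}_i)$, the block ${\bf F}^{(p,m)}_i$ equals $\bu\gg_i$ up to an error $\gir_k(\rho)^{\min(m,p-m)}$ when $m$ is interior, and a normalised two-term chain ($\bu\gg_{i+1}\bu\gg_i$ or $\bu\gg_i\bu\gg_{i-1}$) at the boundary values $m\in\{0,p\}$, and likewise for the ${\bf G}$-blocks. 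Summing over $m,m'\in\{1,\dots,p\}$, and using that $\sum_{m=1}^p\gir_k(\rho)^{\min(m,p-m)}$ is bounded uniformly on the compact set $U$, the interior indices produce the leading $(p-1)$-powers of $p$ and the boundary indices the lower-order ones; taking traces against the partner block and reading off, via \eqref{elem-proj}, $\tr(\bu\gg_i\bu\hh_j)=\tw(\gamma_i,\eta_j)$, $\tr(\bu\gg_{i+1}\bu\gg_i\bu\hh_j)/\tr(\bu\gg_i\bu\gg_{i+1})=\tw(\gamma_{i+1},\gamma_i,\eta_j)/\tw(\gamma_i,\gamma_{i+1})$ and the two analogous identities. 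For the fifth family, over $\xi\in C_{i,j}$, I would invoke the $N$-nice hypothesis: each such loop lies in $\Gamma_k$ and equals $\gamma_i^{k_1}\eta_j^{-k_2}$ in $\pi_1(S)$ with $k_1,k_2$ in the interior ranges $(N,Q_i-N)$ and $(N,P_j-N)$, so $\rho_k(\xi)=\gg_i^{k_1}\hh_j^{-k_2}$; a cyclic rearrangement of the trace brings it into the form treated by Proposition \ref{asymconj}, which returns $\big(\sum_{\xi\in C_{i,j}}I_{i,j}(\xi)\big)\tw(\gamma_i,\eta_j)$ for each pair $(m,m')$ up to an error of order $\gir_0(\rho)^N$, hence — there being $p^2$ such pairs — the $p^2$ term of the statement.

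Finally I would assemble the contributions. The boundary values of the geometric series in $m,m'$ carry index shifts (the chain $\bu\hh_{j+1}\bu\hh_j$ forces $j\mapsto j+1$, symmetrically $\bu\gg_{i+1}\bu\gg_i$ forces $i\mapsto i+1$), so combining the boundary part of the ${\rm f}$-family with the ${\rm m}$-, ${\rm n}$- and ${\rm q}$-families yields precisely the coefficients ${\rm n}_{i,j}(\mathcal C)+{\rm f}_{i+1,j}(\mathcal C)$, ${\rm m}_{i,j}(\mathcal C)+{\rm f}_{i,j+1}(\mathcal C)$ and ${\rm q}_{i,j}(\mathcal C)+{\rm n}_{i,j+1}(\mathcal C)+{\rm m}_{i+1,j}(\mathcal C)+{\rm f}_{i+1,j+1}(\mathcal C)$ in front of the corresponding $\tw$-expressions, while the interior$\times$interior part of the ${\rm f}$-family gives the $(p-1)^2\,{\rm f}_{i,j}(\mathcal C)\,\tw(\gamma_i,\eta_j)$ term; all the error terms accumulate into $K(\gir_k(\rho)+\gir_0(\rho)^N)$ with $K$ locally bounded on $\Hn$ and depending only on the relative position of the eigenvectors of $\rho(\gamma_i)$ and $\rho(\eta_j)$. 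The main obstacle is exactly this last reorganisation — tracking how the boundary values of the two geometric series generate index-shifted copies of the $\tw$'s that must be matched against the genuinely new intersection numbers ${\rm m}$, ${\rm n}$, ${\rm q}$ coming from the connecting arcs — together with checking that $N$-niceness really keeps $k_1$ and $k_2$ in the interior ranges, so that Proposition \ref{asymconj} delivers an error exponent of at least $N$ uniformly in $p$.
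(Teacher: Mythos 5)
Your proposal follows essentially the same route as the paper: rewrite the Product Formula for bouquets (Proposition \ref{prodform2}) as a sum of normalised traces, split it into the four ``connecting arc'' families (${\rm f},{\rm m},{\rm n},{\rm q}$) plus the $C_{i,j}$ family, handle the former with the block asymptotics of Proposition \ref{pro:fpm}, handle the latter with the $N$-niceness hypothesis and Proposition \ref{asymconj}, and then track how the boundary values of the geometric series in $m,m'$ generate the index-shifted coefficients. This is exactly the paper's decomposition $B_p = B^0_p + \sum_\xi I_{i,j}(\xi) B^\xi_p$, treated by the two auxiliary propositions in the proof section.

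One small imprecision: you invoke ``Step 2 of the proof of Proposition \ref{SettingArcGoodPosition}'' to upgrade ``good position'' to ``homotopically good position,'' but what that proposition establishes is that the \emph{specific} bouquets $\mathcal C_L$, $\mathcal C_R$ it constructs are in homotopically good position, not that an arbitrary bouquet in good position is. The statement as written says only ``good position,'' while Proposition \ref{prodform2} requires the homotopical condition; the paper implicitly works with the bouquets of Proposition \ref{SettingArcGoodPosition}, and you should make the same restriction explicit rather than claim an implication that does not hold in general.
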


\subsubsection{Proof of Proposition \ref{AsymBP}}
We now recall the Product Formula \eqref{eq:pf2} that we write using the notation of Paragraph \ref{par:prelimas} as
\begin{eqnarray}
B_p= B^0_p+\sum_{\xi\in C_{i,j}}I_{i,j}(\xi)B^\xi_p,
\end{eqnarray}
where
\begin{eqnarray}
B^0_p&\defeq &\frac{1}{\tr\left(\wh{\ms F}^{(p,0)}\right). \tr\left(\wh{\ms G}^{(p,0)}\right).}\sum_{\substack{0\leq i\leq q\\ 0\leq j\leq q'}}\Bigg(\mathop{\sum_{1\leq m'\leq p}}_{1\leq m\leq p}{\rm f}_{i,j}(\cF) \tr\left(\wh{\ms F}^{(p,m') }_i \wh{\ms G}^{(p,m)}_j\right)
\cr
& &+\mathop{\sum}_{1\leq m\leq p}{\rm m}_{i,j}(\cF) \tr\left(\wh{\ms F}^{(p,m) }_i \wh{\ms G}^{(p,0)}_j \right)\cr
& &
+\mathop{\sum_{1\leq m'\leq p}}{\rm n}_{i,j}(\cF) \tr\left(\wh{\ms F}^{(p,0) }_i  \wh{\ms G}^{(p,m')}_j \right)+{\rm q}_{i,j}(\cF) \tr\left(\wh{\ms F}^{(p,0) }_i \wh{\ms G}^{(p,0)}_j \right)\Bigg)\,,
\end{eqnarray}
and
\begin{eqnarray}
B^\xi_p&\defeq &\frac{1}{\tr\left(\wh{\ms F}^{(p,0)}\right). \tr\left(\wh{\ms G}^{(p,0)}\right)}\mathop{\sum_{1\leq m'\leq p}}_{1\leq m\leq p}\tr\left(\wh{\ms F}^{(p,m') }_i \rho(\xi)\wh{\ms G}^{(p,m)}_j\rho(\xi)^{-1}\right)\, .
\end{eqnarray}

Proposition \ref{AsymBP} will follow from the two next propositions that treat independently the term $B^0_p$ and the term involving the $B^\xi_p$.

\begin{proposition}\label{AsymBPtt}
We have
\begin{multline}
B^0_p(\gamma_0,\ldots,\gamma_q,\eta_0,\ldots,\eta_{q'})
=\sum_{\substack{0\leq i\leq q\\ 0\leq j\leq q'}}\Bigg((p-1)^2 {\rm f}_{i,j}(\cF)\tw(\gamma_i,\eta_j)\\
+(p-1)\left(\frac{\tw(\gamma_{i+1},\gamma_{i},\eta_j)}
{\tw(\gamma_i,\gamma_{i+1})}\left({\rm n}_{i,j}(\cF)+{\rm f}_{i+1,j}(\cF)
\right)
+\frac{\tw(\gamma_i,\eta_{j+1},\eta_j)}
{\tw(\eta_j,\eta_{j+1})}\left({\rm m}_{i,j}(\cF)+{\rm f}_{i,j+1}(\cF)
\right)\right)
\\
+\frac{\tw(\gamma_{i+1},\gamma_i,\eta_{j+1},\eta_j)}{\tw(\gamma_{i+1},\gamma_i)\tw(\eta_j,\eta_{j+1})}\left({\rm q}_{i,j}(\cF)+{\rm n}_{i,j+1}(\cF)+{\rm m}_{i+1,j}(\cF)+{\rm f}_{i+1,j+1}(\cF)\right)\Bigg)+K \cdotp\gir_k(\rho),
\end{multline}
where $K$ only depends on the position of the eigenvectors of $\rho(\gamma_i)$ and $\rho(\eta_j)$.
\end{proposition}
\begin{proof} Using the estimates for $\wh{\ms F}_i^{(p,m)}$ and $\wh{\ms G}_i^{(p,m)}$coming from Proposition \ref{pro:fpm} we get that 
\begin{eqnarray}
& &B^0_p= \sum_{\substack{0\leq i\leq q\\ 0\leq j\leq q'}}\Bigg({\rm f}_{i.j}(\cF)(p-1)^2\tr (\bu{\ms g}_i.\bu{\ms h}_j)\cr
&+&{\rm f}_{i.j}(\cF)\left(\frac{\tr (\bu{\ms g}_{i}.\bu{\ms g}_{i-1}.\bu{\ms h}_{j}.\bu{\ms h}_{j-1})}
{\tr (\bu{\ms g}_{i}.\bu{\ms g}_{i-1})\tr (\bu{\ms h}_{j}.\bu{\ms h}_{j-1})}
+(p-1)\left(\frac{\tr (\bu{\ms g}_i.\bu{\ms h}_{j}.\bu{\ms h}_{j-1})}{\tr (\bu{\ms h}_{j}.\bu{\ms h}_{j-1})}+
\frac{\tr (\bu{\ms h}_j.\bu{\ms g}_{i}.\bu{\ms g}_{i-1})}{\tr (\bu{\ms g}_{i}.\bu{\ms g}_{i-1})}\right)
\right)\cr
&+&{\rm m}_{i.j}(\cF)\left((p-1)\frac{\tr (\bu{\ms g}_i.\bu{\ms h}_{j+1}.\bu{\ms h}_{j})}{\tr (\bu{\ms h}_j.\bu{\ms h}_{j+1})}
+\frac{\tr (\bu{\ms g}_{i}.\bu{\ms g}_{i-1}.\bu{\ms h}_{j+1}.\bu{\ms h}_{j})}
{\tr (\bu{\ms g}_{i}.\bu{\ms g}_{i-1})\tr (\bu{\ms h}_{j+1}.\bu{\ms h}_{j})}\right)\cr
&+&{\rm n}_{i.j}(\cF)\left((p-1)
\frac{\tr (\bu{\ms g}_{i+1}.\bu{\ms g}_{i}.\bu{\ms h}_j)}{\tr (\bu{\ms g}_{i+1}.\bu{\ms g}_{i})}
+\frac{\tr (\bu{\ms g}_{i+1}.\bu{\ms g}_{i}.\bu{\ms h}_{j}.\bu{\ms h}_{j-1})}
{\tr (\bu{\ms g}_{i}.\bu{\ms g}_{i-1})\tr (\bu{\ms h}_{j+1}.\bu{\ms h}_{j})}\right)\cr
&+&{\rm q}_{i.j}(\cF)\frac{\tr (\bu{\ms g}_{i+1}.\bu{\ms g}_{i}.\bu{\ms h}_{j+1}.\bu{\ms h}_{j})}{\tr (\bu{\ms g}_{i+1}.\bu{\ms g}_{i})\tr (\bu{\ms h}_{j+1}.\bu{\ms h}_{j})}\Bigg)+K \cdotp\gir_k(\rho).\end{eqnarray}
Using the definition of multifractions, and after reordering terms, we obtain the asymptotics of the proposition.
\end{proof}

Finally we need to understand the last term involving the sum of the terms $B^\xi_p$. 
\begin{proposition}
We have
\begin{equation}
\sum_{i,j}\sum_{\xi\in C_{i,j}}I_{i,j}(\xi) B^\xi_p=p^2\left(\sum_{i,j} I_{i,j}(1)\sharp(C_{i,j})\tr(\bu\gg_i\bu\hh_j)\right) + K \cdotp\gir(\rho_0)^N,\label{bxi}
\end{equation}
where $K$ only depends on the position of the eigenvectors of $\rho(\gamma_i)$ and $\rho(\eta_j)$.
\end{proposition}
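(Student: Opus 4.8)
The plan is to compute each $B^\xi_p$ individually and then recombine against the multiplicities $I_{i,j}(\xi)$. Fix $i,j$ and a non‑trivial intersection loop $\xi\in C_{i,j}$. Since $\Gamma_k$ is $N$‑nice for the pair $(\gamma_i,\eta_j)$, the image of $\xi$ in $\pi_1(S_0)$ is $\gamma_i^{k_1}\eta_j^{-k_2}$ with $N<k_1<Q_i-N$ and $N<k_2<P_j-N$ (here $Q_i=\gamma_i(k)$, $P_j=\eta_j(k)$, $\wh\gg_i=\gg_i^{Q_i}$, $\wh\hh_j=\hh_j^{P_j}$), hence $\rho(\xi)=\gg_i^{k_1}\hh_j^{-k_2}$ for every $\rho$. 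The whole role of the $N$‑nice hypothesis is that $k_1,k_2$ and their complements $Q_i-k_1$, $P_j-k_2$ are all $>N$, so that conjugation by $\rho(\xi)$ is invisible to leading order: this is precisely the regime of Proposition \ref{asymconj}.

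Using the identities $\wh{\ms F}^{(p,m)}_i=\gg_i^{Q_i(m-1)}\wh{\ms F}^{(p,1)}_i\gg_i^{-Q_i(m-1)}$ and $\wh{\ms G}^{(p,m')}_j=\hh_j^{P_j(m'-1)}\wh{\ms G}^{(p,1)}_j\hh_j^{-P_j(m'-1)}$ together with cyclicity of the trace, each summand of $B^\xi_p$ becomes
$$
\frac{\tr\!\left(\gg_i^{-a}\wh{\ms F}^{(p,1)}_i\gg_i^{a}\cdot\hh_j^{-b}\wh{\ms G}^{(p,1)}_j\hh_j^{b}\right)}{\tr(\wh{\ms F}^{(p,0)})\,\tr(\wh{\ms G}^{(p,0)})},\qquad a=k_1-Q_i(m-1),\ \ b=k_2-P_j(m'-1).
$$
Collapsing the resulting powers of $\gg_i$ (resp. $\hh_j$) at the two ends produces exponents $Q_im-k_1$ and $Q_i(p-m)+k_1$ (resp. $P_jm'-k_2$, $P_j(p-m')+k_2$), all $\geq N$; Propositions \ref{Asym1}, \ref{pro:fpm} and \ref{asymconj} then give
$$
\frac{\tr\!\left(\wh{\ms F}^{(p,m)}_i\rho(\xi)\wh{\ms G}^{(p,m')}_j\rho(\xi)^{-1}\right)}{\tr(\wh{\ms F}^{(p,0)})\,\tr(\wh{\ms G}^{(p,0)})}=\tr(\bu\gg_i\bu\hh_j)+O\!\left(\gir_0(\rho)^{N+e(m,m')}+\gir_k(\rho)\right),
$$
with $e(m,m')\geq 0$ large as soon as $m,m'$ stay away from $1$ and $p$. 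Summing over $1\leq m,m'\leq p$, the $p^2$ copies of $\tr(\bu\gg_i\bu\hh_j)$ yield the main term; the remainders, being geometrically small in the distance of $(m,m')$ to the boundary of $\{1,\dots,p\}$, sum to $O(\gir_0(\rho)^{N}+\gir_k(\rho))$ (up to a bounded factor which is harmless, $N$ being eventually let to infinity). Hence $B^\xi_p=p^2\,\tr(\bu\gg_i\bu\hh_j)+O(\gir_0(\rho)^{N}+\gir_k(\rho))$, uniformly for $\rho\in U$ and $\xi\in C_{i,j}$, with implied constant depending only on the relative position of the eigenvectors of $\rho(\gamma_i)$ and $\rho(\eta_j)$.

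It remains to sum against multiplicities, and here the key point is the identity $\sum_{\xi\in C_{i,j}}I_{i,j}(\xi)=I_{i,j}(1)\cdot\sharp(C_{i,j})$. Indeed two distinct geodesics of $\tilde S$ meet in at most one point, so for each class $\xi$ there is at most one $x\in g_i\cap h_j$ with $c_x=\xi$; moreover every such $x$ lifts, in $\tilde S$, to a crossing of $\tilde\gamma_i$ with a translate of $\tilde\eta_j$ (or of a translate of $\tilde\gamma_i$ with $\tilde\eta_j$) with one and the same local sign $\epsilon_{ij}=\ii(\tilde\gamma_i\cap\tilde\eta_j)$, and the trivial loop corresponds to the crossing $\tilde\gamma_i\cap\tilde\eta_j$ itself with that same sign. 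Thus $I_{i,j}(\xi)=\epsilon_{ij}$ for each $\xi\in C_{i,j}$ and $I_{i,j}(1)=\epsilon_{ij}$, giving the identity (when $\gamma_i,\eta_j$ are not coprime the arcs $g_i,h_j$ run along one embedded geodesic, $C_{i,j}=\emptyset$, and both sides vanish). Multiplying the single‑$\xi$ asymptotic by $I_{i,j}(\xi)$, summing over $\xi\in C_{i,j}$ via this identity, and then over $i,j$, produces exactly the stated formula.

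The main obstacle is the error control in the second step: a priori each of the $\asymp p^2$ summands carries an error of size $\gir_0(\rho)^N$, and the number of loops $\xi$ itself grows with $k$, so a crude term‑by‑term bound is not enough. One must use the sharp exponentially‑decaying form of the remainder in Proposition \ref{asymconj} as the indices $m,m'$ (and, for $\xi$, the depths $k_1,k_2$) move into the bulk, and the fact that the blocks $\wh\gg_{i+1}^p\cdots\wh\gg_{i-1}^p$ are arbitrarily high powers (Proposition \ref{pro:fpm}), so that after summation all tails are geometric and the total remainder is governed by $\gir_0(\rho)^N+\gir_k(\rho)$.
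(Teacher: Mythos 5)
Your computations in the first two paragraphs are essentially correct, and your observation in the final paragraph puts a finger on the right issue — but it is exactly there that the proof breaks down, and the remark does not fix it. After establishing the single-$\xi$ asymptotic
$$
B^\xi_p=p^2\,\tr(\bu\gg_i\bu\hh_j)+O\!\left(\gir_0(\rho)^{N}+\gir_k(\rho)\right)\ \ \text{uniformly in }\xi,
$$
you multiply by $I_{i,j}(\xi)$ and sum over $\xi\in C_{i,j}$. This gives a remainder of order $\sharp(C_{i,j})\cdot\gir_0(\rho)^{N}$, and $\sharp(C_{i,j})$ grows without bound with $k$ (it is comparable to $Q_i$). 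So the resulting ``constant'' depends on $k$, not just on the eigenvectors of $\rho(\gamma_i)$ and $\rho(\eta_j)$ — which contradicts the claimed form of $K$. You acknowledge this (``a crude term-by-term bound is not enough''), but the last paragraph is a statement of intent rather than an argument; nothing in the preceding text actually extracts and uses the sharper, depth-dependent exponent.

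The repair — and this is what the paper does — is to interchange the order of summation. Fix $m,m'$, factor out $I_{i,j}(1)$ (using, as you note, that the bouquet is a lift of a bouquet on $S_0$, so $I_{i,j}(\xi)=I_{i,j}(1)$ for all $\xi\in C_{i,j}$), and apply Proposition \ref{asymconj} \emph{with $r=\sharp(C_{i,j})$} to the inner sum
$$
\sum_{\xi\in C_{i,j}}\gg_i^{-N_\xi}\wh{\ms F}_i^{(p,m)}\gg_i^{N_\xi}\cdot\hh_j^{-M_\xi}\wh{\ms G}_j^{(p,m')}\hh_j^{M_\xi}.
$$
Proposition \ref{asymconj} is engineered precisely for this: the remainder it produces already sums the geometric tails in the depths $N_\xi,M_\xi$, so the error bound $K_0\gir_0(\rho)^{N+M(m,m')}$ is uniform in $r$, with $K_0$ depending only on the eigenvectors. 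One then sums the resulting geometric series over $m,m'$, which is bounded by a universal constant times $\gir_0(\rho)^N$. Applying Proposition \ref{asymconj} with $r=1$, as you effectively do, discards the very decay that keeps the $\xi$-sum under control, and that information is not recoverable afterwards from the blunter $O(\gir_0^N)$ bound.
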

\begin{proof} We use again the notations set up in the beginning of Paragraph \ref{par:prelimas}. By definition of an $N$-nice covering, any element $\xi\in C_{i,j}$ can be written as 
$$
\xi=\gamma_i^{N_\xi}\eta_j^{M_\xi},
$$
where $N<N_\xi<Q_i-N$, and $N<M_\xi<P_j-N$. Since for $0<m<Q_j$, $\gamma_i^m\not\in\Gamma_k$, we obtain that $\xi\to N_\xi$ and $\xi\to M_\xi$ are bijections.

Moreover, since the bouquet $C$ is a lift of a bouquet $C^0$ in $S_0$, then 
$$
I_{i,j}(\xi)=I_{i,j}(1).
$$
It follows that for any $i$ and $j$,
\begin{eqnarray}
\sum_{\xi\in C_{i,j}} I_{i,j}(\xi) B^\xi_p=I_{i,j}(1)\mathop{\sum_{1\leq m'\leq p}}_{1\leq m\leq p}\frac{\tr\left(B^{m,m',i,j}_p\right)}{\tr\left(\wh{\ms F}^{(p,0)}\right). \tr\left(\wh{\ms G}^{(p,0)}\right)}\label{eq:bxi2}\, ,\end{eqnarray}
where
$$
B^{m,m',i,j}_p=
\sum_{\xi\in C_{i,j}}
\left(
\gg_i^{-N_\xi}\wh{\ms F}_i^{(p,m)}\gg_i^{N_\xi}
.\hh_j^{-M_\xi}\wh{\ms G}_j^{(p,m')} \hh_j^{M_\xi}\right).
$$
We now apply Proposition \ref{asymconj} to get
\begin{eqnarray}
& &\frac{B^{m,m',i,j}_p}{\tr\left(\wh{\ms F}^{(p,0)}\right). \tr\left(\wh{\ms G}^{(p,0)}\right)}\cr&=&I_{i,j}(1)\sharp (C_{i,j})\left(\bu\gg_i\bu\hh_j+K_0\gir_0(\rho)^{N+M(m,m')} + K_0\gir_0(\rho)^{Np}\right) \label{eq:bxi3},
\end{eqnarray}
where $M(m,m')=\inf(Q_i(p-m),Q_i(m-1),P_j(p-m'),P_j(m'-1))$.
Observe that for any $\lambda<1$
$$
\mathop{\sum_{1\leq m'\leq p}}_{1\leq m\leq p}\lambda^{M(m,m')}\leq 4\sum_{n\leq 0}\lambda^n=\frac{4}{1-\lambda}.
$$
Thus Equations \eqref{eq:bxi2} and \eqref{eq:bxi3} together yield
\begin{eqnarray}
\sum_{i,j}\sum_{\xi\in C_{i,j}} I_{i,j}(\xi) B^\xi_p=p^2\left(\sum_{i,j}I_{i,j}(1)\sharp(C_i,j). \tr\left(\bu\gg_i.\bu\hh_j\right)\right) +K\gir_0(\rho)^N,
\end{eqnarray}
where we have used that $p^2\gir_0(\rho)^{Np}\leq K_5\gir_0(\rho)^{N}$ for some constant $K_5$ only depending on a compact neighborhood of $\rho$.
The result finally follows from the fact that $\tr(\bu\gg_i.\bu\hh_j)=\tw(\gamma_i,\eta_j)$.
\end{proof}

\subsubsection{Proof of Proposition \ref{AsymBP0}}

Since $G$, $F$ and $\Gamma_k$ satisfy the Good Condition Hypothesis, by Proposition
\ref{SettingArcGoodPosition}, there exist two bouquets $\cF_L$ and $\cF_R$ in $S$ in a homotopically good position, both representing $G$ and $F$ such that furthermore 
\begin{eqnarray*}
\frac{1}{2}({\rm f}_{i,j}({\cF_L})+{\rm f}_{i,j}(\cF_R))&={\rm f}_{i,j}\, ,\\
\frac{1}{2}({\rm n}_{i,j}(\cF_L)+{\rm n}_{i,j}(\cF_R))&={\rm n}_{i,j}\, ,\\
\frac{1}{2}({\rm m}_{i,j}(\cF_L)+{\rm m}_{i,j}(\cF_R))&={\rm m}_{i,j}\, ,\\
\frac{1}{2}({\rm q}_{i,j}(\cF_L)+{\rm q}_{i,j}(\cF_R))&={\rm q}_{i,j}\, .
\end{eqnarray*}
Thus applying Proposition \ref{AsymBP} twice, once for $\cF_L$ and once for $\cF_R$, and taking the half sum, we obtain the final result.

\subsection{Asymptotics of brackets of multifractions}

The setting of this paragraph is the same as the previous one: we shall be given a finite index subgroup $\Gamma_k$ of $\Gamma_0=\pi_1(S)$, corresponding to a covering $S_k\to S_0=S$. Then, if $\rho$ is a Hitchin representation of $\pi_1(S)$ in $\sln$, $\rho_k$ will denote the restriction of $\rho$ to $\Gamma_k$.

Let $G=(\gamma_0,\ldots,\gamma_q)$ and $F=(\eta_0,\ldots,\eta_{q'})$ be two tuples of primitive elements of $\pi_1(S)$. We assume that $(\gamma_i,\gamma_{i+1})$ as well as $(\eta_j,\eta_{j+1})$ are all pairwise coprime. Observe that there exists $M\in\mathbb N$ so that for all $i$ and $j$, $\hat\gamma\defeq \gamma_i^M$ and $\hat\eta\defeq \eta_j^M$ belong to $\Gamma_k$ .

Then let 
$$
\otw(\gamma_1,\ldots,\gamma_q)\defeq \frac{\ww(\hat\gamma_1^p\ldots\hat\gamma_q^p)}{\prod_{i=1}^{i=q} \ww(\hat\gamma_i^p)},
$$
so that
\begin{equation}
\tw=\lim_{p\to\infty}{\otw}.\label{limTW}
\end{equation}
Let now
\begin{eqnarray}
A_p\defeq \frac{\{\otw(\gamma_0,\ldots,\gamma_q),\otw(\eta_0,\ldots,\eta_{q'})\}_S}{\otw(\gamma_0,\ldots,\gamma_q).\otw(\eta_0,\ldots,\eta_{q'})}\label{defAP}
\end{eqnarray}
Let $F=(\gamma_1,\ldots,\gamma_q)$ and $G=(\eta_1,\ldots,\eta_{q'})$. We first have
\begin{proposition}\label{vacote}
We have
\begin{equation}
A_p =B_p(F,G)-\sum_iB_p(\gamma_i,G)-\sum_j B_p(F,\eta_j)+\sum_{\substack{0\leq i\leq q\\ 0\leq j\leq q'}}B_p(\gamma_i,\eta_j).
\end{equation}

\end{proposition}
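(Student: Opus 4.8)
The plan is to expand both sides using the "logarithmic derivative" bilinearity of the Poisson bracket together with the multiplicativity of the Wilson loop function $\otw$. First I would recall that by definition
$$
A_p=\frac{\{\otw(\gamma_0,\ldots,\gamma_q),\otw(\eta_0,\ldots,\eta_{q'})\}_S}{\otw(\gamma_0,\ldots,\gamma_q)\,\otw(\eta_0,\ldots,\eta_{q'})},
$$
and similarly $B_p(F,G)=\dfrac{\ww(\{{\bf G}^{(p)},{\bf F}^{(p)}\}_{S_k})}{\ww({\bf G}^{(p)})\ww({\bf F}^{(p)})}$ from Equation \eqref{eq:bpdef}, where ${\bf F}^{(p)}=\hat\gamma_1^p\cdots\hat\gamma_q^p$ and ${\bf G}^{(p)}=\hat\eta_1^p\cdots\hat\eta_{q'}^p$. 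The key observation is that the numerator $\otw(\gamma_0,\ldots,\gamma_q)$ is, by its very definition, the ratio $\ww({\bf F}^{(p)})/\prod_i\ww(\hat\gamma_i^p)$; since the Poisson bracket is a derivation in each slot, applying $\{\,\cdot\,,\otw(\eta_0,\ldots,\eta_{q'})\}_S$ to this ratio and dividing through by $\otw(\gamma_0,\ldots,\gamma_q)$ produces
$$
\frac{\{\ww({\bf F}^{(p)}),\cdot\}_S}{\ww({\bf F}^{(p)})}-\sum_{i}\frac{\{\ww(\hat\gamma_i^p),\cdot\}_S}{\ww(\hat\gamma_i^p)},
$$
and one does the same in the other slot against $\otw(\eta_0,\ldots,\eta_{q'})=\ww({\bf G}^{(p)})/\prod_j\ww(\hat\eta_j^p)$.

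Concretely, I would write $A_p$ as a double "log derivative": expanding the numerator of $A_p$ by Leibniz in the first argument and then in the second, and cancelling the denominator factor-by-factor, yields
$$
A_p=\sum_{\varepsilon}\pm\,\frac{\{\ww(U_\varepsilon),\ww(V_\varepsilon)\}_S}{\ww(U_\varepsilon)\ww(V_\varepsilon)},
$$
where each $U_\varepsilon$ ranges over ${\bf F}^{(p)}$ and the individual $\hat\gamma_i^p$, each $V_\varepsilon$ ranges over ${\bf G}^{(p)}$ and the individual $\hat\eta_j^p$, and the sign is $(-1)^{(\text{number of single factors chosen})}$. Recognizing each such normalized bracket as a $B_p$ of the appropriate tuples — namely $B_p(F,G)$ when both full products are chosen, $-B_p(\gamma_i,G)$ when $\hat\gamma_i^p$ is singled out against ${\bf G}^{(p)}$, $-B_p(F,\eta_j)$ symmetrically, and $+B_p(\gamma_i,\eta_j)$ when both are singled out — gives exactly the claimed identity. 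Here I would also note that $\ww(\hat\gamma_i^p)=\ww(\gamma_i^{Mp})$ is (a power of) a single Wilson loop, so $B_p(\gamma_i,\eta_j)$ as defined in \eqref{eq:bpdef} for the length-one tuples $(\gamma_i)$ and $(\eta_j)$ is meaningful and is precisely this term.

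The only genuine subtlety — and the one I would be careful about — is a bookkeeping one: making sure the indexing conventions of Proposition \ref{AsymBP0} / Equation \eqref{eq:bpdef} match up, i.e.\ that $B_p(F,G)$ really denotes $\ww(\{{\bf G}^{(p)},{\bf F}^{(p)}\}_{S_k})/(\ww({\bf G}^{(p)})\ww({\bf F}^{(p)}))$ in the correct order of arguments, so that the signs coming from the antisymmetry of the Goldman bracket and the signs coming from the Leibniz expansion of the log-derivatives are consistently oriented; since the bracket is antisymmetric, swapping $F$ and $G$ only flips an overall sign, which is absorbed once and for all. Apart from this, the proof is a purely formal manipulation: a single application of the Leibniz rule in each of the two Poisson-bracket slots, followed by re-packaging the resulting four families of terms as the four stated sums. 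No analytic input is needed here; the content is entirely in the definitions of $\otw$ and of $B_p$.
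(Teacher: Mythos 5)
There is a genuine gap. Your Leibniz expansion of $A_p$ into four inclusion--exclusion families of ``log derivatives'' is correct, and it is indeed the first move of the paper's own proof. But the identification ``each such normalized bracket \emph{is} a $B_p$ of the appropriate tuples'' is false. The quantity
\[
\frac{\{\ww(U),\ww(V)\}_S}{\ww(U)\,\ww(V)}
\]
is the normalized \emph{ABG Poisson bracket} of two Wilson loops, whereas $B_p$ is defined in \eqref{eq:bpdef} as the normalized Wilson loop of the \emph{Goldman Lie bracket} of the underlying curves. By Goldman's theorem \eqref{ABGw} these differ by a correction:
\[
\frac{\{\ww(U),\ww(V)\}_S}{\ww(U)\,\ww(V)}=\frac{\ww(\{U,V\})}{\ww(U)\,\ww(V)}-\frac{\ii(U,V)}{n},
\]
so each term in your expansion of $A_p$ produces a $B_p$ \emph{plus} a multiple of an intersection number $\ii(\cdot,\cdot)/n$. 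You never address these leftover terms; their presence means the identity you claim does not follow from your step.

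The missing piece is that the four families of intersection-number corrections cancel after the inclusion--exclusion. This uses the bi-additivity of the algebraic intersection number, $\ii(a.b,c)=\ii(a,c)+\ii(b,c)$, so that
\[
\ii({\bf F}^{(p)},{\bf G}^{(p)})-\sum_i\ii(\hat\gamma_i^p,{\bf G}^{(p)})-\sum_j\ii({\bf F}^{(p)},\hat\eta_j^p)+\sum_{i,j}\ii(\hat\gamma_i^p,\hat\eta_j^p)=0,
\]
each of the four terms being equal to $\sum_{i,j}\ii(\hat\gamma_i^p,\hat\eta_j^p)$. This is exactly how the paper closes the argument. Without invoking \eqref{ABGw} and this cancellation, the claim that the normalized Poisson bracket ``is'' $B_p$ is an unjustified conflation of two different brackets, and the proof as written would not go through.
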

From this proposition and Proposition \ref{AsymBP}, we will deduce the following important corollary
\begin{corollary}\label{apcomp}
Assume that $G$ and $F$ and $\Gamma_0$ satisfy the Good Position Hypothesis.
Let $k$ be a positive integer so that $\Gamma_k$ is $N$-nice for all pairs $(\gamma_i,\eta_j)$. Then
\begin{eqnarray} 
& &\frac{\left\{\tw(\gamma_0,\ldots\gamma_q),\tw(\eta_0,\ldots\eta_{q'})\right\}_{S_k}}{\tw(\gamma_0,\ldots\gamma_q).\tw(\eta_0,\ldots\eta_{q'})}\cr
&=&\sum_{\substack{0\leq i\leq q\\ 0\leq j\leq q'}}
\Big(
\left({\rm q}_{i,j}+{\rm n}_{i,j+1}+{\rm m}_{i+1,j}+{\rm f}_{i+1,j+1}\right)
\frac{\tw(\gamma_{i+1}\gamma_i,\eta_{j+1},\eta_j)}{\tw(\gamma_{i+1},\gamma_i)\tw(\eta_j,\eta_{j+1})}\cr
& &-\left({\rm n}_{i,j}+{\rm f}_{i+1,j}\right)
\frac{\tw(\gamma_{i+1}\gamma_i,\eta_j)}{\tw(\gamma_{i+1},\gamma_i)}
-\left({\rm m}_{i,j}+{\rm f}_{i,j+1}\right)
\frac{\tw(\gamma_i,\eta_{j+1},\eta_j)}{\tw(\eta_j,\eta_{j+1})}\cr
& &+{\rm f}_{i,j}.\tw(\gamma_i,\eta_j)\Big) +K \cdotp(\gir_k(\rho)+\gir_0(\rho)^N),
\end{eqnarray}
where $K$ is bounded by a continuous function that only depends on the relative position of the eigenvectors of $\rho(\gamma_i)$ and $\rho(\eta_i)$
\end{corollary}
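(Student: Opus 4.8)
The plan is to combine the purely algebraic identity of Proposition \ref{vacote} with the asymptotic product formula of Proposition \ref{AsymBP0}, applied term by term. First I would record that since $G$, $F$ and $\Gamma_0$ satisfy the Good Position Hypothesis, and $\Gamma_k$ is $N$-nice for all pairs $(\gamma_i,\eta_j)$, we are entitled to apply Proposition \ref{AsymBP0} not only to the pair $(F,G)$ but to each of the reduced pairs $(\gamma_i,G)$, $(F,\eta_j)$ and $(\gamma_i,\eta_j)$ appearing in the right-hand side of Proposition \ref{vacote}. Each of these pairs is a sub-collection of primitive elements with consecutive elements pairwise coprime, so the hypotheses are inherited, and the intersection numbers $[\cdot,\cdot]$ occurring in each expansion are restrictions of the same family ${\rm f}_{i,j},{\rm m}_{i,j},{\rm n}_{i,j},{\rm q}_{i,j}$.

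The heart of the computation is the telescoping: when one substitutes the four instances of Proposition \ref{AsymBP0} into the alternating sum $B_p(F,G)-\sum_i B_p(\gamma_i,G)-\sum_j B_p(F,\eta_j)+\sum_{i,j}B_p(\gamma_i,\eta_j)$, the ``diagonal'' terms $p\,{\rm f}_{i,j}$ and $p^2{\rm R}_{i,j}\tw(\gamma_i,\eta_j)$ — which are the terms that depend badly on $p$ and would otherwise prevent the limit $p\to\infty$ from existing — must cancel completely. I would verify this cancellation by bookkeeping on the index ranges: the term $B_p(\gamma_i,\eta_j)$ contributes exactly $p\,{\rm f}_{i,j}+p^2{\rm R}_{i,j}\tw(\gamma_i,\eta_j)$ plus, since $\gamma_i$ is a length-one tuple (so $\gamma_{i+1}=\gamma_i$, $\tw(\gamma_i,\gamma_{i+1})=\tw(\gamma_i,\gamma_i)$ degenerates and the $(p-1)^2$ and $(p-1)$ terms collapse), essentially only the ${\rm f}_{i,j}\tw(\gamma_i,\eta_j)$-type contribution; one checks that these precisely match the bad terms coming from $B_p(F,G)$, $B_p(\gamma_i,G)$ and $B_p(F,\eta_j)$ in the alternating sum, with the right signs. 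After this cancellation, what survives is $p$-independent up to the error $K(\gir_k(\rho)+\gir_0(\rho)^N)$, and by Equation \eqref{limTW} and the fact that $\tw=\lim_p\otw$ one passes to the limit, using that the swapping/Goldman brackets are continuous under this limit on compacta (Proposition \ref{girth} and the uniform convergence in Proposition \ref{Asym2}) and that the error term is unaffected. Finally one reads off the surviving coefficients and regroups to obtain exactly the displayed four-term expression.

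The main obstacle I anticipate is the combinatorial bookkeeping of the cancellation of the $p$- and $p^2$-terms across the four alternating sums, keeping careful track of the cyclic index conventions ($q+1=0$, etc.) and of how the formula of Proposition \ref{AsymBP0} degenerates when applied to a one-element tuple — there the arcs $\gamma_i,\gamma_{i+1}$ coincide, several $\tw$-denominators become $\tw(\gamma_i,\gamma_i)$, and one must interpret the ratios correctly (e.g. $\tw(\gamma_{i+1},\gamma_i,\eta_j)/\tw(\gamma_i,\gamma_{i+1})$ as the appropriate cross fraction). A secondary point requiring care is that Proposition \ref{AsymBP0} is stated ``for $k$ large enough'' depending on $N$ and the compact set $U$; since we apply it to finitely many pairs one takes the maximum of the finitely many thresholds, which is harmless, and the constant $K$ in the conclusion is then the sum of the finitely many constants, still bounded by a continuous function of the relative positions of the eigenvectors of the $\rho(\gamma_i)$ and $\rho(\eta_j)$. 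Once the algebra is organized, the identification of the surviving sum with the right-hand side of the corollary is a direct matching of terms.
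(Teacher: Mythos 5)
Your proposal follows essentially the same route as the paper: decompose via Proposition \ref{vacote}, expand each of the four $B_p$-type terms using the asymptotic product formula of Proposition \ref{AsymBP0}, observe that the $p$- and $p^2$-dependent terms cancel in the alternating sum, and then pass to the limit using Equation \eqref{limTW}. The paper does exactly this, with the bookkeeping carried out term by term (noting in particular that for the reduced tuples $q_{i,j}=n_{i,j}=0$ or $q_{i,j}=m_{i,j}=0$, and that for the singleton-versus-singleton case $B_p(\gamma_i,\eta_j)$ collapses to $\tw(\gamma_i,\eta_j)\big(p^2R_{i,j}+(p-1)^2f_{i,j}+2(p-1)f_{i,j}+f_{i,j}\big)+K\epsilon$). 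One small imprecision in your description: when $B_p$ is applied to a one-element tuple, the $(p-1)^2$- and $(p-1)$-terms do \emph{not} vanish or collapse to a $p$-free contribution --- they remain $p$-dependent (with the ratios $\tw(\gamma_i,\gamma_{i+1},\eta_j)/\tw(\gamma_i,\gamma_{i+1})$ simplifying to $\tw(\gamma_i,\eta_j)$), and the cancellation happens only after summing over the four alternating terms of Proposition \ref{vacote}, as your earlier sentence correctly states. Otherwise the strategy, the identification of the bad terms, the inclusion--exclusion cancellation, the handling of finitely many thresholds for $N$-niceness, and the final matching of surviving coefficients are all in agreement with the paper's proof.
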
 
We first prove the corollary from the proposition, then the proposition in the next paragraph
\vskip 0.2 truecm

\noindent{\em Proof of Corollary \ref{apcomp}:} We study one by one the terms in the right hand side of the formula of Proposition \ref{vacote} using the asymptotics given by Proposition \ref{AsymBP}. Let $\epsilon=\gir_k(\rho)+\gir_0(\rho)^N$.
First,
\begin{multline}
B_p(\gamma_0,\ldots,\gamma_q,\eta_0,\ldots,\eta_{q'})
=\sum_{\substack{0\leq i\leq q\\ 0\leq j\leq q'}}\Bigg((p^2 {\rm R}_{i,j}+(p-1)^2 {\rm f}_{i,j})\tw(\gamma_i,\eta_j)\\
+(p-1)\left(\frac{\tw(\gamma_{i+1},\gamma_{i},\eta_j)}
{\tw(\gamma_i,\gamma_{i+1})}\left({\rm n}_{i,j}+{\rm f}_{i+1,j}
\right)
+\frac{\tw(\gamma_i,\eta_{j+1},\eta_j)}
{\tw(\eta_j,\eta_{j+1})}\left({\rm m}_{i,j}+{\rm f}_{i,j+1}
\right)\right)
\\
+\frac{\tw(\gamma_{i+1},\gamma_i,\eta_{j+1},\eta_j)}{\tw(\gamma_{i+1},\gamma_i)\tw(\eta_j,\eta_{j+1})}\left({\rm q}_{i,j}+{\rm n}_{i,j+1}+{\rm m}_{i+1,j}+{\rm f}_{i+1,j+1}\right)\Bigg)+K\epsilon\,.\label{f01}
\end{multline}
Let us now consider the term $B_p(\gamma_i,\eta_0,\ldots,\eta_{q'})$. We can apply Formula \eqref{f01} using the fact that in this case ${\rm q}_{i,j}={\rm n}_{i,j}=0$ to get
\begin{multline}
B_p(\gamma_i,\eta_0,\ldots,\eta_{q'})
=\sum_{0\leq j\leq q'}\Bigg((p^2 {\rm R}_{i,j}+(p-1)^2 {\rm f}_{i,j})\tw(\gamma_i,\eta_j)\\
+(p-1)\left(\tw(\gamma_{i},\eta_j)
\left({\rm f}_{i,j}
\right)
+\frac{\tw(\gamma_i,\eta_{j+1},\eta_j)}
{\tw(\eta_j,\eta_{j+1})}\left({\rm m}_{i,j}+{\rm f}_{i,j+1}
\right)\right)
\\
+\frac{\tw(\gamma_i,\eta_{j+1},\eta_j)}{\tw(\eta_j,\eta_{j+1})}\left({\rm m}_{i,j}+{\rm f}_{i,j+1}\right)\Bigg)+K\epsilon\,.
\end{multline}
Similarly,
\begin{multline}
B_p(\gamma_0,\ldots,\gamma_q,\eta_j)
=\sum_{0\leq i\leq q}\Bigg((p^2 {\rm R}_{i,j}+(p-1)^2 {\rm f}_{i,j})\tw(\gamma_i,\eta_j)\\
+(p-1)\left(\frac{\tw(\gamma_{i+1},\gamma_{i},\eta_j)}
{\tw(\gamma_i,\gamma_{i+1})}\left({\rm n}_{i,j}+{\rm f}_{i+1,j}
\right)
+\tw(\gamma_i,\eta_j)
\left({\rm f}_{i,j}
\right)\right)
\\
+\frac{\tw(\gamma_{i+1},\gamma_i,\eta_j)}{\tw(\gamma_{i+1},\gamma_i)}\left({\rm n}_{i,j}+{\rm f}_{i+1,j}\right)\Bigg)+K\epsilon\,.
\end{multline}
Finally,
\begin{equation}
B_p(\gamma_i,\eta_j)
=\tw(\gamma_{i},\eta_j)\left(p^2 {\rm R}_{i,j}+(p-1)^2 {\rm f}_{i,j}
+2(p-1)+1\right)+K\epsilon\,.
\end{equation}
Thus, using Proposition \ref{vacote}, regrouping the terms that appears in $A_p$, we obtain that
\begin{itemize}
\item the coefficient of $\tw(\gamma_i,\eta_j)$ is
$
{\rm f}_{i,j}
$,
\item the coefficient of $\frac{\tw(\gamma_{i+1},\gamma_{i},\eta_j)}
{\tw(\gamma_i,\gamma_{i+1})}$
is
$
-({\rm n}_{i,j}+{\rm f}_{i+1,j})
$,
\item the coefficient of $
\frac{\tw(\gamma_i,\eta_{j+1},\eta_j)}
{\tw(\eta_j,\eta_{j+1})}$
is
$
-({\rm m}_{i,j}+{\rm f}_{i,j+1})
$,
\item the coefficient of $\frac{\tw(\gamma_{i+1},\gamma_i,\eta_{j+1},\eta_j)}{\tw(\gamma_{i+1},\gamma_i)\tw(\eta_j,\eta_{j+1})}$
is ${\rm q}_{i,j}+{\rm n}_{i,j+1}+{\rm m}_{i+1,j}+{\rm f}_{i+1,j+1}$.
\end{itemize}
Finally, we conclude the proof of the corollary by using Formula \eqref{limTW}.
\qed

\subsubsection{Proof of Proposition \ref{vacote}}

First we use the ``logarithmic derivative formula" for the Poisson bracket
$$
\frac{\{f.g,h\}_S}{fgh}=\frac{\{f,h\}_S}{fh}+\frac{\{g,h\}_S}{gh}.
$$
We obtain
\begin{multline}
A_p(F,G)
= \logd{\ww(\gamma_0^p\ldots\gamma_q^p)}{\ww(\eta_0^p\ldots\eta_{q'}^p)}-\sum_{0\leq i\leq q}\logd{\ww(\gamma_i^p)}{\ww(\eta_0^p\ldots\eta_{q'}^p)}\\ -\sum_{0\leq j\leq q'}\logd{\ww(\gamma_0^p\ldots\gamma_q^p)}{\ww(\eta_j)}+\sum_{\substack{0\leq i\leq q\\ 0\leq j\leq q'}}\logd{\ww(\gamma_i^p)}{\ww(\eta_j^p)}.
\end{multline}
Then, using the definition of Equation \eqref{ABGw} expressing the Goldman Poisson bracket of Wilson loops in terms of the bracket of loops in the Goldman Algebra, we get
$$
\logd{\ww(\gamma_0^p\ldots\gamma_q^p)}{\ww(\eta_0^p\ldots\eta_{q'}^p)}=B_p(F,G)-\frac{1}{n}\ii(\gamma_0^p\ldots\gamma_q^p,\eta_0^p\ldots\eta_{q'}^p).
$$
The proposition now follows from the fact that
$$
\ii(a.b,c)=\ii(a,c)+\ii(b,c),$$
and thus
\begin{multline}\ii(\gamma_0^p\ldots\gamma_q^p,\eta_0^p\ldots\eta_{q'}^p)=\\ \sum_{0\leq i\leq q}\ii(\gamma_i,\eta_0^p\ldots\eta_{q'}^p)+\sum_{0\leq j\leq q'}\ii(\gamma_0^p\ldots\gamma_q^p,\eta_j)-\sum_{\substack{0\leq i\leq q\\ 0\leq j\leq q'}}\ii(\gamma_i^p,\eta_j^p).
\end{multline}

\section{Goldman and swapping algebras: proofs of the main results}\label{sec:gold}

We finally prove the results stated in Section \ref{sec:main}. In the course of the proof, we prove the generalised Wolpert formula in Theorem \ref{theo:genWolp}.

\subsection{Poisson brackets of elementary functions and the proof of Theorem \ref{vanish-sequence}}\label{proof:vanish-sequence}
By Corollary \ref{ElemGen}, the algebra $\mathcal B(\PP)$ of multifractions is generated by elementary functions. Thus it is enough to prove the theorem when $b_0$ and $b_1$ are elementary functions.

Let $G=(\gamma_0,\ldots,\gamma_p)$ and $F=(\eta_0,\ldots\eta_{q'})$ be primitive elements of $\grf$. We assume that for all $i$ and $j$, 
$\gamma_i$ and $\gamma_{i+1}$ are coprime, as well as $\eta_i$ and $\eta_{i+1}$.
 
Let $b_0=\tw(\gamma_0,\ldots,\gamma_q)$ and $b_1=\tw(\eta_0,\ldots,\eta_q')$. 

By Proposition \ref{VanishGPH2}, we can assume that $G$ and $F$ satisfy the Good Position Hypothesis for $S_k$ when $k>k_0$ for some $n_0$. Let $N$ be a positive integer, we can further assume that $S_k\mapsto S_0$ is $N$-nice for all pairs $(\gamma_i,\eta_j)$ by Proposition \ref{VanishGPH3} for $k\geq k_0$ and $k_0$ large enough.

Recall also, using the notation of Proposition \ref{prop:braelem}, that
\begin{align}
{\rm f}_{i,j}&=[\gamma_i^-\gamma_i^+,\eta_j^-\eta_j^+]\ \ \ \ ={\rm a}_{i,j}\, ,&\cr
{\rm q}_{i,j}+{\rm n}_{i,j+1}+{\rm m}_{i+1,j}+{\rm f}_{i+1,j+1}&=[\gamma_i^-\gamma_{i+1}^+,\eta_{j}^-\eta_{j+1}^+]={\rm b}_{i,j}\, ,&\cr
{\rm f}_{i,j+1}+{\rm m}_{i,j}&=[\gamma_i^-\gamma_i^+,\eta_j^-\eta_{j+1}^+] \ \ ={\rm c}_{i,j}\, ,&\cr
{\rm f}_{i+1,j}+{\rm n}_{i,j}&=[\gamma_i^-\gamma_{i+1}^+,\eta_j^-\eta_j^+] \ \ ={\rm d}_{i,j}\, .&\end{align}

Thus Corollary \ref{apcomp} and the computation of the swapping bracket in Proposition \ref{prop:braelem} yield
\begin{multline}
\frac{\left\{\tw(\gamma_0,\ldots\gamma_q),\tw(\eta_0,\ldots\eta_{q'})\right\}_{S_k}}{\tw(\gamma_0,\ldots\gamma_q).\tw(\eta_0,\ldots\eta_{q'})}\cr=\frac{\left\{\tw(\gamma_0,\ldots\gamma_q),\tw(\eta_0,\ldots\eta_{q'})\right\}_W}{\tw(\gamma_0,\ldots\gamma_q).\tw(\eta_0,\ldots\eta_{q'})}+K\cdotp(\gir_k(\rho)+\gir(\rho)^N),
\end{multline}
where $K$ is a bounded function that only depends on the eigenvectors of $\rho(\gamma_i)$ and $\rho(\eta_j)$. In particular, there exists a real number $K_0$ and a compact neighbourhood $C$ of $\rho_0$ so that the previous equality holds with $K\leq K_0$ and $\rho$ in $C$.

Let $\epsilon$ be a positive real number.
By the last assertion in Proposition \ref{VanishGPH1}, we may furthermore choose $k_0$ so that if $k>k_0$, 
$$
\gir_k(\rho)\leq \frac{\epsilon}{2K_0}\, .
$$
Since $\sup\{\gir(\rho)\mid \rho\in C\}<1$, we may further choose $N$ -- and thus $k_0$ -- so that for all $\rho$ in $C$,
$$
\gir(\rho)^N\leq \frac{\epsilon}{2K_0}\, .
$$
It follows that for all $\rho$ in $C$, for all $k\geq k_0$, we have
\begin{equation}
\left\vert\frac{\left\{\tw(\gamma_0,\ldots\gamma_q),\tw(\eta_0,\ldots\eta_{q'})\right\}_{S_k}}{\tw(\gamma_0,\ldots\gamma_q).\tw(\eta_0,\ldots\eta_{q'})}-\frac{\left\{\tw(\gamma_0,\ldots\gamma_q),\tw(\eta_0,\ldots\eta_{q'})\right\}_W}{\tw(\gamma_0,\ldots\gamma_q).\tw(\eta_0,\ldots\eta_{q'})}\right\vert\leq \epsilon\, .
\end{equation}

This concludes the proof of Theorem \ref{vanish-sequence}.

\subsection{Poisson brackets of length functions}\label{proof-length}

We shall first prove a result of independent interest, namely the computation of the value of the Goldman bracket of two length functions of geodesics having exactly one intersection point. 

Given a Hitchin representation $\rho$ in $\sln$, or alternatively a rank-$n$ cross ratio $\bb_\rho$, the period -- or length -- of a conjugacy class $\gamma$ in $\grf$, is given by
\begin{align}
\ell_\gamma(\rho)=\log\left(\frac{\lambda_{\text{max}}(\rho(\gamma))}{\lambda_{\text{min}}(\rho(\gamma))}\right)=\log\left(\bb_\rho(\gamma^+,\gamma^-,\gamma(y),y)\right),
\end{align}
for any $y\in\bgrf$ different from $\gamma^+$ and $\gamma^-$ and where $\lambda_{\text{max}}(A)$ and $\lambda_{\text{min}}(A)$ denotes respectively the eigenvalue with the greatest and smallest modulus of the endomorphism $A$.

\subsubsection{A generalised Wolpert Formula}

We have the following extension of Wolpert Formula for the bracket of length functions:
\begin{theorem}{\sc [Generalised Wolpert Formula]}\label{theo:genWolp1}
Let $\gamma$ and $\eta$ two closed geodesic with a unique intersection point then the Goldman bracket of the two length functions $\ell_\gamma$ and $\ell_\eta$ seen as functions on the Hitchin component is 
\begin{equation}
\{\ell_\gamma,\ell_\eta\}_S=\ii(\gamma,\eta)\sum_{\epsilon,\epsilon^\prime\in\{-1,1\}}\epsilon\epsilon^\prime.\tw(\gamma^{\epsilon},\eta^{\epsilon^\prime}),\label{eq:KSF}
\end{equation}
where we recall that
$$
\tw(\xi,\zeta)(\rho)=\bb_\rho(\xi^+,\zeta^+,\zeta^-,\xi^-).
$$
\end{theorem}

\begin{proof}
Let us first remark that
\begin{align}
\ell_\gamma=\lim_{p\rightarrow+\infty}\frac{1}{p}\log\left({\tr(\rho(\gamma^p))}{\tr(\rho(\gamma^{-p}))}\right).
\end{align}
Thus assuming that $\gamma$ and $\eta$ have a unique intersection point $x$, whose intersection number is 
$\ii(\gamma,\eta)$, the Product Formula \eqref{eq:pf2} gives us if $\epsilon_i\in\{-1,1\}$,
\begin{align}
\{\gamma^{\epsilon.p}, \eta^{\epsilon^\prime.p}\}=\epsilon\epsilon^\prime. p^2. \ii(\gamma,\eta)\gamma^{\epsilon.p}.\eta^{\epsilon^\prime.p}.
\end{align}
It follows that
\begin{align}
&\left\{\log\left({\ww(\gamma^p)}{\ww(\gamma^{-p})}\right),\log\left({\ww(\eta^p)}{\ww(\eta^{-p})}\right)\right\}_S\cr &=\sum_{\epsilon,\epsilon^\prime\in\{-1,1\}}\epsilon\epsilon^\prime.\ii(\gamma,\eta)\frac{\{\ww(\gamma^{\epsilon.p}),\ww(\eta^{\epsilon^\prime.p})\}_S}{\ww(\gamma^{\epsilon.p}).\ww(\eta^{\epsilon^\prime.p})}\cr
&=\sum_{\epsilon,\epsilon^\prime\in\{-1,1\}}p^2.\epsilon\epsilon^\prime.\ii(\gamma,\eta)\frac{\ww(\gamma^{\epsilon.p}.\eta^{\epsilon^\prime.p})}{\ww(\gamma^{\epsilon.p}).\ww(\eta^{\epsilon^\prime.p})}+\frac{1}{n}\ii(\gamma,\eta)\sum_{\epsilon,\epsilon^\prime\in\{-1,1\}}\epsilon\epsilon^\prime\,.
\end{align}
Thus
\begin{align}
&\lim_{p\rightarrow\infty}\left(\left\{\frac{1}{p}\log\left({\ww(\gamma^p)}{\ww(\gamma^{-p})}\right),\frac{1}{p}\log\left({\ww(\eta^p)}{\ww(\eta^{-p})}\right)\right\}_S\right)\cr
&=\sum_{\epsilon,\epsilon^\prime\in\{-1,1\}}\epsilon\epsilon^\prime.\ii(\gamma,\eta)\lim_{p\to\infty}\left(\frac{\ww(\gamma^{\epsilon.p}.\eta^{\epsilon^\prime.p})}{\ww(\gamma^{\epsilon.p}).\ww(\eta^{\epsilon^\prime.p})}\right)\cr
&=\sum_{\epsilon,\epsilon^\prime\in\{-1,1\}}\epsilon\epsilon^\prime.\ii(\gamma,\eta)\tw(\gamma^{\epsilon},\eta^{\epsilon^\prime})\,.
\end{align}
This concludes the proof of the theorem.
\end{proof}

\subsubsection{Proof of Theorem \ref{theo:braleng}}

Recall that we want to prove the following result.

\begin{theorem}
Let $\gamma$ and $\eta$ be two geodesics with at most one intersection point, then we have 
$$
\lim_{n\to\infty}{\rm I}_S(\{\hat\ell_{\gamma^n}(y),\hat\ell_{\eta^n}(y)\})=\frac{1}{4}\{\ell_{\gamma},\ell_{\eta}\}_S.
$$
\end{theorem}

\begin{proof} This will be a consequence of the generalised Wolpert Formula.
By definition,
$$
\hat\ell_\gamma(y)=\frac{1}{2}\log(\bb(\gamma^+,\gamma^-,\gamma(y),\gamma^{-1}(y))).
$$
Thus
\begin{equation}
\{\hat\ell_{\alpha}(y),\hat\ell_\beta(y)\}=\frac{1}{4}\sum_{\substack{u,u^{\prime}\in\{-1,1\} \\ v,v^{\prime}\in\{-1,1\}}}u.u^\prime\frac{(\{(\alpha^{v},\alpha^{-uv}(y)), (\beta^{v^\prime},\beta^{-u^\prime v^\prime}(y))\}}{ (\alpha^{v},\alpha^{-uv}(y)).(\beta^{v^\prime},\beta^{-u^\prime v^\prime}(y))}.\label{l:lim1}
\end{equation}
But, 
\begin{multline}
\{(\alpha^{v},\alpha^{-uv.}(y)), (\beta^{v^\prime},\beta^{-u^\prime v^\prime}(y))\}\\
=[(\alpha^{v}\alpha^{-uv.}(y)),(\beta^{v^\prime}\beta^{-u^\prime v^\prime.}(y))]\alpha^{v}\beta^{-u^\prime v^\prime.}(y). \beta^{v^\prime}\alpha^{-uv.}(y).\label{l:lim2}
\end{multline}
We remark that for $n$ large enough, for all $u,v,u^\prime,v^\prime$, we have
\begin{align}
[(\gamma^{v}\gamma^{v.n}(y)),(\eta^{v^\prime}\eta^{-u^\prime v^\prime.n}(y))]&=0,\cr
[(\gamma^{v}\gamma^{-uv.n}(y)),(\eta^{v^\prime}\eta^{v^\prime.n}(y))]&=0,\cr
[(\gamma^{v}\gamma^{-v.n}(y)),(\eta^{v^\prime}\eta^{-v^\prime.n}(y))]&=vv^\prime[\gamma^+\gamma^-,\eta^+\eta^-].\label{l:lim3}
\end{align}
Combining the remark in Equation \eqref{l:lim3}, with \eqref{l:lim2} and \eqref{l:lim1}, we have that for $n$ large enough,
\begin{equation}
\{\hat\ell_{\gamma^n}(y),\hat\ell_{\eta^n}(y)\}
=\frac{[\gamma^+\gamma^-,\eta^+\eta^-]}{4}\sum_{\substack{v,v^{\prime}\in\{-1,1\}}}v.v^\prime\frac{\gamma^{v}\eta^{-v^\prime.n}(y). \eta^{v^\prime}\gamma^{-v}(y)}{ (\gamma^{v}\gamma^{-v.n}(y)).(\eta^{v^\prime}\eta^{-v^\prime n}(y))}\,.\label{l:lim4}
\end{equation}
Thus, taking the limit when $n$ goes to $\infty$ yields
\begin{align}
\lim_{n\to\infty}\left(\ms I_S\{\hat\ell_{\gamma^n}(y),\hat\ell_{\eta^n}(y)\}\right)
&=
\frac{[\gamma^+\gamma^-,\eta^+\eta^-]}{4}\sum_{\substack{v,v^{\prime}\in\{-1,1\}}}v.v^\prime\frac{\gamma^{v}\eta^{-v^\prime}. \eta^{v^\prime}\gamma^{-v}}{ \gamma^{v}\gamma^{-v}. \eta^{v^\prime}\eta^{-v^\prime}}\cr
&=
\frac{[\gamma^+\gamma^-,\eta^+\eta^-]}{4}\sum_{v,v^{\prime}\in\{-1,1\}}v.v^\prime.\tw
(\gamma^{v},\eta^{v^\prime}).
\end{align}
The result now follows from this last equation and the generalised Wolpert formula 
\eqref{eq:KSF}.
\end{proof}

\section{Drinfel'd-Sokolov reduction}\label{sec:oper}

The purpose of this section is to prove Theorem \ref{DF-swap} which explain the relation of the multifraction algebra with the Poisson structure on $\sln$-opers.

We spend the first three paragraphs explaining the Poisson structure on $\sln$-opers using the Drinfel'd-Sokolov reduction of the Poisson structure on connections on the circle. Although this is a classical construction (see \cite{Dickey:1997un,vanMoerbeke:1998wl,Guha:2007wf} and the original reference \cite{Drinfelcprimed:1981ua}) we take some time explaining the main steps in differential geometric terms, expanding the sketch of the construction given by Graeme Segal in \cite{Segal:1991}. 

Finally, we relate the swapping algebra and this Poisson structure in Theorem \ref{DF-swap}.

\subsection{Opers and non-slipping connections}

In this paragraph, we recall the definition $\sln$-opers and show that they can be interpreted as class of equivalence of ``non-slipping" connections on a bundle with a flag structure.

\subsubsection{Opers}
\begin{definition}{\sc [Opers] }
A {\em $\sln$-oper} is an $n^{\hbox{\tiny th}}$-order linear differential operator on the circle $\mathbb T=\mathbb R/\mathbb Z$ of the form
\begin{eqnarray}
D\ :\ \psi\mapsto\frac{\dd ^n\psi}{\dd t^n}+q_{2}\frac{\dd ^{n-2}\psi}{\dd t^{n-2}}+\ldots+q_n\psi,
\end{eqnarray}
where $q_i$ are functions. 
\end{definition}

Observe that this definition of an oper requires the choice of a parametrisation of the circle. Otherwise the $q_i$ would rather be $i^{\hbox{\tiny th}}$-order differentials.

We denote by ${\ms X}_n(\mathbb T)$ the space of $\sln$-opers on $\mathbb T$. Every oper has a natural holonomy which reflect the fact that the solutions may not be periodic. We consider the space ${\ms X}_n(\mathbb T)^0$ of opers with trivial holonomy, that is those opers $D$ for which all solutions of $D\psi=0$ are periodic. 
A Poisson structure on ${\ms X}_n(\mathbb T)$, whose symplectic leaves are opers with the same holonomy, was discovered in the context of integrable systems and Korteweg--de Vries equations -- for a precise account of the history, see Dickey \cite{Dickey:1997un}. Later, Drinfel'd and Sokolov interpreted that structure in a more differential geometric way in \cite{Drinfelcprimed:1981ua} and we shall now retrace the steps of that construction. 

\subsubsection{Non-slipping connections}\label{non-slip} 
Let $K$ be the line bundle of $(-1/2)$-densities over $\mathbb T$ (so that $\mathsf T\mathbb T=K^2$) and $P\defeq J^{n-1}(K^{n-1})$ be the rank $n$ vector bundle of $(n-1)$ jets of sections of the bundles of $(-(n-1)/2)$-densities.

Let $F_p$ be the vector subbundle of $P$ defined by 
$$
F_p\defeq \{j^{n-1}\sigma\mid j^{n-p-1}\sigma=0\}.
$$
The family $\{F_p\}_{1\leq p\leq n}$ is a filtration of $P$: we have $F_n=P$, $F_{p-1}\subset F_{p}$ and $\dim(F_p)=p$. Observe that $$
W_p\defeq F_p/F_{p-1}=\left(\mathsf T^*\mathbb T\right)^{n-p}\otimes K^{n-1}=\left(K^{-2}\right)^{n-p}\otimes K^{n-1}=K^{2p-n-1}.$$ 
In particular $W_{n-p-1}=W_p^*$ and it follows that
$$
\det(P)=\bigotimes_{p=1}^n \det(W_p),
$$
is canonically isomorphic to $\mathbb R$. Thus $P$ carries a canonical volume form.

We say a family of sections $\{e_1,\ldots,e_n\}$ of $P$ is a {\em basis for the filtration} if for every integer $p$ no greater than $n$, for every $x\in S^1$, $\{e_1(x),\ldots,e_p(x)\}$ is a basis of the fibre of $F_p$ at $x$.

\begin{definition}{\sc[non-slipping connections]}
A connection $\nabla$ on $P$ is {\em non-slipping} if it satisfies the following conditions
\begin{itemize}
\item $\nabla F_p\subset F_{p+1}$ for all $p$,
\item If $\alpha_p$ is the projection from $F_{p+1}$ to $F_{p+1}/F_p$, then the map $$
(X,u)\to \alpha_p(\nabla_{X}(u)),
$$ 
considered as a linear map from $K^2\otimes F_p/F_{p-1}=K^{2p-n+1 }$ to $F_{p+1}/F_{p-1}=K^{2p-n+1}$ is the identity.
\end{itemize}

\end{definition}

We denote by $\DD_0$ the space of non--slipping connections on $P$. The first classical proposition is that
\begin{proposition}
Let $\nabla$ be a non-slipping connection, then there exists a unique basis $\{e_1,\ldots,e_n\}$ of determinant 1 for the filtration so that
\begin{eqnarray}
\nabla_{\partial_t}e_i&=&-e_{i+1},\hbox{ for } i\leq n-1,\cr
\nabla_{\partial_t}e_n&\in&F_{n-1},
\end{eqnarray}
where $\partial_t$ is the canon-ical vector field on $\mathbb T$.
\end{proposition}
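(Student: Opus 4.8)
The plan is to construct the basis $\{e_1,\ldots,e_n\}$ inductively, exploiting the non-slipping conditions to pin down each vector, and then to verify uniqueness by retracing the same steps. First I would choose any local section $\sigma$ of $K^{n-1}$ that generates $F_1/F_0$ at a point and set $e_1 := j^{n-1}\sigma$; the non-slipping requirement that the map $K^2\otimes F_p/F_{p-1}\to F_{p+1}/F_p$ be the identity forces, upon projecting $\nabla_{\partial_t}e_1$ to $F_2/F_1$, that this projection equals (a canonical generator corresponding to) $\partial_t\otimes e_1$. Thus $\nabla_{\partial_t}e_1$ is congruent modulo $F_1$ to a prescribed section of $F_2$, and I would \emph{define} $e_2 := -\nabla_{\partial_t}e_1 + (\text{correction in }F_1)$, where the correction is chosen so that $e_2$ lands in the right place; iterating, set $e_{i+1} := -\nabla_{\partial_t}e_i$ after projecting away the ambiguity, which by the first non-slipping condition $\nabla F_p\subset F_{p+1}$ indeed lies in $F_{p+1}$, and by the second condition its image in $F_{p+1}/F_p$ is the canonical generator, hence $\{e_1,\ldots,e_p\}$ remains a basis of $F_p$ at each stage.

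The key remaining point is the normalisation of the determinant. Since $\det P$ is canonically trivial and $F_p/F_{p-1}=K^{2p}$, the wedge $e_1\wedge\cdots\wedge e_n$ is a nowhere-vanishing section of $\det P\cong\mathbb R$, hence a nowhere-vanishing function on $\mathbb T$; the residual freedom in the construction is precisely rescaling $e_1$ (and correspondingly $e_2,\ldots,e_n$) by a positive function, which rescales this determinant. I would observe that rescaling $e_1$ by $f$ forces $e_2\mapsto f e_2 - f' e_1$ and more generally an upper-triangular change with diagonal entries all equal to $f$ up to lower-order corrections coming from the Leibniz rule, so that $e_1\wedge\cdots\wedge e_n\mapsto f^n\,(e_1\wedge\cdots\wedge e_n)$; therefore there is a unique positive rescaling making the determinant equal to $1$. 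This fixes the basis completely. Finally, the condition $\nabla_{\partial_t}e_n\in F_{n-1}$ is automatic: $\nabla_{\partial_t}e_n\in F_{n+1}=F_n=P$, and the image of $\nabla_{\partial_t}e_n$ in $F_n/F_{n-1}=K^{2n}$ is forced by the second non-slipping condition to be the canonical generator associated to $e_{n-1}$ — but wait, that would put it outside $F_{n-1}$; here I must use that the determinant-one normalisation kills exactly this top component, i.e.\ $\partial_t(e_1\wedge\cdots\wedge e_n)=\sum_i e_1\wedge\cdots\wedge\nabla_{\partial_t}e_i\wedge\cdots\wedge e_n = 0$, and all terms with $i<n$ vanish because $\nabla_{\partial_t}e_i=-e_{i+1}$ produces a repeated vector, leaving $e_1\wedge\cdots\wedge e_{n-1}\wedge\nabla_{\partial_t}e_n=0$, which says precisely that $\nabla_{\partial_t}e_n\in F_{n-1}$.

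For uniqueness, suppose $\{e_i'\}$ is another such basis. Then $e_1'=f e_1$ for some positive function $f$ (both generate $F_1$), and the relations $\nabla_{\partial_t}e_i'=-e_{i+1}'$ together with the Leibniz rule determine $e_i'$ in terms of $f$ and its derivatives exactly as above; the determinant-one condition then forces $f^n\equiv 1$, hence $f\equiv 1$, so the two bases coincide. The main obstacle I expect is bookkeeping: making precise the identifications $F_{p+1}/F_p\cong K^{2(p+1)}$ and the "canonical generator" language so that the second non-slipping condition genuinely reads as $\nabla_{\partial_t}e_i\equiv -e_{i+1}\pmod{F_{i}}$ with the correct sign, and checking that the inductively defined corrections are consistent across all $p$ simultaneously rather than just pairwise. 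Once the correct normalisation of the isomorphisms $F_{p+1}/F_p\cong K^2\otimes F_p/F_{p-1}$ is fixed (this is where the factor structure of jet bundles enters), the argument is a clean downward recursion.
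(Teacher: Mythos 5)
The paper states this proposition as ``the first classical proposition'' and gives no proof, so there is nothing to compare against; I am simply checking your argument on its own merits.

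Your argument is essentially correct, and it captures the key mechanism: once $e_1$ is chosen, the recursion $e_{i+1}:=-\nabla_{\partial_t}e_i$ determines the rest of the basis; the second non-slipping condition, applied at level $p$, forces $\nabla_{\partial_t}e_p$ to generate $F_{p+1}/F_p$, so $\{e_1,\ldots,e_p\}$ is at each stage a basis of $F_p$; the only remaining freedom is the scaling of $e_1$, and rescaling $e_1\mapsto fe_1$ changes the determinant by $f^n$, so the determinant-one condition fixes $f$; finally, $\nabla_{\partial_t}e_n\in F_{n-1}$ follows by differentiating $e_1\wedge\cdots\wedge e_n\equiv1$ and cancelling the repeated-vector terms. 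Three small points deserve care. First, the aside ``the image of $\nabla_{\partial_t}e_n$ in $F_n/F_{n-1}$ is forced by the second non-slipping condition to be the canonical generator'' is a misreading: the non-slipping condition at level $p=n-1$ constrains $\alpha_{n-1}(\nabla_{\partial_t}e_{n-1})$, not $\nabla_{\partial_t}e_n$, and there is no non-slipping constraint on $\nabla_{\partial_t}e_n$ at all since $F_{n+1}=F_n=P$; you recover correctly via the determinant, but this sentence is a red herring. Second, the step $\partial_t(e_1\wedge\cdots\wedge e_n)=\sum_i e_1\wedge\cdots\wedge\nabla_{\partial_t}e_i\wedge\cdots\wedge e_n$ identifies the induced connection on $\det P$ with the canonical trivial one under $\det P\cong\mathbb R$; this is equivalent to $\nabla$ being a trace-free ($\ms{SL}_n$) connection, which the paper assumes throughout its Drinfel'd--Sokolov discussion but does not repeat inside the definition of ``non-slipping'' --- you should make this hypothesis explicit, since without it the determinant of $\{e_i\}$ need not be constant and $\nabla_{\partial_t}e_n\in F_{n-1}$ can fail. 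Third, you take the rescaling $f$ to be positive without justification; for $n$ even, $f^n g_0=1$ has two solutions $\pm f$, so the basis is only unique up to the global sign change $e_i\mapsto -e_i$ (harmless in $\ms{PSL}_n$, but a genuine ambiguity in $\ms{SL}_n$ that the uniqueness claim should address).
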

Observe here that the basis depends on the choice of a parametrisation of the circle. From this proposition, it follows that we can associate to a non-slipping connection $\nabla$ the differential operator $D=D^\nabla$ so that
$$
 \nabla^*_{\partial_t}\left(\sum_{i=1}^{i=n}\frac{\dd ^{i-1}\psi}{\dd t^{i-1}} \omega_i\right)=(D\psi)\,\omega_n,
$$
where $\nabla^*$ is the dual connection and $\{e_i\}_{1\leq i\leq n}$ is the dual basis to the basis $\{\omega_i\}_{1\leq i\leq n}$ associated to $\nabla$ in the previous proposition. One easily check that
$$
D\psi=\frac{\dd ^n\psi}{\dd t^n}+q_{2}\frac{\dd ^{n-2}\psi}{\dd t^{n-2}}+\ldots+q_n\psi\, ,
$$
where the functions $q_i$ are given by $q_i=\omega_{n-j+1}(\nabla_{\partial_t}e_n)$. 

We now introduce 
\begin{enumerate}
\item the {\em flag gauge group} as the group $\ms N$ of linear automorphism of the bundle $P$ defined by
$$
\ms N\defeq \{A\in \Omega^0(\mathbb T, \End(P)), \mid A(F_p)=F_p, \ \left.A\right\vert_{F_p/F_{p-1}}=\operatorname{Id}\},
$$
\item the {\em Lie algebra} $\mk n$ of the flag gauge group as
$$
\mk n\defeq \{A\in \Omega^0(\mathbb T, \End(P))\mid A(F_p)\subset F_{p-1}\}.
$$
\end{enumerate}

We now have
\begin{proposition}
The map $\nabla\mapsto D^\nabla$ realise an identification between $\DD_0/\ms N$ and ${\ms X}_n(\mathbb T)$ and this identification preserves the holonomy. 
\end{proposition}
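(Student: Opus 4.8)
The plan is to establish three things: that $\nabla\mapsto D^\nabla$ is constant on $\ms N$-orbits, that the induced map $\DD_0/\ms N\to{\ms X}_n(\mathbb T)$ is surjective, and that it is injective; the claim about holonomy is then a short addendum. The one tool driving all of this is the \emph{uniqueness} of the adapted basis supplied by the previous proposition: to each $\nabla\in\DD_0$ there is exactly one determinant-$1$ basis $\{e^\nabla_i\}$ for the filtration with $\nabla_{\partial_t}e^\nabla_i=-e^\nabla_{i+1}$ for $i<n$ and $\nabla_{\partial_t}e^\nabla_n\in F_{n-1}$, and then $q_i(D^\nabla)=\omega^\nabla_{n-i+1}(\nabla_{\partial_t}e^\nabla_n)$, where $\{\omega^\nabla_i\}$ is the dual frame. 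For $\ms N$-invariance, take $A\in\ms N$ and write $A\cdot\nabla:=A\nabla A^{-1}$; since $A$ preserves each $F_p$ and restricts to the identity on $F_p/F_{p-1}$ it has determinant $1$, $\{Ae^\nabla_i\}$ is again a determinant-$1$ flag basis, and $(A\cdot\nabla)_{\partial_t}(Ae^\nabla_i)=A\nabla_{\partial_t}e^\nabla_i$ equals $-Ae^\nabla_{i+1}$ for $i<n$ and lies in $AF_{n-1}=F_{n-1}$ for $i=n$. By uniqueness $e^{A\cdot\nabla}_i=Ae^\nabla_i$, whose dual frame is $(A^{-1})^*\omega^\nabla_i$, so $q_i(D^{A\cdot\nabla})=\bigl((A^{-1})^*\omega^\nabla_{n-i+1}\bigr)(A\nabla_{\partial_t}e^\nabla_n)=\omega^\nabla_{n-i+1}(\nabla_{\partial_t}e^\nabla_n)=q_i(D^\nabla)$. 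Hence the map descends to the quotient.

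For surjectivity I would run the construction of the previous proposition backwards. Over $\mathbb T$ the density line bundles, and therefore $K$, the graded pieces $K^{2p}\cong F_p/F_{p-1}$, and $P$ itself, are all trivial, so one can choose a determinant-$1$ basis $\{e_i\}$ for the filtration whose graded classes satisfy $\overline{e_{p+1}}=-\partial_t\cdot\overline{e_p}$ (pick a nowhere-vanishing $\overline{e_1}\in\Gamma(K^2)$, which forces all the $\overline{e_p}$, and rescale $\overline{e_1}$ so that the resulting section of $\det P$ is the canonical one). Given an oper $D$ with coefficients $q_2,\dots,q_n$ (set $q_1:=0$), define $\nabla$ by $\nabla_{\partial_t}e_i=-e_{i+1}$ for $i<n$ and by letting $\nabla_{\partial_t}e_n$ be the combination of $e_1,\dots,e_n$ with $\omega_{n-i+1}(\nabla_{\partial_t}e_n)=q_i$ (so the $e_n$-coefficient is $q_1=0$ and $\nabla_{\partial_t}e_n\in F_{n-1}$), extended by the Leibniz rule. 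Then $\nabla F_p\subset F_{p+1}$ because $e_i\in F_i$ and $\nabla_{\partial_t}e_i\in F_{i+1}$, and the normalization clause of "non-slipping" holds because the induced map on $F_p/F_{p-1}$ sends $\overline{e_p}$ to $\overline{\nabla_{\partial_t}e_p}=-\overline{e_{p+1}}=\partial_t\cdot\overline{e_p}$. Thus $\nabla\in\DD_0$, the frame $\{e_i\}$ satisfies the defining conditions of the adapted basis, hence is $\{e^\nabla_i\}$, and $q_i(D^\nabla)=q_i$, i.e. $D^\nabla=D$.

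For injectivity, suppose $D^{\nabla_1}=D^{\nabla_2}$ and define a bundle automorphism $A$ of $P$ by $Ae^{\nabla_1}_i=e^{\nabla_2}_i$. Because the coefficients expressing $\nabla_k$ in the adapted basis are read off from $D^{\nabla_k}$ and these coincide, one gets $A\nabla_1A^{-1}(e^{\nabla_2}_i)=A\nabla_1(e^{\nabla_1}_i)=-Ae^{\nabla_1}_{i+1}=-e^{\nabla_2}_{i+1}=\nabla_2 e^{\nabla_2}_i$ for $i<n$, and the analogous identity for $i=n$, so $A\nabla_1A^{-1}=\nabla_2$; and $AF_p=F_p$ is immediate. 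It remains to see $A\in\ms N$, i.e. that $A$ is the identity on each $F_p/F_{p-1}$, and this is the delicate point. The non-slipping condition forces $\overline{e^{\nabla}_p}=(-\partial_t)^{p-1}\overline{e^{\nabla}_1}$ for $\nabla=\nabla_1,\nabla_2$, while the determinant-$1$ normalization pins down the nowhere-vanishing section $\overline{e^{\nabla}_1}$ of $K^2$ — uniquely if $n$ is odd, and up to the single sign $-1$ if $n$ is even. When these signs agree, $\overline{e^{\nabla_1}_p}=\overline{e^{\nabla_2}_p}$ for all $p$, so $A\in\ms N$ and $[\nabla_1]=[\nabla_2]$ in $\DD_0/\ms N$; the residual ambiguity in the even case is precisely the nontrivial central element of $\ms{SL}_n(\mathbb R)$, which is absorbed on passing to $\sln=\ms{PSL}_n(\mathbb R)$, as throughout. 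I expect this bookkeeping — the identification $F_p/F_{p-1}\cong K^{2p}$ and carrying the determinant normalization through it — to be the main obstacle; the rest is formal once uniqueness of the adapted basis is in hand.

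Finally, for holonomy: the adapted basis $\{e^\nabla_i\}$ is a global frame of $P$ over $\mathbb T$, so it trivializes $P$ and presents $\nabla$ as $d+M(t)\,dt$ with $M(t)$ the companion matrix built from the $q_i(D^\nabla)$. Parallel transport once around $\mathbb T$ is then the time-one map of $\dot v=-M(t)v$, which is exactly the monodromy of the first-order system equivalent to $D^\nabla\psi=0$; hence $\hol(\nabla)$ and the monodromy of $D^\nabla$ represent the same conjugacy class. Since $\ms N$-gauge-equivalent connections have conjugate holonomy, this is compatible with the quotient, and in particular the opers with trivial holonomy correspond exactly to the non-slipping connections with trivial holonomy.
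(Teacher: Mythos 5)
Your proof is correct and rests on the same central tool as the paper, namely the uniqueness of the adapted basis $\{e^\nabla_i\}$ from the preceding proposition. The paper's own proof, however, is essentially a one-line verification of $\ms N$-invariance (and even there it asserts $u_i\in F_{i-1}$ where the natural computation gives $F_i$), leaving surjectivity, injectivity and the holonomy statement tacit; you have spelled out all three, and in particular you correctly isolate the one non-formal point in injectivity — that the graded classes $\overline{e^\nabla_p}\in\Gamma(K^{2p})$ are determined by the parametrisation and the determinant-$1$ normalisation alone, independently of $\nabla$, so the comparison automorphism $A$ really does lie in $\ms N$ (up to the central sign absorbed in $\ms{PSL}_n$).
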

It is interesting now to observe that the definition of an oper as an element of $\DD_0/\ms N$ does not depend of a parametrisation. 
\begin{proof} Let $\nabla$ be a non-slipping connection, let $\{e_i\}$ the basis obtained by the previous proposition and $\nabla'=n^*\cdot\nabla$ be a connection in the $\ms N$-orbit of $\nabla$. By definition of $\ms N$, 
$
\nabla' e_i=\nabla e_i +u_i$, with $u_i\in F_{i-1}$. The result follows \end{proof}

\subsection{The Poisson structure on the space of connections}

The purpose of Drinfel'd-Sokolov reduction is to identify the space of opers ${\ms X}_n(\mathbb T)=\DD_0/\ms N$ as a symplectic quotient of the space of all connections on $\mathbb T$ by the group $\ms N$. 

Again, we shall paraphrase Segal, and define in this paragraph, as a first step of the construction of Drinfel'd-Sokolov reduction, the classical construction of the Poisson structure on the space of connections.

In general, when we deal with a Fréchet space of sections of a bundle, we have to specify functionals that we deemed observables and for which we can compute a Poisson bracket. This is done by specifying a subspace of cotangent vectors and describing the Poisson tensor on that subspace. Observables are then functionals whose differentials belong to that specific subset. However, the Poisson bracket can be extended to more general pair of observables. Rather than describing a general formalism -- for which we could refer to \cite{Dickey:1997un} -- we explain the construction in the case of connections.

\subsubsection{Connections and central extensions}

Let $\ms G$ be the Gauge group of the vector bundle $P$. The choice of a trivialisation of $P$ give rise an isomorphism of $\ms G$ with the loop group of $\sln$. Then, we introduce the following definitions
\begin{enumerate}
\item the {\em Lie algebra} $\mk g$ of $\ms G$ is $\Omega^0(\mathbb T,\End_0(P))$ where $\End_0(P)$ stands for the vector space of trace free endomorphisms of $P$, the Lie algebra $\mk g$ is equipped naturally with a coadjoint action of $\ms G$,
\item the {\em dual Lie algebra} $\mk  g^{\dual}$ of $\ms G$ is $\Omega^1(\mathbb T,\End_0(P))$,
\item the {\em duality} is given by the non-degenerate bilinear mapping from $\mk  g\times \mk  g^{\dual}$ defined by
\begin{equation}
\langle \alpha,\beta\rangle=\int_{\mathbb T}\tr(\alpha \cdotp\beta)\,.\label{Dual}
\end{equation}
\end{enumerate}

Let us choose a connection $\nabla$ on $P$. Let $\Omega_\nabla$ be the 2-cocycle on $\mk  g$ given by the following formula 
$$
\Omega_\nabla(\xi,\eta)=\int_{\mathbb T}\tr(\xi \nabla\eta). 
$$
If $\nabla$ and $\nabla'$ are two connections on $P$, then 
$$
\Omega_\nabla(\xi,\eta)-\Omega_{\nabla'}(\xi,\eta)=\alpha([\xi,\eta]),
$$
where
$$
\alpha(\chi)=\int_{\mathbb T}\tr((\nabla-\nabla').\chi).
$$
In particular the cohomology class of the cocycle $\Omega_\nabla$ does not depend on the choice of $\nabla$.
Let $\widehat{ \ms G}$ -- whose Lie algebra is $\widehat{\mk  g}$ -- be the central extension of $\ms G$ corresponding to this cocycle, so that
$$
0\rightarrow \mathbb R\rightarrow \widehat{\mk  g} \xrightarrow{\pi} \mk  g.
$$
As we noticed, every connection defines a splitting of this sequence, that is a way to write
$\widehat{\mk  g}$ as $\mathbb R \oplus \mk  g$. 

Dually, we consider the vector space $\widehat{\mk g}^\dual$ defined by the exact sequence
$$
0\rightarrow \mk  g^\dual \xrightarrow{i}\widehat{\mk  g}^\dual \rightarrow \mathbb R,
$$
with the duality with $\widehat{\mk g}$ so that $\langle \gamma, i(\beta)\rangle=\langle \pi(\gamma), \beta\rangle$.

It follows that the space $\mathcal D$ of all $\sln$ connections on $P$ can be embedded in the space of such splittings which is in turn identified with the affine hyperplane $\DD$ in $\widehat{\mk  g}^{\dual}$ defined by
$$
\DD\defeq \{\beta\in \widehat{ \mk g}^{\dual}\mid \langle Z,\beta\rangle=1\},
$$
where $Z\in\widehat{\mk  g}$ the generator of the centre. The hyperplane $\ms D$ has $\mk g^\dual$ as a tangent space. Observe that the embedding $\mathcal D\to\DD$ is equivariant under the affine action of $\Omega^1(\mathbb T, {\operatorname{End}_0}(P))=\mk  g^{\dual}\subset\widehat{\mk  g}^{\dual} $ as well as the coadjoint action of $\ms G$ itself. In particular, the above embedding is surjective and we now identify $\DD$ as the space of all $\sln$-connections on $P$. The coadjoint orbits of $\ms G$ on $\DD$ are those connections with the same holonomy. 

\subsubsection{The Poisson structure}\label{Poisson}

Since we are working in infinite dimension, we are only going to define the Poisson tensor on certain ``cotangent vectors" to $\DD$. In our context we consider the set $\DD^{\dual}\defeq \mk g=\Omega^0(\mathbb T,\End_0(P))$ of cotangent vectors where the duality is given by Formula \eqref{Dual}. Using these notation, the Poisson structure is described in the following way
\begin{definition}{\sc[Poisson structure for connections]}\label{def:poisson}
\begin{itemize}
\item The {\em Hamiltonian mapping} from $\DD^{\dual}$ to $\DD$ at a connection $\nabla$ is
$$
H\ :\ \alpha\mapsto \dd^\nabla\alpha.
$$
\item The {\em Poisson tensor} on $\DD^{\dual}$ at a connection $\nabla$ is $$
\Pi_{\nabla}(\alpha,\beta)\defeq \langle\alpha,H(\beta)\rangle=\int_{\mathbb T}\tr(\alpha \cdotp\dd ^\nabla\beta).
$$

\item We say a functional $F$ is an {\em observable} if its differential $\dd_\nabla F$ belongs to $\DD^{\dual}$ for all $\nabla$. The {\em Poisson bracket} of two observables is
$$
\{f,g\}\defeq \Pi(\dd f,\dd g)=\langle\dd f,H(\dd g)\rangle.
$$
\end{itemize}

\end{definition}

\rmks
\begin{enumerate}
\item The Poisson bracket can be defined for more general pair of functionals than observables. Observe first that the differential of functionals on a Fréchet space of sections of bundles -- for instance connections -- are distributions. Thus we can define the Poisson bracket of a general differentiable functional with an observable. 
For the purpose of this paper, we shall say that two functionals $f$ and $g$ form an {\em acceptable pair of observables} if their derivatives $\dd f$ and $\dd g$ are distributions with disjoint singular support, or equivalently if they can be written as
\begin{eqnarray*}
\dd\!f &=&F+f_0\\
\dd g&=&G+g_0,
\end{eqnarray*}
where $F$ and $G$ have disjoint support and $f_0, g_0$ are observables in the previous meaning. In this case, their Poisson bracket is defined as
$$
\{f,g\}(\nabla)=\Pi(f_0,g_0)+\langle F, H(g_0)\rangle -\langle G, H(f_0)\rangle.
$$
This Poisson bracket agrees with regularising procedures.
\item We further observe that if $\DD_\nabla$ is the space of connections with the same holonomy as $\nabla$ -- that is the coadjoint orbit of $\nabla$ -- then the tangent space of $\DD_\nabla$ at $\nabla$ is the vector space of exact 1-forms $\dd ^\nabla\left(\Omega^0(\mathbb T,\End_0(P))\right)$, and moreover the Poisson tensor on $\DD_\nabla$ is dual to the symplectic form $\omega$ defined by
$$
\omega(\dd ^\nabla \alpha,\dd ^{\nabla}\beta)\defeq \int_{\mathbb T}\tr(\alpha \cdotp\dd ^\nabla\beta).$$
Thus the symplectic leaves of this Poisson structure are connections with the same holonomy. One can furthermore check that this formalism agrees with what we expect from coadjoint orbits.

\end{enumerate}

\subsection{Drinfel'd-Sokolov reduction}

We now describe the Drinfel'd-Sokolov reduction. In the first paragraph we describe more precisely the group that we are going to work with in order to perform the reduction.

\subsubsection{Dual Lie algebras} Let $\mk n$ be the Lie algebra of $\ms N$ as defined above. Let $\mk u$ be the subspace of $\mk g^\dual$ given by
$$
\mk u\defeq \{A\in\Omega^1(\mathbb T,\End_0(P))\mid A(F_p)\subset F_p\}.
$$
We observe

\begin{proposition}
We have
$
\mk u=\{A\in \widehat{\mk g}^\dual\mid \langle \alpha,A\rangle=0, \ \ \forall \alpha\in\mk n\}
$
\end{proposition}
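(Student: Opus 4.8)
The statement to prove is the characterization
$$
\mk u=\{A\in \widehat{\mk g}^\dual\mid \langle \alpha,A\rangle=0, \ \ \forall \alpha\in\mk n\}.
$$
Unpacking the definitions, $\mk u\subset\mk g^\dual=\Omega^1(\mathbb T,\End_0(P))$ consists of those $A$ with $A(F_p)\subset F_p$ for all $p$, while $\mk n\subset\mk g=\Omega^0(\mathbb T,\End_0(P))$ consists of those $\alpha$ with $\alpha(F_p)\subset F_{p-1}$ for all $p$. The pairing between $\widehat{\mk g}^\dual$ and $\mk n$ factors through the projection $\mk g^\dual\hookrightarrow\widehat{\mk g}^\dual$ and the inclusion $\mk n\hookrightarrow\mk g$ by the rule $\langle\gamma,i(\beta)\rangle=\langle\pi(\gamma),\beta\rangle$, so it is just the restriction of the duality \eqref{Dual}, namely $\langle\alpha,A\rangle=\int_{\mathbb T}\tr(\alpha A)$, to $\mk n\times\mk g^\dual$.

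\textbf{Approach.} The plan is a pure linear-algebra argument, carried out fiberwise over each point $x\in\mathbb T$ and then integrated. First I would observe that $\mk n$ is precisely the annihilator of $\mk u$ for the \emph{pointwise} trace pairing on $\End_0(P_x)$ with respect to the flag $F_1(x)\subset\cdots\subset F_n(x)=P_x$. Concretely, choose a basis adapted to the flag so that endomorphisms are represented by matrices: then $\mk u$ corresponds (at each point, modulo the trace-zero condition) to upper triangular matrices and $\mk n$ to strictly upper triangular matrices. A classical computation shows that for the bilinear form $(\alpha,A)\mapsto\tr(\alpha A)$, the orthogonal complement of the strictly-upper-triangular subspace inside all matrices is exactly the upper-triangular subspace: if $\alpha$ is strictly upper triangular and $A$ upper triangular then $\alpha A$ is strictly upper triangular hence traceless; conversely if $\tr(\alpha A)=0$ for every strictly upper triangular $\alpha$, testing against the elementary matrices $E_{j,i}$ with $j>i$ forces all strictly lower-triangular entries of $A$ to vanish.

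\textbf{Key steps, in order.} (i) Reduce the pairing on $\widehat{\mk g}^\dual\times\mk n$ to the pairing $\int_{\mathbb T}\tr(\alpha A)$ on $\mk g^\dual\times\mk n$ using $\langle\gamma,i(\beta)\rangle=\langle\pi(\gamma),\beta\rangle$ and the definition of $\mk u$ as a subspace of $\mk g^\dual$. (ii) Prove the inclusion $\mk u\subset\mk n^\perp$: for $A\in\mk u$ and $\alpha\in\mk n$ one has $(\alpha A)(F_p)\subset\alpha(F_p)\subset F_{p-1}$, so $\alpha A$ preserves the flag and shifts it down by one; in an adapted basis it is strictly upper triangular, hence $\tr(\alpha A)(x)=0$ for all $x$, so $\int_{\mathbb T}\tr(\alpha A)=0$. (iii) Prove the reverse inclusion: if $A\in\mk g^\dual$ satisfies $\int_{\mathbb T}\tr(\alpha A)=0$ for all $\alpha\in\mk n$, then since $\mk n$ contains all sections of the form $fN$ with $f\in C^\infty(\mathbb T)$ and $N$ a smooth section of the bundle of strictly-upper-triangular (flag-lowering) endomorphisms, a standard localization/bump-function argument gives $\tr(N(x)A(x))=0$ for every such $N$ and every $x$; the fiberwise linear-algebra fact then shows $A(x)$ preserves the flag $F_\bullet(x)$, i.e. $A\in\mk u$. (iv) Conclude by noting $A$ is automatically in $\widehat{\mk g}^\dual$ via $\mk g^\dual\hookrightarrow\widehat{\mk g}^\dual$, and that the annihilator condition over $\widehat{\mk g}^\dual$ is what was asserted.

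\textbf{Main obstacle.} There is no deep obstacle here; the only point requiring care is the passage from ``the integrated pairing vanishes for all $\alpha\in\mk n$'' to ``the integrand vanishes pointwise'', which needs the observation that $\mk n$ is a module over $C^\infty(\mathbb T)$ rich enough to separate points (multiply a fixed flag-lowering endomorphism field by an arbitrary bump function), together with the fiberwise triangular-matrix computation. I would also be slightly careful about the trace-free constraint: $\End_0(P)$ rather than $\End(P)$ only removes the scalar direction, which lies in $\mk u$ but not in $\mk n$, and this is harmless because strictly upper triangular matrices are already traceless, so the characterization is unaffected. Once these points are spelled out the proposition follows immediately.
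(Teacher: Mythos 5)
The paper gives no proof here (the proposition is introduced by ``We observe''), so the task is simply to check your argument, which is the natural one and is essentially sound. The fiberwise picture is correct: in a frame adapted to the flag, $\mk u$ consists of upper-triangular endomorphism fields, $\mk n$ of strictly upper-triangular ones, the pointwise trace pairing between the two vanishes, and testing against flag-lowering endomorphisms scaled by bump functions recovers pointwise upper-triangularity of any $A$ in the annihilator. One minor slip: the elementary matrix $E_{j,i}$ (one in row $j$, column $i$) with $j>i$ is strictly \emph{lower} triangular, hence not in $\mk n$; the test element should be $E_{i,j}$ with $i<j$, for which $\tr(E_{i,j}A)=A_{ji}$ then forces the sub-diagonal entries of $A$ to vanish.

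The one substantive thing your step (iv) does not address is the gap between the annihilator taken inside $\mk g^\dual$ --- which is what your argument actually computes, and where it correctly finds $\mk u$ --- and the annihilator taken inside the larger space $\widehat{\mk g}^\dual$ as the proposition literally asserts. The extra central direction in $\widehat{\mk g}^\dual$ is a priori unconstrained by pairing with $\mk n$, unless one insists that $\langle\alpha,A\rangle$ be independent of the choice of lift of $\alpha$ to $\widehat{\mk g}$, which additionally imposes $\langle Z,A\rangle=0$. In fact the paper is being a little loose here: immediately after the proposition it sets $\mk n^\dual:=\Omega^1(\mathbb T,\End_0(P))/\mk u$, a quotient of $\mk g^\dual$, which only makes sense if the annihilator is being taken inside $\mk g^\dual$. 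Your proof does establish that intended statement; it would just be cleaner to say so directly rather than remark that $\mk g^\dual\hookrightarrow\widehat{\mk g}^\dual$, which by itself says nothing about where the annihilator of $\mk n$ in the larger space sits.
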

Thus if
$
\mk n^{\dual}\defeq \Omega^1(\mathbb T,\End_0(P))/\mk u,
$
we have a duality $\mk n^{\dual}\times\mk n\to\mathbb R$ given by the map
$$
\langle\alpha,\beta\rangle\defeq \int_{\mathbb T}\tr(\alpha\beta).
$$
We now give another description of $\mk n^{\dual}$ more suitable for our purpose.
Let us first consider the natural projections
\begin{eqnarray}
\pi_p^+: \Hom(F_p,E/F_p)&\to&\Hom(F_p,E/F_{p+1}),\cr
\pi_p^-: \Hom(F_p,E/F_p)&\to&\Hom(F_{p-1},E/F_p).
\end{eqnarray}
Let 
$$
 M\defeq \left\{(u_1,\ldots ,u_{n-1})\in\bigoplus_{p=1}^{p=n-1}\Hom(F_p,E/F_p)\mid \pi^+_p(u_p)=\pi_{p+1}^-(u_{p+1})\right\}.
$$
We leave the reader check the following
\begin{proposition}
The map defined from $\mk n^{\dual}$ to $\Omega^1(\mathbb T,M)$ by
$$
A\to(\left.A\right\vert_{F_1},\ldots,\left.A\right\vert_{F_{n-1}})
$$
is an isomorphism.
\end{proposition}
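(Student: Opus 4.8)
The plan is to reduce this statement -- which concerns an exact sequence of spaces of sections of bundles over $\mathbb T$ -- to a piece of pointwise linear algebra. First I would check that the map is well defined and injective. The key observation for well-definedness is that for any $A\in\Omega^1(\mathbb T,\End_0(P))$ the composite $F_p\hookrightarrow P\xrightarrow{A}P\to P/F_p=E/F_p$ depends only on $\left.A\right\vert_{F_p}$, and the compatibility relation $\pi_p^+(\left.A\right\vert_{F_p})=\pi_{p+1}^-(\left.A\right\vert_{F_{p+1}})$ holds tautologically, since both sides equal the single map $F_p\to E/F_{p+1}$ obtained by restricting $A$ to $F_p$ and projecting; moreover the map visibly kills $\mk u$, so it descends to $\mk n^{\dual}$ and takes values in $\Omega^1(\mathbb T,M)$. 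Injectivity is then immediate: if all $\left.A\right\vert_{F_p}$ vanish modulo $F_p$, then $A(F_p)\subset F_p$ for every $p$, i.e. $A\in\mk u$.

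The substance is surjectivity. Here I would first note that $M$ is a genuine subbundle of $\bigoplus_p\Hom(F_p,E/F_p)$ -- it is the kernel of a bundle map of locally constant rank -- that $\mk u$ is the space of sections of the subbundle $\mathcal U_0\subset\End_0(P)$ of flag-preserving trace-free endomorphisms, and that the functor $\Omega^1(\mathbb T,-)$ is exact on smooth vector bundles over $\mathbb T$ (tensoring by the line bundle of $1$-forms is exact, and taking smooth sections is exact by partitions of unity). Consequently $\mk n^{\dual}\cong\Omega^1\bigl(\mathbb T,\End_0(P)/\mathcal U_0\bigr)$, and it suffices to exhibit, fiber by fiber, an isomorphism $\End_0(V)/\mathcal U_{0,V}\xrightarrow{\ \sim\ }M_V$ for a single filtered vector space $V=V_n\supset\cdots\supset V_1\supset V_0=0$ with $\dim V_p=p$, where $\mathcal U_{0,V}$ denotes the flag-preserving trace-free endomorphisms of $V$; naturality of the construction in $V$ then promotes it to the desired bundle isomorphism.

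For the fiberwise statement I would pick a basis $(e_1,\dots,e_n)$ of $V$ adapted to the flag. Then the flag-preserving endomorphisms are the upper-triangular matrices (including the diagonal, hence every scalar), so $\End_0(V)/\mathcal U_{0,V}\cong\End(V)/\{\text{flag-preserving}\}$ is identified with the strictly lower-triangular matrices; meanwhile $\Hom(V_p,V/V_p)$ is the block of entries $(i,j)$ with $j\leq p<i$, and the relation $\pi_p^+(u_p)=\pi_{p+1}^-(u_{p+1})$ says exactly that the $(i,j)$-entries of $u_p$ and $u_{p+1}$ coincide whenever $j\leq p$ and $i\geq p+2$. Chaining these equalities shows that in a compatible tuple the entry $u_p^{ij}$ (defined for $j\leq p<i$) depends only on $(i,j)$; assembling these common values into a single strictly lower-triangular -- hence automatically trace-free -- matrix $A$ produces a preimage with $\left.A\right\vert_{V_p}\equiv u_p$ for all $p$, which is surjectivity. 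I do not expect a real obstacle here: the only point that needs attention is the bookkeeping around the trace-free condition, namely that the diagonal -- and in particular the identity -- lies inside the flag-preserving part, so that replacing $\End(P)$ by $\End_0(P)$ does not change the quotient; once this is noticed the argument is pure adapted-basis linear algebra, upgraded to bundles by the exactness of $\Omega^1(\mathbb T,-)$.
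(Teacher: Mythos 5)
The paper offers no proof here --- it says ``We leave the reader check the following'' --- so there is no argument of the author's to compare against; your proof fills the gap. Your argument is correct and is the natural one: well-definedness and the tautological compatibility condition, injectivity via $A(F_p)\subset F_p\Rightarrow A\in\mk u$, reduction of surjectivity to a pointwise statement using the exactness of the smooth-sections functor on vector bundles over $\mathbb T$ (a fine-sheaf/partition-of-unity fact), and then the adapted-basis computation identifying both sides with strictly lower-triangular matrices. The one bookkeeping point that genuinely needs to be noticed --- that the identity (indeed all scalars) lies in the flag-preserving subalgebra, so replacing $\End(P)$ by $\End_0(P)$ does not change the quotient, and conversely a strictly lower-triangular preimage is automatically trace-free --- you do notice and handle correctly.
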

\subsubsection{Drinfel'd-Sokolov reduction}
If $\nabla$ is a connection, we define the {\em slippage} -- denoted $\sigma(\nabla)$ -- of $\nabla$ as the element of $\Omega^1(\mathbb T,M)=\mk n^{\dual}$ given by
$$
(u_1,\ldots,u_p),
$$
where $u_p(X,v)=\alpha_p(\nabla_{X}v)$ and $\alpha_p$ is the projection from $E$ to $E/F_p$.

We are now going to define a canonical section of $\Omega^1(\mathbb T, M)$. We have a natural embedding 
$$
i_p:\Hom(F_p/F_{p-1},F_{p+1}/F_p)\to\Hom(F_p,F/F_p).
$$
Now observe that 
$$
\Omega^1(\mathbb T,\Hom(F_p/F_{p-1},F_{p+1}/F_p))= (K^2)^*\otimes(K^{2p-n-1})^*\otimes K^{2p-n+1}. 
$$
Thus, let
$$
{\bf I}_p\defeq i_p({\rm Id})\in (K^2)^*\otimes(K^{2p-n-1})^*\otimes K^{2p-n+1}.
$$
Finally, we set
$$
{\bf I}\defeq ({\bf I}_1,\ldots,{\bf I}_{n-1}),
$$
and we observe that ${\bf I}$ is invariant under the coadjoint action of $\ms N$.

\begin{theorem}{\sc[Drinfel'd-Sokolov reduction]}\label{DrinSok}
The map $\sigma$ is a moment map for the action of $\ms N$. Moreover $\DD_0=\sigma^{-1}({\bf I})$ and we thus obtain a Poisson structure on ${\ms X}_n(\mathbb T)$.
\end{theorem}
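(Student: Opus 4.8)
The plan is to recognise $\sigma$ as an instance of the general principle that, for a Hamiltonian $\ms G$-space, the dual of a Lie subalgebra inclusion $\mk n\hookrightarrow\widehat{\mk g}$ composed with an ambient moment map is again a moment map for the restricted subgroup action; here the ambient "moment map" is simply the tautological inclusion $\DD\hookrightarrow\widehat{\mk g}^\dual$ of the coadjoint picture of Paragraph~\ref{Poisson}. Using the Proposition identifying $\mk u$ with the annihilator of $\mk n$ in $\widehat{\mk g}^\dual$, the canonical projection $\widehat{\mk g}^\dual\to\widehat{\mk g}^\dual/\mk u=\mk n^\dual$ restricts to a map $\DD\to\mk n^\dual$; and under the identification $\mk n^\dual\cong\Omega^1(\mathbb T,M)$ of the last Proposition before the theorem (restriction of a $1$-form to the subbundles $F_p$) this map is exactly $\nabla\mapsto(u_1,\ldots,u_{n-1})$ with $u_p(X,v)=\alpha_p(\nabla_Xv)$, i.e.\ it is $\sigma$. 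So the first task is to verify that $\sigma$, so described, is genuinely a moment map for the $\ms N$-action.

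I would check the two defining conditions directly on $\DD$. First, equivariance: $\ms N$ acts on the affine hyperplane $\DD$ by gauge transformations $n\cdot\nabla=n\nabla n^{-1}$, preserves the filtration $\{F_p\}$ and acts as the identity on each graded piece, hence preserves the subspace $\mk u\subset\Omega^1(\mathbb T,\End_0(E))$; therefore the quotient map $\sigma$ intertwines the gauge action with the coadjoint action of $\ms N$ on $\mk n^\dual$, i.e.\ $\sigma(n\cdot\nabla)=\operatorname{Ad}^*_n\,\sigma(\nabla)$. Second, the Hamiltonian condition: for fixed $\xi\in\mk n\subset\mk g$, the functional $h_\xi:\nabla\mapsto\langle\sigma(\nabla),\xi\rangle$ is affine in $\nabla$, so its differential $\dd h_\xi\in\DD^\dual=\mk g$ is the constant $\xi$; by Definition~\ref{def:poisson} its Hamiltonian vector field at $\nabla$ is $H(\xi)=\dd^\nabla\xi$, which coincides, up to the universal sign of the moment-map convention, with the fundamental vector field of $\xi$ for the gauge action, since $\left.\tfrac{\dd}{\dd t}\right|_{0}e^{t\xi}\nabla e^{-t\xi}=-\dd^\nabla\xi$. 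Together these give that $\sigma$ is a moment map.

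Next I would identify the level set by unwinding the definition of ${\bf I}=({\bf I}_1,\ldots,{\bf I}_{n-1})$. Each ${\bf I}_p=i_p(\mathrm{Id})$ lies in the subspace $\Omega^1(\mathbb T,\Hom(F_p/F_{p-1},F_{p+1}/F_p))$ of $\Omega^1(\mathbb T,\Hom(F_p,E/F_p))$. Hence $\sigma(\nabla)={\bf I}$ forces, for every $p$: (i) $u_p$ kills $F_{p-1}$ and has image in $F_{p+1}/F_p$, that is $\nabla F_{p-1}\subset F_p$ and $\nabla F_p\subset F_{p+1}$, which is exactly the first clause of the definition of a non-slipping connection; and then (ii) the induced map $K^2\otimes F_p/F_{p-1}\to F_{p+1}/F_p$ equals $\mathrm{Id}$, which is the second clause. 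Conversely, a non-slipping connection manifestly satisfies $u_p={\bf I}_p$ for all $p$. Therefore $\sigma^{-1}({\bf I})=\DD_0$, the asserted equality $\DD_0=\mu^{-1}({\bf I})$.

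Finally, the Poisson structure on ${\ms X}_n(\mathbb T)$ is obtained by symplectic (Marsden--Weinstein) reduction: ${\bf I}$ is fixed by the coadjoint action of $\ms N$ (as noted just before the theorem), so $\DD_0=\sigma^{-1}({\bf I})$ is $\ms N$-stable, and the $\ms N$-action on it is free because a non-slipping connection possesses a unique unimodular adapted basis by the first Proposition of Paragraph~\ref{nonslip}, so no nontrivial element of the flag gauge group fixes it; hence $\DD_0/\ms N$ inherits the reduced Poisson structure, and through the identification $\DD_0/\ms N\cong{\ms X}_n(\mathbb T)$ this gives the Poisson structure on opers, whose symplectic leaves are the images of the $\ms G$-coadjoint orbits, i.e.\ opers of fixed holonomy. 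The main obstacle is not algebraic but analytic: since $\DD$ is a Fréchet space carrying only a weak symplectic form and the Poisson tensor is defined only on the restricted set of cotangent vectors $\DD^\dual=\mk g$ (Paragraph~\ref{Poisson}), one must check that $\sigma$ is differentiable, that ${\bf I}$ is a regular value so that $\DD_0$ is a genuine submanifold, and that the reduced bracket is well defined on the class of (acceptable) observables used later; in the adapted basis $\DD_0$ is an affine subspace and these verifications are routine, but this is where the care lies.
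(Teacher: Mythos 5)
The paper does not prove this theorem. It is stated as a classical fact (Drinfel'd--Sokolov reduction, with references to \cite{Drinfeld:1981}, \cite{Dickey:1997}, \cite{Segal:1991}, \cite{Moerbeke:1998}, \cite{Guha:2007}), and the paragraph immediately following the statement simply records the induced Poisson bracket on observables. So there is no paper proof to compare against; your proposal has to be judged on its own as a reconstruction of the classical argument.

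As such a reconstruction it is essentially sound: you correctly recognise $\sigma$ as the restriction to $\DD$ of the projection $\widehat{\mk g}^{\dual}\to\mk n^{\dual}$ coming from the annihilator description of $\mk u$; you identify the Hamiltonian vector field of $h_\xi=\langle\sigma(\cdot),\xi\rangle$ with $\dd^\nabla\xi$ and match it, up to the universal sign, with the fundamental vector field of $\xi\in\mk n$; you unwind $\sigma^{-1}(\mathbf I)=\DD_0$ from the definitions; and you invoke that $\mathbf I$ is $\ms N$-invariant and the $\ms N$-action on $\DD_0$ is free (unique adapted basis) to perform the Marsden--Weinstein reduction. The one step you pass over too quickly is $\ms N$-equivariance. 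On the affine slice $\DD$ the gauge action is inhomogeneous, $A\mapsto nAn^{-1}-(\dd n)\,n^{-1}$, and you only argue that $\operatorname{Ad}_n$ preserves $\mk u$. That handles the linear part, but for $\sigma$ to intertwine the gauge action with the genuine coadjoint action of $\ms N$ on $\mk n^{\dual}$ (rather than with some cocycle-twisted affine action) you also need the Maurer--Cartan term $(\dd n)\,n^{-1}$ to lie in $\mk u$. It does: $n-\operatorname{Id}$ takes values in $\mk n$, hence so does $\dd n$, hence so does $(\dd n)\,n^{-1}$, and $\mk n\subset\mk u$. This is precisely where the vanishing of the central cocycle $\Omega_\nabla$ restricted to $\mk n$ is used, and it should be stated, since without it $\sigma$ is only an affine moment map. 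Apart from this, and the Fr\'echet-analytic caveats you already flag at the end (weak symplectic form, regularity of $\mathbf I$, well-definedness of the reduced bracket on acceptable observables), the outline matches the standard treatment.
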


As a particular case of symplectic reduction, we briefly explain the construction of the Poisson bracket in our context of opers and non-slipping connections. 
If $f$ and $g$ are two functionals on the space of opers, they are observables if their pull back $F$ and $G$ on the space of non-slipping connections are observables and then their Poisson bracket is $\{f,g\}(D)\defeq \{F,G\}(\nabla)$ where $D$ is the oper associated with $\nabla$. 

\subsection{Opers and Frenet curves} 

\subsubsection{Curves associated to $\sln$-opers}
We recall that every oper $D$ give rise to a curve from $\mathbb R$ to $\pn$ {\em equivariant} under the holonomy that is a curve $$
 \xi:\mathbb R\to \pn,
 $$
 such that $\xi(t+1)=H(\xi(t))$ where $H$ is the holonomy.
 The construction runs as follows: the curve $\xi$ is given in projective coordinates by
 $$
 \xi\defeq [v_1,\ldots,v_n],
 $$
 where $\{v_1,\ldots, v_n\}$ are independent solutions of the equation $D\psi=0$. The curve $\xi$ is well defined up to the action of $\sln$. We call $\xi$ the curve {\em associated} to the oper.
 
\subsubsection{Hitchin opers}
Let us say an oper is {\em Hitchin} if it has trivial holonomy and can be deformed through opers with trivial holonomy to the trivial oper $\psi\mapsto \frac{\rm d^n\psi}{\dd t^n}$. Let us denote by ${\ms X}_n^0(\mathbb T)$ the space of Hitchin opers which by the previous section inherits a Poisson structure.

\subsubsection{Frenet curves}

We say a curve $\xi$ from $\mathbb T$ to $\pn$ is {\em Frenet}
if there exists a curve
$(\xi^{1},\xi^{2},\ldots,\xi^{n-1})$ defined on $\mathbb T$, called the {\em osculating flag curve},
with values in the flag variety such that for every $x$ in $\mathbb T$, $\xi(x)=\xi^{1}(x)$, and moreover
\begin{itemize}
\item For every pairwise distinct points $(x_{1},\ldots,x_{l})$ in $\mathbb T$ and positive integers $(n_1,\ldots,n_l)$ such that $$\sum_{i=1}^{i=l}n_i\leq n,$$ then the sum $$
\xi^{n_i}(x_i)+\ldots+\xi^{n_{l}}(x_{l})$$ is direct.

\item For every $x$ in $\mathbb T$ and positive integers $(n_1,\ldots,n_l)$ such that $$p=\sum_{i=1}^{i=l}n_i\leq n,$$
 then
$$
\lim_{\substack{(y_1,\ldots,y_l)\rightarrow x,\\ y_i
\text{all distinct}}}
\left(\bigoplus_{i=1}^{i=l}\xi^{n_i}(y_i)\right)=\xi^{p}(x).$$
\end{itemize}
We call $\xi^*\defeq \xi^{n-1}$ the {\em osculating hyperplane}.

Since the trivial connection is non-slipping with respect to the filtration given by osculating flags, we have the following obvious remark -- see also \cite{Fock:2006a} Section 9.12.

\begin{proposition}
Every smooth Frenet curve comes from a $\sln$-oper with trivial holonomy.
\end{proposition}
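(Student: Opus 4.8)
The plan is to reduce the statement to the identification $\DD_0/\ms N\simeq{\ms X}_n(\mathbb T)$ between non-slipping connections and opers established above: I will show that the trivial flat connection, paired with the osculating flag filtration of a Frenet curve, represents a non-slipping connection, and then read off that the resulting oper has trivial holonomy. It is convenient to carry this out in coordinates, as follows.

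Let $\xi:\mathbb T\to\pn$ be a smooth Frenet curve with osculating flag curve $(\xi^1,\dots,\xi^{n-1})$. First I would pull $\xi$ back to a $1$-periodic curve $\tilde\xi:\mathbb R\to\pn$ and pick a smooth lift $w:\mathbb R\to\mathbb R^n\setminus\{0\}$ with $[w]=\tilde\xi$, which exists since $\mathbb R$ is contractible. The geometric content of the Frenet axioms — the directness of the sums $\xi^{n_1}(x_1)+\dots+\xi^{n_l}(x_l)$ and the limit relation $\lim_{y_i\to x}\bigoplus_i\xi^{n_i}(y_i)=\xi^p(x)$ — amounts, by an induction on $p$ using the Taylor expansion of $w$, to the statement that $w$ is non-degenerate: $\xi^p(t)=\operatorname{span}(w(t),w'(t),\dots,w^{(p-1)}(t))$ and $\{w(t),\dots,w^{(n-1)}(t)\}$ is a basis of $\mathbb R^n$ for all $t$. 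Replacing $w$ by $|W|^{-1/n}w$, where $W:=\det(w,w',\dots,w^{(n-1)})$ is nowhere zero, I may assume $\det(w,w',\dots,w^{(n-1)})\equiv\pm1$; this changes neither $\tilde\xi$ nor the osculating flags.

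Next, expand $w^{(n)}=-q_1w^{(n-1)}-\dots-q_nw$ with the $q_i$ smooth (by Cramer's rule). Differentiating the Wronskian normalisation and deleting the repeated columns gives $q_1\equiv0$, so $w$ solves the $n$-th order operator $D\psi=\dd^n\psi/\dd t^n+q_2\,\dd^{n-2}\psi/\dd t^{n-2}+\dots+q_n\psi$; its $n$ components form a basis of the solution space of $D$, and the curve associated to $D$ is precisely $[w]=\xi$. From $[w(t+1)]=[w(t)]$ and the normalisation one gets $w(t+1)=\epsilon\,w(t)$ with $\epsilon$ a constant satisfying $\epsilon^n=1$, hence $\epsilon=\pm1$; comparing the operators annihilating $w(\cdot)$ and $w(\cdot+1)$ then shows every $q_i$ is $1$-periodic, so $D$ is a genuine $\sln$-oper on $\mathbb T$. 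Finally the monodromy of $D$ is $\epsilon\cdot\operatorname{Id}=\pm\operatorname{Id}$, which is trivial in $\sln=\ms{PSL}_n(\mathbb R)$; thus $D$ is an oper with trivial holonomy whose associated curve is $\xi$, as desired. Conceptually this is just the coordinate form of the remark: the $\xi^p$ define a filtration of the trivial bundle, the Frenet non-degeneracy says the trivial connection carries $F_p$ into $F_{p+1}$ isomorphically onto the next graded piece, and after the above rescaling of the adapted frame this is exactly a non-slipping connection, whose associated oper inherits the trivial holonomy.

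I expect the one genuinely non-formal step to be the equivalence in the second paragraph between the flag-theoretic Frenet conditions and the derivative-theoretic non-degeneracy of the lift $w$: one must argue that a limit of the $p$-planes $\xi^{p-1}(y)\oplus\xi^1(y')$, as $y\ne y'$ both tend to $x$, is forced to equal $\operatorname{span}(w(x),\dots,w^{(p-1)}(x))$, and in particular that these vectors remain linearly independent in the limit — here compactness of the Grassmannian is what rules out a drop in dimension. Once this is in place, the remaining points (the vanishing of $q_1$, the periodicity of the $q_i$, and the computation of the monodromy) are routine.
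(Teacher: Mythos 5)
Your argument is a coordinate-level version of the paper's one-sentence observation that the trivial connection, filtered by the osculating flags $F_p(t):=\xi^p(t)$, is a non-slipping connection on the trivial bundle; you read off the corresponding oper directly by normalising the Wronskian of a lift $w$ and verifying $q_1\equiv 0$ and the periodicity of the coefficients. This is the same idea in a different register, and the routine steps you carry out are correct. Your explicit identification of the monodromy as $\pm\operatorname{Id}$ — trivial only in $\ms{PSL}_n(\mathbb R)$, which is the relevant group — usefully clarifies a point the paper is silent on.

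The step you single out as the one non-formal point, passing from the paper's flag-theoretic Frenet axioms to the jet-level statement that $\xi^p(t)=\operatorname{span}\bigl(w(t),\dots,w^{(p-1)}(t)\bigr)$ with all these spans of full rank, is however a genuine gap, and compactness of the Grassmannian alone does not close it. The limit axiom guarantees that $\bigoplus_i\xi^{n_i}(y_i)$ converges to $\xi^p(x)$, but it says nothing about the relation of $\xi^p(x)$ to the derivative flag when the derivatives degenerate: for instance near $t_0=0$ the lift $w(t)=(1,t,t^4)$ in $\mathbb R^3$ has $w''(0)=0$, so its Wronskian vanishes there and it cannot be the frame of an oper; yet the flag curve $\xi^1=\operatorname{span}(w)$, $\xi^2=\operatorname{span}(w,w')$ passes all the stated direct-sum and limit tests, the relevant limits being filled in by higher-order jets. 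Compactness of the Grassmannian only gives you subsequential limits; it does not force those limits to coincide with $\operatorname{span}(w,\dots,w^{(p-1)})$ when that span drops rank. Note that the paper itself quietly switches formulations in the converse proposition, where it declares "by definition" that a Frenet curve is one admitting no degenerate weighted $p$-tuple — exactly the jet-level characterisation you need. If you adopt that as the working definition of Frenet (which is what the paper, and the sources it cites, effectively use), your proof is complete; as written, the equivalence of the two formulations still needs an argument or a citation, not an appeal to compactness.
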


Conversely, we now prove 

\begin{proposition}
The curve associated to a Hitchin oper is Frenet.
\end{proposition}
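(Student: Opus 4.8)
The plan is to connect a Hitchin oper to the trivial oper $\psi\mapsto\dd^n\psi/\dd t^n$ through a path of opers with trivial holonomy, and then to show that the Frenet property of the associated curve is both satisfied by the trivial oper and stable along the deformation. The trivial oper has associated curve the Veronese embedding $t\mapsto[1:t:t^2/2!:\cdots:t^{n-1}/(n-1)!]$ (after choosing $v_i(t)=t^{i-1}/(i-1)!$ as a basis of solutions), which is the standard rational normal curve; its osculating flag curve $\xi^1\subset\cdots\subset\xi^{n-1}$ consists of the spans of consecutive derivatives, and one checks directly that the general-position and convergence conditions defining a Frenet curve hold. So the statement is true at the base point.

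First I would make precise the construction of the osculating flag of the curve associated to \emph{any} oper $D$ with trivial holonomy: if $(v_1,\ldots,v_n)$ is a basis of the $n$-dimensional space of (now periodic) solutions of $D\psi=0$, set $\xi(t)=[v_1(t),\ldots,v_n(t)]$ and let $\xi^p(t)$ be the $p$-plane spanned by $(\phi(t),\phi'(t),\ldots,\phi^{(p-1)}(t))$ where $\phi(t)=(v_1(t),\ldots,v_n(t))$. The nondegeneracy one needs here is exactly that the Wronskian matrix of $(v_1,\ldots,v_n)$ is everywhere invertible, which is automatic since they solve a nondegenerate $n$th-order linear ODE; this gives that $\xi$ is an immersion and that $\xi^p$ is well defined and has rank $p$. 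In differential-geometric language this says precisely that the trivial connection on $P$ is non slipping with respect to the filtration by osculating flags, and conversely $D$ is recovered from this data as in Paragraph \ref{nonslip}; so the content to be checked is the two bulleted conditions of the definition of a Frenet curve --- the directness of sums $\xi^{n_1}(x_1)\oplus\cdots\oplus\xi^{n_l}(x_l)$ for distinct points with $\sum n_i\le n$, and the convergence $\bigoplus_i\xi^{n_i}(y_i)\to\xi^p(x)$ as the $y_i$ collide.

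Next I would argue that both conditions propagate along a path $\{D_s\}_{s\in[0,1]}$ of trivial-holonomy opers from $D_0=D$ to $D_1=\hbox{trivial oper}$. The solution spaces form a smooth vector bundle over $[0,1]$ (again because the ODE is nondegenerate and holonomy is trivial throughout), so the maps $s\mapsto\xi_s^p$ vary continuously with values in the relevant Grassmannian bundles over $\mathbb T$ (and over configuration spaces of points of $\mathbb T$). The convergence condition is a closed condition that is preserved under $C^1$-limits of immersed curves with everywhere-invertible Wronskian, hence holds for every $D_s$ by a connectedness argument once it holds at $s=1$; alternatively, it follows directly and uniformly from the everywhere-invertibility of the Wronskian via a Vandermonde/Taylor-expansion estimate, so it actually holds for each $D_s$ individually without any deformation argument. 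The directness condition is the subtler one: for each fixed tuple $(x_1,\ldots,x_l)$ and $(n_1,\ldots,n_l)$, directness is an open condition (nonvanishing of a generalized Wronskian-type determinant) and it holds for the trivial oper; but it could a priori degenerate for some $D_s$. The clean way around this is to invoke Theorem \ref{hcintro}-type input, or more directly the hyperconvexity/Frenet results of \cite{Labourie:2006} and \cite{Guichard:2008}: a Hitchin oper, by definition deformable through trivial-holonomy opers to the trivial one, has its associated curve hyperconvex, and hyperconvexity of a curve into $\pn$ is exactly the statement that the osculating flag curve exists and satisfies the two required conditions.

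The main obstacle, as just indicated, is the directness (general-position) condition along the deformation: continuity alone gives it on an open subset of the parameter $s$, and one must rule out collapse. I expect the cleanest route is to \emph{not} prove this from scratch but to cite the characterization of the image of the Hitchin component / trivial-holonomy opers as hyperconvex Frenet curves from \cite{Labourie:2006,Guichard:2008}: there it is shown that deformability to the trivial oper forces the associated curve to remain an immersion with nondegenerate osculating flags of every order, which is precisely Frenet. Thus the proof reduces to: (i) identify the osculating flag of the associated curve with the span of successive derivatives of a solution vector, using invertibility of the Wronskian; (ii) check the trivial oper is Frenet by explicit Veronese computation; (iii) invoke stability of hyperconvexity under deformation through trivial-holonomy opers from the cited work to conclude the Frenet conditions hold for the given Hitchin oper.
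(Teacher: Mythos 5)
Your setup is sound: identifying the osculating flag with spans of successive derivatives, observing that the Wronskian's invertibility handles the local (convergence) condition near the diagonal, checking the Veronese base case explicitly, and isolating the global directness condition as the genuine difficulty are all correct and parallel to what the paper does. But the proposal does not actually prove the hard step; it defers it to \cite{Labourie:2006,Guichard:2008}, and this is where the gap lies. Those references establish the Frenet/hyperconvexity property for limit curves of Hitchin representations of cocompact surface groups --- curves that are equivariant under a group acting with dense limit set and for which dynamical (Anosov) arguments are available. The curve associated to a trivial-holonomy oper on $\mathbb T$ is not equivariant under any such group; it is just a smooth closed curve in $\pn$, and none of the surface-group machinery applies to it. Indeed the paper itself remarks that the present statement ``seems to belong to the folklore but for which we could not find a proper reference,'' which is precisely because the cited works do not cover it. So step (iii) of your outline --- ``invoke stability of hyperconvexity under deformation through trivial-holonomy opers from the cited work'' --- cites a result that does not exist in the cited work.

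What the paper actually does is a self-contained connectedness argument. It introduces degenerate weighted $m$-tuples, takes the minimal $m$ for which some Hitchin oper admits one, and shows the set $\ms O_m$ of such opers is both open and closed in the connected space of Hitchin opers. You correctly observe that \emph{non}degeneracy of a fixed tuple is open, but that is the wrong direction: what must be shown is that the \emph{failure} locus $\ms O_m$ is open, which requires that a degenerate tuple cannot be an isolated coincidence --- the paper proves this by a transversality computation (the function $y\mapsto\hat\eta(X(y))$ has nonvanishing derivative at the degenerate point, via an elementary wedge-product lemma). And the closedness of $\ms O_m$ is where the real delicacy sits: one must extract, from a sequence of degenerate $m$-tuples whose points may collide, a degenerate tuple for the limit oper; the paper does this with a Taylor-formula rescaling of the wedge products, and crucially uses the minimality of $m$ to conclude the limiting collapsed tuple still has exactly $m$ blocks. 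Your proposal has neither of these two arguments, and without them (or a legitimate citation, which is unavailable here) the proof is incomplete.
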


\begin{proof} Let us first introduce some notation and definitions. 
A {\em weighted} $p$-tuple $X$ is a pair consisting of a $p$-tuple of pairwise distinct points $(x^1,\ldots,xp)$ in $\mathbb T$ -- called the {\em support} -- and a $p$-tuple of positive integers $(j_1,\ldots,j_p)$ so that $\sum_{1\leq k\leq p}j_k=n$. If $\eta
$ is a smooth curve defined on a subinterval $I$ of $\mathbb T$ with values in $\mathbb R^n\setminus\{0\}$, let $$
\hat\eta^{(p)}(x)\defeq \eta(x)\wedge\bu\eta(x)\wedge \ldots\wedge\eta^{(p-1)}(x)\in\Lambda^p(\mathbb R^n),
$$
where $\bu\eta$, $\eta^{(k)}$ denote respectively the derivative and $k$-th derivatives of $\eta$.
Moreover, if $X$ is a weighted $p$-tuple as above with support in $I$, let
$$
\hat\eta(X)\defeq \bigwedge_{1\leq k\leq p} \hat\eta^{(j^k)}(x^k)\in\Lambda^n(\mathbb R^n)=\mathbb R.
$$
Say that a weighted $p$-tuple is {\em degenerate} with respect to $\eta$ if $\hat\eta(X)=0$. Observe finally that being degenerate only depends on the projection of $\eta$ as a curve with values in $\pn$ and thus makes sense for curves with values in $\pn$.
By definition, a curve $\xi$ with values in $\pn$ is Frenet if it admits no degenerate weighted $p$-tuple.

Let us work by contradiction and assume that there exists a Hitchin oper so that the associated curve is not Frenet. Let $m$ be the smallest integer so that there exists a curve $\xi$ associated to an Hitchin oper which admits a degenerate $m$-tuple.

Let $\ms O_m$ be the set of Hitchin opers whose associate curve admits a degenerate $m$-tuple. By our standing assumption, $\ms O_m$ is non-empty, and moreover the trivial oper -- which corresponds to the Veronese embedding -- does not belong to $\ms O_m$.  We will now prove that  $\ms O_m$ is both open and closed which will yield a contradiction, since  ${\ms X}_n^0(\mathbb T)$ is connected.

\vskip 0.2 truecm
\noindent{{\sc Step 1: }\em The set $\ms O_m$ is open in ${\ms X}_n^0(\mathbb T)$}. 
Let $$X=\left((x^1,\ldots,x^m),(i_1,\ldots,i_m)\right)$$ be a degenerate $m$-tuple for the curve $\xi$ associated to the oper $D$. Without loss of generality, we can assume that $i_1$ is the greatest integer of all integers $j$ so that $\left((x^1,\ldots,x^m),(j,\ldots,i_m)\right) $ is degenerate. Let now $\eta$ be a lift (with values in $\mathbb R^n\setminus\{0\}$) of $\xi$ on an interval containing the support of $X$. Let us consider the function $f_D$ defined on a neighbourhood of $x^1$ by
$$
f_D\ :\ y\mapsto \hat\eta(X(y)),
$$
where $X(y)\defeq \left((y,x^2,\ldots,x^m),(i_1,\ldots,i_m)\right)$. We first prove that $\bu f_D(x^1)\not=0$. A computation yields
$$
\bu f_D(x^1)=\left(\hat\eta^{(i_1-2)}(x_1)\wedge\eta^{(i_1)}(x_1)\right)\wedge\left(\bigwedge_{2\leq j\leq m}\hat\eta^{i_j}(x_j)\right).
$$
Let us recall the following elementary fact of linear algebra. Let $u$, $v$ and $e_1\ldots,e_k$ are vectors in $\mathbb R^n$ so that
\begin{eqnarray}
u\wedge v\wedge e_1\wedge\ldots\wedge e_{k-1}&\not=&0,\label{eq:frenh1}\\
u\wedge e_1\wedge\ldots\wedge e_{k}&=&0,\label{eq:frenh2}
\end{eqnarray}
then
\begin{eqnarray}
v\wedge e_1\wedge\ldots\wedge e_{k}&\not=&0,\label{eq:frenh3}
\end{eqnarray}
Indeed, by Assertion \eqref{eq:frenh2}, $u$ belongs to the hyperplane $H$ generated by $(e_1,\ldots, e_{k})$. If Assertion \eqref{eq:frenh3} does not hold, then $v$ also belongs to $H$. Thus the vector space generated by $(u,v,e_1,\ldots, e_{k-1})$ also would lie in $H$, thus contradicting Assertion \eqref{eq:frenh1}.

By maximality of $i_1$, we know that $\hat\eta(Y)\not=0$, where $$Y=\left((x^1,\ldots,x^m),(i_1+1,i_2-1,\ldots,i_m)\right).$$ Since $f_D(x^1)=0$, the previous remark, with $u=\eta^{(i_1-1)}$, $v=\eta^{(i_1)}$ yields that 
$\bu f_D(x^1)\not=0$.

By transversality, it then follows that for $D'$ close to $D$ there exist $z$ close to $x^1$ so that $f_{D'}(z)=0$ and thus $D'\in \ms O_m$. 
\vskip 0.2 truecm
\noindent{{\sc Step 2: }\em The set $\ms O_m$ is closed in ${\ms X}_n^0(\mathbb T)$. } 

 Let $\{\xi^1_n\}_{n\in\mathbb N}$ be a sequence of curves associated to a sequence of opers in $\ms O_m$ converging to an oper $D$ associated to the curve $\xi$. Let $$
\{X_n=\left((x^1_n,\ldots,x^m_n),(j^1_n,\ldots,j^m_n)\right)\}_{n\in\mathbb N}.$$
be the corresponding sequence of degenerate $m$-tuples. 
We can extract a subsequence so that for every $i$, the sequence $\{j^i_n\}_{n\in \mathbb N}$ is constant and equal to $j^i$. After permutation of $\{1,\ldots, p\}$ and extracting a further subsequence, we can assume that there exists a $p$-tuple, with $p\leq m$, $$Y=\left((y^1,\ldots,y^p),(i^1,\ldots,i^p)\right),$$ and integers $k_1,\ldots, k_p$ so that 
\begin{enumerate}
\item $1=k_1\leq\ldots\leq k_p=m$
\item for all $i$, with $k_u\leq i<k_{u+1}$, $$\lim_{n\to\infty}(x^i_n)=y^u,$$
\item for all $v$, with $1\leq v\leq p$, $$i^v=\sum_{k_v\leq u<k_{v+1}}j^u.$$
\end{enumerate}

As an application of the Taylor formula, we have
$$
\hat\eta^{(p)}(x)\wedge\hat\eta^{(k)}(y)=(x-y)^{p.k}\hat\eta^{p+k}(x)+o((x-y)^k).
$$ 
It follows that for all $u$
$$
\lim_{n\to\infty}\left(\left(\prod_{v=k_u}^{k_{u+1}-1}\frac{1}{(x^v_n-y^u)^{N_v}}\right) \bigwedge_{v=k_u}^{k_{u+1}-1}\hat\eta^{(i^v)}(x_n^{v})\right)=\hat\eta^{(i_u)}(y^u),
$$
where $N_v=i^v\left(\sum_{w=k_u}^{v-1} i^w\right)$. 
In particular, $Y$ is degenerate for $\xi$. Thus $p=m$ by minimality and $D\in\ms O_m$.

\end{proof}

Finally, let us say a Frenet curve is {\em Hitchin} if it can be deformed through Frenet curves to the Veronese embedding. Then we obtain a consequence of the two previous propositions the following statement which seems to belong to the folklore but for which we could not find a proper reference.

\begin{theorem}
The map which associates to an $\sln$-oper its associated curve is a homeomorphism from the space of Hitchin opers to the space of Hitchin Frenet curves.
\end{theorem}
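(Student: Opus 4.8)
The plan is to promote the two preceding propositions of this subsection into a genuine bijection and then check bicontinuity. Write $\Phi$ for the map sending a Hitchin oper $D$ to its associated curve $\xi_D=[v_1,\dots,v_n]$, where $(v_1,\dots,v_n)$ is a basis of solutions of $D\psi=0$, so that $\xi_D$ is well defined up to the $\sln$-action; write $\Psi$ for the assignment of the second proposition, sending a Frenet curve $\xi$ to the oper $D_\xi$ obtained by noting that the trivial connection on $\mathbb R^n\times\mathbb T$ is non-slipping with respect to the filtration by osculating flags of $\xi$ and reading off the associated differential operator as in Paragraph~\ref{nonslip}. The very first thing to establish is that these two constructions are mutually inverse up to the $\sln$-action: starting from $D$, the osculating flag curve of $\xi_D$ is exactly the canonical flag filtration $\{F_p\}$ for which the trivial connection is non-slipping, and running the recipe of \S\ref{nonslip} on it returns $D$; conversely $\Phi(\Psi(\xi))=\xi$. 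The normalisation built into the definition of an oper (vanishing subleading coefficient, equivalently, by Abel's formula, constancy of the Wronskian $W(v_1,\dots,v_n)$) is precisely what pins down the lift of $\xi$ to $\mathbb R^n$ up to a constant, hence determines the solution space $V=\ker D\subset C^\infty(\mathbb R)$ from the projective curve; this gives injectivity of $\Phi$ directly.

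Next I would check that $\Phi$ actually maps Hitchin opers into Hitchin Frenet curves. If $D$ is Hitchin, choose a path $D_t$ of opers with trivial holonomy joining $D$ to the trivial oper; every $D_t$ is then again Hitchin, so by the first proposition each $\xi_{D_t}$ is Frenet, and $t\mapsto\xi_{D_t}$ is continuous because solutions of a linear ODE depend smoothly on its coefficients in the $C^\infty$ topology. Thus $\xi_D$ is joined to $\Phi(\text{trivial oper})$, the Veronese embedding, through Frenet curves, i.e.\ it is a Hitchin Frenet curve.

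For surjectivity onto Hitchin Frenet curves, let $\xi$ be Frenet and deformable through Frenet curves $\xi_t$ to the Veronese embedding. By the second proposition each $\xi_t$ comes from the oper $D_t:=\Psi(\xi_t)$ with trivial holonomy, and $t\mapsto D_t$ is continuous: the coefficients $q_i$ of $D_\xi$ are, via the Wronskian formula $D_\xi\psi=W(v_1,\dots,v_n,\psi)/W(v_1,\dots,v_n)$, polynomial expressions in the derivatives up to order $n$ of a local lift of $\xi$, divided by the nowhere-vanishing Wronskian. Hence $D_t$ is a path of trivial-holonomy opers from $\Psi(\xi)$ to the trivial oper, so $\Psi(\xi)$ is Hitchin and $\Phi(\Psi(\xi))=\xi$. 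Combining with the previous paragraph, $\Phi$ is a bijection between Hitchin opers and Hitchin Frenet curves, with inverse $\Psi$. Bicontinuity has by now been shown on both sides ($\Phi$ continuous by smooth dependence of ODE solutions on parameters, $\Psi$ continuous by the Wronskian formula), so $\Phi$ is a homeomorphism.

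The main obstacle, as signalled above, is the first step: verifying carefully that the osculating flag filtration of $\xi_D$ coincides with the canonical filtration $\{F_p=J^{n-1}(\cdot)\}$ of Paragraph~\ref{nonslip} — up to the identification of the trivial $\mathbb R^n$-bundle with $P=J^{n-1}(K^{n-1})$ via the Wronskian jets — so that "non-slipping" with respect to it is automatic and the associated operator is exactly $D$ with the right normalisation. Once this dictionary between the jet picture and the curve picture is spelled out, everything else is formal: injectivity from the determinedness of $\ker D$, surjectivity and the Hitchin condition from continuity of $\Psi$ and the fact that $\Phi,\Psi$ exchange the distinguished points (trivial oper and Veronese embedding), and the homeomorphism property from the two continuity statements.
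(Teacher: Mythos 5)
The paper offers no argument for this theorem: it is stated as ``a consequence of the two previous propositions,'' with the key input -- that the trivial connection is non-slipping for the osculating-flag filtration, i.e.\ that $\Psi$ is well defined -- dismissed as ``the following obvious remark'' with a pointer to Fock--Goncharov. Your write-up is therefore essentially the paper's intended argument, spelled out: the two propositions furnish $\Phi$ (oper to curve) and $\Psi$ (Frenet curve to oper); continuity of $\Phi$ (smooth dependence of ODE solutions on coefficients) and of $\Psi$ (the Wronskian formula for the coefficients $q_i$) transports the two deformation conditions across each other along the distinguished paths; and one concludes that $\Phi$ is a bicontinuous bijection between Hitchin opers and Hitchin Frenet curves. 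Two remarks. First, your labelling of the two propositions is swapped relative to the paper's ordering: the proposition producing $\Psi$ (every smooth Frenet curve comes from an oper with trivial holonomy) is the paper's \emph{first} in this subsection; the one asserting that the curve of a Hitchin oper is Frenet is its \emph{second}. Second, the identity $\Psi\circ\Phi=\mathrm{id}$, which you yourself flag as ``the main obstacle,'' is only asserted and not carried out. Since you correctly identify it as the crux, a complete write-up should actually check that trivialising $P=J^{n-1}(K^{n-1})$ by the $(n-1)$-jets of a basis of $\ker D$ carries the jet filtration $\{F_p\}$ to the osculating-flag filtration of $\xi_D$, and in particular that the second (``identity'') normalisation condition in the definition of a non-slipping connection holds precisely for the constant-Wronskian lift. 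The paper treats this as obvious, but in a full proof it is the one step with genuine content; everything you write after it is, as you say, formal.
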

 
\subsection{Cross ratios and opers}
Let $\xi$ be a Frenet curve and $\xi^*$ be its associated osculating hyperplane curve. 
The {\em weak cross ratio} associated to this pair of curves is the function on $$
\mathbb T^{4*}\defeq \{(x,y,z,t)\in\mathbb T^4\mid z\not=y,x\not=t\},$$ defined by 
\begin{equation}
\bb_{\xi,\xi^*}(x,y,z,t)=\frac{\langle\widehat\xi(x)\vert\widehat\xi^*(y)\rangle\langle\widehat\xi(z)\vert\widehat\xi^*(t)\rangle}
{\langle\widehat\xi(z)\vert\widehat\xi^*(y)\rangle\langle\widehat\xi(x)\vert\widehat\xi^*(t)\rangle},\label{FRE}
\end{equation}
where for every $u$, we choose an arbitrary non-zero vector $\widehat\xi(u)$ and $\widehat\xi^*(u)$ respectively in $\xi(u)$ and $\xi^*(u)$. This weak cross ratio only depends on the oper $D$ and we shall denote it by $\bb_D$.

\subsubsection{Coordinate functions}\label{defFxy}
Let as in Section \ref{non-slip}, $K$ be the line bundle of $(-1/2)$-densities over $\mathbb T$ and $P\defeq J^{n-1}(K^{n-1})$ be the rank $n$ vector bundle of $(n-1)$ jets of sections of the bundle of $(-(n-1)/2)$-densities. We choose once and for all a trivialisation of $P$ given by $n$ fibrewise independent sections $\sigma_1,\ldots,\sigma_n$ of $P$ so that $F_p$ is generated by $\sigma_1,\ldots,\sigma_p$. 

Let $\nabla$ be a connection on $P$. Let $I$ be an interval in $\mathbb R$ with extremities $Y$ and $y$. We pull back $\nabla$, $P$ and $\sigma_i$ on $\mathbb R$ using the projection $$\pi:\mathbb R\to\mathbb R/\mathbb Z=\mathbb T.$$ We denote the pulled back objects by the same symbol. For any $y\in\mathbb R$, let $\sigma_y$ the $\nabla$-parallel section of $P$ on $I$ characterised by $\sigma_y(y)=\sigma_1(y)$. Similarly let $\sigma_Y^*$ be the $\nabla^*$ parallel section on $I$ of $P^*$ characterised by $\sigma^*_Y(Y)=\sigma_n^*(Y)$ where $(\sigma_1^*,\ldots,\sigma_n^*)$ is the dual basis to $(\sigma_1,\ldots,\sigma_n)$.

Then the function $t\mapsto\langle\sigma^*_Y(t),\sigma_y(t)\rangle$ is constant on $I$.

\begin{definition}{\sc[Coordinate function]}
The {\em coordinate function} associated to the points $Y$ and $y$ and the trivialisation of $P$, is the function
$$F_{Y,y}\ : \ \nabla\mapsto F_{Y,y}(\nabla)=\langle\sigma^*_Y(t),\sigma_y(t)\rangle, \hbox{ for }t\in I,$$
defined on the space of connections on $P$.
\end{definition}

We shall write $\sigma_Y^*\otimes \sigma_y=:\pp^{Y,y}=\pp^{Y,y}(\nabla)\in \Omega^0(\mathbb R,\End_0(P))$ so that 
\begin{eqnarray}
F_{Y,y}(\nabla)&=&\tr\left(\pp^{Y,y}\right)\label{Fp}
\end{eqnarray}
We then have
\begin{proposition} Assume that $\nabla$ has trivial holonomy. Then the coordinate function $F_{Y,y}$ only depends on the projections of $Y$ and $y$ in $\mathbb T$. Moreover there exists a section $\ms p^{Y_0,y_0}\in \Omega^0(\mathbb T, \operatorname{End}_0(P))$, so that $\ms p^{Y,y}$ is the pull back of $\ms p^{Y_0,y_0}$ .
\end{proposition}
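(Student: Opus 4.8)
The plan is to unwind the definitions of $\sigma_y$ and $\sigma_Y^*$ and exploit the fact that trivial holonomy means parallel transport around the circle is the identity. First I would fix lifts: let $\tilde Y, \tilde y \in \mathbb{R}$ be lifts of $Y_0, y_0 \in \mathbb{T}$, and note that any other lift is of the form $\tilde Y + k$, $\tilde y + \ell$ for integers $k, \ell$. The pulled-back connection $\nabla$ on $\mathbb{R}$ is invariant under the deck transformation $t \mapsto t+1$, as are the pulled-back sections $\sigma_i$ (by construction, since they descend to $\mathbb{T}$). Since $\nabla$ has trivial holonomy, the monodromy of parallel transport once around $\mathbb{T}$ is the identity; concretely, if $\tau_{a,b}$ denotes $\nabla$-parallel transport in $P$ from the fiber at $a$ to the fiber at $b$ along $\mathbb{R}$, then $\tau_{a+1,b+1} = \tau_{a,b}$ and moreover $\tau_{a,a+1} = \mathrm{Id}$ under the identification of fibers given by the deck transformation.

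Next I would use this to show $\sigma_{\tilde y}$ does not depend on the choice of lift $\tilde y$ of $y_0$, in the sense that $\sigma_{\tilde y + 1}(t+1)$ equals $\sigma_{\tilde y}(t)$ after applying the deck transformation. Indeed $\sigma_{\tilde y}$ is characterized by being $\nabla$-parallel with $\sigma_{\tilde y}(\tilde y) = \sigma_1(\tilde y)$; since the deck transformation carries $\nabla$ to $\nabla$, $\sigma_1$ to $\sigma_1$ and $\tilde y$ to $\tilde y + 1$, it carries $\sigma_{\tilde y}$ to $\sigma_{\tilde y+1}$. Hence $\sigma_{\tilde y}$ is a deck-equivariant family, so it descends to a well-defined parallel section (a ``multivalued section'' that is actually single-valued because the holonomy is trivial) over $\mathbb{T}$. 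The same argument applies to $\sigma_{\tilde Y}^*$ using the dual connection $\nabla^*$, which also has trivial holonomy. Therefore the endomorphism-valued function $\pp^{\tilde Y, \tilde y} = \sigma_{\tilde Y}^* \otimes \sigma_{\tilde y} \in \Omega^0(\mathbb{R}, \mathrm{End}_0(P))$ is deck-invariant, hence descends to a section $\ms p^{Y_0, y_0} \in \Omega^0(\mathbb{T}, \mathrm{End}_0(P))$, and $\pp^{\tilde Y, \tilde y}$ is exactly its pull-back. Taking traces and invoking Equation \eqref{Fp}, $F_{Y,y}(\nabla) = \tr(\pp^{Y,y}) = \tr(\ms p^{Y_0,y_0})$ depends only on $Y_0, y_0$.

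The main obstacle — really the only subtlety — is making precise the claim that trivial holonomy lets one identify the fiber of $P$ at $t$ with the fiber at $t+1$ compatibly with everything in sight, and then verifying the deck-equivariance of $\sigma_{\tilde y}$ rigorously; this is where one must be careful that ``$\sigma_i$ descends to $\mathbb{T}$'' is being used in two ways (both as the reference trivialization and as the initial condition). I would handle it by working with the canonical trivialization of the pull-back $\pi^* P$ over $\mathbb{R}$ induced by $\sigma_1, \dots, \sigma_n$: in that trivialization $\nabla = d + A(t)\,dt$ with $A$ a $1$-periodic $\mathrm{End}_0$-valued function, $\sigma_{\tilde y}(t)$ is the first column of the fundamental solution $\Phi(t)\Phi(\tilde y)^{-1}$ of $\Phi' = -A\Phi$ (transposed appropriately for conventions), and trivial holonomy says $\Phi(t+1) = \Phi(t)\Phi(1)\Phi(0)^{-1} = \Phi(t)$ after normalizing $\Phi(0) = \mathrm{Id}$, i.e.\ $\Phi$ is $1$-periodic. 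Then $\sigma_{\tilde y + 1}(t+1) = \sigma_{\tilde y}(t)$ is immediate, and similarly on the dual side $\sigma_{\tilde Y}^*$ involves $(\Phi^{-1})^{\mathsf T}$ which is likewise periodic. Everything else is bookkeeping.
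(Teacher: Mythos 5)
Your proposal is correct and takes essentially the same route as the paper: trivial holonomy is used to conclude that the parallel sections determined by the initial conditions $\sigma_1(y)$, $\sigma_n^*(Y)$ are single-valued on $\mathbb{T}$, so $\pp^{Y,y}$ descends. The only stylistic difference is the direction of the argument — the paper constructs the parallel sections $\eta_{y_0}$ and $\eta^*_{Y_0}$ directly on $\mathbb{T}$ (which exist by trivial holonomy) and notes $\sigma_y=\pi^*\eta_{y_0}$, $\sigma^*_Y=\pi^*\eta^*_{Y_0}$, whereas you work upstairs on $\mathbb{R}$ and verify deck-equivariance (via the periodicity $\Phi(t+1)=\Phi(t)$ of the fundamental solution) before descending.
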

\begin{proof} Let $Y_0$ and $y_0$ the projections of $Y$ and $y$. Since $\nabla$ has trivial holonomy, we may find parallel sections $\eta_{y_0}$ and $\eta^*_{Y_0}$, so that $\eta_{y_0}(y_0)=\sigma_1(y_0)$ and $\eta^*_{Y_0}(Y_0)=\sigma_1(Y_0)$. Then, $\sigma_y=\pi^*(\eta_{y_0})$ and $\sigma^*_Y=\pi^*(\eta_{y_0})$. Thus $F_{Y,y}(\nabla)=\langle\eta^*_{Y_0}(t),\eta_{y_0}(t)\rangle$. The first part of result follows. For the second part, we take $\ms p^{Y_0,y_0}=\eta^*_{Y_0}\otimes \eta_{y_0}$.
\end{proof}

\subsubsection{Differential of coordinate functions}
Our aim in that paragraph is to compute the differential of $F_{Y,y}$, where $Y,y$ belong to an interval $I$.
\begin{proposition}\label{PoiXx}
Let $\nabla$ be a connection. Let $y_0$ be a point in $\mathbb R\setminus I$. Let $\alpha$ be an element of $\Omega^1\left(\mathbb T,\End_0(P)\right)$. Then 
\begin{eqnarray}
\langle{\dd}_\nabla F_{Y,y},\alpha\rangle=\int_{\mathbb R}\psi^{Y,y,y_0}\tr\left(\pp^{Y,y}\pi^*(\alpha)\right),\label{dF}
\end{eqnarray}
where $\psi^{Y,y,y_0}(s)\defeq [y_0s,Yy]$.
 \end{proposition}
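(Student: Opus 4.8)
The plan is to differentiate $F_{Y,y}$ directly along the affine family of connections $\nabla_t:=\nabla+t\alpha$ (all objects pulled back to $\mathbb R$ via $\pi$) by the first‑order variation of parallel transport, and then to recognise the resulting integral kernel as the linking number $[y_0 s,Yy]$ by an elementary computation from the explicit formula \eqref{In0}. The first reduction is that, since $t\mapsto\langle\sigma^*_Y(t),\sigma_y(t)\rangle$ is constant on $I$ and continuous up to its endpoints, one may evaluate $F_{Y,y}(\nabla_t)$ at the endpoint $Y$ of $I$; there the $\nabla_t^*$‑parallel section $\sigma^{*,t}_Y$ equals $\sigma_n^*(Y)$ for every $t$, so $F_{Y,y}(\nabla_t)=\langle\sigma_n^*(Y),\sigma^t_y(Y)\rangle$ and only the variation of the $\nabla_t$‑parallel section $\sigma^t_y$ contributes.

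Writing $\sigma^t_y=\sigma_y+t\,\tau+o(t)$, the equation $\nabla_t\sigma^t_y=0$ gives the inhomogeneous linear equation $\nabla_{\partial_s}\tau(s)=-\alpha_s\bigl(\sigma_y(s)\bigr)$, where $\alpha_s$ is the endomorphism‑valued coefficient of $\alpha$, with initial condition $\tau(y)=0$ (recording $\sigma^t_y(y)=\sigma_1(y)$ for all $t$). Pairing with the $\nabla^*$‑parallel section $\sigma^*_Y$, and using $\pp^{Y,y}=\sigma^*_Y\otimes\sigma_y$ together with the elementary identity $\tr(\pp^{Y,y}A)=\langle\sigma^*_Y,A\sigma_y\rangle$, the function $h(s):=\langle\sigma^*_Y(s),\tau(s)\rangle$ satisfies $h'(s)=\langle\sigma^*_Y(s),\nabla_{\partial_s}\tau(s)\rangle=-\tr\bigl(\pp^{Y,y}(s)\,\alpha_s\bigr)$ with $h(y)=0$. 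Since $\langle\dd_\nabla F_{Y,y},\alpha\rangle=\tfrac{\dd}{\dd t}\big|_{0}F_{Y,y}(\nabla_t)=\langle\sigma_n^*(Y),\tau(Y)\rangle=\langle\sigma^*_Y(Y),\tau(Y)\rangle=h(Y)$, integrating $h'$ from $y$ to $Y$ gives $\langle\dd_\nabla F_{Y,y},\alpha\rangle=\int_Y^y\tr\bigl(\pp^{Y,y}\,\pi^*(\alpha)\bigr)$, the integral being along $I$ oriented from $Y$ to $y$.

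It remains to rewrite $\int_Y^y$ as $\int_{\mathbb R}[y_0 s,Yy]\,(\cdot)\,\dd s$. One first notes that $\sigma_y$ and $\sigma^*_Y$, and hence $\pp^{Y,y}$, extend from $I$ to all of $\mathbb R$ by parallel transport, so the latter integral is meaningful once $s\mapsto[y_0 s,Yy]$ is known to have compact support. The point — read off from \eqref{In0} (or from the equality cases \eqref{XX}, \eqref{1/2} together with the antisymmetries \eqref{In1}, \eqref{In2}) by a short case analysis according to whether $y_0$ lies to the left or to the right of $I$ — is that, for $y_0\notin I$, the function $s\mapsto[y_0 s,Yy]$ vanishes off the convex hull of $\{y_0,Y,y\}$ and coincides on $I$ with the characteristic function of $I$ carrying the orientation from $Y$ to $y$ (value $+1$ if $Y<y$, value $-1$ if $y<Y$, and $0$ elsewhere); in particular it does not depend on $y_0$, consistently with the statement. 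Substituting $\psi^{Y,y,y_0}(s)=[y_0 s,Yy]$ for this characteristic function in the formula of the previous paragraph yields Equation \eqref{dF}.

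The Duhamel variation is routine; the only place needing care is the orientation bookkeeping in the last step — matching the sign conventions of $\pp^{Y,y}$, of $\int_Y^y$, and of $[y_0 s,Yy]$ — and I expect that this sign‑matching, rather than any analytic difficulty, is where the only genuine subtlety lies.
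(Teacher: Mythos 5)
Your argument is correct and follows essentially the same route as the paper's: vary $\nabla$ to first order, reduce the resulting variation of $F_{Y,y}$ to the line integral of $\tr\left(\pp^{Y,y}\pi^*\alpha\right)$ over $I$, and identify the kernel $s\mapsto[y_0 s,Yy]$ with the oriented characteristic function of $I$. The only presentational difference is that the paper packages the first-order variation into a gauge transformation $G^t$ whose derivative is a $\nabla$-primitive $\beta$ of $\pi^*\alpha$, whereas you solve the Duhamel equation $\nabla_{\partial_s}\tau=-\alpha_s\sigma_y$ directly; the computation is the same, and your orientation bookkeeping is in fact tighter -- the paper's intermediate Equation \eqref{prel1} carries a sign slip that is cancelled by a compensating one in the passage to $\int_{\mathbb R}\psi^{Y,y,y_0}$, so the final Formula \eqref{dF} comes out correctly either way.
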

 We can observe that the right hand side of Equation \eqref{dF} does not depend on the choice of $y_0\in\mathbb R\setminus I$. Indeed, by the cocycle identity, $\psi^{Y,y,x}-\psi^{Y,y,z}$ is constant and equal to $[xz,Yy]=0$, if $x,z\not\in I$.

 \begin{proof} Let $\beta$ de a primitive of $\pi^*\alpha$ on $I$ such that $\beta(y)=0$. Let $t\mapsto \nabla^t$ be a one parameter smooth family of connections with $\nabla^0=\nabla$ so that $$
 \left.\frac{\dd }{\dd t}\right\vert_{t=0}\nabla^t=\alpha.$$
 Let $G^t$ be the family of sections of $\End(P)$ so that $G^t(z)={\rm Id}$ and $(G^t)^*\nabla=\nabla^t$. Then by construction $$
 \left.\frac{\dd }{\dd t}\right\vert_{t=0}G^t=\beta.$$
 Moreover 
 $$
 F_{Y,y}(\nabla^t)=\langle \sigma_n^*(Y), G^t\left(\sigma_y(Y)\right)\rangle.
 $$
Thus
\begin{eqnarray}
\langle{\dd}_\nabla F_{Y,y},\alpha\rangle&=&\langle \sigma^*_Y(Y), \beta(Y)\sigma_y(Y)\rangle.
\end{eqnarray}
Let $c(t)$ is a curve with value in $I$ so that $c(0)=y$ and $c(1)=Y$. Let $$f(t)=\left\langle\sigma^*_Y(c(t)),\beta(c(t))\sigma^*_y(c(t))\right\rangle.$$ Then
\begin{eqnarray}
\langle{\dd}_\nabla F_{Y,y},\alpha\rangle&=&f(1)-f(0)\cr
&=&\int_{0}^{1} \bu{f}(s)\,{\dd s}.\end{eqnarray}
Since $\sigma^*_Y$ and $\sigma_y$ are parallel,
$$
\bu{f}(s)=\langle\sigma^*_Y(c(s)), \pi^*(\alpha(\bu{c}(s)).\sigma_y(c(s))\rangle,
$$ 
and we have, letting $J$ be the interval whose endpoints are $Y$ and $y$
\begin{eqnarray}
\langle{\dd}_\nabla F_{Y,y},\alpha\rangle&=&\sig(Y-y)\int_{J}\langle \sigma^*_Y, \pi^*(\alpha).\sigma_y\rangle\cr
&=&\sig(Y-y)\int_{J}\tr\left(\pp^{Y,y}.\pi^*(\alpha)\right).\label{prel1}
\end{eqnarray}
We finally deduce the result from Equation \eqref{prel1} and the fact that for any $y_0\not\in I$, we have 
$$
\sig(Y-y)\int_{J}\gamma=\int_{\mathbb R}\psi^{Y,y,y_0}\gamma.\qedhere
$$
\end{proof}

\subsection{Poisson brackets on the space of connections} Since $F_{X,x}$ is not an observable in the sense of Paragraph \ref{Poisson}, we first need to regularise these functions

\subsubsection{Regularisation} Let $\mu$ and $\nu$ two $C^\infty$ measures compactly supported in a bounded interval $]a,b[$ of $\mathbb R$. Let us consider the function
$$
F_{\mu,\nu}\defeq \int_{\mathbb R^2}F_{X,x}\,\dd\mu \cdotp \dd\nu(X,x).
$$
We consider this function as defined on the space of connections over the bundle $P\to \mathbb T$.
We obviously have

\begin{proposition}
Let $\{(\mu_n,\nu_n)\}_{n\in\mathbb N}$ be two sequences of measures weakly converging to $(\mu,\nu)$, then $\{F_{\mu_n,\nu_n}\}_{n\in\mathbb N}$ converges uniformly on every compact to $F_{\mu,\nu}$.
\end{proposition}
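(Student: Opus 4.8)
The statement to prove is the continuity/convergence property: if $\{(\mu_n,\nu_n)\}$ are sequences of $C^\infty$ measures (all compactly supported in a fixed bounded interval $]a,b[$) converging weakly to $(\mu,\nu)$, then $F_{\mu_n,\nu_n}\to F_{\mu,\nu}$ uniformly on compact subsets of the space of connections.

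The plan is to reduce everything to the jointly continuous dependence of $F_{X,x}(\nabla)$ on the triple $(X,x,\nabla)$, which is already implicit in the construction of the coordinate functions. First I would recall that $F_{X,x}(\nabla)=\langle\sigma^*_X(t),\sigma_x(t)\rangle=\tr(\pp^{X,x}(\nabla))$, where $\sigma_x$ and $\sigma^*_X$ are the $\nabla$- and $\nabla^*$-parallel sections on the interval $I$ normalised at $x$ and $X$ respectively. Parallel transport along a fixed compact interval depends smoothly (in particular continuously) on the connection and on the endpoints; hence the map
$$
(X,x,\nabla)\ \longmapsto\ F_{X,x}(\nabla)
$$
is continuous on $[a,b]^2\times\mathcal D$, and for $\nabla$ ranging over a compact set $C$ of connections it is bounded, say $|F_{X,x}(\nabla)|\le M_C$, and moreover uniformly continuous in $(X,x)$ uniformly in $\nabla\in C$. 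This is the only analytic input; the rest is the standard fact that weak convergence of probability-type measures against a uniformly equicontinuous bounded family of integrands passes to the limit.

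Then the argument is: write
$$
F_{\mu_n,\nu_n}(\nabla)-F_{\mu,\nu}(\nabla)=\int_{\mathbb R^2}F_{X,x}(\nabla)\bigl(\dd\mu_n\,\dd\nu_n-\dd\mu\,\dd\nu\bigr)(X,x),
$$
split the product-measure difference as $\dd\mu_n(\dd\nu_n-\dd\nu)+(\dd\mu_n-\dd\mu)\dd\nu$ and estimate each term. For the second term, $(\dd\mu_n-\dd\mu)$ tested against the function $x\mapsto\int F_{X,x}(\nabla)\,\dd\nu(X)$, which for $\nabla\in C$ is continuous on $[a,b]$ with modulus of continuity independent of $\nabla$, tends to $0$ by weak convergence of $\mu_n$; the bound is uniform in $\nabla\in C$ because one may approximate this family of integrands uniformly by a single fixed continuous function up to $\epsilon$ (equicontinuity on the compact interval plus Arzel\`a--Ascoli), and weak convergence against a fixed continuous test function is a scalar statement. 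The first term is handled symmetrically, using in addition that $\mu_n$ has uniformly bounded total mass (weak convergence of $C^\infty$ measures supported in $]a,b[$). Combining the two estimates gives $\sup_{\nabla\in C}|F_{\mu_n,\nu_n}(\nabla)-F_{\mu,\nu}(\nabla)|\to 0$.

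The main obstacle — really the only nontrivial point — is justifying the uniform (in $\nabla\in C$) equicontinuity of the integrand $(X,x)\mapsto F_{X,x}(\nabla)$, i.e. that parallel transport over the fixed interval $[a,b]$ depends on the connection and on the endpoints continuously in a way that is uniform over a compact family of connections. This follows from Gronwall-type estimates for the linear ODE defining parallel transport, with the coefficient (the connection form) varying in a compact, hence equibounded and equicontinuous, set; it is routine but is where all the analysis sits. Everything else — the splitting, the total-mass bound, the passage to the limit — is formal. Since the excerpt ends at ``We obviously have'' followed by this proposition, I would keep the written proof short, essentially stating that $(X,x,\nabla)\mapsto F_{X,x}(\nabla)$ is continuous and locally bounded, hence the claim is the standard stability of integration against weakly convergent measures.
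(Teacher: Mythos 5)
The paper does not actually provide a proof of this proposition — it is introduced with ``We obviously have'' and left as an assertion. Your argument is therefore not being compared against anything, but it is a correct and reasonable fleshing-out of what the author treats as immediate. The two ingredients you isolate are exactly the ones needed: (i) joint continuity of $(X,x,\nabla)\mapsto F_{X,x}(\nabla)$ together with equicontinuity in $(X,x)$ locally uniformly in $\nabla$, coming from Gronwall/parallel-transport stability, and (ii) the standard compactness argument (Arzel\`a--Ascoli gives a finite $\epsilon$-net of the integrand family, weak convergence handles each member of the net, and the uniformly bounded total masses of $\mu_n,\nu_n$ control the error). One small point worth making explicit if you write this out fully: the uniform bound on total variations $\Vert\mu_n\Vert$, $\Vert\nu_n\Vert$ is itself a consequence of weak convergence (uniform boundedness principle, or just test against the constant function $1$ since all the measures live in a fixed bounded interval), and you use it twice — once when replacing an integrand by a nearby element of the net, and once in the ``first term'' of your splitting. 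Other than that minor elaboration, the proof is complete and in the spirit the paper intends.
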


We say the sequence $\{(\mu_n,\nu_n)\}_{n\in\mathbb N}$ is {\em regularising} for the pair $(X,x)$ if $\mu_n,\nu_n$ are smooth measures weakly converging to the Dirac measures supported at $X$ and $x$ respectively.

\subsubsection{Poisson brackets of regularisation}

We now have 
\begin{proposition} For any pair of smooth measures $(\mu,\nu)$ with compact support, $F_{\mu,\nu}$ is an observable.
Let $(\mu,\nu)$ and $(\bar\mu,\bar\nu)$ be two pairs of $C^\infty$ measures on $\mathbb R$. Then the Poisson bracket $\{F_{\mu,\nu},F_{\bar\mu,\bar\nu}\}$ is equal to
\begin{eqnarray}
\sum_{m\in\mathbb Z}\int_{\mathbb R^4}[m(Y)m(y),Xx] \left(F_{X,y}F_{Y,x}-\frac{1}{n^2}F_{X,x}F_{Y,y}\right)\ \!\dd \Lambda(X,x,Y,y),
\end{eqnarray}
where $m(u)=u+m$ and $\Lambda=\mu\otimes\nu\otimes\bar\mu\otimes\bar\nu$. In particular if all measures are supported on $[0,1]$, then the bracket $\{F_{\mu,\nu},F_{\bar\mu,\bar\nu}\}$ is equal to
\begin{eqnarray}
\int_{\mathbb [0,1]^4}[Yy,Xx] \left(F_{X,y}F_{Y,x}-\frac{1}{n^2}F_{X,x}F_{Y,y}\right)\ \!\dd\Lambda (X,x,Y,y).
\end{eqnarray}

\end{proposition}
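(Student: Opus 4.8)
The plan is to compute the differential of $F_{\mu,\nu}$ from Proposition~\ref{PoiXx}, verify it is an honest observable (i.e. has a smooth density), and then plug two such differentials into the Poisson tensor of Definition~\ref{def:poisson}. First I would note that by linearity and Proposition~\ref{PoiXx},
\begin{eqnarray*}
\langle\dd_\nabla F_{\mu,\nu},\alpha\rangle=\int_{\mathbb R^3}\psi^{Y,y,y_0}(s)\,\tr\!\left(\pp^{Y,y}\pi^*(\alpha)\right)\dd\mu(Y)\dd\nu(y),
\end{eqnarray*}
so that, after pushing forward by $\pi$, the differential $\dd_\nabla F_{\mu,\nu}$ is represented by the $0$-form
$$
\Phi_{\mu,\nu}(s):=\sum_{m\in\mathbb Z}\int_{\mathbb R^2}[m(Y)\,m(y),\ s_0 s]\,\pp^{Y,y}(s)\,\dd\mu(Y)\dd\nu(y),
$$
where $s_0$ is a basepoint in $\mathbb R$ disjoint from the interval $]a,b[$; the dependence on $s_0$ disappears by the cocycle identity exactly as in the remark following Proposition~\ref{PoiXx}. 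Since $\mu,\nu$ are smooth and compactly supported in $]a,b[$, this density is smooth in $s$ (the linking-number factor is locally constant in $s$ away from the jumps, and the jumps occur only at $Y,y\in]a,b[$, over which one integrates against smooth measures), so $F_{\mu,\nu}$ is an observable in the sense of Paragraph~\ref{Poisson}.

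Next I would substitute into $\Pi_\nabla(\dd f,\dd g)=\int_{\mathbb T}\tr(\alpha\,\dd^\nabla\beta)$ with $\alpha=\Phi_{\mu,\nu}$, $\beta=\Phi_{\bar\mu,\bar\nu}$. The key computation is the ``second differential'': one needs $\dd^\nabla\pp^{Y,y}$. Because $\sigma_Y^*$ and $\sigma_y$ are respectively $\nabla^*$- and $\nabla$-parallel, $\pp^{Y,y}=\sigma_Y^*\otimes\sigma_y$ is itself $\nabla$-parallel as a section of $\End(P)$, hence $\dd^\nabla\pp^{Y,y}=0$; therefore the only contribution to $\dd^\nabla\beta$ comes from differentiating the locally constant linking-number coefficient $s\mapsto[m(\bar Y)m(\bar y),s_0 s]$, which is a sum of Dirac masses located at $\bar Y$ and $\bar y$ (up to translates by $\mathbb Z$). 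Carrying this out, $\dd^\nabla\Phi_{\bar\mu,\bar\nu}$ is a sum over $m$ of boundary terms, and pairing with $\alpha=\Phi_{\mu,\nu}$ collapses the $s$-integral onto the support points of $\bar\mu,\bar\nu$. One is left with
\begin{eqnarray*}
\{F_{\mu,\nu},F_{\bar\mu,\bar\nu}\}=\sum_{m\in\mathbb Z}\int_{\mathbb R^4}[m(Y)m(y),Xx]\,\tr\!\left(\pp^{Y,y}\pp^{X,x}-\tfrac1{n^2}\pp^{Y,y}\otimes\pp^{X,x}\right)\dd\Lambda,
\end{eqnarray*}
and then recognising $\tr(\pp^{Y,y}\pp^{X,x})=F_{X,y}F_{Y,x}$ (compose the rank-one operators: $\langle\sigma^*_Y,\sigma_x\rangle\langle\sigma^*_X,\sigma_y\rangle$) together with the trace-free correction $\tfrac1{n^2}F_{X,x}F_{Y,y}$ coming from the projection $\End(P)\to\End_0(P)$, which is forced because the Poisson tensor is written with $\End_0(P)$. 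The restriction to measures supported in $[0,1]$ then kills all $m\neq 0$ terms since $[m(Y)m(y),Xx]=0$ once $X,x,Y,y,m(Y),m(y)$ all lie in disjoint fundamental domains, leaving the stated formula.

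The main obstacle I anticipate is the careful bookkeeping of the $\mathbb Z$-sum and the choice of basepoint $s_0$: one must check that the regularised expressions genuinely live on $\mathbb T$ (not just on $\mathbb R$), that the sum over $m$ converges (it is locally finite once the supports are bounded, but this needs to be said), and that the distributional derivative of the piecewise-constant linking number really produces the linking number $[m(Y)m(y),Xx]$ with the correct sign rather than some variant — this is where the antisymmetry and cocycle identities \eqref{In1}, \eqref{In2}, \eqref{In2b} for the linking number get used. The trace-free projection term $-\tfrac1{n^2}$ also requires a small argument: $\Pi_\nabla$ pairs elements of $\mk g=\Omega^0(\mathbb T,\End_0(P))$, so each $\pp^{Y,y}$ must first be replaced by its trace-free part $\pp^{Y,y}-\tfrac1n F_{Y,y}\,\id$ before pairing, and expanding the product of the two trace-free parts and taking the trace produces exactly $F_{X,y}F_{Y,x}-\tfrac1{n^2}F_{X,x}F_{Y,y}$. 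Everything else is routine.
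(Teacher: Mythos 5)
Your proposal follows essentially the same route as the paper: compute $\dd F_{\mu,\nu}$ by integrating Proposition~\ref{PoiXx} against $\mu\otimes\nu$, periodicize over $\mathbb Z$ to obtain a smooth section of $\End_0(P)$ over $\mathbb T$ (hence an observable), use that $\pp^{Y,y}$ is $\nabla$-parallel so $\dd^\nabla$ only differentiates the step-function linking factor, and then expand $\tr(\pp_0^{Y,y}\pp_0^{X,x})$ via the identity $\tr(\pp^{Y,y}\pp^{X,x})=F_{X,y}F_{Y,x}$. The only cosmetic difference is that you describe the boundary contribution via Dirac masses integrated against the smooth measures, whereas the paper directly produces the smooth densities $\d\bar\mu(s),\d\bar\nu(s)$, but these are the same computation.
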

\begin{proof} By Proposition \ref{PoiXx}, we have that if $a$ does not belong to the union $K$ of the supports of $\mu$ and $\nu$
\begin{eqnarray*}
\langle\dd F_{\mu,\nu},\alpha\rangle&=&\int_{\mathbb R^2}\psi^{X,x,a}\tr\left(\pp^{X,x}\pi^*\alpha\right)\ \dd\mu \cdotp\dd\nu(X,x).
\end{eqnarray*}
Let us denote by  ${\ms C}_0\defeq {\ms C}-\frac{1}{n}\tr({\ms C}){\rm Id}$ the trace free part of the endomorphism $\ms C$.
For any $s$ in $\mathbb R$, let
\begin{eqnarray}
\Lambda_{\mu,\nu}(s)\defeq \int_{\mathbb R^2}\psi^{X,x,a}(s)\pp_0^{X,x}(s)\ \dd\mu \cdotp\dd\nu(X,x).\label{Poi10}
\end{eqnarray}
Observe that $\Lambda_{\mu,\nu}\in\Omega^0(\mathbb R,\End_0(P))$ and the the support of $\Lambda_{\mu,\nu}$ is included in $K$. Let us trivialise $P$ using the connection $\nabla$. Then let
$$
G_{\mu,\nu}(s)\defeq \sum_{m\in\mathbb Z}\Lambda_{\mu,\nu}(s+m).
$$
Then $G_{\mu,\nu}(s)$ is periodic and thus of the form $\pi^*\beta$, with $\beta\in\Omega^0(\mathbb T,P)$. Moreover
\begin{eqnarray}
\int_{\mathbb T}\tr(\beta.\alpha)&=&\int_0^1\tr(\pi^*\beta.\pi^*\alpha)\cr
&=&\int_{\mathbb R}\tr(\Lambda_{\mu,\nu}.\pi^*\alpha)\cr
&=&\langle \dd F_{\mu,\nu},\alpha\rangle.\label{dF2}
\end{eqnarray}
It follows by Equation \eqref{dF2}, that 
\begin{eqnarray}
\dd F_{\mu,\nu}(s)=\beta\in \mk{g}=\ms D^0.
\end{eqnarray}
In particular, according to Definition \ref{def:poisson}, $F_{\mu,\nu}$ is an observable.
From Equation \eqref{Poi10}, we have
\begin{eqnarray*}
\Lambda_{\mu,\nu}(s)
&=&-\int_{-\infty}^s\!\int_s^\infty
\pp_0^{X,x}(s)\ \dd\mu \cdotp\dd\nu(X,x)\cr & &+\int_s^\infty\!\int_{-\infty}^s\pp_0^{X,x}(s)\ \dd\mu \cdotp\dd\nu(X,x).
\end{eqnarray*}
For any smooth probability measure $\xi$ let us write $\dd\xi=\d\xi\dd\lambda$ where $\lambda$ is the Lebesgue measure.
Then, since $\pp^{X,x}$ is parallel, we have
\begin{eqnarray*}
\nabla_{\partial_t}\Lambda_{\mu,\nu}(s)&=&-\d\mu(s)\int_s^\infty\pp_0^{s,x}(s)\ \dd\nu(x)+\d\nu(s)\int_{-\infty}^s\pp_0^{X,s}(s)\ \dd\mu(X) \\
& &-\d\mu(s)\int_{-\infty}^s\pp_0^{s,x}(s)\ \dd\nu(x)+\d\nu(s)\int_s^\infty\pp_0^{X,s}(s)\ \dd\mu(X)\\
&=&\d\nu(s)\int_{\mathbb R}\pp_0^{X,s}\dd\mu(X)-\d\mu(s)\int_{\mathbb R}\pp_0^{s,x}\ \dd\nu(x).
\end{eqnarray*}
It follows that
\begin{eqnarray}
& &\tr\left(\Lambda_{\mu,\nu}(s+m)\nabla_{\partial_t} \Lambda_{\bar\mu,\bar\nu}(s)\right)\cr&=&\d{\bar\nu}(s)\int_{\mathbb R^3}\psi^{X,x,a}(s+m)\tr\left(\pp_0^{Y,s}\pp_0^{X,x}\right)\ \dd\mu \cdotp\dd\nu \cdotp\dd\bar\mu(X,x,Y)\cr
&-&\d{\bar\mu}(s)\int_{\mathbb R^3}\psi^{X,x,a}(s+m)\tr\left(\pp_0^{s,y}\pp_0^{X,x}\right)\ \dd\mu \cdotp\dd\nu \cdotp\dd\bar\nu(X,x,y).\label{Poi2}
\end{eqnarray}
We can now compute the Poisson bracket as defined in Definition \ref{def:poisson}:
\begin{eqnarray}
\{F_{\mu,\nu},F_{\bar\mu,\bar\nu}\}&=&\int_{\mathbb R}\tr\left(\Lambda_{\mu,\nu}(s)\pi^*\left(\nabla_{\partial_t} \dd F_{\bar\mu,\bar\nu}(s)\right)\right)\ \dd\lambda(s)\cr
&=&\sum_{m\in\mathbb Z}\int_{\mathbb R}\tr\left(\Lambda_{\mu,\nu}(s)\nabla_{\partial_t} \Lambda_{\bar\mu,\bar\nu}(s+m)\right)\ \dd\lambda(s)\cr
&=&\sum_{m\in\mathbb Z}\int_{\mathbb R}\tr\left(\Lambda_{\mu,\nu}(s+m)\nabla_{\partial_t} \Lambda_{\bar\mu,\bar\nu}(s)\right)\ \dd\lambda(s).
\end{eqnarray}
Using Equation \eqref{Poi2}, we get that
\begin{eqnarray}
& &\{F_{\mu,\nu},F_{\bar\mu,\bar\nu}\}\cr
&=&\sum_{m\in\mathbb Z}\int_{\mathbb R^4}\psi^{X,x,a}(s+m)\tr\left(\pp_0^{Y,s}\pp_0^{X,x}\right)\ \dd\Lambda(X,x,Y,s)\label{lin1}\\
&-&\sum_{m\in\mathbb Z}\int_{\mathbb R^4}\psi^{X,x,a}(s+m)\tr\left(\pp_0^{s,y}\pp_0^{X,x}\right)\ \dd\Lambda(X,x,s,y).\label{lin2}
\end{eqnarray}
Using the dummy changes of variables $s=y$ on line \eqref{lin1} and $s=Y$ on line \eqref{lin2}, we finally get
\begin{align}
&\{F_{\mu,\nu},F_{\bar\mu,\bar\nu}\}\cr
&=\sum_{m\in\mathbb Z}\int_{\mathbb R^4}(\psi^{X,x,a}(y+m)-\psi^{X,x,a}(Y+m))\tr\left(\pp_0^{Y,y}\pp_0^{X,x}\right)\ \dd\lambda(X,x,Y,y)\cr
&=\sum_{m\in\mathbb Z}\int_{\mathbb R^4}[(Y+m)(y+m),Xx]\tr\left(\pp_0^{Y,y}\pp_0^{X,x}\right)\ \dd\Lambda(X,x,Y,y)\label{Poi4}
\end{align}
We conclude the proof of the proposition by remarking that
$$
\tr\left(\pp^{X,x}\pp^{Y,y}\right)=\tr\left(\pp^{X,y}\right)\tr\left(\pp^{Y,x}\right),
$$
and thus
\begin{equation}
\tr\left(\pp_0^{X,x}\pp_0^{Y,y}\right)=\tr\left(\pp^{X,y}\right)\tr\left(\pp^{Y,x}\right)-\frac{1}{n^2}\tr\left(\pp^{X,x}\right)\tr\left(\pp^{Y,y}\right).\label{Poi5}
\end{equation}
Combining Equations \eqref{Poi4} and \eqref{Poi5} yields the result.
\end{proof}
As corollaries, we obtain
\begin{corollary}
Let $(\mu_u,\nu_n)$ and $(\bar\mu_n,\bar\nu_n)$ be regularising sequences for $(X,x)$ and $(Y,y)$ respectively. Assume that $\{X,x,Y,y\}\subset ]0,1[$, then
$$
\lim_{n\to\infty}\left(\{F_{\mu_n,\nu_n},F_{\bar\mu_n,\bar\nu_n}\}\right)=[Yy,Xx] \left(F_{X,y}F_{Y,x}-\frac{1}{n^2}F_{X,x}F_{Y,y}\right).
$$ 
\end{corollary}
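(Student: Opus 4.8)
The plan is to deduce the corollary from the preceding proposition by a simple limiting argument, combined with the definition of the coordinate functions $F_{X,x}$. Since all four points $X,x,Y,y$ lie in $]0,1[$, the proposition gives, for any regularising sequences, the explicit formula
\begin{eqnarray*}
\{F_{\mu_n,\nu_n},F_{\bar\mu_n,\bar\nu_n}\}=\int_{[0,1]^4}[Y'y',X'x']\left(F_{X',y'}F_{Y',x'}-\frac{1}{n^2}F_{X',x'}F_{Y',y'}\right)\dd\Lambda_n(X',x',Y',y'),
\end{eqnarray*}
where $\Lambda_n=\mu_n\otimes\nu_n\otimes\bar\mu_n\otimes\bar\nu_n$. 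First I would observe that the integrand is a bounded measurable function of $(X',x',Y',y')$ on any compact neighbourhood of $(X,x,Y,y)$ contained in $]0,1[^4$: the maps $(X',x')\mapsto F_{X',x'}(\nabla)$ are continuous (they are defined by parallel transport along intervals, which depends continuously on the endpoints), and the linking number $[\cdot,\cdot]$ is bounded by construction. Hence the only possible source of trouble is a discontinuity of the linking number $[Y'y',X'x']$ at configurations where the points collide; but by hypothesis $\{X,x,Y,y\}\subset]0,1[$ are the limiting Dirac supports and, for the corollary to be meaningful, these four points are pairwise distinct, so for $n$ large the masses of $\Lambda_n$ are concentrated in a small neighbourhood where $[Y'y',X'x']$ is locally constant (on the real line the linking number of pairs is locally constant away from the coincidence locus).

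The key steps, in order, are: (1) invoke the second formula of the preceding proposition to rewrite the bracket $\{F_{\mu_n,\nu_n},F_{\bar\mu_n,\bar\nu_n}\}$ as the integral above over $[0,1]^4$; (2) note that since $X,x,Y,y$ are pairwise distinct points of $]0,1[$, there is $\epsilon>0$ so that for $n$ large the support of $\Lambda_n$ lies in $B(X,\epsilon)\times B(x,\epsilon)\times B(Y,\epsilon)\times B(y,\epsilon)\subset]0,1[^4$ and $[Y'y',X'x']$ is constant equal to $[Yy,Xx]$ on that product of balls; (3) on that neighbourhood the remaining factor $(X',x',Y',y')\mapsto F_{X',y'}F_{Y',x'}-\tfrac{1}{n^2}F_{X',x'}F_{Y',y'}$ is a continuous function, hence the integral against $\Lambda_n$ — which is a product of probability measures weakly converging to Dirac masses — converges to the value of that function at $(X,x,Y,y)$; (4) conclude
\begin{eqnarray*}
\lim_{n\to\infty}\{F_{\mu_n,\nu_n},F_{\bar\mu_n,\bar\nu_n}\}=[Yy,Xx]\left(F_{X,y}F_{Y,x}-\frac{1}{n^2}F_{X,x}F_{Y,y}\right),
\end{eqnarray*}
using also the earlier proposition that $\{F_{\mu_n,\nu_n}\}$ converges uniformly on compacta so the bracket converges as a function on the space of connections.

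The main obstacle — really the only point requiring a little care — is step (2): one must be sure that the linking number term contributes a genuine constant and not some averaged quantity. This is where the hypothesis $\{X,x,Y,y\}\subset]0,1[$ (and the implicit pairwise distinctness needed for the right-hand side to be well defined) is used: it guarantees that the weak convergence of $\Lambda_n$ to the Dirac mass at $(X,x,Y,y)$ takes place in a region where the integrand is continuous, so no boundary or coincidence effects spoil the limit. Everything else is the standard fact that integration against a weakly convergent sequence of probability measures evaluates a continuous integrand at the limit point, together with the explicit formula already established in the preceding proposition.
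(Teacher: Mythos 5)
Your proposal is correct and is essentially the argument the paper implicitly intends: the paper states the corollary without proof as a direct consequence of the preceding proposition, and your reduction to the explicit integral formula on $[0,1]^4$, followed by weak convergence of the product measures $\Lambda_n$ against a continuous integrand, is exactly that consequence. You also correctly identify that the only delicate point is the local constancy of the linking number near $(X,x,Y,y)$, and that this is where the pairwise-distinctness of the four points (implicit in the statement) enters.

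One small point that deserves to be stated more carefully: your step (2) asserts that for $n$ large the support of $\Lambda_n$ lies in a small product of balls inside $]0,1[^4$. Weak convergence to Dirac masses by itself does not force the supports to shrink; it only forces the masses outside any fixed neighbourhood to tend to zero. To apply the ``in particular'' form of the proposition (the $[0,1]^4$ integral), one should either build the support-control into the definition of regularising sequence, which the paper's context tacitly does since all the measures under discussion are compactly supported near the target points, or else work with the full formula involving the sum over $m\in\mathbb Z$ and note that for each $m\neq 0$ the integrand's linking-number factor is eventually zero on the shrinking effective support while the $F$-factors stay bounded, so those terms vanish in the limit. Either fix is routine, and once it is made your argument goes through: continuity of $(X',x',Y',y')\mapsto F_{X',y'}F_{Y',x'}-\tfrac{1}{n^2}F_{X',x'}F_{Y',y'}$ plus local constancy of $[Y'y',X'x']$ near the pairwise-distinct limit configuration, combined with weak convergence of $\Lambda_n$ to the Dirac mass at $(X,x,Y,y)$, yields the stated limit.
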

\begin{corollary}\label{FxxPois}
Let $(X,x,Y,y)$ be a quadruple of pairwise distinct points. Then $(F_{X,x},F_{Y,y})$ is a pair of acceptable observables. Moreover, 
$$
\{F_{X,x},F_{Y,y}\}=[Yy,Xx] \left(F_{X,y}F_{Y,x}-\frac{1}{n^2}F_{X,x}F_{Y,y}\right).
$$ 
\end{corollary}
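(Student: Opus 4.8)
The plan is to deduce Corollary \ref{FxxPois} from Proposition \ref{PoiXx}, from the explicit formula for $\{F_{\mu,\nu},F_{\bar\mu,\bar\nu}\}$ established just above, and from the definition of the Poisson bracket of an acceptable pair of observables given in the remark following Definition \ref{def:poisson}.

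First I would check that $(F_{X,x},F_{Y,y})$ is an acceptable pair. By Proposition \ref{PoiXx}, the differential $\dd_\nabla F_{X,x}$ is represented by the periodisation of the $0$-form $s\mapsto\psi^{X,x,a}(s)\,\pp_0^{X,x}(s)$, where $\psi^{X,x,a}(s)=[as,Xx]$; since this linking number is locally constant on $\mathbb{T}\setminus\{X,x\}$ and $\pp^{X,x}$ is $\nabla$-parallel, the $0$-form is smooth away from $\{X,x\}$ and has jump discontinuities exactly at $X$ and $x$, so $\dd F_{X,x}$ is a distribution with singular support $\{X,x\}$. Choosing cut-off functions $\chi_X,\chi_x$ supported in small disjoint neighbourhoods of $X$ and $x$, one writes $\dd F_{X,x}=F+f_0$ with $F:=(\chi_X+\chi_x)\,\psi^{X,x,a}\,\pp_0^{X,x}$ supported near $\{X,x\}$ and $f_0$ smooth, hence an observable; likewise $\dd F_{Y,y}=G+g_0$ with $G$ supported near $\{Y,y\}$ and $g_0$ an observable. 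Since the four points are pairwise distinct, the neighbourhoods can be chosen small enough that $\operatorname{supp}F\cap\operatorname{supp}G=\emptyset$, so $(F_{X,x},F_{Y,y})$ is an acceptable pair and the bracket $\{F_{X,x},F_{Y,y}\}$ is well defined.

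Second I would evaluate the bracket by a regularising procedure, using that (as noted in the remark after Definition \ref{def:poisson}) the acceptable-pair bracket agrees with any such procedure. Pick regularising sequences $(\mu_n,\nu_n)\to(\delta_X,\delta_x)$ and $(\bar\mu_n,\bar\nu_n)\to(\delta_Y,\delta_y)$. After rotating $\mathbb{T}$ — which transforms the bundle $P$, its filtration, the trivialisation and hence all the $F$'s covariantly — we may assume $\{X,x,Y,y\}\subset\,]0,1[$ and that the supports of all four measures lie in $]0,1[$. Then the formula for $\{F_{\mu_n,\nu_n},F_{\bar\mu_n,\bar\nu_n}\}$ from the Proposition above applies: in the sum over $m\in\mathbb{Z}$ the pair $(Y+m,y+m)$ lies in $]m,m+1[$, which is disjoint from $]0,1[$ for $m\neq 0$, so the linking number $[(Y+m)(y+m),Xx]$ vanishes against the measures and only the term $m=0$ survives, giving $\int[Yy,Xx]\bigl(F_{X,y}F_{Y,x}-\frac{1}{n^2}F_{X,x}F_{Y,y}\bigr)\,\dd\Lambda_n$. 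Letting $n\to\infty$ — the integrand being continuous at $(X,x,Y,y)$ precisely because these points are distinct — yields the claimed value; this last step is exactly the content of the corollary immediately preceding \ref{FxxPois}.

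The point requiring the most care, and the expected main obstacle, is the identification of the abstractly defined value $\{F_{X,x},F_{Y,y}\}=\Pi(f_0,g_0)+\langle F,H(g_0)\rangle-\langle G,H(f_0)\rangle$ with the regularised limit $\lim_n\{F_{\mu_n,\nu_n},F_{\bar\mu_n,\bar\nu_n}\}$: one must verify that the value is independent of the chosen cut-offs $\chi_X,\chi_x$ and that the distributional pairings above are continuous under the weak approximation of $\dd F_{X,x}$ by $\dd F_{\mu_n,\nu_n}$ away from the singular support of $\dd F_{Y,y}$. It is here that the disjointness of $\operatorname{supp}F$ and $\operatorname{supp}G$ is genuinely used; once this compatibility is established, the remaining computation is the routine one already carried out in the Proposition.
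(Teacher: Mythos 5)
Your proposal is correct and follows the same route the paper takes: identify the singular support of $\dd F_{X,x}$ from Proposition \ref{PoiXx}, conclude acceptability from disjointness of the four points, and then invoke the remark that the acceptable-pair bracket agrees with regularising procedures together with the immediately preceding corollary to evaluate the limit. The paper leaves these steps implicit (it states Corollary \ref{FxxPois} without a separate proof), and you have correctly filled them in — including the point that only the $m=0$ term survives and that the regularising limit must be shown to be compatible with the abstractly defined bracket, which is indeed the only part the paper takes on faith.
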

This last corollary interprets the swapping algebra as an algebra of ``observables" on the space of connections.
\subsection{Drinfel'd-Sokolov reduction and the multifraction algebra}

We introduced in Paragraph \ref{defFxy} functions of connections depending on the choice of a trivialisation of $P$. We now introduce functions that only depend on the associated oper and do not rely on the choice of the trivialisation of $P$. 

We first relate cross ratios with our previously defined coordinate functions.

\subsubsection{Cross ratios}
The following proposition follows at once from the definitions.

\begin{proposition}\label{crossFxx} Let $D$ be a Hitchin oper associated to the connection $\nabla$ with trivial holonomy. Let $X,x,Y,y$ be a quadruple of pairwise distinct points of $\mathbb T$. Let $\tilde X,\tilde x,\tilde Y,\tilde y$ be lifts of $X,x,Y,y$ in $\mathbb R$, then
$$
\bb_D(X,x,Y,y)=\frac{F_{\tilde X,\tilde y}(\nabla).F_{\tilde Y,\tilde x}(\nabla)}{F_{\tilde X,\tilde x}(\nabla).F_{\tilde Y,\tilde y}(\nabla)}.
$$
\end{proposition}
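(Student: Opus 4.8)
The final statement (Proposition \ref{crossFxx}) asserts that for a Hitchin oper $D$ with trivial holonomy, the weak cross ratio $\bb_D(X,x,Y,y)$ equals the quotient of coordinate functions
$$
\frac{F_{\tilde X,\tilde y}(\nabla) F_{\tilde Y,\tilde x}(\nabla)}{F_{\tilde X,\tilde x}(\nabla) F_{\tilde Y,\tilde y}(\nabla)}.
$$

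The plan is to trace through the definitions and observe that both sides are built from the same pairings $\langle\widehat\xi^*(\cdot),\widehat\xi(\cdot)\rangle$. First I would recall that the weak cross ratio of the Frenet curve $\xi$ and its osculating hyperplane $\xi^*$ is, by Equation \eqref{FRE},
$$
\bb_{\xi,\xi^*}(X,x,Y,y)=\frac{\langle\widehat\xi(X)\vert\widehat\xi^*(x)\rangle\langle\widehat\xi(Y)\vert\widehat\xi^*(y)\rangle}{\langle\widehat\xi(Y)\vert\widehat\xi^*(x)\rangle\langle\widehat\xi(X)\vert\widehat\xi^*(y)\rangle},
$$
using the convention that relates $\bb_D$ to the curve associated to the oper. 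Then I would recall the definition of the coordinate function $F_{Y,y}(\nabla)=\langle\sigma^*_Y(t),\sigma_y(t)\rangle$, where $\sigma_y$ is the $\nabla$-parallel section of $P$ with $\sigma_y(y)=\sigma_1(y)$ and $\sigma^*_Y$ the $\nabla^*$-parallel section with $\sigma^*_Y(Y)=\sigma^*_n(Y)$; the key point already established is that this pairing is constant in $t$, and that (since $\nabla$ has trivial holonomy) $F_{Y,y}$ descends to a function of the projections of $Y$ and $y$ to $\mathbb T$.

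The heart of the argument is the identification: because $\nabla$ has trivial holonomy, parallel transport gives a trivialization of $P$ over $\mathbb R$ (equivalently $\mathbb T$) under which the curve $\xi$ associated to the oper is realized, in the appropriate line, by $t\mapsto$ (parallel transport of $\sigma_1(y)$ back to a fixed fiber), and the osculating hyperplane $\xi^*$ is realized via $\sigma^*_n$ transported by the dual connection. More precisely, the section $\sigma_y$ represents $\xi(y)=\xi^1(y)$ (the line $F_1$ at $y$ carried along $\nabla$), and $\sigma^*_Y$ represents the hyperplane $\xi^*(Y)$ (the annihilator of $F_{n-1}$ at $Y$ carried along $\nabla^*$). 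Hence $F_{Y,y}(\nabla)=\langle\sigma^*_Y,\sigma_y\rangle$ is, up to the freely chosen normalizations of $\widehat\xi$ and $\widehat\xi^*$ at each point, exactly the pairing $\langle\widehat\xi(Y)\vert\widehat\xi^*(y)\rangle$ appearing in \eqref{FRE}. The point of forming the cross ratio quotient is precisely that all these pointwise normalization ambiguities — one scalar per point $X, x, Y, y$ for $\xi$ and one per point for $\xi^*$ — cancel in the ratio, since each of $X, x, Y, y$ appears once in the numerator and once in the denominator. Therefore the quotient $\frac{F_{\tilde X,\tilde y}F_{\tilde Y,\tilde x}}{F_{\tilde X,\tilde x}F_{\tilde Y,\tilde y}}$ coincides with $\bb_{\xi,\xi^*}(X,x,Y,y)=\bb_D(X,x,Y,y)$.

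I would then remark on why the lifts $\tilde X,\tilde x,\tilde Y,\tilde y\in\mathbb R$ may be chosen independently: trivial holonomy means the parallel sections $\sigma_y$, $\sigma^*_Y$ on $\mathbb R$ are invariant (up to scalar, and the scalars cancel in the ratio) under the deck transformation $t\mapsto t+1$, so $F_{\tilde Y,\tilde y}$ depends only on $Y,y\in\mathbb T$ — this is exactly the content of the proposition preceding \eqref{Fp}. The main (and only real) obstacle is bookkeeping: being careful that the index conventions in \eqref{FRE} match the order of arguments in $F_{Y,y}$ (i.e., that $F_{Y,y}$ pairs the hyperplane at $Y$ with the line at $y$, matching $\langle\widehat\xi(Y)\vert\widehat\xi^*(y)\rangle$ after accounting for the dualization that sends lines to hyperplanes), and confirming that the trivial-holonomy hypothesis is used precisely where claimed. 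Once the dictionary between $(\sigma_y,\sigma^*_Y)$ and $(\widehat\xi,\widehat\xi^*)$ is pinned down, the proposition "follows at once from the definitions," as the paper states.
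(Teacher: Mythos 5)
Your overall plan is the right one — unwind the definitions, identify the coordinate functions $F_{Y,y}$ with the pairings appearing in \eqref{FRE}, and observe that the pointwise scalar ambiguities cancel in the cross-ratio quotient. But the central identification you write down is reversed, and you flag but never actually resolve the bookkeeping issue that this creates. You assert that $F_{Y,y}(\nabla)=\langle\sigma^*_Y,\sigma_y\rangle$ is, up to scale, the pairing $\langle\widehat\xi(Y)\vert\widehat\xi^*(y)\rangle$. Trace the roles: $\sigma_y$ is the $\nabla$-parallel section of $P$ normalized inside $F_1(y)$, so under the identification of $P$ (via parallel transport) with the constant bundle $V^*$, where $V$ is the solution space of $D\psi=0$, it represents the \emph{line} $\xi$ at the point $y$; while $\sigma^*_Y$ is the $\nabla^*$-parallel section of $P^*$ normalized to annihilate $F_{n-1}(Y)$, so it represents the osculating \emph{hyperplane} $\xi^*$ at the point $Y$. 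Hence $\langle\sigma^*_Y,\sigma_y\rangle$ is, up to scale, $\langle\widehat\xi(y)\vert\widehat\xi^*(Y)\rangle$: the lower-case index $y$ travels with $\xi$ and the upper-case $Y$ with $\xi^*$, the opposite of what you wrote.

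This is not a cosmetic slip. With the corrected identification the quotient $F_{X,y}F_{Y,x}/(F_{X,x}F_{Y,y})$ becomes $\langle\widehat\xi(y)\vert\widehat\xi^*(X)\rangle\langle\widehat\xi(x)\vert\widehat\xi^*(Y)\rangle\big/\bigl(\langle\widehat\xi(x)\vert\widehat\xi^*(X)\rangle\langle\widehat\xi(y)\vert\widehat\xi^*(Y)\rangle\bigr)$, in which $\widehat\xi$ is evaluated at the lower-case points $\{x,y\}$ and $\widehat\xi^*$ at $\{X,Y\}$, whereas \eqref{FRE} applied to the arguments $(X,x,Y,y)$ evaluates $\widehat\xi$ at $\{X,Y\}$ and $\widehat\xi^*$ at $\{x,y\}$. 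These are not formally the same rational expression; indeed, checking the trivial $n=2$ oper $\psi\mapsto\psi''$ (so $\xi(t)=[1:t]$, $F_{Y,y}=y-Y$) shows the two sides are reciprocals of one another. So the step you call ``the main (and only real) obstacle'' — ``being careful that the index conventions in \eqref{FRE} match the order of arguments in $F_{Y,y}$'' — is exactly the point at which the argument does not close as written: you must either carry out the bookkeeping honestly (and decide whether a convention in the statement or in \eqref{FRE} needs adjusting), or supply and justify a duality symmetry for the weak cross ratio of a Frenet pair $(\xi,\xi^*)$. Treating this as routine and deferring it to a parenthetical is where the proof fails.
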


\subsubsection{The main Theorem}

We can now prove our main theorem which relates the Poisson structure on the space of opers and the multifraction algebra.

\begin{theorem}\label{DF-swap}
Let $(X_0,x_0,Y_0,y_0,X_1,x_1,Y_1,y_1)$ be pairwise distinct points. Then the cross fractions $[X_0;x_0;Y_0;y_0]$ and $[X_1;x_1;Y_1;y_1]$ defines a pair of acceptable observables whose Poisson bracket with respect to the Drinfel'd-Sokolov reduction coincide with their Poisson bracket in the multifraction algebra.
\end{theorem}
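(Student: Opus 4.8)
The plan is to reduce the statement to Corollary \ref{FxxPois} by expressing the cross fraction bracket through coordinate functions and then showing the regularisation procedures on both sides agree. First I would write, for a Hitchin oper $D$ associated to a trivial-holonomy connection $\nabla$ and lifts $\tilde X_i,\tilde x_i,\tilde Y_i,\tilde y_i$ in $\mathbb R$ of the eight points, the identity from Proposition \ref{crossFxx},
$$
[X_i;x_i;Y_i;y_i](\bb_D)=\frac{F_{\tilde X_i,\tilde y_i}(\nabla).F_{\tilde Y_i,\tilde x_i}(\nabla)}{F_{\tilde X_i,\tilde x_i}(\nabla).F_{\tilde Y_i,\tilde y_i}(\nabla)}.
$$
The eight points being pairwise distinct, I can choose the lifts so that all sixteen coordinate functions appearing (for $i=0,1$) are built from parameter values lying in a common interval $]0,1[$, and such that the singular supports relevant to $i=0$ and $i=1$ are disjoint intervals; this is exactly the "acceptable pair'' situation described in the remarks following Definition \ref{def:poisson}. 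So the first substantive step is a bookkeeping lemma: choosing the lifts and checking disjointness of singular supports, hence that $[X_0;x_0;Y_0;y_0]$ and $[X_1;x_1;Y_1;y_1]$ form an acceptable pair of observables on the space of connections, and hence (by the last proposition of Section \ref{sec:oper}) descend to functions on $\ms X_n^0(\mathbb T)$ whose Poisson bracket is well defined.

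Next I would compute the bracket using the "logarithmic derivative'' formalism. Since $[X;x;Y;y]=F_{X,y}F_{Y,x}/(F_{X,x}F_{Y,y})$ is a ratio of products of coordinate functions, and since the Poisson bracket is a biderivation, the bracket $\{[X_0;x_0;Y_0;y_0],[X_1;x_1;Y_1;y_1]\}$ expands as $[X_0;x_0;Y_0;y_0].[X_1;x_1;Y_1;y_1]$ times a sum over the four numerator/denominator choices on each side of terms $\pm\{F_{A_0,B_0},F_{A_1,B_1}\}/(F_{A_0,B_0}F_{A_1,B_1})$, where $A_k,B_k$ range over the relevant points. By Corollary \ref{FxxPois}, each such term equals $[A_1B_1,A_0B_0]$ times $(F_{A_0,B_1}F_{A_1,B_0}-\frac{1}{n^2}F_{A_0,B_0}F_{A_1,B_1})/(F_{A_0,B_0}F_{A_1,B_1})$. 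Substituting $F_{U,v}=\langle\widehat\xi^*(U)|\widehat\xi(v)\rangle$ (with the chosen lifts, where $\xi,\xi^*$ are the Frenet curve and osculating hyperplane of $D$) turns everything into rational expressions in the pairings, i.e.\ into multi fractions evaluated on $\bb_D$. The claim is then that this rational expression equals precisely the swapping bracket $\{[X_0,x_0,Y_0,y_0],[X_1,x_1,Y_1,y_1]\}_W$ evaluated on $\bb_D$ — which on the swapping-algebra side is computed by the same logarithmic-derivative expansion from the definition $\{Uv,U'v'\}=[Uv,U'v'](Uv'.U'v)$ (the $\alpha=0$ normalisation being the one induced on $\mathcal B(\PP)$). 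So the core computation is a purely formal identity: expanding both sides by Leibniz over the ratio of four factors yields, term by term, the same products of linking numbers times monomials in the pairings, once one accounts for the fact that the $-\frac{1}{n^2}$ correction terms telescope against each other. I would verify this cancellation of the central-extension corrections, using that the four signs $\epsilon.\epsilon'$ over numerator/denominator choices sum to zero and that the linking-number prefactor is the same for the "$F_{A_0,B_1}F_{A_1,B_0}$'' part and the "$-\frac{1}{n^2}F_{A_0,B_0}F_{A_1,B_1}$'' part of each contribution.

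The main obstacle I expect is precisely this last point: matching the combinatorics of the eight linking-number coefficients coming from Corollary \ref{FxxPois} (one for each pair of a numerator/denominator factor on the $0$-side and a numerator/denominator factor on the $1$-side) with the combinatorics of the swapping bracket of two cross fractions, and checking that the $\frac{1}{n^2}$-terms genuinely drop out. Here the identities of Proposition \ref{prelimF} — especially \eqref{In5}, \eqref{In3}, \eqref{In4} — together with the antisymmetries \eqref{In1}, \eqref{In2} of the linking number are the tools: they are exactly what is needed to collapse the sixteen raw terms into the eight-term form $\sum_{\epsilon,\epsilon'}\epsilon\epsilon'\,\bb(\dots)$ that the cross-fraction bracket produces, and to see that the trace-correction contributions, which come with a linking-number coefficient independent of the $\epsilon$'s, cancel upon summation over $\epsilon$ (or over $\epsilon'$). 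Once this formal identity is in hand, the theorem follows: the bracket descends to $\ms X_n^0(\mathbb T)$ by the acceptable-pair argument, it is independent of the chosen trivialisation of $P$ and of the chosen lifts because $\bb_D$ is, and it agrees on the nose with the multi fraction bracket, which by the earlier proposition in Section \ref{sec:swap} is $\alpha$-independent.
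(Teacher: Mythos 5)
Your proposal is correct and follows essentially the same route as the paper, which disposes of Theorem~\ref{DF-swap} in one line as an ``immediate consequence of Proposition~\ref{crossFxx} and Corollary~\ref{FxxPois}.'' You usefully spell out what the paper leaves implicit: that the cross fractions form an acceptable pair because the singular support of $\dd[X_i;x_i;Y_i;y_i]$ is the finite set $\{X_i,x_i,Y_i,y_i\}$ (not an interval, as you wrote, but the conclusion is the same given the eight points are pairwise distinct), and that after the logarithmic-derivative expansion of the bracket of two four-factor ratios, the $-\tfrac{1}{n^2}$ corrections in Corollary~\ref{FxxPois} telescope to zero. The mechanism for that cancellation is exactly identity~\eqref{In5}: for each fixed pair $(A_1,B_1)$ on the $1$-side, the signed sum $\sum_{(A_0,B_0)}\epsilon_0\,[A_1B_1,A_0B_0]$ over the four numerator/denominator choices on the $0$-side vanishes, which is cleaner than your informal ``the prefactor is the same'' phrasing but agrees with the tools you list. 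The remaining terms assemble into the swapping bracket of the two cross fractions (whose restriction to $\mathcal B(\PP)$ is $\alpha$-independent, so the specific value $-1/n^2$ is irrelevant), as you say.
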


\begin{proof} This is an immediate consequence of Proposition \ref{crossFxx} and Corollary \ref{FxxPois}, as well as the definition of the Poisson structure coming from the symplectic reduction in Theorem \ref{DrinSok}. 
\end{proof}
\section{Appendix: existence of vanishing sequences}\label{sec:vanishexist}

We prove the existence of vanishing sequences.

\begin{definition}{\sc [Separability in groups]}
Let $G$ be a group. The group $G$ is said to be {\em subgroup separable}, if given any finitely generated subgroup $H$ in $G$, given any $g\in G$ and $h\not\in Hg$, there exists a finite index subgroup $G_0$ in $G$ such that if $\pi$ is the projection of $G$ onto $G/G_0$, then
$$
\pi(h)\not\in \pi(Hg).
$$

Let $G$ be a group. The group $G$ is said to be {\em double coset separable}, if given any finitely generated subgroups $H$ and $K$ in $G$, given any $g\in G$ and $h\not\in HgK$, there exists a finite index subgroup $G_0$ in $G$ such that if $\pi$ is the projection of $G$ onto $G/G_0$, then
$$
\pi(h)\not\in \pi(HgK).
$$
\end{definition}

Observe that a double coset separable group is then subgroup separable and residually finite.
G.~Niblo proved in \cite{Niblo:1992uy}

\begin{theorem}
A surface group is double coset separable.
\end{theorem}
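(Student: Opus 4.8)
The plan is to split into two cases and then combine them. If $\Gamma$ is the fundamental group of a surface with non-empty boundary, or of the sphere, torus, projective plane or Klein bottle, then $\Gamma$ is free (in the interesting cases) or virtually abelian, so I would first treat the free group case on its own. For a free group $F$, given finitely generated $H,K\leq F$ and $h\notin HgK$, I would work with Stallings' folded graphs: $H$ and $K$ are carried by finite \emph{core} graphs immersing into a rose $R$ with $\pi_1(R)=F$. Following Marshall Hall's argument and its graph-theoretic refinements, one builds a finite-sheeted cover $\widehat R\to R$ in which the core graph of $H$ and the core graph of a suitable translate of $K$ both lift to embedded subgraphs in controlled position, and then reads off the deck group of a regular cover dominating $\widehat R$; this yields a finite quotient $\pi\colon F\to Q$ with $\pi(h)\notin\pi(H)\pi(g)\pi(K)$, because the combinatorics of reduced paths in $\widehat R$ detects membership in $HgK$. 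This is the step carrying the genuine combinatorial content for free groups, but it is essentially classical.

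Second, for a closed surface $S$ of genus $g\geq 2$, I would realise $\Gamma=\pi_1(S)$ as a graph of groups with free vertex groups and infinite cyclic edge groups. Cutting $S$ along a non-separating simple closed curve $c$ exhibits $\Gamma$ as an HNN extension $\Gamma=F*_{\langle c\rangle}$, where $F$ is the free fundamental group of the cut-open surface and the stable letter conjugates the two boundary copies of $\langle c\rangle$; alternatively, cutting along a separating curve writes $\Gamma=F_1*_{\mathbb Z}F_2$ with $F_1,F_2$ free. In either case the vertex groups are free, hence double coset separable by the first step, and the edge groups are $\mathbb Z$, which is separable in each vertex group.

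The core of the proof is then a combination theorem: an amalgam or HNN extension of double-coset-separable groups over infinite cyclic edge groups is again double coset separable, provided the splitting is \emph{efficient} (maximal cyclic subgroups separated and malnormal enough), which holds for the splittings of a surface group above. To prove it I would analyse a double coset $HgK$ in $\Gamma=A*_C$ using Bass--Serre theory: $\Gamma$ acts on its tree $T$, and membership $h\in HgK$ reduces to a reduced-word condition that can be decomposed vertex by vertex along the path in $T$ joining the relevant orbits. Given $h\notin HgK$, one locates the place where the obstruction occurs --- either a vertex where the slices $H\cap{}^t\!A$, $K\cap{}^s\!A$ and the local translate fail a double coset relation inside the free group ${}^t\!A$, handled by double coset separability of the vertex group, or an edge where the $\mathbb Z$-cosets fail to match, handled by separability of $\mathbb Z$ in $A$. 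The delicate point, which I expect to be the main obstacle, is choosing these finite quotients of the vertex groups \emph{compatibly along the edges}, so that they assemble into an honest finite quotient of the whole graph of groups (by passing to a common refinement in which the edge groups acquire a common prescribed finite index). This compatibility is exactly where Scott's theorem that surface groups are LERF, together with his geometric ``good cover'' technique, enters: it lets one first pass to a finite cover of $S$ in which the finitely many relevant subgroups are disentangled from the edge curves, after which the assembly is routine. Once the combination theorem is established, the genus $\geq 2$ closed case follows by applying it to the chosen splitting, and together with the free case this covers all surface groups.
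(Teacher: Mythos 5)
The paper does not give a proof of this statement at all; it records it as Niblo's theorem with a citation to \cite{Niblo:1992} and uses it only as a black box to guarantee the existence of vanishing sequences. Niblo's own argument adapts Scott's geometric proof that surface groups are LERF: one embeds the closed surface group in the right-angled reflection group of a tessellation of $\mathbb H^2$ by right-angled pentagons, and shows that Scott's construction of ``good'' finite-sheeted covers can be strengthened to separate an element $h$ from an arbitrary double coset $HgK$, not just from a single coset $Hg$. Your proposal is a genuinely different route --- a cyclic Bass--Serre splitting of the surface group into free pieces plus a combination theorem --- so it is worth being explicit about where it stands.

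The critical gap is the combination step. The claim you rely on, that an amalgam or HNN extension of double-coset-separable groups over $\mathbb Z$ is again double coset separable under suitable malnormality or ``efficiency,'' is not an off-the-shelf theorem, and its unqualified form is false: there are HNN extensions of free groups over cyclic subgroups that are not even LERF, and LERF (let alone double coset separability) is notoriously fragile under cyclic amalgamation. So the combination theorem is precisely what you would need to prove, and your sketch does not prove it; the essential ``assemble the finite quotients compatibly along the edges'' step is handed off to ``Scott's good cover technique,'' which is exactly the geometric engine Niblo uses directly. If you are going to invoke Scott's construction anyway, the cleaner path is Niblo's: upgrade Scott's good covers so they separate double cosets, without detouring through Bass--Serre theory at all. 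The free group half via Stallings/Hall is standard and fine, but for closed surfaces the heart of the matter is the step you have asserted rather than established.
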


As an immediate consequence, since $\pi_1(S)$ is countable, we have
\begin{corollary}
Vanishing sequences exist.
\end{corollary}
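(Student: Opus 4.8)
The plan is to deduce the corollary from Niblo's theorem that surface groups are double coset separable, the only additional ingredients being the countability of $\Gamma=\pi_1(S)$ and a routine reduction of the universally quantified clause in the definition of a vanishing sequence to countably many elementary separation statements.

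The first step is the reduction. I claim it suffices to produce a nested sequence $\Gamma=\Gamma_0\supseteq\Gamma_1\supseteq\cdots$ of finite index \emph{normal} subgroups such that, for every triple $(\gamma,\eta,g)\in\Gamma^3$ with $g\notin\langle\eta\rangle\langle\gamma\rangle$, there is an integer $n_0(\gamma,\eta,g)$ with $\Gamma_n\cap(\langle\eta\rangle\,g\,\langle\gamma\rangle)=\emptyset$ for all $n>n_0(\gamma,\eta,g)$. Granting this: any $H$ invariant under the relevant left and right multiplications is a union of $\langle\eta\rangle$--$\langle\gamma\rangle$ double cosets, and the assumption that its image in $\langle\eta\rangle\backslash\Gamma/\langle\gamma\rangle$ be finite lets us write $H=H_0\cup\bigcup_{i=1}^{m}\langle\eta\rangle\,g_i\,\langle\gamma\rangle$ with $H_0:=H\cap\langle\eta\rangle\langle\gamma\rangle$ (which is either empty or all of $\langle\eta\rangle\langle\gamma\rangle$, since double cosets partition $\Gamma$) and each $g_i\notin\langle\eta\rangle\langle\gamma\rangle$. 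Taking $n_0:=\max_i n_0(\gamma,\eta,g_i)$ then gives $H\cap\Gamma_n\subseteq H_0\subseteq\langle\eta\rangle\langle\gamma\rangle$ for $n>n_0$, which is precisely the defining property of a vanishing sequence.

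The second step produces, for each admissible triple, a single finite index normal subgroup. Fix $(\gamma,\eta,g)$ with $g\notin\langle\eta\rangle\langle\gamma\rangle$. The cyclic subgroups $\langle\gamma\rangle$ and $\langle\eta\rangle$ are finitely generated, so Niblo's double coset separability of $\Gamma$ --- applied to these two subgroups and (after a harmless passage to inverses, to match the conventions) the point $g$ together with the trivial sandwiched element --- yields a finite index subgroup $G_0\le\Gamma$ with $\pi(g)\notin\pi(\langle\eta\rangle)\pi(\langle\gamma\rangle)=\pi(\langle\eta\rangle\langle\gamma\rangle)$, where $\pi:\Gamma\to\Gamma/G_0$. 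Replacing $G_0$ by its normal core $N(\gamma,\eta,g):=\bigcap_{t\in\Gamma}tG_0t^{-1}$ keeps it finite index and makes it normal, and it does not destroy the separation: if $\pi_N(g)$ lay in $\pi_N(\langle\eta\rangle\langle\gamma\rangle)$ in $\Gamma/N(\gamma,\eta,g)$ then, pushing forward along the canonical surjection $\Gamma/N(\gamma,\eta,g)\to\Gamma/G_0$, we would get $\pi(g)\in\pi(\langle\eta\rangle\langle\gamma\rangle)$, a contradiction. Now whenever $\Gamma_n\subseteq N(\gamma,\eta,g)$, an element $\eta^a g\gamma^b\in\Gamma_n$ would have trivial image in $\Gamma/N(\gamma,\eta,g)$, forcing $\pi_N(g)\in\pi_N(\langle\eta\rangle\langle\gamma\rangle)$; hence $\Gamma_n\cap\langle\eta\rangle g\langle\gamma\rangle=\emptyset$.

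Finally I assemble the sequence. Since $\Gamma$ is countable, the set of admissible triples is countable; enumerate it as $\{(\gamma_k,\eta_k,g_k)\}_{k\in\mathbb N}$, put $\Gamma_0:=\Gamma$, and set $\Gamma_n:=\Gamma_{n-1}\cap N(\gamma_n,\eta_n,g_n)$ for $n\ge1$. Each $\Gamma_n$ is finite index and normal, the sequence is nested, and for $n>k$ one has $\Gamma_n\subseteq N(\gamma_k,\eta_k,g_k)$, so the elementary condition holds for the $k$-th triple with $n_0:=k$; by the reduction step, $\{\Gamma_n\}_{n\in\mathbb N}$ is a vanishing sequence. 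The one substantial input is Niblo's theorem, which is already cited; the only points requiring care are cosmetic, namely keeping the left/right double-coset conventions straight and verifying --- as above --- that passing to the normal core cannot undo a separation already realized in a larger finite quotient.
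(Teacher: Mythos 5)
Your proof is correct and is exactly the fleshed-out version of what the paper leaves implicit: the paper simply declares the corollary an ``immediate consequence'' of Niblo's double coset separability together with the countability of $\pi_1(S)$, and your three steps (reduction to countably many separation statements, separation with passage to the normal core, and diagonal intersection over an enumeration) are the standard way to make that precise. The only wrinkle you rightly flag --- the paper's left/right conventions in the definition of a vanishing sequence are internally inconsistent --- is indeed harmless, as inversion exchanges the two readings.
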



\begin{thebibliography}{10}


\bibitem{Atiyah:1983}
Michael~F. Atiyah and Raoul Bott, \emph{{The Yang--Mills equations over Riemann
 surfaces}}, Philos. Trans. Roy. Soc. London Ser. A \textbf{308} (1983),
 no.~1505, 523--615.

\bibitem{Bourdon:1996}
Marc Bourdon, \emph{{Sur le birapport au bord des $\rm CAT(-1)$-espaces}},
 Publ. Math. Inst. Hautes {\'E}tudes Sci. (1996), no.~83, 95--104.

\bibitem{Bridgeman:2015wn}
	Martin~J. Bridgeman,
	 {\emph{The Poisson Bracket of Length functions in the Hitchin Component}},
	arxiv preprint:1502.05975v1.

\bibitem{Bridgeman:2015ba}
Martin~J. Bridgeman, Richard Canary, Fran{\c c}ois Labourie, and Andrés 
 Sambarino, \emph{{The pressure metric for Anosov representations}}, Geometric
 And Functional Analysis \textbf{25} (2015), no.~4, 1089--1179.

\bibitem{Dickey:1997un}
L.~A. Dickey, \emph{{Lectures on classical $W$-algebras}}, Acta Applicandae
 Mathematicae. An International Survey Journal on Applying Mathematics and
 Mathematical Applications \textbf{47} (1997), no.~3, 243--321.

\bibitem{Drinfelcprimed:1981ua}
Vladimir Drinfel'd and V.~V. Sokolov, \emph{{Equations of Korteweg-de Vries type,
 and simple Lie algebras}}, Dokl. Akad. Nauk SSSR \textbf{258} (1981), no.~1,
 11--16.

\bibitem{Fock:2006a}
Vladimir~V. Fock and Alexander~B. Goncharov, \emph{{Moduli spaces of local
 systems and higher {T}eichm{\"u}ller theory}}, Publ. Math. Inst. Hautes
 {\'E}tudes Sci. (2006), no.~103, 1--211.

\bibitem{Goldman:1984}
William~M. Goldman, \emph{{The symplectic nature of fundamental groups of
 surfaces}}, Advances in Mathematics \textbf{54} (1984), no.~2, 200--225.

\bibitem{Goldman:1986}
\bysame, \emph{{Invariant functions on Lie groups and Hamiltonian flows of
 surface group representations}}, Inventiones Mathematicae \textbf{85} (1986),
 no.~2, 263--302.

\bibitem{Govindarajan:1995cr}
Suresh Govindarajan, \emph{{Higher-dimensional uniformisation and
 $W$-geometry}}, Nuclear Physics. B \textbf{457} (1995), no.~1-2, 357--374.

\bibitem{Govindarajan:1995im}
Suresh Govindarajan and T~Jayaraman, \emph{{A proposal for the geometry of
 $W_n$-gravity}}, Physics Letters. B \textbf{345} (1995), no.~3, 211--219.

\bibitem{Guha:2007wf}
Partha Guha, \emph{{Euler--Poincar{\'e} Flows on $\mk{sl}(n)$-Opers and
 Integrability}}, Acta Applicandae Mathematicae: An International Survey
 {\ldots} (2007).

\bibitem{Guichard:2008tu}
Olivier Guichard, \emph{{Composantes de Hitchin et repr{\'e}sentations
 hyperconvexes de groupes de surface}}, Journal of Differential Geometry
 \textbf{80} (2008), no.~3, 391--431.

\bibitem{Guichard:2012eg}
Olivier Guichard and Anna Wienhard, \emph{{Anosov representations: domains of
 discontinuity and applications}}, Inventiones Mathematicae \textbf{190}
 (2012), no.~2, 357--438.

\bibitem{Hitchin:1992es}
Nigel~J Hitchin, \emph{{Lie groups and {T}eichm{\"u}ller space}}, Topology
 \textbf{31} (1992), no.~3, 449--473.

\bibitem{Labourie:2006}
Fran{\c c}ois Labourie, \emph{{Anosov flows, surface groups and curves in
 projective space}}, Inventiones Mathematicae \textbf{165} (2006), no.~1,
 51--114.

\bibitem{Labourie:2005}
\bysame, \emph{{Cross ratios, surface groups, {${\rm PSL}(n,{\bf R})$} and
 diffeomorphisms of the circle}}, Publ. Math. Inst. Hautes {\'E}tudes Sci.
 (2007), no.~106, 139--213.

\bibitem{Labourie:2008tv}
\bysame, \emph{{Cross ratios, Anosov representations and the energy functional
 on Teichm\"uller space}}, Annales Scientifiques de l'Ecole Normale
 Sup{\'e}rieure. Quatri{\`e}me S{\'e}rie \textbf{41} (2008), no.~3, 437--469.

\bibitem{Labourie:2010ev}
\bysame, \emph{{An algebra of observables for cross ratios}}, Comptes Rendus de
 l'Acad{\'e}mie des Sciences. S{\'e}rie I. Math{\'e}matique \textbf{348}
 (2010), no.~9-10, 503--507.

\bibitem{Labourie:2013ka}
\bysame
\emph{{Lectures on representations of surface groups}},
{European Mathematical Society (EMS), Z\"urich},
 (2013), 
 {Z\"urich Lectures in Advanced Mathematics}

\bibitem{Ledrappier:1995}
Fran{\c c}ois Ledrappier, \emph{{Structure au bord des vari{\'e}t{\'e}s {\`a}
 courbure n{\'e}gative}}, vol.~13, Saint, 1995.

\bibitem{Magri:1978gh}
Franco Magri, \emph{{A simple model of the integrable Hamiltonian equation}},
 Journal of Mathematical Physics \textbf{19} (1978), no.~5, 1156--1162.

\bibitem{Niblo:1992uy}
Graham Niblo, \emph{{Separability properties of free groups and surface groups}}, J.
 Pure Appl. Algebra \textbf{78} (1992), no.~1, 77--84.

\bibitem{Otal:1990th}
Jean-Pierre Otal, \emph{{Le spectre marqu{\'e} des longueurs des surfaces {\`a}
 courbure n{\'e}gative}}, Annals of Mathematics \textbf{131} (1990), no.~1,
 151--162.

\bibitem{Otal:1992}
\bysame, \emph{{Sur la g\'eometrie symplectique de l'espace des g\'eod\'esiques
 d'une vari\'et\'e \`a courbure n\'egative}}, Rev. Mat. Iberoamericana
 \textbf{8} (1992), no.~3, 441--456.

\bibitem{Sambarino:2014kc}
Andrés Sambarino, \emph{{Hyperconvex representations and exponential growth}},
 Ergodic Theory Dynam. Systems \textbf{34} (2014), no.~3, 986--1010.

\bibitem{Sambarino:2014jv}
\bysame, \emph{{Quantitative properties of convex representations}},
 Commentarii Mathematici Helvetici \textbf{89} (2014), no.~2, 443--488.

\bibitem{Segal:1991}
Graeme Segal, \emph{{The geometry of the KdV equation}}, Internat. J. Modern
 Phys. A \textbf{6} (1991), no.~16, 2859--2869.

\bibitem{vanMoerbeke:1998wl}
Pierre van Moerbeke, \emph{{Alg{\`e}bres $W_n$ et {\'e}quations
 non--lin{\'e}aires}}, Ast{\'e}risque (1998), no.~252, Exp.\ No.\ 839, 3,
 105--129.

\bibitem{Witten:to}
Edward Witten, \emph{{Surprises with topological field theories}}, cdsweb.cern.ch.

\bibitem{Wolpert:1982eo}
Scott Wolpert, \emph{{The Fenchel-Nielsen deformation}}, Annals of Mathematics
 \textbf{115} (1982), no.~3, 501--528.

\bibitem{Wolpert:1983td}
\bysame, \emph{{On the Symplectic Geometry of Deformations of a
 Hyperbolic Surface}}, Annals of Mathematics \textbf{117} (1983), no.~2,
 207--234.
\end{thebibliography}

\providecommand{\bysame}{\leavevmode\hbox to3em{\hrulefill}\thinspace}
\providecommand{\MR}{\relax\ifhmode\unskip\space\fi MR }
\providecommand{\MRhref}[2]{%
 \href{http://www.ams.org/mathscinet-getitem?mr=#1}{#2}
}
\providecommand{\href}[2]{#2}

\auteur

\end{document}